\theoremstyle{definition} \newtheorem{Definition}{Definition}[section]
\theoremstyle{plain} \newtheorem{Theorem}[Definition]{Theorem}
\theoremstyle{plain} \newtheorem{corollary}[Definition]{Corollary}
\theoremstyle{plain} \newtheorem{lemma}[Definition]{Lemma}
\theoremstyle{definition} \newtheorem{Remark}[Definition]{Remark}
\theoremstyle{plain}
\newtheorem{proposition}[Definition]{Propostion}
\theoremstyle{plain} \newtheorem{claim}[Definition]{Claim}
\theoremstyle{plain} 
\theoremstyle{definition}
\newtheorem{example}[Definition]{Example}
\newcommand{\im}{\mbox{ im}}
\newcommand{\Skip}[3]{#1#2 \ldots#2 #3}
\newcommand{\calscr}[1]{\mathcal{#1}}
\newcommand{\gen}[1]{\langle #1 \rangle}
\newcommand{\T}[1]{\mathcal{#1}}
\newcommand{\id}{\mbox{id}}
\newcommand{\pn}[1]{\mathcal{P}_{#1}}
\newcommand{\spn}[1]{\widetilde{\T{P}}_{#1}}
\newcommand{\hn}[1]{\mathcal{H}_{#1}}
\newcommand{\shn}[1]{\widetilde{\T{H}}_{#1}}
\newcommand{\wequal}{=_{\omega}}
\newcommand{\nwequal}{\ne_{\omega}}
\newcommand{\core}{\mathrm{Core}}
\newcommand{\Z}{\mathbb{Z}}
\newcommand{\N}{\mathbb{N}}
\newcommand{\sym}{\mathmbox{Sym}}
\newcommand{\dual}[1]{#1^{\vee}}
\DeclarePairedDelimiter{\floor}{\lfloor}{\rfloor}
\renewcommand*{\eqref}[1]{%
  \hyperref[{#1}]{\textup{\tagform@{\ref*{#1}}}}%
}
\begin{document}
\author{
  Olukoya, Feyishayo\\
  School Of Mathematics and Statistics,\\
  University of St. Andrews, St Andrews, Fife,\\
    \texttt{fo55@st-andrews.ac.uk}
}
\title{The growth rates of automaton groups generated by reset automata}
\begin{abstract}
We give sufficient conditions for when groups generated by automata in  a class $\mathcal{C}$ of transducers, which contains the class of reset automata transducers, have infinite order. As a consequence we also demonstrate that if a group generated by an automaton in $\T{C}$ is infinite, then it contains a free semigroup of rank at least 2. This gives a new proof, in the context of groups generated by automaton in $\mathcal{C}$, of a result of Chou  showing that  finitely generated elementary amenable groups either have polynomial growth or contain a free semigroup of rank 2. 

\end{abstract}
\maketitle

\section{Introduction}
 This article studies the finiteness problem in automata groups for a specialised class of automata which includes all \emph{reset automata}. In particular we give sufficient conditions for when these groups are infinite. Our approach also provides a new proof, in the context of groups generated by such automata, of a result of Chou \cite{ChouChing} that either the groups generated by these automata are finite or contain a free semigroup of rank at least 2.  
 
 The class of automata we are interested in are called \emph{synchronizing automata} in the sense introduced in the paper  \cite{BlkYMaisANav}. In this paper, the authors  characterise the automorphism groups of $G_{n,r}$, as subgroups of the rational group $\mathcal{R}_{n}$ of Grigorchuk, Nekreshevich Suschanski\u{\i}. 
 In describing the outer-automorphisms $\mathcal{O}_{n,r}$,  $1 \le r < n$,  of $G_{n,r}$ they introduce a sequence of subgroups as follows: $\hn{n} \lneq \mathcal{ L}_{n,r} \lneq \mathcal{O}_{n,r}$. These groups turn out to have connections to the automorphism groups of shift spaces in a manner made precise in the forthcoming paper \cite{BleakCameronOlukoya}. In particular the authors of \cite{BleakCameronOlukoya} identify $\mathcal{H}_{n}$ as a group of  topical interest in dynamics, being isomorphic to the automorphisms of the one-sided shift on $\{0,1 \ldots, n-1\}^{\mathbb{N}}$. 
 
 In order to state our main results, we informally define the terms appearing above, formal definitions will be given in Section~\ref{preliminaries}.
 
 In this article an \emph{automaton} (we will also interchangeably use the terms \emph{transducer} and \emph{Mealy-automaton}) is a machine with finitely many states, which on reading an input symbol from an alphabet of size $n$, writes a string from the same alphabet, possibly the empty string, and changes state awaiting the next input.  In the case that each state of the transducer changes an input symbol to another symbol (and not a string) we say that the transducer is \emph{synchronous}. In the case that this transformation is invertible we say the transducer is \emph{invertible}. It  is a result in \cite{GriNekSus} that the inverse is also representable by a transducer. 
 
 A transducer is said to be \emph{synchronizing at level $k$} (see \cite{BlkYMaisANav}) if there is a natural number $k \in \mathbb{N}$ such that for all strings of length $k$ in the input alphabet, the state of the transducer reached after processing such a string is independent of the starting state and depends only on the string processed. We should point out that there is a weaker notion of synchronization for automaton which occurs, for instance, in the \v{C}ern\'{y} Conjecture \cite{Volkov2008} and in the road colouring conjecture proved by Trahtman \cite{Trahtman}. We shall only be concerned with the definition of synchronization given above. We call a transducer \emph{synchronizing} if it is synchronizing at some level. If the transducer is invertible, with synchronizing inverse, then we say that the transducer is \emph{bi-synchronizing}. For a synchronizing transducer $A$ which is synchronizing at level $k$ we shall denote by $\core(A)$ the sub-transducer consisting of those states reached after reading words of length $k$ from any state of $A$.   

 We may now more properly define the groups of automata we are interested in. The  monoid $\pn{n}$ consists of core, synchronous, bi-synchronizing transducers. We note that the forthcoming paper \cite{BleakCameronOlukoya} demonstrates that elements of $\pn{n}$ induce homeomorphisms of the full two-sided shift, therefore they are invertible and although the inverse may not be contained in $\pn{n}$ it can be represented by a synchronizing transducer. Hence, the term bi-synchronizing makes sense in $\pn{n}$.   The monoid $\pn{n}$ is itself contained in the monoid $\spn{n}$ of synchronous. 
 
 Given an element $A \in \spn{n}$ we shall say that $A$ has a \emph{homeomorphism state} if there is some state $q$ of $A$ such that when $A$ is considered as the initial transducer $A_{q}$, then this induces a homeomorphism of the Cantor space $X^{\mathbb{N}}$ where  $X$ is the alphabet set. Thus, let  $\shn{n}$ be the monoid consisting of elements of $\spn{n}$ which have a homeomorphism state (as it turns out if one of the states of such an $A \in \spn{n}$ is a homeomorphism state then they all are). This article shall mainly be concerned with the group or semigroup generated by automata in the monoid $\shn{n}$, although some of our results shall also apply to $\pn{n}$ and $\spn{n}$. The subgroup of $\shn{n}$ consisting of bi-synchronizing elements we shall denote by $\hn{n}$. As observed above all elements of $\spn{n}$ induce shift commuting continuous functions on $X_n^{\Z}$; elements of $\hn{n}$ induce shift commuting homeomorphisms of $X_n^{\Z}$ \cite{BleakCameronOlukoya}. We shall not differentiate between a automata in $\spn{n}$ and the continuous function it induces on $X_n^{\Z}$.
We associate to each automata $A \in \shn{n}$ and for each natural number $r$ bigger than the synchronizing level of $A$ a finite graph $G_{r}(A)$ the \emph{graph of bad pairs}. Our main result may now be stated as follows:

\begin{Theorem}\label{Thm:badpairshascircuitimpliesifiniteorder}
Let $A \in \widetilde{\T{H}}_{n}$ be synchronizing at level $k$, and let $r \ge k$. If the graph $G_{r}(A)$ of bad pairs has a circuit, then there is rational word in $X_n^{\Z} $ on an infinite orbit under the action of $A$.
\end{Theorem}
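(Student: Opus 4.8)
The plan is to convert the circuit in $G_{r}(A)$ into an explicit eventually‑periodic word $x \in X_n^{\Z}$ and then to prove directly that the forward orbit $\{A^{j}(x) : j \ge 0\}$ is infinite, by exhibiting a non‑negative integer invariant of $x$ --- the length of the bounded ``transition region'' that separates its two periodic tails --- which is unbounded along the orbit.

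First I would record how $A$ acts on $X_n^{\Z}$. Since $A$ is synchronizing at level $k$, for any bi‑infinite word $x$ and any position $i$ the state of $A$ at $i$ is the well‑defined state $\pi(x_{i-k}\cdots x_{i-1})$ reached by reading the preceding $k$ letters, and the $i$‑th letter of $A(x)$ is the output of that state on $x_i$; thus $A$ is realized by a sliding block code of memory $k$ and anticipation $0$, and in particular is position‑preserving. It follows that $A$ carries eventually‑periodic (rational) words to rational words: a left tail $\overline{u}$ is sent to a periodic left tail $\overline{u'}$ depending only on $u$ and of period dividing $|u|$, and likewise on the right, while in between there is a transition region of bounded length depending on the $k$‑letter windows straddling it. This is exactly the data the graph $G_{r}(A)$ organizes: a vertex records, at level $r \ge k$, the local picture of such a seam --- the two regimes together with the state(s) of $\core(A)$ they induce --- and an edge $P \to P'$ records that one application of $A$ sends a word displaying seam‑picture $P$ to one displaying seam‑picture $P'$; the word ``bad'' is there to certify that the discrepancy between the two regimes at such a seam is genuine, hence cannot be cancelled, so that the transition region never shrinks along an edge and strictly grows around a circuit.

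Given a circuit $P_{0} \to P_{1} \to \cdots \to P_{m} = P_{0}$, I would read off a word $x$ whose two tails are the periodic regimes appearing in $P_{0}$ and whose seam realizes $P_{0}$. By the defining property of the edges, $A^{m}(x)$ again displays $P_{0}$, but with a transition region longer by the positive net amount $\Delta$ accumulated around the circuit; inductively the transition length of $A^{jm}(x)$ is at least $j\Delta$. Since two words with the same periodic tails but different transition lengths are distinct, the words $A^{jm}(x)$ for $j \ge 0$ are pairwise distinct, all lie in the forward orbit of $x$, and so that orbit is infinite; and $x$ is rational by construction. (A periodic word always has finite orbit, as $A$ sends it to a periodic word of period dividing its own, so in particular the $x$ we build is not periodic --- again a consequence of the seam being genuinely bad.)

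The step I expect to be the main obstacle is the claim that a directed cycle in $G_{r}(A)$ forces $\Delta > 0$, and not merely $\Delta \ge 0$: one must rule out the possibility that the two regimes on either side of the seam quietly become compatible after finitely many applications of $A$, collapsing $x$ to a periodic word with finite orbit. This requires unwinding the precise definition of a bad pair --- in particular that the discrepancy is witnessed at level $r \ge k$, so that synchronization has already taken effect and the discrepancy is not an artefact of insufficient context --- and proving a bookkeeping lemma to the effect that each edge of $G_{r}(A)$ weakly increases the transition length while no circuit is length‑neutral. Establishing that lemma, together with the companion fact that the word extracted from the circuit is genuinely non‑periodic, is the technical heart of the argument; the reductions surrounding it (to a sliding block code, to the rationality of every $A^{j}(x)$, and to distinctness of words with distinct transition lengths) are routine.
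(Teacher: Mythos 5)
Your high-level picture — build an eventually-periodic word whose seam is driven by the combinatorics of $G_{r}(A)$ and then show the seam position escapes to infinity — is in the right spirit, and indeed resembles the paper's construction in Proposition~\ref{loopgiveswitness}. But there are two genuine problems that keep this from being a proof.

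First, the reading of $G_{r}(A)$ is off. A vertex is not a ``seam picture''; it is a two-element set $\{p,q\}\subset Q_A$ arising as the \emph{top} or \emph{bottom} of a split of the dual automaton $\dual{A}_{r}$. An edge $\{x_1,x_2\}\to\{y_1,y_2\}$ records that some tuple pair $(T_1,T_2)\in Q_A^{m}\times Q_A^{m}$ of length $m\ge l$ splits $\dual{A}_{r}$ (i.e.\ processing a single word $\Gamma\in X_n^{r}$ through $T_1$ and $T_2$ gives outputs agreeing in all but the last symbol) with top $\{x_1,x_2\}$, bottom $\{y_1,y_2\}$, and the bottom depending only on the top. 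This is a statement about $m$ simultaneous copies of $A$ acting on a single length-$r$ window, where $m$ varies from edge to edge; it is not ``one application of $A$ moves seam picture $P$ to seam picture $P'$.'' Consequently the bookkeeping you would need — that traversing a length-$m$ circuit corresponds to applying $A^m$ and returning to the same seam picture at larger offset — is not set up correctly, and is not obviously repairable without redoing the dual-automaton analysis the paper performs.

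Second, and more importantly, you have explicitly deferred the core of the argument. You write that the ``technical heart'' is to show the circuit forces $\Delta>0$ rather than $\Delta\ge 0$, and that the constructed word is genuinely non-periodic, but the proposal does not establish either fact. This is not a routine gap: the paper's proof of Proposition~\ref{loopgiveswitness} devotes most of its length to precisely this point, after first reducing (Remark~\ref{circuitimpliesloop}) a circuit at level $r$ to a \emph{loop} at some higher level $r'$. The witness is explicitly $\ldots\Delta\overset{\centerdot}{\Delta}\Gamma_1\ldots\Gamma_{i-1}(\Gamma_i\ldots\Gamma_j)(\Gamma_i\ldots\Gamma_j)\ldots$, the marker $\Delta_t$ at position $t$ is shown to stay in a fixed $W$-class (or alternate between two), and the proof then splits into cases ($\Gamma_i\ne\Gamma_1$; $\Gamma_i=\Gamma_1$ with $\Gamma_{i-1}\notin W_p$; $\Gamma_i=\Gamma_1$ with $\Gamma_{i-1}\in W_p$ forcing a switch to the companion word built from $\Lambda$) precisely to exclude the collapse-to-periodicity you flag as the danger. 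Your ``transition-length'' invariant is not even well defined until those degenerate configurations are excluded, so the distinctness of the orbit points does not follow from what is written. As it stands the proposal is a plausible plan accompanied by an honest acknowledgement that the difficult step is missing, rather than a proof.
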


Notice that if $A \in \hn{n}$ is such that $G_{r}(A)$ has a circuit, then $A$, in its action on $X_n^{\Z}$ has infinite order. In this case we say that $A$ is an element of infinite order. A corollary of the above result is the following:

\begin{corollary}
Let $A \in \widetilde{\T{H}}_{n}$ be synchronizing at level $k$, and let $r \ge k$. If the graph $G_{r}(A)$ of bad pairs has a circuit, then the automaton group generated by $A$ is infinite.
\end{corollary}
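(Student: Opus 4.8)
The plan is to deduce this from Theorem~\ref{Thm:badpairshascircuitimpliesifiniteorder}. Write $G_A$ for the automaton group generated by $A$, i.e.\ the group generated by the initial transducers $A_q$, one for each state $q$ of $A$; since $A\in\shn{n}$ every state of $A$ is a homeomorphism state, so each $A_q$ is a self-homeomorphism of the Cantor space $X^{\N}$ and $G_A$ is a genuine group. By the Theorem there is a rational $w\in X_n^{\Z}$ whose orbit $\{A^m(w):m\ge 0\}$ is infinite; in particular the continuous self-maps $A^m$ of $X_n^{\Z}$ are pairwise distinct, so $A$ has infinite order as a transformation. The real content is to turn this into the assertion that $G_A$ is infinite, and for this one must relate the action of $A$ on two-sided words to the action of the generators $A_q$ on one-sided words.

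First I would normalise $w$. Since every element of $\spn{n}$ commutes with the shift $\sigma$, we have $A^m(\sigma^{-N}w)=\sigma^{-N}(A^m(w))$, so $\sigma^{-N}w$ again lies on an infinite $A$-orbit; as $w$ is rational its left tail is eventually periodic, so taking $N$ at least its pre-period we may assume the left tail $(\dots,w_{-2},w_{-1})$ of $w$ is purely periodic, say of period $p$. Now I would use the standard description of how a synchronous transducer synchronizing at level $k$ acts on a bi-infinite word $x$: the state of $A$ reached ``at coordinate $0$'' of $x$ depends only on $x_{-k}\cdots x_{-1}$, call it $q_x$; the restriction of $A(x)$ to coordinates $\ge 0$ equals $A_{q_x}(x_0x_1\cdots)$; and, crucially, if the left tail of $x$ is purely $p$-periodic then so is the left tail of $A(x)$, since then the state at coordinate $-i$ depends only on $i\bmod p$. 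Iterating these observations: the left tails of all the $A^m(w)$ lie in the finite set of purely $p$-periodic left tails, while the restriction of $A^m(w)$ to coordinates $\ge 0$ equals $g_m(w_0w_1\cdots)$ for $g_m=A_{q^{(m-1)}}\cdots A_{q^{(0)}}$, a product of $m$ generators of $G_A$ (here $q^{(i)}$ is the coordinate-$0$ state of $A^i(w)$, well defined because the left tails stay purely periodic).

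Finally, suppose for contradiction that $G_A$ is finite. Then $\{g_m:m\ge 0\}\subseteq G_A$ is finite, so the restrictions of the words $A^m(w)$ to coordinates $\ge 0$ take only finitely many values; together with the fact, from the previous paragraph, that their restrictions to coordinates $<0$ also take only finitely many values, this forces $\{A^m(w):m\ge 0\}$ to be finite, contradicting the Theorem. Hence $G_A$ is infinite. I expect the main obstacle to be precisely this middle step: arranging the left tail to be purely periodic and verifying it remains so under $A$, so that the orbit of $w$ in $X_n^{\Z}$ is described by a sequence of honest \emph{group} elements $g_m\in G_A$ rather than merely by the powers of the transformation $A$ --- it is this that upgrades ``$A$ has infinite order'' to ``$G_A$ is infinite'', an implication that is not automatic since $A$ need not be invertible on $X_n^{\Z}$.
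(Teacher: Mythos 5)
Your argument is correct, and it is a genuinely different (more hands-on) route than the one the paper implicitly has in mind. The paper states the corollary without a proof, evidently regarding it as immediate; the step ``$A$ of infinite order $\Rightarrow$ the automaton group is infinite'' is already packaged in Lemma~\ref{corehasinfiniteorderimpliesdifferentpowersnotequivalent}, which says that once the cores $\min\core(A^m)$ are pairwise $\nwequal$ (which is exactly what an infinite $A$-orbit gives), the initial transducers $A^i_u$ and $A^j_v$ are $\omega$-inequivalent whenever $i\ne j$ --- and these $A^i_u$ are products of $i$ generators of the automaton group, so the group is infinite. You instead re-derive the implication directly from the infinite orbit: you normalise $w$ by the shift so that its left tail is purely $p$-periodic, observe (correctly, via the synchronizing map) that purely $p$-periodic left tails are preserved under $A$, split each $A^m(w)$ into a left tail drawn from a finite set and a right half $g_m(w_0w_1\cdots)$ with $g_m$ a word of length $m$ in the generators $A_q$, and then deduce that finiteness of the automaton group would force the orbit of $w$ to be finite. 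Both routes ultimately exploit the same fact --- that the bi-infinite action of $A$ is controlled, via the synchronizing map, by the one-sided initial transducers --- but yours avoids invoking Lemma~\ref{corehasinfiniteorderimpliesdifferentpowersnotequivalent} and Claim~\ref{inequivstates}, at the cost of the shift-normalisation and periodicity bookkeeping. What your approach buys is an explicit, constructive witness to infinitude (the sequence $(g_m)$ tracking the orbit of $w$), whereas the lemma-based route gives the cleaner structural conclusion that initial transducers of distinct powers are already pairwise inequivalent.
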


It turns out that  if the graph $G_{r}(A)$ has a circuit for some $r$, then $A$ contains a free semigroup. By studying the graphs $G_{r}(A)$ for sufficiently large $r \in \N$, we deduce the following result: 

\begin{Theorem}\label{Thm:ifinitethenfreesemigroup}
Let $A \in \widetilde{\T{H}}_{n}$ generate an infinite group. Then either there is a $j \in \mathbb{N}$ such that the  graph of bad pairs $G_{j}(A)$ has a loop otherwise the automaton semigroup generated by $A$ contains a free semigroup of rank at least 2.
\end{Theorem}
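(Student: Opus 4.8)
The plan is to prove the dichotomy through its non-trivial branch: writing $k$ for the synchronizing level of $A$, and assuming that $G_j(A)$ has no loop for every $j\ge k$, I will exhibit a free sub-semigroup of rank $2$ inside the automaton semigroup generated by $A$. The argument splits into two essentially independent parts. The first part upgrades the hypothesis ``$A$ generates an infinite group'' to the combinatorial statement that $G_r(A)$ contains a circuit for some $r\ge k$. The natural route is the contrapositive: if $G_r(A)$ is acyclic for every $r\ge k$, one uses the compatibility between consecutive graphs of bad pairs --- roughly, an edge of $G_{r+1}(A)$ projects onto a walk of $G_r(A)$, so the absence of circuits propagates together with a bound, uniform in $r$, on the length of directed paths. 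Such a uniform bound forces the obstructions recorded by the graphs of bad pairs to have bounded depth, which in turn bounds the complexity of the powers of $A$ (for instance the size of $\core(A^m)$ up to $\omega$-equivalence) independently of $m$; since there are only finitely many transducers of bounded size over $X_n$, the group generated by the states of $A$ is then finite, contradicting the hypothesis. Consequently some $G_r(A)$ has a circuit, and since no $G_j(A)$ has a loop this circuit has length $\ell\ge 2$.

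The second part turns a circuit of length $\ell\ge 2$ into a free semigroup of rank $2$. I would build on the mechanism already used to prove Theorem~\ref{Thm:badpairshascircuitimpliesifiniteorder}: tracing the circuit $P_0\to P_1\to\cdots\to P_{\ell-1}\to P_0$ of bad pairs produces a rational word $x\in X_n^{\Z}$ and an element of the automaton semigroup whose action on $x$ has an infinite orbit. The new ingredient is that $\ell\ge 2$ supplies at least two genuinely distinct bad pairs along the circuit; using this I would produce two elements $a,b$ of the automaton semigroup --- for example by following the circuit from two different vertices, or by combining one full loop with a partial loop --- together with a rational word $x$ such that the orbital address map $\{a,b\}^{+}\to X_n^{\Z}$, $w\mapsto w\cdot x$, is injective. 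Injectivity of this map is exactly the statement that $\gen{a,b}^{+}$ is free of rank $2$, which completes the proof; and if instead some $G_j(A)$ does have a loop, we are already in the first alternative of the statement.

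I expect the main obstacle to be the injectivity of the address map: one must show that two distinct words $w_1\ne w_2$ over $\{a,b\}$ cannot act identically on $x$ (equivalently, on the finite window of the orbit used to certify freeness). This is where membership in $\widetilde{\T{H}}_{n}$, and in the $\hn{n}$ case bi-synchronization, must be used essentially: it has to be arranged that distinct bad pairs leave distinguishable imprints on sufficiently long windows of the bi-infinite word, and that these imprints are not re-homogenised by the letters of $w$ that are read afterwards, so that the disagreement between $w_1$ and $w_2$ surfaces after finitely many letters once $w_1$ and $w_2$ first differ. Choosing $x$ and the window sizes so that this separation genuinely occurs, uniformly along the whole circuit, is the technical heart of the construction. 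A more bookkeeping-flavoured difficulty, in the first part, is making the bound in ``acyclic at every level $\Rightarrow$ finite group'' uniform over all $r$; this rests on pinning down precisely how $G_{r+1}(A)$ refines $G_r(A)$ and on the behaviour of cores under composition of synchronizing transducers.
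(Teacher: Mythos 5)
Your proposed decomposition does not match the paper's argument, and Part~1 contains a genuine gap that the rest of the plan depends on.

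You want to show that if $G_r(A)$ has no circuit for any $r\ge k$ then the group generated by $A$ is finite. The paper's proof of Proposition~\ref{looporfreesmgp} explicitly handles the case where no $\overline{G}_j(A)$ has a loop (which, by Remark~\ref{circuitimpliesloop}, is the same as no $G_j(A)$ having a circuit at any level) \emph{without} deriving a contradiction with infiniteness. Instead it shows, in that case, that two \emph{states} $p_1, t_1$ of $A$ itself generate a free semigroup. If your Part~1 claim held, the paper's Case~2 would be vacuous and the theorem could be stated unconditionally as ``infinite order implies some $G_j(A)$ has a loop''; the dichotomy form of the statement indicates the authors could not establish this. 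Moreover, your proposed mechanism for Part~1 is incompatible with what the paper does establish: Lemma~\ref{minsplittinglengthandsynchlevel} together with the lemma that follows it show that for \emph{any} infinite-order element the synchronizing level of $\min\core(A^m)$ is unbounded, which forces $|\min\core(A^m)|$ to grow without bound (a transducer synchronizing at level $\ell$ has at least $\ell+1$ states). This happens whether or not the graphs of bad pairs have circuits --- the acyclicity hypothesis bounds iterated products in $\mathfrak{S}_{r,\bullet}$ (Proposition and Remark preceding Section~5), not the core size. So ``acyclic graphs of bad pairs'' does not bound $|\core(A^m)|$ and your chain ``acyclic $\Rightarrow$ bounded path length $\Rightarrow$ bounded core $\Rightarrow$ finitely many transducers $\Rightarrow$ finite group'' breaks at the second arrow. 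Your Part~1 also creates an internal contradiction in the plan: if you could show some $G_r(A)$ has a circuit, Remark~\ref{circuitimpliesloop} already converts it into a loop at some $G_{r'}(A)$, contradicting your standing assumption that no $G_j(A)$ has a loop, so Part~2 never gets off the ground.

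The paper's actual route in the no-loop case is quite different and worth contrasting with your Part~2. Rather than following a circuit of length $\ge 2$ to manufacture two semigroup elements and checking injectivity of an orbital address map, the paper pigeonholes over all levels $j$ to extract a fixed top pair $\{p_1,p_2\}$ and bottom pair $\{t_1,t_2\}$ that occur as the top and bottom of splits of $\dual{A}_{j}$ for infinitely many $j$, then (after reducing to $p_2=t_2$ and ruling out a degenerate $\omega$-equivalence that would itself force a loop) argues directly that the subsemigroup $\gen{A_{p_1}, A_{t_1}}$ is free. The freeness argument is a short length-and-cancellation argument: if two words in $\{p_1,t_1\}$ over the states were $\omega$-equivalent they would have equal length by Lemma~\ref{corehasinfiniteorderimpliesdifferentpowersnotequivalent} (different powers of $A$ have inequivalent cores), and at the first position where they differ one would need $A_{p_1 S_1}$ $\omega$-equivalent to $A_{t_1 S_2}$, which is precisely what Case~2's hypothesis rules out. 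Your plan never reaches this point because it never confronts the genuine possibility that there is no circuit at any level.
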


From this it follows that either the group or semigroup generated by $A$ is finite   or it has exponential growth rate. This result is in the spirit of  \cite{IKlimann} which studies \emph{bireversible automata}. One should note, however, that the class of bireversible automata is disjoint from the class of synchronizing automata. 

Silva and Steinberg in \cite{SilvaSteinberg} study reset automata which fit in this context as level 1 synchronizing transducers.
They show that the automaton group generated by a level 1 synchronizing transducer with a homeomorphism state is infinite if and only if it contains an element of infinite order, and in this case the group is locally-finite-by-cyclic and so amenable.  Thus if the order problem is solvable in groups generated by reset automata, then the finiteness problem is also solvable in groups generated by reset automata (in fact this is and if and only if by the results of \cite{SilvaSteinberg}). While an early draft of this paper has been under review, the two papers, \cite{LBartholdiIMitrofanov} and \cite{PierreGillibertOrder} demonstrating that the order problem in groups generated by automata is undecidable in general were uploaded to the arxiv. However the question remains open for reset automata. The results above  address this question by providing sufficient conditions for when the group generated by a reset automata is finite.
 
In \cite{SilvaSteinberg}, by making further assumptions the authors of that paper are also able to show that in the case where this group is infinite then it has exponential growth. {Using Chou's  classification of elementary amenable groups \cite{ChouChing}, a result of Rosset \cite{RossetShmuel} and the result of Silva and Steinberg that groups generated by reset automata are locally-finite-by-cyclic, it turns out that all groups generated reset automata are actually elementary amenable. Moreover, it is a result of Chou \cite{ChouChing} that all finitely generated elementary amenable groups are either virtually nilpotent (and so of polynomial growth \cite{GromovMikhael}) or contain a free semigroup on two generators. From this one may deduce that all finitely generated locally-finite-by-infinite cyclic groups contain a free semigroup on two generators}. 
Thus we reprove the result of Chou in the context of groups generated by  reset automata.
In particular, the class of automata groups generated by reset automata do not furnish examples of infinite Burnside groups or groups of intermediate growth:  these groups are finite or they have exponential growth. 

In the case that $A \in \hn{n}$ generates a finite group, the graph $G_{r}(A)$ is eventually empty. In this case, it is more fruitful to study powers of the dual automaton. We have the following result:

\begin{proposition}
Let $A$ be an element of $\T{P}_{n}$ and suppose $A$ is synchronizing at level $k$. Then the semigroup $\gen{A}$ generated by  $A$  is finite if and only if there is some $m \in \mathbb{N}$ such that the following holds:
\begin{enumerate}[label = (\roman*)]
\item $\dual{A}_{m}$ is a zero of the semigroup $\langle \dual{A} \rangle$  and,
\item $\dual{A}_{m}$ is $\omega$-equivalent to a transducer with $r$ components such that:
\begin{enumerate}
\item For each component $D_i$ $1\le i \le r$, there is a fixed pair of words $w_{i,1}, w_{i,2}$ (in the states of $A$) of  associated to $D_i$, and
\item Whenever we read any input from a state in the $D_i$, the output is of the form $w_{i,1}w_{i,2}^{l}v$ for $l \in \mathbb{N}$ and $v$ a prefix of $w_{i,2}$ or has the from $u$ for some prefix $U$ of $w_{i,1}$. Moreover the output depends only on which state in the component $D_i$ we begin processing inputs.
\end{enumerate}
 \end{enumerate}
\end{proposition}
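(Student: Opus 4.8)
The plan is to route everything through the classical correspondence between $A$ and its dual. Recall that an element $w$ of $\langle A\rangle$ and a finite word $u$ of the same length determine a section $w|_u$, that $\langle A\rangle$ is finite exactly when the set of all such sections is finite, and that the $m$-th power $\dual{A}_m$ of the dual automaton is precisely the machine that reads a word over the states of $A$ and outputs (a word representing) the corresponding section, the states of $\dual{A}_m$ along a run tracking the images of a fixed length-$m$ word $u$. Since $A$ is synchronizing at level $k$, for $m\ge k$ the letters output along a run of $\dual{A}_m$ are \emph{synchronizing} states of $A$ --- states of $\core(A)$, of which there are finitely many --- and the sequence of states visited by $\dual{A}_m$ traces the orbit, inside the finite set of length-$m$ words, of $u$ under the input being read. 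Thus the Proposition says: $\langle A\rangle$ is finite iff the powers $\dual{A}_m$ stabilise, which in the cyclic semigroup $\langle\dual A\rangle$ of compositional powers of $\dual A$ is exactly condition (i) ($\dual A_m$ a zero), and the stable power has the combinatorial shape described in (ii).

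I would do the easy direction first. Assume some $m$ satisfies (i) and (ii). By (ii), every state of the $\omega$-reduced form of $\dual{A}_m$, on \emph{any} input, outputs a prefix of one of the $r$ eventually periodic words $w_{i,1}w_{i,2}^{\infty}$ over the states of $A$; by the correspondence this says that for every $w\in\langle A\rangle$ and every $u$ of length $m$, the section $w|_u$ is one such prefix --- so, among sections of a fixed length, at most $r$ possibilities occur. Since a synchronous transformation is determined by its action at level $m$ together with its sections at level $m$, and the latter now form a set of size bounded independently of $w$ (with (i) ensuring the same bound persists at all greater depths), $\langle A\rangle$ is finite.

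For the converse, suppose $\langle A\rangle$ is finite --- hence a finite group, $A$ being bi-synchronizing and so invertible --- and pick $m$ large. The set of level-$m$ sections of elements of $\langle A\rangle$ is then finite, so along any run of $\dual{A}_m$ the sequence of synchronizing states written is eventually periodic; and since those output symbols are synchronizing states (here $m\ge k$ is used), this sequence depends only on the orbit traced by the states of $\dual{A}_m$, not on the precise input. Decomposing the states of $\dual{A}_m$ into the components $D_1,\dots,D_r$ of its underlying graph and, for each $D_i$, reading the synchronizing states along a transient path into the cycle $D_i$ contains and around that cycle yields the words $w_{i,1}$ and $w_{i,2}$; the output from a state of $D_i$ on an input of length $\ell$ is then the length-$\ell$ prefix of $w_{i,1}w_{i,2}^{\infty}$ --- of the form $w_{i,1}w_{i,2}^{l}v$ with $v$ a prefix of $w_{i,2}$, or a prefix of $w_{i,1}$ --- and depends only on the starting state. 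This is (ii), and finiteness together with the stabilisation of these orbits gives $\dual{A}_{m+1}\wequal\dual{A}_m$, i.e.\ (i). The one bookkeeping point is that a single $m$ must be chosen which is at once $\ge k$, past the stabilisation point of $\langle A\rangle$, and large enough that all the relevant orbits have entered their periodic part --- possible since each of these is a lower bound on $m$.

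The real obstacle, I expect, is the precise verification of (ii): that the output of $\dual{A}_m$ is input-independent and collapses to prefixes of a \emph{single} eventually periodic word, in the transient-plus-periodic normal form $w_{i,1}w_{i,2}^{l}v$ per component. This is where the synchronizing hypothesis and the choice $m\ge k$ are indispensable --- the analogue fails for arbitrary synchronous transducers --- and it requires controlling how the orbit of a length-$m$ word under the (finite) level-$m$ action interacts with reading further letters, so that ever longer inputs yield ever longer prefixes of the \emph{same} word. One must also be scrupulous about working up to $\omega$-equivalence throughout, as the component structure emerges only after $\omega$-reduction, and must check the period-one stabilisation that makes $\dual{A}_m$ an actual zero of $\langle\dual A\rangle$ rather than merely a point of a longer cycle.
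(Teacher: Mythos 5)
Your $\Leftarrow$ direction is essentially the paper's: conditions (i) and (ii) together force $A^{\ell}$ to be synchronizing at level $m$ for every $\ell$, and since there are only finitely many minimal synchronous transducers synchronizing at a fixed level over a fixed alphabet, the set $\{A^{\ell}\}$ is finite. Your phrasing via ``bounded sections'' is a bit looser than the paper's but lands on the same invariant, and it is fine.

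The $\Rightarrow$ direction has a genuine gap, which to your credit you flag yourself in the last paragraph but do not close. Your argument is: outputs of $\dual{A}_{m}$ are synchronizing states when $m\ge k$, hence the output sequence depends only on the orbit of states visited; the orbit is eventually periodic because there are finitely many level-$m$ sections; therefore the component structure of (ii) emerges. The step that fails is the jump from ``eventually periodic'' to ``input-independent.'' The set of possible output sequences from a fixed state $\Gamma$ of $\dual{A}_m$ being finite does not at all force it to be a singleton: two distinct input words over $Q_A$ of the same length may drive $\dual{A}_m$ along two different paths from $\Gamma$, visiting $\omega$-inequivalent states at some coordinate and hence producing two distinct outputs. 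When that happens the ``component'' containing $\Gamma$ is not a transient-plus-cycle at all, and neither (ii) nor the zero property (i) follows. Eventual periodicity of each individual run is simply the wrong invariant.

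What you are missing is the paper's splitting-length machinery, which is precisely the device that converts finiteness of $\gen{A}$ into input-independence of $\dual{A}_m$. Concretely: a ``split'' of $\dual{A}_m$ of length $l$ is a pair of length-$l$ tuples of states of $A$ which, read from some state $\Gamma$, produce outputs agreeing on a prefix but eventually diverging. Lemma~\ref{minsplittinglengthandsynchlevel} shows that a split of length $l$ forces $\min\core(A^{l+1})$ to have synchronizing level strictly greater than $m$. So if $\gen{A}$ is finite with $j$ the maximal synchronizing level attained across $\gen{A}$, taking $m\ge j$ leaves $\dual{A}_m$ with no splits at all (Lemma~\ref{lemma 1}); \emph{that} is what makes the output from each state depend only on the length of the input, which after $\omega$-reduction gives exactly the component normal form of (ii) (Remark~\ref{remark 1}) and then the zero property (i) (Theorem~\ref{lemma 2}). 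Your sketch has the right target but lacks the lemma that actually enforces it, and the ``bookkeeping point'' you defer --- choosing $m$ large enough --- is not bookkeeping: the correct choice is $m\ge j$, and one needs the split-vs-synchronizing-level dichotomy even to know that such a $j$ exists and does the job.
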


We shall formally define the dual automaton in Section \ref{tools}, where we  also prove the above proposition.

In \cite{GriZuk}, Grigorchuk, Nekreshevich  and Suschanski\u{\i}, show that the lamplighter group can viewed as the automata group generated by a 2-state automaton. Silva and Steinberg by studying Cayley machines of  finite groups (these are finite automata whose inverse are reset automata), show that the automata group generated by the Cayley machine of a finite abelian group $G$ is isomorphic to the restricted wreath product $G \wr \mathbb{Z}$. In particular all lamplighter groups are in this class. Furthermore, by showing that these automata satisfy certain conditions, they are also able to prove that the automaton group generated by the Cayley machine of an abelian group has exponential growth.

Using tools introduced later in the paper, we demonstrate that the number of states of the $n$'th power of the Cayley machine of a group $G$ is precisely $|G|^n$. This is in keeping with the Cayley machine of the cyclic group of order 2 as was demonstrated in \cite{GriNekSus}.

In \cite{GriNekSus}, the authors also demonstrate that for the  Cayley machine of the cyclic  group of order 2, all powers of this automaton are connected (i.e every state is accessible from any other.) We show that this is also satisfied by the Cayley machine of any finite group.

\subsection{Outline of Paper}
In Section \ref{preliminaries} we give a formal definition of the semigroup $\spn{n}$ and present some of the basic properties of this group that we will need later on. In Section \ref{propertiesofpn} we introduce some various useful properties of $\spn{n}$ which, when restricted to $\shn{n}$ begin to shed light on when two elements commute or are conjugate to each other. In Section \ref{tools} we develop the techniques required to prove Theorem \ref{Thm:badpairshascircuitimpliesifiniteorder}. In Section \ref{embeddings} we introduce techniques for combining elements of $\shn{n}$ and show that direct-sums of copies of $\shn{n}$ can be embedded in $\widetilde{\T{H}}_{m}$ for $m$ large enough. In Section \ref{proofofmainresult} we prove Theorem~\ref{Thm:ifinitethenfreesemigroup}. 

\subsection{Acknowledgements}
The author would like to thank his supervisor Collin Bleak for bringing this problem to his attention and for helpful comments on  numerous versions of the paper. {The author would also like to thank Laurent Bartholdi for helpful conversations, in particular for drawing attention to the results of Ching Chou and Shmuel Rosset.} He also gratefully acknowledges the financial support of the Carnegie Trust.

\section{Preliminaries}\label{preliminaries}

\subsection{Transducers}
 Throughout the paper fix $X_n := \{0,\ldots, n-1\}$, for $2 \le n$ a natural number. For an arbitrary finite set $X$ of symbols we shall let $X^{\ast}$ be the set of all finite strings, including the empty string (which we shall always denote by $\epsilon$), and $X^+$ to be the set of all finite strings excluding the empty string. We call a word $\Gamma \in X_n^{+}$ which cannot be written as $\Gamma =  (\gamma)^r$ for a stricly smaller word $\gamma \in X_n^{+}$ and $0< r \in \N$ a \emph{prime word}. For a natural number $k \in \mathbb{N}$ we shall let $X^k$ be the set of all strings of length precisely $k$; $X^{\mathbb{N}}$ and $X^{\mathbb{Z}}$ shall denote the set of infinite and bi-infinite strings respectively. We shall represent a point $x \in X^{\mathbb{N}}$ as a sequence $(x_i)_{i \in \mathbb{N}}$ which we will normally write as $x:= x_0x_1x_2 \ldots$, likewise we shall represent a point $x \in X^{\Z}$ as a sequence $(x_i)_{i \in \Z}$, and we will normally write this as $x:= \ldots
 x_{-1}x_0x_{1}\ldots$. Sometimes it shall be convenient to consider an element $x \in X_n^{\mathbb{Z}}$ as being composed of words in $X_n^{k}$ for some natural number $k >0$ and in this case we shall write: 
 \[
    x:= \ldots\Gamma_{-1}\dot{\Gamma}_0\Gamma_{1}\ldots
 \]
where $\Gamma_{i}$, $i \in \mathbb{Z}$ are all in $X_n^{k}$. The dot above $\Gamma_0$ is used to communicate that $x_0\ldots x_{k-1} = \Gamma_0$. Then $x_{k}\ldots x_{2k-1} = \Gamma_1$ and so on. Equipping $X$ with the discrete topology, and taking the product topology on $X^{\mathbb{N}}$ and $X^{\mathbb{Z}}$ we have that both of these spaces are homeomorphic to Cantor space. It is a standard result that both $X^{\mathbb{N}}$ and $X^{\mathbb{Z}}$ with the topologies defined above, are metrisable. On $X^{\mathbb{N}}$ we shall take the metric $d_{\omega}: X^{\mathbb{N}} \times X^{\mathbb{N}} \to (0, \infty)$ given by 
 \begin{equation}\label{metriconX^omega}
 d_{\omega}(x,y) = \begin{cases}
                      \frac{1}{j+1} \mbox{ where } j \in \mathbb{N} \mbox{ is minimal so that } x_j \ne y_j \\
                      0 \mbox{ if } x = y. 
                    \end{cases} 
 \end{equation}
 
 On $X^{\Z}$ we shall take the metric $d_{\infty}: X^{\Z} \times X^{\Z} \to (0, \infty)$ given by 
  \begin{equation}\label{metriconX^Z}
  d_{\infty}(x,y) = \begin{cases}
                       \frac{1}{j+1} \mbox{ where } j \in \mathbb{N} \mbox{ is minimal so that } x_j \ne y_j \mbox{ or } x_{-j} \ne y_{-j} \\
                       0 \mbox{ if } x = y. 
                     \end{cases} 
  \end{equation} 
   For a string $\Gamma \in X^{\ast}$, we shall use $|\Gamma|$ to denote the length (or size) of $\Gamma$; the empty string has length zero. We shall also use $|X|$ to denote the cardinality of the set $X$; if $i \in \Z$ then $|i|$ shall denote the absolute value of $i$. We shall let the context determine which meaning of $| \cdot |$ is being taken.

\begin{Definition}
In our context a \emph{transducer} $A$ is a tuple $A = \gen{X_n, Q, \pi, \lambda}$ where:

\begin{enumerate}[label= (\roman*)]
\item $X_n$ is both the input and output alphabet.
\item $Q$ is the set of states of $A$.
\item $\pi$ is the \emph{transition} function, and is a map:
   \[
     \pi: X_n\sqcup\{\epsilon\} \times Q \to Q
   \]
 \item $\lambda$ is the \emph{output} or \emph{rewrite} function, and is a map:
 \[
   \lambda: X_n\sqcup\{\epsilon\} \times Q \to X_n^{\ast}
 \]
\end{enumerate}
\end{Definition}

We shall take the convention that $\pi(\epsilon, q) = q$ for any $q \in Q$, and also $\lambda(\epsilon, q) = \epsilon$, this will allow single-state transducers to be synchronizing at level 0. 
If $|Q| < \infty$ then we say the transducer $A$ is finite. $A$  is said to be \emph{synchronous} or a \emph{Mealy automaton} if $\lambda$ obeys the rule $|\lambda(x,q)| = |x|$ for any $x \in X_n\sqcup\{\epsilon\}$ and $q \in Q$. If the map $\lambda(\cdot,q) : X_n \to X_n$ is the identity map on $X_n$ then we say that $q$ acts \emph{locally as the identity}, if the map $\lambda(\cdot,q) : X_n \to X_n$ is a permutation of $X_n$ then we say that $q$ acts \emph{locally as a permutation.} If it is clear from the context then we may sometimes omit the prefix `locally'.

If we specify a state $q \in Q$ from which we start processing inputs then we say $A$ is \emph{initialised at $q$} and shall denote this $A_{q}$. The transducer $A_{q}$ is then called an initial transducer.

We can extend the domain  of $\pi$ and $\lambda$ to $X_n^{\ast} \times Q$ using the rules below and  induction:

\begin{IEEEeqnarray}{rCl}
\pi( \Gamma x, q) &=& \pi(x, \pi(\Gamma,q)) \\
\lambda( \Gamma x, q) &=& \lambda(\Gamma,q) \lambda(x,\pi(\Gamma,q)) 
\end{IEEEeqnarray}

where $\gamma \in X_n^{\ast}$, $x \in X_n$  and  $q \in Q$. Given a word in $\Gamma \in X_n^{\ast}$ and $q, p \in Q$ such that $\pi(\Gamma, q) = p$, then we shall say that we \emph{ read $\Gamma$ from state $q$ into $p$}, we shall sometimes supplement this by adding, \emph{ and the output is $\Delta$} if $\Delta = \lambda(\Gamma, q)$. 

Endowing $X_n$ with the discrete topology,
the above now means that each state $q \in Q$ induces a continuous map from Cantor space $X_n^{\mathbb{N}}$ to itself. If this map is a homeomorphism then we say that $q$ is a \emph{homeomorphism state} Two states $q_1$ and $q_2$ are then said to be $\omega$-equivalent if they induce the same continuous map. (This is can be checked in finite time.) A transducer, therefore, is called \emph{minimal} if no two states are $\omega$-equivalent. Two minimal transducers, $A = \gen{X_n, Q_A, \pi_A, \lambda_A}$ and $B= \gen{X_n, Q_B, \pi_B, \lambda_B}$, are said to be $\omega$-equivalent if there is a bijection $f: Q_A \to Q_B$ such that $q$  and $f(q)$ induce the same continuous map for $q \in Q_A$. In the case where $A$ and $B$ are $\omega$-equivalent then we write $A \wequal B$, otherwise we write $A \nwequal B$.

Given two transducers $A = \gen{X_n, Q_A, \pi_A, \lambda_A}$ and $B= \gen{X_n, Q_B, \pi_B, \lambda_B}$, the product $A*B$  shall be defined in the usual way. The set of states of $A*B$ will be $Q_A \times Q_B$, and the transition and rewrite functions, $\pi_{A*B}$ and $\lambda_{A*B}$ of $A*B$ are defined by the rules:

\begin{IEEEeqnarray}{rCl}
 \pi_{A*B}(x, (p,q)) &=& (\pi_{A}(x,p), \pi_B(\lambda_A(x,p),q)) \\
 \lambda_{A*B}(x,(p,q)) &=& \lambda_{B}(\lambda_A(x,p),q)
\end{IEEEeqnarray}

Where $x \in X_n\sqcup{\epsilon}$, $p \in Q_A$ and $q \in Q_B$. As usual $A^{i} = A_{1} \ast A_{2}\ast \ldots \ast A_{i}$ where $A_j = A$ $1\le j \le i$ and $i \in \mathbb{N}$, and $A^{-i} = (A^{-1})^{i}$. If $A:= \gen{X_n, Q_A, \lambda_A, \pi_{A}}$ then we shall set $A^{i} = \gen{X_n, Q_A^{i}, \lambda_{Ai}, \pi_{Ai}}$. 

If $A = \gen{X_n, Q_A, \pi_A, \lambda_A}$ is a synchronous transducer, then as each state $q$ of $A$ induces a continuous function of $X_n^{\mathbb{Z}}$ we may consider the subsemigroup (or group in the case that $A$ is invertible) of the endomorphisms of $X_n^{\Z}$ generated by the set $\{A_{q} | q \in Q_{A}\}$. We shall refer to this semigroup or group as the \emph{automaton semigroup or automaton group generated by $A$}. Alternatively can also consider the monogenic semigroup $\gen{A} = \{A^i | i \in \mathbb{N}\}$ or cyclic group $\gen{A} = \{A^i | i \in \Z \}$, we shall call these the \emph{the semigroup or  group generated by $A$}. When there is any ambiguity we shall make it explicit that $\gen{A}$ refers either to  the semigroup or group generated by $A$.

\begin{Definition}
Given a non-negative integer $k$ and an automaton $A=\gen{X_n,Q,\pi,\lambda}$, we say that \emph{$A$ is synchronizing at level $k$} if there is a map $\mathfrak{s}:X_n^k\to Q$, so that for all $q\in Q$ and any word $\Gamma\in X_I^k$ we have $\mathfrak{s}(\Gamma)=\pi(\Gamma,q)$. That is, the location in the automaton is determined by the last $k$ letters read. We call $\mathfrak{s}$ the synchronizing map for $A$, the image of the map $\mathfrak{s}$ the \emph{core of $A$}, and for a given $\Gamma \in X_n^k$, we call $\mathfrak{s}(\Gamma)$ the \emph{state of $A$ forced by $\Gamma$}. If $A$ is invertible, and $A^{-1}$ is synchronizing at some level $0\le l \in \mathbb{N}$, then  we say that \emph{$A$ is bi-synchronizing at level $\max(k,m)$}. If $A$ is synchronizing but not bi-synchronizing then we shall say $A$ is \emph{one-way synchronizing}.
\end{Definition}

\begin{Remark}
\begin{enumerate}[label = (\roman*)] \leavevmode
\item It is an easy observation that for a synchronizing transducer the core of $A$ is a synchronizing transducer in its own right. We shall denote this transducer by $\core(A)$, and if $A = \core(A)$ then we say that \emph{$A$ is core}.
\item For a synchronous, synchronizing transducer, $A$, $\core(A)$ induces a continuous map from $X_n^{\mathbb{Z}}$ to itself. This follows since if $\core(A)$ is synchronizing at level $k$ with synchronizing map $\mathfrak{s}$, and given a bi-infinite string $(x_i)_{i \in \mathbb{Z}}$, then $\core(A)(x_i) = \pi(x_i,\mathfrak{s}(x_{i-k}\ldots x_{i-1}))$. That is we look at the preceding $k$ symbols to determine from which state the subsequent symbol is to be processed. Let $f_{A}$ be the map induced on $X_n^{\Z}$ induced by $\core(A)$, then as $A$ is synchronous, the map $f_{A}$ preserves indices and so is a well defined map on $X_n^{\Z}$. This is not always the case, for instance if $A$ was asynchronous, and synchronizing then the indices for the action of $f_{A}$ are not well-defined. 
\end{enumerate}
\end{Remark}

Before defining the monoid $\spn{n}$ it is necessary to establish the following standard result.

\begin{claim} \label{claim-SyncLengthsAdd}
Let $A = \gen{X_n,Q_A,\pi_{A},\lambda_{A}}$ and $B = \gen{X_n,Q_{B},\pi_B,\lambda_{B},}$ be synchronizing, synchronous transducers. If $A$ is synchronizing at level $j$ and $B$ is synchronizing at level $k$ then $A*B$ is synchronizing at level $j+k$.
\end{claim}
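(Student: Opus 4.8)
The plan is to verify the definition of synchronization at level $j+k$ directly: I will show that reading any word of length $j+k$ in $A*B$ lands in a state that is a function of that word alone. Write the state set of $A*B$ as $Q_A\times Q_B$, let $\mathfrak{s}_A\colon X_n^j\to Q_A$ and $\mathfrak{s}_B\colon X_n^k\to Q_B$ be the synchronizing maps of $A$ and $B$, fix an arbitrary state $(p,q)$ and an arbitrary word $\Gamma\in X_n^{j+k}$, and factor $\Gamma=\Gamma_1\Gamma_2$ with $|\Gamma_1|=j$ and $|\Gamma_2|=k$.

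First I would track the $A$-coordinate. By the definition of $\pi_{A*B}$, after reading $\Gamma$ from $(p,q)$ the first coordinate is $\pi_A(\Gamma,p)=\pi_A(\Gamma_2,\pi_A(\Gamma_1,p))$. Since $A$ is synchronizing at level $j$ and $|\Gamma_1|=j$, we have $\pi_A(\Gamma_1,p)=\mathfrak{s}_A(\Gamma_1)$, independent of $p$, so the first coordinate equals $\pi_A(\Gamma_2,\mathfrak{s}_A(\Gamma_1))$, which depends only on $\Gamma$.

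Next I would track the $B$-coordinate, which is where the only subtlety lies. After reading $\Gamma_1$ from $(p,q)$ we reach $(\mathfrak{s}_A(\Gamma_1),q')$ for some $q'=\pi_B(\lambda_A(\Gamma_1,p),q)$ that may genuinely depend on $p$ and $q$. Reading $\Gamma_2$ from this state sends the $B$-coordinate to $\pi_B(\lambda_A(\Gamma_2,\mathfrak{s}_A(\Gamma_1)),q')$. Put $\Delta:=\lambda_A(\Gamma_2,\mathfrak{s}_A(\Gamma_1))$. Because $\mathfrak{s}_A(\Gamma_1)$ depends only on $\Gamma_1$, the string $\Delta$ depends only on $\Gamma$; and because $A$ is synchronous, $|\Delta|=|\Gamma_2|=k$. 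Since $B$ is synchronizing at level $k$, it follows that $\pi_B(\Delta,q')=\mathfrak{s}_B(\Delta)$ no matter what $q'$ is — this is precisely the step that uses the synchronous hypothesis, via the length identity $|\Delta|=k$. Hence the $B$-coordinate after reading $\Gamma$ is $\mathfrak{s}_B(\Delta)$, a function of $\Gamma$ alone.

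Combining the two coordinates, the state of $A*B$ reached after reading any $\Gamma\in X_n^{j+k}$ from any starting state is $\bigl(\pi_A(\Gamma_2,\mathfrak{s}_A(\Gamma_1)),\,\mathfrak{s}_B(\Delta)\bigr)$, which depends only on $\Gamma$; so $A*B$ is synchronizing at level $j+k$, and its minimal synchronizing level is therefore at most $j+k$. I do not expect any real obstacle beyond this bookkeeping; the single point that must not be skipped is that the output $\Delta$ produced by $A$ on the final $k$ input letters has length exactly $k$, so that feeding $\Delta$ into $B$ triggers $B$'s level-$k$ synchronization.
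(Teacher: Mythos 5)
Your proof is correct and follows essentially the same route as the paper's: split the word of length $j+k$ into a length-$j$ prefix (which forces the $A$-coordinate) and a length-$k$ suffix (whose $A$-output has length exactly $k$ by synchronicity, forcing the $B$-coordinate). You have also correctly identified the one point that matters — that the synchronous hypothesis is what guarantees $|\Delta| = k$ so that $B$'s level-$k$ synchronization applies — which is the same crux as in the paper's argument.
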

\begin{proof}
We show that any word of length $j+k$ is synchronizing for $A*B$, where we recall the set of states of $A*B$ is the product set $Q_A\times Q_B$ (of course, not all of these states are in the core of $A*B$).

Let $\Gamma$ be a word of length $j$ and $\Delta$ a word of length $k$, both $\Gamma, \Delta \in X^\ast_{n}$. Suppose that $\Gamma$ forces us into a state $q_A$ of $A$. Let $\bar{\Gamma}$ be the length $j$ prefix of  $\Delta\Gamma$, and let $\bar{\Delta}$ be the complimentary length $k$ suffix. Let $(p_A,p_B)$ any pair in $Q_A \times Q_B$. Then we have that $\pi_A(\Delta\Gamma,p_A) = q_A$. Let $\bar{q}_A$ be the state which we are in after processing $\bar{\Gamma}$. Let $q_B$ be the state we are forced to after reading $\lambda_{A}(\bar{\Delta},\bar{q_{A}})$ from any state in $B$. Then $\lambda_A(\Delta\Gamma,p_A) = \lambda_{A}(\bar{\Gamma},p_A) \lambda_A(\bar{\Delta},\bar{q}_{A})$. Therefore $\pi_{B}(\lambda_{A}(\bar{\Gamma},p_A) \lambda_A( \bar{\Delta},\bar{q}_{A}),p_B) = q_B$, thus reading $\Delta\Gamma$ from any state pair of $Q_A \times Q_B$, the active state becomes $(q_A,q_B)$. 
\end{proof}
\begin{Remark}
From the above it follows that the set $\widetilde{\mathcal{P}}_{n}$ of core, synchronizing, synchronous transducers forms a monoid \cite{BleakCameronOlukoya}.
\end{Remark}
With this in place, given $1 \le n \in \mathbb{N}$, we can describe the monoid $\spn{n}$ as those continuous functions on $\{0,1,\ldots,n-1\}^{\Z}$ whose elements are given as finite, synchronizing, synchronous, core transducers.  (We mention core since we are restricting our attention only to those states in the core.  The product is the automaton product, where after taking this product, one removes non-core states as they are irrelevant to the action. This is always possible by the above claim.) The submonoid  $\pn{n}$ shall consist of those elements of $\spn{n}$ which  induce homeomorphisms of $X_n^{\mathbb{Z}}$. The group $\hn{n}$ is the subset of $\pn{n}$ consisting of those transducers $H$ for which  there is a state $q$ of $H$ such that the initial transducer $H_{q}$ is a homeomorphism of $X_n^{\mathbb{N}}$. Notice that since $H$ is synchronous, then this means all of its states are homeomorphism states. It is a result in the forthcoming paper \cite{BleakCameronOlukoya} that $\hn{n}$ is isomorphic to the group of automorphisms of the one-sided shift on $n$ letters. Finally define $\shn{n}$ to be those elements of $\spn{n}$ which have  a homeomorphism state and which are synchronizing but not bi-synchronizing. 

Given two elements $A, B \in \spn{n}$, then we shall denote the minimal transducer representing the core of the product of $A$ and $B$  by $\min(\core(A*B))$. Since the operations of minimising and reducing to the core commute with each other, the order in which we perform these operations is irrelevant.

Most of our results shall be for $\hn{n}$ and $\shn{n}$, though some of our results also apply to $\spn{n}$ and $\pn{n}$. We  give below  a table listing these various groups and monoid together with their defining properties.

\begin{center}
	
\begin{TAB}(r,1cm,1cm){|c|c|}{|c|c|c|c|}

	$\spn{n}$ & synchronous, core, synchronizing transducers \\ 
	$\pn{n}$  & subset of $\spn{n}$ which induce homeomorphisms of $X_{n}^{\Z}$ \\ 
	$\shn{n}$ & subset of $\spn{n}$ consisting  of elements with a homeomorphism state \\ 
	$\hn{n}$ & Subset of $\pn{n}$ containing elements with a homeomorphism state.
\end{TAB}
\end{center}

The following should aid the reader in remembering which groups/monoids are denoted by which symbols: $\hn{n}$ and $\shn{n}$ are those elements of $\pn{n}$ and $\spn{n}$, respectively, with a homeomorphism state, thus, $\mathcal{H}$ for homeomorphism; a tilde above a symbol means that the corresponding set  contains elements that do not induce homeomorphisms of $X_{n}^{\Z}$.

\section{Properties of \texorpdfstring{$\spn{n}$}{Lg}}\label{propertiesofpn}

Let $A =  \gen{Q,X_n,\pi,\lambda}$ be an element of $\shn{n}$. Let $Q^{-1}$ be a set in bijective correspondence with $Q$, such that for $q\in Q$, $q^{-1}$ denotes the corresponding state of $Q^{-1}$, and ${(q^{-1})}^{-1} = q$.  Then as $A$ is synchronous, the inversion algorithm of \cite{GriNekSus} produces an inverse automaton $A^{-1}=\gen{Q^{-1},X_n,\pi^{-1},\lambda^{-1}}$ so that for states $p,q\in Q$, and $x,y\in X_n$ such that $\pi(x,p)=q$ with $\lambda(x,p)=y$, we have $\pi^{-1}(y,p^{-1})=q^{-1}$ and $\lambda^{-1}(y,p^{-1})=x$. If $A$ represents an element of $\hn{n}$ there is a constant $k$ so that both $A$ and $A^{-1}$ are synchronizing at level $k$.  Note also that all words $\Gamma$ of length at least $k$ are synchronizing words for $A$ and for $A^{-1}$.

We make the following claims about $A$ which will be useful later on. The first two shall apply to to all elements $A \in \spn{n}$.

\begin{claim}\label{inequivstates}
Let $A$ and $B$ be elements of $\widetilde{\T{P}}_n$, and let $m \in \mathbb{N}\backslash\{0\}$ be minimal such that both $A$ and $B$ are synchronizing at level $m$. Then if  $A \nwequal B$, there is a word $\Gamma$, $|\Gamma| = k \ge m$, and states $p$ and $q$ of $A$ and $B$, respectively, such that: 
\begin{enumerate}[label = (\roman*)]
\item $p$ is the state in $A$ forced by $\Gamma$  and $q$ is the state in $B$ forced by $\Gamma$. 
\item  $p$ and $q$ are not $\omega$-equivalent. 
\end{enumerate}  
\end{claim}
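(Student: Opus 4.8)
The plan is to argue by contradiction: suppose that for every word $\Gamma$ of length $\ge m$, the state $p = \mathfrak{s}_A(\Gamma)$ of $A$ forced by $\Gamma$ and the state $q = \mathfrak{s}_B(\Gamma)$ of $B$ forced by $\Gamma$ are $\omega$-equivalent. I then want to deduce that $A \wequal B$, contradicting the hypothesis $A \nwequal B$. Since $A$ and $B$ are core, every state of $A$ is forced by some word of length $m$ (and likewise for $B$), so the map $f$ that should witness $\omega$-equivalence is forced on us: for a state $p$ of $A$, pick any $\Gamma \in X_n^m$ with $\mathfrak{s}_A(\Gamma) = p$ and set $f(p) = \mathfrak{s}_B(\Gamma)$.

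The key steps, in order, are as follows. First I would check that $f$ is well-defined: if $\Gamma, \Gamma' \in X_n^{m}$ both force the state $p$ in $A$, then I need $\mathfrak{s}_B(\Gamma) \wequal \mathfrak{s}_B(\Gamma')$. This is where the contradiction hypothesis does real work, but one has to be slightly careful — the hypothesis says $\mathfrak{s}_A(\Delta) \wequal \mathfrak{s}_B(\Delta)$ for each fixed word $\Delta$, it does not immediately say $\mathfrak{s}_B(\Gamma) \wequal \mathfrak{s}_B(\Gamma')$. To bridge this, I would use that $A$ is synchronous and synchronizing: reading any further letter $x$ after $\Gamma$ (resp. $\Gamma'$) moves $p$ to the \emph{same} successor state $\mathfrak{s}_A(\Gamma x) = \mathfrak{s}_A(\Gamma' x)$ of $A$, with outputs $\lambda_A(x,p)$ agreeing in both cases since it is the same state $p$; then applying the hypothesis to the longer words $\Gamma x$ and $\Gamma' x$ and inducting on word length shows the two initial transducers of $B$ rooted at $\mathfrak{s}_B(\Gamma)$ and $\mathfrak{s}_B(\Gamma')$ produce identical outputs on every input, i.e. are $\omega$-equivalent. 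Second, $f$ is surjective because $B$ is core: every state of $B$ is $\mathfrak{s}_B(\Delta)$ for some $\Delta \in X_n^m$, and $f(\mathfrak{s}_A(\Delta)) \wequal \mathfrak{s}_B(\Delta)$; working with minimal representatives makes $f$ a genuine bijection on $\omega$-equivalence classes. Third, $f$ intertwines the actions: for a state $p$ of $A$ forced by $\Gamma$ and an input letter $x$, the definitions give $\lambda_A(x,p) = \lambda_B(x, f(p))$ and $f(\mathfrak{s}_A(\Gamma x)) = \mathfrak{s}_B(\Gamma x) = \mathfrak{s}_B(\mathfrak{s}_B(\Gamma)\,x)$-successor, so $f$ commutes with transition and rewrite; hence $p$ and $f(p)$ induce the same continuous map on $X_n^{\mathbb{N}}$, which is exactly $A \wequal B$.

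I expect the main obstacle to be the well-definedness step: carefully turning the per-word hypothesis into the statement "all words forcing a common state of $A$ force $\omega$-equivalent states of $B$" requires the inductive argument on word length together with an appeal to Claim \ref{claim-SyncLengthsAdd}-style reasoning about how synchronizing levels and outputs propagate, and one must keep track of the fact that $m$ was chosen minimal so that both machines are synchronizing at level $m$ (so words of length exactly $m$, not just longer words, already force states). Everything after that is bookkeeping with the product/transition definitions. One should also record explicitly at the end that the word $\Gamma$ produced can be taken of length $k \ge m$ — indeed the contradiction produces a witnessing word of some length $\ge m$, and padding on the left by any letters (which does not change which states are forced) lets one take $k$ as large as desired, matching the statement.
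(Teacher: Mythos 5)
Your proof is correct, but it takes a genuinely different route from the paper's, and it is considerably more elaborate than necessary. The paper argues directly: since $A \nwequal B$, they process some bi-infinite word $w$ differently; take $l$ minimal so that the outputs disagree at position $l$ or $-l$; then the length-$k$ block of $w$ immediately to the left of that position (where $k \ge m$ is a common synchronizing level) forces states $p$ in $A$ and $q$ in $B$ that disagree on the next input letter, hence are not $\omega$-equivalent. You instead argue by contradiction and build the bijection $f$ realizing $A \wequal B$ from the hypothesis that the claim fails. That is logically fine, and in effect it is the contrapositive of the same argument, but two of your steps are overworked. Well-definedness of $f$ does not need the inductive bootstrap you describe: if $\Gamma$ and $\Gamma'$ both force the state $p$ of $A$, then the contradiction hypothesis gives $\mathfrak{s}_B(\Gamma) \wequal p$ and $\mathfrak{s}_B(\Gamma') \wequal p$, so $\mathfrak{s}_B(\Gamma) \wequal \mathfrak{s}_B(\Gamma')$ is immediate by transitivity of $\wequal$; the induction on word length and the appeal to Claim~\ref{claim-SyncLengthsAdd}-type bookkeeping are not required. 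Likewise your third step (checking that $f$ intertwines $\pi$ and $\lambda$, and concluding $p$ and $f(p)$ induce the same map) is circular: $p \wequal f(p)$ is already literally what the contradiction hypothesis gives you, and under the paper's definition of $\wequal$ for transducers, exhibiting a bijection with $q \wequal f(q)$ is the whole of what is required. A cleaner version of your contrapositive would skip $f$ entirely and simply observe that if $\mathfrak{s}_A(\Gamma) \wequal \mathfrak{s}_B(\Gamma)$ for every $\Gamma$ of length $\ge m$, then for any bi-infinite $w$ the output of $A$ and of $B$ at coordinate $i$ is computed by $\omega$-equivalent states applied to $w_i$, so $A(w) = B(w)$ and $A \wequal B$. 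That recovers the paper's argument read backwards, and makes it clear why the two proofs are morally the same.
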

\begin{proof}
Since $A \nwequal B$ they induce different homeomorphisms  of $X_{n}^{\mathbb{Z}}$, and so there is a bi-infinite word $w = \ldots x_{-2}x_{-1}x_0x_1\ldots$ which they process differently. 

Let $w_1 = \ldots y_{-2}y_{-1}y_{0}y_1y_2\ldots$ and $w_2 = \ldots z_{-2}z_{-1}z_{0}z_1z_2\ldots$ be the outputs from $A$ and $B$ respectively. Let $k \in \mathbb{N}\backslash\{0\}$ be such that $A$ and $B$ are synchronizing at level $k$. Note that $k \ge m$. Let $l \in \mathbb{N}$ be minimal such that $y_{l} \ne z_l$ or $y_{-l} \ne z_{-l}$. Then one of the words $x_{l-k}\ldots x_{l-2}x_{l-1}$ or $x_{-l-k}\ldots x_{-l-2}x_{-l-1}$ satisfies the premise of the claim. 
\end{proof}

\begin{lemma}\label{corehasinfiniteorderimpliesdifferentpowersnotequivalent}
Let $A \in \spn{n}$ be such that  $\min\core{(A^{i})} \nwequal \min\core{(A^{j})}$ for any pair $i, j \in \mathbb{N}$ . Then for $i \ne j \in \mathbb{N}$ and two distinct states $u$ and $v$ of $A^{i}$ and $A^{j}$ respectively, the initial transducers $A^{i}_{u}$ and $A^{j}_{v}$ are not $\omega$-equivalent.
\end{lemma}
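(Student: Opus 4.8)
The plan is to argue by contradiction. Suppose that for some $i \ne j$ there are states $u$ of $A^{i}$ and $v$ of $A^{j}$ with $A^{i}_{u} \wequal A^{j}_{v}$; I will show this forces $\min\core(A^{i}) \wequal \min\core(A^{j})$, contradicting the hypothesis. First I would set up a common synchronizing level: if $A$ is synchronizing at level $k_{0}$, then iterating Claim~\ref{claim-SyncLengthsAdd} gives that $A^{m}$ is synchronizing at level at most $m k_{0}$, so $N := k_{0}\max(i,j)$ is a synchronizing level for both $A^{i}$ and $A^{j}$, every word of length $N$ is a synchronizing word for both, and the synchronizing maps $\mathfrak{s}_{A^{i}}\colon X_n^{N}\to Q_{A^{i}}$ and $\mathfrak{s}_{A^{j}}\colon X_n^{N}\to Q_{A^{j}}$ have images exactly the state sets of $\core(A^{i})$ and $\core(A^{j})$ respectively (reading a word of length $\ge$ the minimal level still lands in the core, so passing to a larger level does not shrink the image).

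The key local step is a prefix-cancellation argument. Fix an arbitrary $\Gamma \in X_n^{N}$ and write $p := \mathfrak{s}_{A^{i}}(\Gamma)$ and $q := \mathfrak{s}_{A^{j}}(\Gamma)$; these are states of $\core(A^{i})$ and $\core(A^{j})$, and reading $\Gamma$ from $u$ in $A^{i}$ reaches $p$ while reading $\Gamma$ from $v$ in $A^{j}$ reaches $q$. For any $w \in X_n^{\ast}$ the extension rule for $\lambda$ gives $\lambda_{A^{i}}(\Gamma w, u) = \lambda_{A^{i}}(\Gamma,u)\,\lambda_{A^{i}}(w,p)$ and $\lambda_{A^{j}}(\Gamma w, v) = \lambda_{A^{j}}(\Gamma,v)\,\lambda_{A^{j}}(w,q)$. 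Since $A^{i}_{u} \wequal A^{j}_{v}$ these two outputs agree for every $w$; taking $w = \epsilon$ shows the length-$N$ prefixes $\lambda_{A^{i}}(\Gamma,u)$ and $\lambda_{A^{j}}(\Gamma,v)$ coincide, and cancelling them yields $\lambda_{A^{i}}(w,p) = \lambda_{A^{j}}(w,q)$ for all $w$. Hence $A^{i}_{p} \wequal A^{j}_{q}$; that is, $\mathfrak{s}_{A^{i}}(\Gamma)$ and $\mathfrak{s}_{A^{j}}(\Gamma)$ are $\omega$-equivalent for every $\Gamma \in X_n^{N}$.

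Finally I would assemble these into a global $\omega$-equivalence of the minimal cores. Since $\Gamma \mapsto \mathfrak{s}_{A^{i}}(\Gamma)$ and $\Gamma \mapsto \mathfrak{s}_{A^{j}}(\Gamma)$ are surjections onto the state sets of $\core(A^{i})$ and $\core(A^{j})$, and the two forced states always induce the same continuous map, the assignment sending the $\omega$-class of $\mathfrak{s}_{A^{i}}(\Gamma)$ to the $\omega$-class of $\mathfrak{s}_{A^{j}}(\Gamma)$ is well defined and injective: both properties follow from transitivity of $\omega$-equivalence chained through $\mathfrak{s}_{A^{i}}(\Gamma)\wequal\mathfrak{s}_{A^{j}}(\Gamma)$, and surjectivity is immediate. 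This is therefore a bijection $f$ between the state sets of $\min\core(A^{i})$ and $\min\core(A^{j})$ carrying each state to an $\omega$-equivalent one, which by the definition of $\omega$-equivalence of minimal transducers gives $\min\core(A^{i}) \wequal \min\core(A^{j})$ — the contradiction we wanted.

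I do not expect a serious obstacle: the whole argument is the prefix-cancellation observation together with bookkeeping. The only points requiring care are (i) working at a level $N$ large enough to be synchronizing for both powers at once while checking that $\mathfrak{s}$ still surjects onto the core at that possibly non-minimal level, and (ii) verifying that the state correspondence descends to an honest bijection of $\omega$-equivalence classes rather than just a relation; both are handled as indicated above.
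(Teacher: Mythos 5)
Your proof is correct. It establishes the contrapositive rather than exhibiting a separating word: where the paper invokes Claim~\ref{inequivstates} to find a single $\Gamma$ whose forced states in $\min\core(A^i)$ and $\min\core(A^j)$ are inequivalent, reads $u,v$ into the cores with an auxiliary prefix $\Lambda$, and then produces a word $\Lambda\Gamma\delta$ on which $A^i_u$ and $A^j_v$ disagree, you instead assume $A^i_u \wequal A^j_v$, show by prefix cancellation that for \emph{every} $\Gamma\in X_n^N$ the forced states $\mathfrak{s}_{A^i}(\Gamma)$ and $\mathfrak{s}_{A^j}(\Gamma)$ induce the same map, and assemble that into a bijection witnessing $\min\core(A^i)\wequal\min\core(A^j)$. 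The underlying mechanism is the same — using a common synchronizing level $N$ so that reading any length-$N$ word from $u$ or $v$ lands in the respective cores, and exploiting the synchronous output rule to peel off the length-$N$ prefix — and the two points you flag as needing care (that $\mathfrak{s}_N$ still surjects onto the core when $N$ exceeds the minimal level, and that the state correspondence descends to a well-defined bijection of $\omega$-classes) are exactly the right things to check and are handled correctly. The only real difference is presentational: the paper's proof is slightly more constructive (it produces an explicit disagreeing word), whereas your version is shorter and avoids both the auxiliary word $\Lambda$ and the appeal to Claim~\ref{inequivstates}, essentially re-deriving the needed fragment of that claim inline.
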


\begin{proof}
Let $A, i, j, u$ and $v$ be as in the statement of the lemma above. Observe that since $\min\core(A^{i}) \nwequal \min\core(A ^{j})$, by Claim \ref{inequivstates} there is a word  $\Gamma$ of size greater than or equal to the maximum of the minimum synchronizing levels of $A$ and $B$ such that the state of $\min\core(A^{i})$ forced by $\Gamma$ is not $\omega$-equivalent to the state $\min\core(A^{j})$ forced by $\Gamma$. Now since  $A^{i}$ and $A^{j}$ are synchronizing, the initial transducers $A^{i}_{u}$ and $A^{j}_{v}$ are also synchronizing. Moreover $\core(A^{i}_{u}) \wequal \min\core(A^{i})$, likewise $\core(A^{j}_{v}) \wequal \min\core(A^{j})$.  Therefore let $\Lambda$ be a long enough word such that when read from the state $u$ of $A^{i}$ and state $v$ of $A^{j}$ the resultant state is in the core of $A^{i}$ and $A^{j}$ respectively. Now let $u'$ and $v'$ be the states of $A^{i}_{u}$ and $A^{j}_{v}$ respectively reached after reading $\Lambda\Gamma$ in $A^{i}_{u}$ and $A^{j}_{v}$. Then $u'$ and $v'$ are not $\omega$-equivalent since $\core(A^{i}_{u}) \wequal \min\core(A^{i})$, and $\core(A^{j}_{v}) \wequal \min\core(A^{j})$. Therefore there exits a word $\delta \in X_n^{\N}$ such that $(\delta)A^{i}_{u'} \ne (\delta)A^{i}_{v'}$ therefore we have that $(\Lambda\Gamma\delta)A^{i}_{u} \ne (\Lambda\Gamma\delta)A^{j}_{v}$. The result now follows.
\end{proof}

We introduce some terminology for the claim below. Let $X$ be a finite set and $\rho$ a permutation of $X$. For any $a_1 \in X$, there is a sequence $(a_1 a_2 \ldots a_m)$ such that $(a_i) \rho= a_{i+1}$ for $1 \le i <m$ and $(a_m)\rho = a_1$. We call such a sequence a \emph{disjoint cycle}. Note that up to cyclically permuting the elements, the disjoint cycle containing a given element $a \in X$ is unique, therefore we typically do not distinguish between a disjoint cycle $(a_1 a_2 \ldots a_m)$ and a cyclic permutation $(a_i a_{i+1} \ldots a_{m} a_1 \ldots a_m)$ of its elements for some $1 \le i \le m$. Another way of saying this, is that for $a \in X$, a disjoint cycle $(a_1 a_2 \ldots a_m)$ containing $a$ corresponds uniquely to the permutation $\gamma$ of $X$ which maps $a_{i} \to a_{i+1}$ for all $1 \le i \le m$, maps $a_{m} \to a_1$ and fixes every other point. Clearly the map $\gamma$ is invariant under cyclically permuting the elements of the disjoint cycle containing $a$. We do not distinguish between a disjoint cycle containing $a$ and the permutation $\gamma$. The length of a disjoint cycle is the number of points in its support i.e the number of points it moves. Let $\gamma_1, \ldots, \gamma_{l}$ be disjoint cycles such that every  element of $X$ is contained in exactly one of the $\gamma_{i}$. Then, we say that $\gamma_1 \gamma_2 \ldots \gamma_{l}$ is an expression of $\rho$ as a \emph{product of disjoint cycles}. Since  cycles of length $1$ correspond to the identity map, it is conventional to omit disjoint cycles of length $1$ when expressing a permutation as a product of disjoint cycles. Two permutation $\rho$ and $\phi$ are said to have the same disjoint cycle structure  if there are disjoint cycle decompositions $\rho = \gamma_1 \gamma_2 \ldots \gamma_{l}$ and $\phi= \eta_1 \eta_2 \ldots \eta _{l}$ such that for $1 \le i \le l$, the length of $\gamma _i$ is equal to the length f $\eta_i$.

\begin{claim}\label{cycls<k}
Let $A$ be a finite, invertible, synchronous transducer which is bi-synchronizing at level $k$. Then for any non-empty word $\Gamma_1$  there is a unique state $q_{1} \in Q_A$ such that $\pi(\Gamma_1,q_1) = q_1$. Moreover,  there is a disjoint cycle $(\Skip{\Gamma_1}{}{\Gamma_m})$ such that we have the following:
\begin{enumerate}[label= (\roman{*})]
\item Let $q_i$, $1\le i < m$ be such that $\pi(\Gamma_i,q_i) = q_i$ then $\Gamma_{i+1} = \lambda(\Gamma_i,q_i)$
\item We have $\Gamma_1 = \lambda(\Gamma_m,q_m)$
\end{enumerate}
\end{claim}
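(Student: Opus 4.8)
The plan is to first prove the uniqueness statement directly from synchronization, then upgrade the word-valued map $\Gamma \mapsto \lambda(\Gamma, q_\Gamma)$, where $q_\Gamma$ denotes the unique state fixed by $\Gamma$, to a \emph{bijection} on the finite set $X_n^{\ell}$ of words of a fixed length $\ell$; the required cycle $(\Skip{\Gamma_1}{}{\Gamma_m})$ is then just the orbit of $\Gamma_1$ under this bijection.

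\textbf{Step 1 (existence and uniqueness of the fixed state).} Fix a non-empty word $\Gamma$ and choose $N$ with $N|\Gamma| \ge k$. Since $A$ is synchronizing at level $k$, the state $\pi(\Gamma^N,p)$ is independent of $p\in Q_A$; call it $q$. Evaluating $\pi(\Gamma^{N+1},p)$ in two ways, once as $\pi(\Gamma,\pi(\Gamma^N,p)) = \pi(\Gamma,q)$ and once as the level-$k$ synchronizing value $q$, yields $\pi(\Gamma,q)=q$. For uniqueness, if $\pi(\Gamma,q)=q$ and $\pi(\Gamma,q')=q'$, then $\pi(\Gamma^N,q)=q$ and $\pi(\Gamma^N,q')=q'$, while by synchronization the two left-hand sides coincide, so $q=q'$. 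Write $q_\Gamma$ for this unique state and set $T_A(\Gamma) := \lambda(\Gamma,q_\Gamma)$; as $A$ is synchronous, $|T_A(\Gamma)|=|\Gamma|$, so for each $\ell\ge 1$ this defines a map $T_A\colon X_n^\ell\to X_n^\ell$.

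\textbf{Step 2 ($T_A$ is a bijection).} Because $A$ is bi-synchronizing at level $k$, its inverse $A^{-1}$ (produced by the inversion algorithm of \cite{GriNekSus} recalled above) is also synchronizing at level $k$, so Step 1 applies verbatim to $A^{-1}$ and gives a map $T_{A^{-1}}\colon X_n^\ell\to X_n^\ell$. I claim $T_{A^{-1}}\circ T_A = \id$ and $T_A\circ T_{A^{-1}} = \id$. Indeed, with $q=q_\Gamma$ and $\Delta=T_A(\Gamma)$, reading $\Gamma$ from $q$ traces a closed loop at $q$ with output $\Delta$; applying the defining relation of $A^{-1}$ letter by letter along this loop shows that reading $\Delta$ from $q^{-1}$ traces a closed loop at $q^{-1}$ with output $\Gamma$. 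Thus $q^{-1}$ is a state of $A^{-1}$ fixed by $\Delta$, hence equals $q_\Delta$ (for $A^{-1}$) by the uniqueness in Step 1, so $T_{A^{-1}}(\Delta) = \lambda^{-1}(\Delta,q^{-1}) = \Gamma$; the other composition is symmetric. Hence $T_A$ is a permutation of $X_n^\ell$.

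\textbf{Step 3 (the cycle) and the main obstacle.} Since $T_A$ permutes the finite set $X_n^{|\Gamma_1|}$, the orbit of $\Gamma_1$ is a cycle: there is a minimal $m\ge 1$ with $T_A^m(\Gamma_1)=\Gamma_1$, and $\Gamma_1,\ \Gamma_2:=T_A(\Gamma_1),\ \ldots,\ \Gamma_m:=T_A^{m-1}(\Gamma_1)$ are pairwise distinct, giving the disjoint cycle $(\Skip{\Gamma_1}{}{\Gamma_m})$. Unwinding the definition of $T_A$, for $1\le i<m$ we have $\Gamma_{i+1}=\lambda(\Gamma_i,q_i)$ with $q_i:=q_{\Gamma_i}$ the unique state with $\pi(\Gamma_i,q_i)=q_i$, which is (i), and $\Gamma_1=T_A(\Gamma_m)=\lambda(\Gamma_m,q_m)$, which is (ii). The only genuine subtlety is Step 1: passing from ``some power of $\Gamma$ fixes a state'' (immediate from finiteness of $Q_A$) to ``$\Gamma$ itself fixes a \emph{unique} state'' is exactly where synchronization is needed, and it is what makes $T_A$ well defined. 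The fact that $T_A$ is \emph{invertible} — which is what forces the sequence $\Gamma_1,\Gamma_2,\ldots$ to be purely periodic rather than merely eventually periodic — is precisely where bi-synchronization (rather than one-way synchronization) is essential, since it lets Step 1 be run for $A^{-1}$ as well.
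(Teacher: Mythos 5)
Your proof is correct, and the overall structure — existence/uniqueness of the state $q_\Gamma$ with $\pi(\Gamma,q_\Gamma)=q_\Gamma$ via synchronizing words of the form $\Gamma^N$, then showing the induced map $T_A\colon X_n^\ell\to X_n^\ell$ is a bijection using the synchronization of $A^{-1}$, then reading off the cycle as the $T_A$-orbit of $\Gamma_1$ — is the same as the paper's. The one genuine (if small) difference is in Step~2: the paper proves injectivity of $T_A$ (two words with the same image force equal $A^{-1}$-states via $(\Gamma')^k$, then use injectivity of $A_q$ on length-$\ell$ words) and then invokes finiteness of $X_n^\ell$ for surjectivity, whereas you exhibit $T_{A^{-1}}$ as an explicit two-sided inverse by running Step~1 on $A^{-1}$ and using the letter-by-letter inversion relation on the loop. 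Your version is slightly more constructive and makes the role of bi-synchronization more visible (the inverse exists because $A^{-1}$ is synchronizing), but it rests on the same facts and buys the same conclusion; neither yields more generality than the other. One cosmetic remark: in Step~1, when you identify the ``level-$k$ synchronizing value'' of $\Gamma^{N+1}$ with $q$, you are implicitly using either that $\Gamma^{N}$ and $\Gamma^{N+1}$ share the same length-$k$ suffix, or that $\pi(\Gamma^{N+1},p)=\pi(\Gamma^{N},\pi(\Gamma,p))=q$; the paper spells out the suffix observation, and it is worth making one of these explicit.
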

\begin{proof}
Through out the proof let $\Gamma$ be any non-empty word of length $j \ge 1$. We observe first that if there is a state $q$ such that $\pi(\Gamma, q) = q$ then this state must be unique. Since if there was a state $q'$ such that $\pi(\Gamma, q') = q'$ then $\pi(\Gamma^k, q) = q$ while $\pi(\Gamma^{k}, q')=q'$, and since $\Gamma^k$ has length at least $k$ we see it is a synchronizing word and so can conclude that $q=q'$.

To see that such a state $q$ exists, consider again the word $\Gamma^k$. Since $\Gamma$ is non-empty, $|\Gamma^k| \ge k$, so there is a unique state $q$ such that $\Gamma^k = q$.   Now consider the state $p$ so that $\pi(\Gamma,q)=p.$  Since $\pi(\Gamma^k,q)=q$ it is the case that $\pi(\Gamma^{k+1},q) =p$, but $\Gamma^k$ and $\Gamma^{k+1}$ have the same length $k$ suffix, so that $p=q$.  In particular, we have $\pi(\Gamma,q)=q$.

We now free the symbol $\Gamma$.  We want to show the map defined on $X_n^j$ (words of length exactly $j$), by $\Gamma \mapsto \lambda(\Gamma,q)$, where $\pi(\Gamma,q) = q$, is a bijection (and so is decomposable into disjoint cycles as indicated in the statement of the claim).

To prove this map is injective, suppose there are two words, $\Gamma$ and $\Delta$, with associated states $q$ and $r$ respectively, of length $l$, such that $\lambda(\Gamma,q) = \Gamma' = \lambda(\Delta,r)$. Now, as $q$ is the state forced by $\Gamma^k$ as above, while $r$ is the state forced by $\Delta^k$ (again as above), we see that $\pi^{-1}((\Gamma')^k,q)=q$ while $\pi^{-1}((\Gamma')^k,r)=r$, but as $(\Gamma')^k$ is synchronizing for $A^{-1}$ we must have that $q=r$, and then, by injectivity of $A_q$, that $\overline{\Gamma}=\overline{\Delta}$, so that in particular $\Gamma=\Delta$.

Therefore for each $j \in \mathbb{N}$, $j \ge 1$ the map induced by $A$, from the set of words of length $j$ to itself, is injective.  Therefore as this set of words is finite, the map is actually a bijection and so can be represented as product of a set of disjoint cycles (of words of length $j$).
\end{proof}

\begin{Remark}
Notice that we have only used the full bi-synchronizing condition in arguing invertibility. The existence and uniqueness of the state $q \in Q$ such that for $1 \le j \in \mathbb{N}$ and $\Gamma \in X_n^j$, $\pi(\Gamma, q) = q$ holds for all elements of the monoid $\widetilde{\mathcal{P}}_{n}$.
\end{Remark}
We illustrate the above claim with the example below.

\begin{example} \label{example 1}
Let $C$ be the following transducer:
\begin{figure}[h!]
\begin{center}
\begin{tikzpicture}[shorten >=0.5pt,node distance=3cm,on grid,auto] 

   \node[state] (q_0)   {$q_0$}; 
   \node[state] (q_1) [below left=of q_0] {$q_1$}; 
   \node[state] (q_2) [below right=of q_0] {$q_2$}; 
    \path[->] 
    (q_0) edge [in=105,out=75,loop] node [swap]{$0|1$} ()
          edge  [out=335,in=115]node [swap]{$2|0$} (q_2)
          edge  [out=185,in=85]node [swap]{$1|2$} (q_1)
    (q_1) edge[out=65,in=205]  node  [swap]{$0|0$} (q_0)
          edge [in=240,out=210, loop] node [swap] {$1|2$} ()
          edge [out=330,in=210] node [swap]{$2|1$} (q_2)
    (q_2) edge[out=95,in=355]  node [swap]{$1|1$} (q_0)
          edge [out=195,in=345]  node [swap]{$0|2$} (q_1) 
          edge [in=330,out=300, loop] node [swap] {$2|0$} ();
\end{tikzpicture}
\caption{An example}
\end{center}
\end{figure}
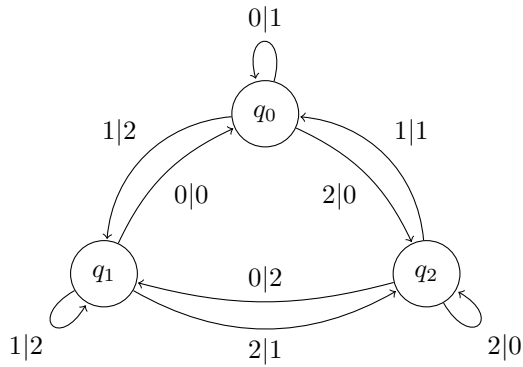
It is easily verified that this transducer is bi-synchronizing at level 2. The sets $\{00,10,21\}$, $\{01,11,20\}$ and $\{02,12,22\}$ are, respectively, the set of words which force the  states $q_0$, $q_1$ and $q_2$. The permutation of words of length 2 associated to this transducer in the manner described above is given by: $(00 \ 11 \ 22) (10 \ 20 \ 12 ) ( 21 \ 01 \ 02)$. The curious reader might have observe that these disjoint cycles have an interesting structure: if we consider the states forces by each element of a cycle then the result is a cyclic permutation of $(q_0 \ q_1 \ q_2)$. We shall later see how such behaviour plays a role in understanding the order an element.
\end{example}
\begin{Remark}
If we have found a permutation (as above) for a transducer $A$  for words of  length $j \ge 1$, then the disjoint cycle structure of this permutation will be present in all permutations associated to $A$ for words of length  $mj$, for $m \in \mathbb{N} \backslash \{0\}$. This is seen for example  if $(\Skip{\Gamma_1 \ }{}{\ \Gamma_l})$ is a disjoint cycle in the permutation associated to words of length $j$ , then $(\Skip{\Gamma_1\Gamma_1 \ }{}{ \ \Gamma_l\Gamma_l})$ is a disjoint cycle in the level $2j$ permutation. This is because each $\Gamma_i$ is processed from the state of $A$ it forces and the output is $\Gamma_{i+1}$. Generalise in the obvious way for the permutation of words of length $mj$. For instance in the example above $(0\ldots 0 \ 1\ldots1 \ 2 \ldots 2)$ will be present in the permutation of words of length $2m$  associated to $C$ (where each $i\ldots i$ is of length $2m$, $i \in \{0,1,2\}$).
\end{Remark}

We establish some further notation. For $A \in \hn{n}$ bi-synchronizing at level $k$, and $1 \le j \in \mathbb{N}$, let $\overline{A_j}$ represent the permutation of $X_n^j$ indicated in Claims \ref{cycls<k}.

\begin{Remark}\label{transformations}
Observe that a similar proof to that given in Claim \ref{cycls<k} will show that we can analogously associate to each element of $\widetilde{\mathcal{P}}_{n}\backslash \pn{n}$ a map from $X_n^j \to X_n^j$ for every $1\le j \in \mathbb{N}$. However this map need not be invertible for every such $1 \le j$, (we shall later see that for one-way synchronizing transducers there is some $j$, where the map so defined is not invertible). In light of this, for each  $A = \gen{X_n,Q, \pi, \lambda} \in \widetilde{\mathcal{P}}_{n}$ and $1 \le j \in \mathbb{N}$ let $\overline{A}_{j}: X_n^j \to X_n^j$ be the transformation given by $ \Gamma \mapsto \lambda(\Gamma,q)$ where $q \in Q$ is the unique state such that $\pi(\Gamma,q) = q$. We observe that if $A \in \pn{n}$ then $\overline{A}_{j}$ is a permutation for every $j \in \mathbb{N}$. This is because  $\pn{n}$ induces a homeomorphism of $X_n^{\mathbb{Z}}$. Since if for some $j \in \mathbb{N}$, $\overline{A}_{j}$ is not injective, then there are words $\Gamma, \Delta$ for which $(\Gamma)A_{j} = (\Delta)A_j$, this means that the bi-infinite strings $(\ldots\Gamma\dot{\Gamma}\Gamma\ldots)$ and $(\ldots\Delta\dot{\Delta}\Delta\ldots)$ are mapped to the same element of $X_n^{\mathbb{Z}}$ by $A$ contradicting injectivity.  
\end{Remark}
The following lemma shows that these maps  behave well under multiplication. 

\begin{claim}\label{cyclswellbhvd}
Let $A = \gen{X_{n}, Q_{A}, \pi_{A}, \lambda_{A}}$ and $B = \gen{Q_{B},X_{n}, \pi_{B}, \lambda_{B}}$ be elements of $\widetilde{\T{P}}_n$. Let $A*B = \gen{Q_{A},S, \pi_{A*B},\lambda_{A*B}}$ be the core of the product of $A$ and $B$, where $S \subset Q_A \times Q_B$ is the set of states in the core of $A\ast B$. Then $\overline{(A*B)}_{l} = \overline{A}_{l}*\overline{B}_{l}$.
\end{claim}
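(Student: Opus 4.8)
The plan is to unwind both sides of the claimed identity $\overline{(A*B)}_{l} = \overline{A}_{l}*\overline{B}_{l}$ directly from the definitions, using the fixed-state characterisation from Claim~\ref{cycls<k} and its Remark. First I would fix a word $\Gamma \in X_n^l$. By the Remark following Claim~\ref{cycls<k}, the fixed-state property holds for all of $\widetilde{\T{P}}_n$, so there is a unique state $q_A \in Q_A$ with $\pi_A(\Gamma, q_A) = q_A$; set $\Delta := \lambda_A(\Gamma, q_A) = (\Gamma)\overline{A}_l$, a word of length $l$ since $A$ is synchronous. Likewise there is a unique $q_B \in Q_B$ with $\pi_B(\Delta, q_B) = q_B$, and $(\Delta)\overline{B}_l = \lambda_B(\Delta, q_B)$. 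Thus by definition $(\Gamma)(\overline{A}_l * \overline{B}_l) = \lambda_B(\lambda_A(\Gamma,q_A), q_B)$.

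Next I would identify the fixed state of $A*B$ forced by $\Gamma$. Using the product transition rule $\pi_{A*B}(x,(p,q)) = (\pi_A(x,p), \pi_B(\lambda_A(x,p),q))$ extended to words, one computes $\pi_{A*B}(\Gamma, (q_A,q_B)) = (\pi_A(\Gamma,q_A), \pi_B(\lambda_A(\Gamma,q_A), q_B)) = (q_A, \pi_B(\Delta,q_B)) = (q_A,q_B)$. Hence $(q_A,q_B)$ is a fixed state of $A*B$ under $\Gamma$, and by uniqueness (again the Remark after Claim~\ref{cycls<k}, applied to the product, which lies in $\widetilde{\T{P}}_n$) it is \emph{the} state forced by $\Gamma$. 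A small point to check here: one must confirm $(q_A,q_B)$ actually lies in the core $S$ of $A*B$ — this follows because the state forced by a length-$l$ word (with $l$ at least the synchronizing level of $A*B$, which by Claim~\ref{claim-SyncLengthsAdd} is at most the sum of the levels; for smaller $l$ one argues via $\Gamma^k$ as in Claim~\ref{cycls<k}) is by definition in the image of the synchronizing map, i.e. in the core. Then $(\Gamma)\overline{(A*B)}_l = \lambda_{A*B}(\Gamma, (q_A,q_B)) = \lambda_B(\lambda_A(\Gamma,q_A), q_B)$ by the product output rule, which is exactly the expression obtained for $(\Gamma)(\overline{A}_l * \overline{B}_l)$ above. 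Since $\Gamma$ was arbitrary, the two maps agree.

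The only genuine subtlety — and the step I would be most careful about — is the bookkeeping around the core and the synchronizing level: ensuring that the unique fixed state of $A*B$ forced by $\Gamma$ is the pair of the fixed states of $A$ and of $B$ forced by $\Gamma$ and $\Delta$ respectively, rather than some other core state, and that this pair really is retained when passing to $\core(A*B)$. Everything else is a mechanical substitution into the product formulas for $\pi_{A*B}$ and $\lambda_{A*B}$. I would therefore structure the write-up as: (1) define $q_A, \Delta, q_B$; (2) verify $(q_A,q_B)$ is fixed by $\Gamma$ and invoke uniqueness plus the core remark; (3) compare outputs. No induction beyond the already-established extension of $\pi,\lambda$ to words is needed.
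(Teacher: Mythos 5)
Your argument is exactly the paper's: fix $\Gamma$, take the unique $q_A$ with $\pi_A(\Gamma,q_A)=q_A$, set $\Delta=\lambda_A(\Gamma,q_A)$, take the unique $q_B$ with $\pi_B(\Delta,q_B)=q_B$, verify that $(q_A,q_B)$ is fixed by $\Gamma$ in $A*B$, and compare outputs. The extra care you take over uniqueness and over $(q_A,q_B)$ lying in the core is sound and merely makes explicit what the paper leaves implicit.
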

\begin{proof}
Let $\Gamma$ be a word of length $l$ in $X_{n}$, and let $p \in Q_A$ be such that $\pi_{A}(\Gamma, p) = p$. Let $\Delta:= \lambda_{A}(\Gamma, p)$, and let $q \in Q_{B}$ be such that $\pi_{B}(\Delta, q) = q$. Then $(p,q)$ is a state of $A*B$ such that $ \pi_{A*B}(\Gamma,(p,q)) = (p,q)$. If $\Lambda = \lambda_{B}(\Delta, q)$, the we have in $\overline{(A*B)}_{l}$ that $\Gamma \mapsto \Lambda$. However $\overline{A} * \overline{B}$ sends $\Gamma$ to $\Lambda$ also. Since $\Gamma$ was an arbitrary word of length $l$, this gives the result.
\end{proof}

 Let $\tau_{l}: \spn{n} \to \sym{(X_n^{l})}$ be the map defined by $A \mapsto \overline{A}_{l}$ for every $l \in \mathbb{N}$. Below we demonstrate the usefulness of these maps.

\begin{proposition} \label{usefulnessofperms}
Let $A$ and $B$ be elements of $\spn{n}$ then the following hold:
\begin{enumerate}[label =  (\roman*)]
\item $A$ and $B$  commute if and only if for every $l \ge 1$ $\overline{A}_{l}$ and $\overline{B}_{l}$ commute.
\item $A$ and $B$ are conjugate by an invertible element of $\spn{n}$  if and only if there is an  invertible, $h \in \spn{n}$, such that for  every $l \ge 1$ $\overline{h}_{l}^{-1}\overline{A}_{l} \overline{h}_{l} = \overline{B}_{l}$.
\item $A$ and $B$ are equal if and only if for every $l \ge 1$ $\overline{A_l} = \overline{B_l}$.
\end{enumerate}
\end{proposition}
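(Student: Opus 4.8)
The plan is to establish part~(iii) first, since parts~(i) and~(ii) will then follow formally from it together with Claim~\ref{cyclswellbhvd}. The crucial preliminary is the following observation: for $A \in \spn{n}$, a length $l \ge 1$, and $\Gamma \in X_n^l$, let $p_\Gamma := \ldots\Gamma\dot{\Gamma}\Gamma\ldots \in X_n^{\Z}$ denote the period-$l$ point whose $0$th block is $\Gamma$; then $A(p_\Gamma) = p_{\overline{A}_l(\Gamma)}$. To see this, let $q$ be the unique state of $A$ with $\pi(\Gamma,q)=q$ (existence and uniqueness hold by the Remark following Claim~\ref{cycls<k}). If $A$ is synchronizing at level $k$, then in the bi-infinite computation of $A$ on $p_\Gamma$ the state active at a block boundary $jl$, $j \in \Z$, is $\mathfrak{s}$ applied to the length $k$ word read just before that boundary; by periodicity this word is the same for every $j$, so the active state at every block boundary is one and the same state, which is necessarily fixed by $\Gamma$ and hence equal to $q$. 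Consequently block $j$ of the output is $\lambda(\Gamma,q)=\overline{A}_l(\Gamma)$ for every $j$, i.e. $A(p_\Gamma)=p_{\overline{A}_l(\Gamma)}$. In particular $\overline{A}_l(\Gamma)$ depends only on the continuous function $A$: it is the $0$th block of $A(p_\Gamma)$, and $A(p_\Gamma)$ is again period-$l$ because $A$ commutes with the shift.

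For part~(iii), the forward direction is immediate from the last sentence above. Conversely, suppose $\overline{A_l}=\overline{B_l}$ for all $l \ge 1$. Then $A(p_\Gamma)=p_{\overline{A}_l(\Gamma)}=p_{\overline{B}_l(\Gamma)}=B(p_\Gamma)$ for every $l$ and every $\Gamma \in X_n^l$, so $A$ and $B$ agree on the set of all periodic points of $X_n^{\Z}$, which is dense; since $A$ and $B$ are continuous, $A=B$.

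For parts~(i) and~(ii) I would argue as follows. Since $\spn{n}$ is a monoid, $A*B$ and $B*A$ lie in $\spn{n}$, and Claim~\ref{cyclswellbhvd} gives $\overline{(A*B)}_l=\overline{A}_l*\overline{B}_l$ and $\overline{(B*A)}_l=\overline{B}_l*\overline{A}_l$. Hence $A$ and $B$ commute $\iff A*B=B*A$ in $\spn{n}$ $\iff$ (by~(iii)) $\overline{(A*B)}_l=\overline{(B*A)}_l$ for all $l$ $\iff \overline{A}_l*\overline{B}_l=\overline{B}_l*\overline{A}_l$ for all $l$, which is part~(i). For part~(ii), note first that if $h \in \spn{n}$ is invertible then $\overline{h}_l$ is a permutation of $X_n^l$ for every $l$, exactly as argued in Remark~\ref{transformations} (a non-injective $\overline{h}_l$ would force $h$ to identify the two period-$l$ points built from a colliding pair of words). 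Now $A$ and $B$ are conjugate by an invertible $h \in \spn{n}$ $\iff h^{-1}*A*h=B$, equivalently $A*h=h*B$ in $\spn{n}$, $\iff$ (by~(iii) and Claim~\ref{cyclswellbhvd}) $\overline{A}_l*\overline{h}_l=\overline{h}_l*\overline{B}_l$ for all $l$, $\iff$ (composing with $\overline{h}_l^{-1}$) $\overline{h}_l^{-1}\,\overline{A}_l\,\overline{h}_l=\overline{B}_l$ for all $l$.

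The substantive point is the preliminary observation identifying $\overline{A}_l$ with the action of $A$ on the periodic point $p_\Gamma$; once this is in hand, part~(iii) is a density argument and parts~(i) and~(ii) are purely formal consequences of Claim~\ref{cyclswellbhvd}. The steps needing care are the verification that the active state at every block boundary of $p_\Gamma$ really is the state $q$ fixed by $\Gamma$ (which rests on the fact that, for a synchronizing transducer, the active state at a coordinate of a bi-infinite input depends only on the preceding $k$ coordinates) and, in part~(ii), keeping the left-to-right composition convention consistent between the monoid product $*$ and the conjugation expression $\overline{h}_l^{-1}\overline{A}_l\overline{h}_l$.
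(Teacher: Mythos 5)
Your proof is correct, and it takes a genuinely different route from the paper's. The paper proves the reverse implications of (i) and (ii) directly by contradiction: assuming $\core(A*B) \nwequal \core(B*A)$, it invokes Claim~\ref{inequivstates} to extract a synchronizing word $\Gamma$ forcing inequivalent states, extends it by a distinguishing word $\Delta$, and observes that $\Delta\Gamma$ is moved differently by $\overline{\core(A*B)}_{l+m}$ and $\overline{\core(B*A)}_{l+m}$; part~(iii) is then deduced from (ii) by specialising $h$ to the identity. You instead isolate the dynamical content of $\overline{A}_l$ up front, via the identity $A(p_\Gamma) = p_{\overline{A}_l(\Gamma)}$, prove (iii) by density of periodic points and continuity, and obtain (i) and (ii) as formal consequences of (iii) together with Claim~\ref{cyclswellbhvd}. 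What your version buys is conceptual clarity: it makes transparent why the family $\{\overline{A}_l\}_{l\ge 1}$ is a complete invariant (it records the action of $A$ on periodic orbits, and periodic points are dense), it avoids writing out the `analogous' case~(ii) separately, and it shows that $\overline{A}_l$ depends only on the continuous function $A$ induces rather than on the particular transducer presentation. The paper's combinatorial argument is more self-contained in that it does not appeal to density, but it is in essence constructing, by hand, a periodic point (built from a power of $\Delta\Gamma$) at which $A$ and $B$ differ — exactly the point your approach produces structurally. Your aside about $\overline{h}_l$ being a permutation when $h$ is invertible is the right thing to check and the argument you sketch (collision of two period-$l$ points) is the one the paper itself uses in Remark~\ref{transformations}.
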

\begin{proof}
The forward direction in all cases follows by Claim \ref{cyclswellbhvd} above which shows that the map $\tau_{l}: \spn{n} \to \sym{(X_n^{l})}$ is a monoid homomorphism. We need only prove the reverse implications.

We proceed by contradiction. 

For (i) suppose that $\overline{A}_l$ and $\overline{B}_{l}$ commute for every $l$ however $\core(B*A) \nwequal \core(A*B)$. Let $m \in \mathbb{N}\backslash\{0\}$ be  such that both $\core(A*B)$ and $\core(B*A)$ are bi-synchronizing at level $m$. Let $\Gamma$ be a word of length $m$ as in Claim \ref{inequivstates} such that $p$ is the state of $\core(A*B)$ forced by $\Gamma$ and $q$ is the state of $\core(B*A)$ forced by $\Gamma$. 

Let $\lambda_{AB}$ and $\lambda_{BA}$ denote, respectively, the output function of $\core(A*B)$ and $\core(B*A)$. Since $p$ is not $\omega$-equivalent to $q$ there is a word  $\Delta$, of length $l \ge 1$ say, such that $\Lambda: = \lambda_{AB}(\Delta,p) \ne \lambda_{BA}(\Delta,q) =: \Xi$. This now means that in $\overline{\core(A*B)_{l+m}}$, $\Delta\Gamma \mapsto \Lambda W_1$ and in $\overline{\core(B*A)}_{l+m}$, $\Delta\Gamma \mapsto \Xi W_2$ (for some words $W_1$ and $W_2$ of length $l$).  Therefore we conclude that $\overline{\core(A*B)_{l+m}} \ne \overline{\core(B*A)_{l+m}}$ which is a contradiction.

Part (ii) proceeds in a analogous fashion. Suppose $A,B$ and $h$ are as in the statement of Proposition 1.1 (ii), but $\core(A \ast h) \ne \core(h \ast B)$. Let $m \in \mathbb{N}\backslash\{0\}$ be such that $\core(A \ast h)$ and $\core(h \ast B)$ are bi-synchronizing at level $m$. Let $\Gamma$ be a word as in Claim \ref{inequivstates} and let $p$ be the state of $\core(A h)$ forced by $\Gamma$  and $q$ the state of $\core(h \ast B)$ forced by $\Gamma$ and $p$ and $q$ are not $\omega$-equivalent. Now we are able to construct a word as in part (i) demonstrating that $\overline{\core(A \ast h)}_{l} \ne \overline{\core(h \ast B)}_{l}$ for some $l$ yielding a contradiction.

Part (iii) follows from Part (ii) with $h$ the identity transducer.
\end{proof}

The following is a corollary of Proposition \ref{usefulnessofperms}.

\begin{corollary}\label{corollaryusefulnessofperms}
Let $A$ and $B$ be elements of $\spn{n}$, and let $k \ge 1 \in \mathbb{N}$ be such that both $A$ and $B$ are synchronizing at level $k$ then the following hold:
\begin{enumerate}[label = (\roman*)]
\item $A=B$ if and only if  $\overline{A}_{k+1} = \overline{B}_{k+1}$.
\item Let $BA$ and $AB$ denote the minimal transducers representing the products $\core(A*B)$ and $\core(B*A)$ and $l \ge 1 \in \mathbb{N}$ be such that both  $AB$ and $BA$ are synchronizing at level $l$. Then $AB = BA$ if and only if $\overline{A}_{l+1}\overline{B}_{l+1}= \overline{B}_{l+1}\overline{A}_{l+1}$.
\item  $A$ and $B$ are conjugate in  $\spn{n}$ if and only if there is an invertible $h \in \spn{n}$ such that $h^{-1}Ah$ (where this is the minimal transducer representing the product) is synchronizing at level $k$ and $\overline{h}^{-1}_{k+1}\overline{A}_{k+1}\overline{h}_{k+1} = \overline{B}_{k+1}$.
\end{enumerate}
\end{corollary}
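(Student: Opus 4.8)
The plan is to deduce all three statements from Proposition~\ref{usefulnessofperms} by showing that, for transducers synchronizing at a bounded level, the infinitely many conditions ``$\overline{A}_l = \overline{B}_l$ for all $l$'' (resp.\ the commuting/conjugacy conditions) collapse to the single condition at level $k+1$. The key observation is a stabilization phenomenon: if $A$ and $B$ are both synchronizing at level $k$, then knowing $\overline{A}_{k+1} = \overline{B}_{k+1}$ forces $\overline{A}_l = \overline{B}_l$ for every $l \ge 1$. I would first handle the ``short'' lengths $1 \le l \le k+1$: a transformation $\overline{A}_l$ on $X_n^l$ is recovered from $\overline{A}_{k+1}$ by a suitable projection/restriction argument, since the state forced by a word $\Gamma$ of length $l \le k+1$ is determined by extending $\Gamma$ on the left to length $\ge k$ and reading; more directly, for any word of length $k+1$ one reads off outputs of all its prefixes, and conversely each length-$l$ fixed word $\Gamma$ (with $\pi(\Gamma,q)=q$) embeds into the length-$(k+1)$ picture as $\Gamma' \Gamma$ where $\Gamma'$ is the appropriate prefix, as in the proof of Claim~\ref{inequivstates}. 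For lengths $l > k+1$, I would argue by the same bi-infinite-word reduction used in Claim~\ref{inequivstates}: if $\overline{A}_l \ne \overline{B}_l$ for some $l$, there is a word $\Gamma$ with $\pi(\Gamma,p)=p$ in $A$ and $\pi(\Gamma,q)=q$ in $B$ and an output discrepancy; feeding the periodic bi-infinite word $\ldots\Gamma\dot\Gamma\Gamma\ldots$ and tracking the first index of disagreement, the $k$ preceding symbols synchronize both machines to $\omega$-inequivalent states, and one then finds a short witness of length $\le k+1$ of disagreement, i.e.\ $\overline{A}_{k+1} \ne \overline{B}_{k+1}$.

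For part (i), given $\overline{A}_{k+1} = \overline{B}_{k+1}$, the stabilization claim yields $\overline{A}_l = \overline{B}_l$ for all $l$, and Proposition~\ref{usefulnessofperms}(iii) gives $A = B$; the converse is the monoid homomorphism property of $\tau_l$ from Proposition~\ref{usefulnessofperms}. Part (ii) is identical with $\core(A*B)$ and $\core(B*A)$ in place of $A$ and $B$: these are synchronizing at level $l$ by hypothesis, so $\overline{AB}_{l+1} = \overline{BA}_{l+1}$ implies $\overline{AB}_m = \overline{BA}_m$ for all $m$, and since $\overline{AB}_m = \overline{A}_m \overline{B}_m$ and $\overline{BA}_m = \overline{B}_m \overline{A}_m$ by Claim~\ref{cyclswellbhvd}, the condition $\overline{A}_{l+1}\overline{B}_{l+1} = \overline{B}_{l+1}\overline{A}_{l+1}$ is exactly what we need to invoke Proposition~\ref{usefulnessofperms}(i) — modulo checking that the commuting condition at level $l+1$ propagates to all levels, which again uses the stabilization argument applied to the two product transducers. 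Part (iii) follows from part (i) applied to $h^{-1}Ah$ and $B$ (both synchronizing at level $k$ by hypothesis), combined with $\overline{h^{-1}Ah}_{k+1} = \overline{h}^{-1}_{k+1}\overline{A}_{k+1}\overline{h}_{k+1}$ from the homomorphism property, invoking Proposition~\ref{usefulnessofperms}(ii) in the form already reduced to a single level.

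The main obstacle I anticipate is making the stabilization claim fully rigorous — precisely, that equality (or commuting) of the level-$(k+1)$ transformations is genuinely equivalent to equality at \emph{all} levels, not merely necessary. The subtlety is that $\overline{A}_l$ for $l > k+1$ carries information about outputs on long fixed words, and one must verify that any such discrepancy propagates down to a length-$(k+1)$ discrepancy; the bi-infinite periodic word argument from Claim~\ref{inequivstates} does this, but care is needed because the relevant fixed word in $A$ and in $B$ may be different words of the same length, so one should track the synchronization of \emph{both} machines simultaneously along the periodic word and extract the short witness from the first position of disagreement together with its $k$ predecessors. Once this is set up cleanly the three parts are immediate corollaries.
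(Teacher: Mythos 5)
Your proposal is correct and follows essentially the same route as the paper: parts (ii) and (iii) are reduced to part (i), and part (i) hinges on the observation that for a transducer synchronizing at level $k$ the output at any position of a bi-infinite word is determined by the $k$ preceding input symbols together with the current one, which is exactly the data recorded by $\overline{A}_{k+1}$. Your ``stabilization'' lemma (that $\overline{A}_{k+1}=\overline{B}_{k+1}$ forces $\overline{A}_l=\overline{B}_l$ for all $l$) and the paper's direct comparison of the actions of $A$ and $B$ on $X_n^{\mathbb{Z}}$ are this same window argument packaged differently.
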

\begin{proof}
Throughout the proof all products indicated shall represent the minimal transducer under $\omega$-equivalence representing the product. 

Observe that parts $(ii)$ and $(iii)$ are consequences of part $(i)$. Since for part $(ii)$ $AB$ and $BA$ are synchronizing at level $l$; for part $(iii)$ $B$ and $h^{-1}Ah$ are synchronizing at level $k$ (where $h$ is the conjugator). Therefore it suffices to prove only part $(i)$.

The forward implication follows by Proposition \ref{usefulnessofperms}, so we need only show the reverse implication. Let $k$ be as in the statement of part $(i)$ and assume that $\overline{A}_{k+1} = \overline{B}_{k+1}$. Denote by a triple $(\Xi,u,v)$ for $\Xi \in X_{n}^{k}$ and $u$ and $v$ states of $A$ and  $B$ respectively, that $u$ is the state of $A$ forced by $\Xi$ and $v$ is the state of $B$ forced by $\Xi$. Notice that for each such  $\Xi \in X_n^k$ such a triple is unique.

Let $\Gamma \in X_{n}^{k}$, belong to a triple $(\Gamma,p,q)$. Let $i \in X_n$ be arbitrary. Since $\overline{A}_{k+1} = \overline{B}_{k+1}$, we must have that $A_{p}(i) = B_q(i)$ since $\overline{A}_{k+1}(\Gamma i) = \overline{B}_{k+1}(\Gamma i)$.

Free the symbols $\Gamma$, $p$, and $q$.

Now let $w=\ldots w_{-k}\ldots w_{-1}w_{0}\Skip{w_1}{}{w_{k}}\ldots$ be a bi-infinite word. We show that $A$ and $B$ process this word identically. Let $w_i$ $i \in \mathbb{Z}$ denote the $i$\textsuperscript{th} letter of $w$. Then the $i$\textsuperscript{th} letter of $A(w)$ is $A_p(w_i)$ where $p$ is the state of $A$ forced by $\Gamma = \Skip{w_{i-k}}{}{w_{i-1}}$, the word of length $k$ immediately to the left of  $w_i$. Likewise the $i$\textsuperscript{th} letter of $B(w)$ is $B_{q}(w_i)$ where $q$ is the state of $B$ forced by $\Gamma$. Therefore $(\Gamma, p, q)$ is an allowed triple. However from above  we know that $A_{p}(w_i) = B_{q}(w_i)$. Since $i \in \mathbb{Z}$ was arbitrary, $A(w) = B(w)$, and $A = B$ since $w$ was arbitrary and $A$ and $B$ are assumed minimal.
\end{proof}

\begin{Remark}
Recall that a group is said to be \emph{residually finite} if for any non-identity element $g$ of the group, there is a homomorphism onto a finite group mapping $g$ to a non-trivial element. Corollary \ref{corollaryusefulnessofperms} part (i) demonstrates that the group $\hn{n}$ is residually finite. This is because given $A \in \hn{n}$ a non-identity element that is synchronizing at level $k$, the map sending $B \in \hn{n}$ to $\overline{B}_{k+1}$ in the symmetric group on $n^{k+1}$ points, is a homomorphism that maps $A$ to a non-trivial element.
\end{Remark}

\begin{Remark}
Part (ii) of Corollary \ref{corollaryusefulnessofperms}  demonstrates that if $B \in \spn{n}$ is synchronizing at level $j \ge 1 \in \mathbb{N}$, and $A \in \spn{n}$ is synchronizing at level $k \ge 1 \in \mathbb{N}$, then $B$ commutes with $A$ if and only if $\overline{A}_{j+k+1}$ commutes with $\overline{B}_{j+k+1}$ by  Claim \ref{claim-SyncLengthsAdd}.
\end{Remark}

\begin{Remark}
In order to restate Corollary \ref{corollaryusefulnessofperms} $(iii)$ for a non-invertible $h \in \spn{n} \backslash \pn{n} $  showing that the equation $\overline{A}_{k+1}\overline{h}_{k+1} = \overline{h}_{k+1}\overline{B}_{k+1}$ holds might no longer suffice. Instead we might have to check that $\overline{A}_{j+1}\overline{h}_{j+1} = \overline{h}_{j+1}\overline{B}_{j+1}$ where $j \in \mathbb{N}$ is  a level such that $\core(A*h)$ and $\core(B*h)$  are synchronizing at level $j$.
\end{Remark}


The result distinguishes between elements of $\T{H}_{n}$ and $\shn{n}$. However, we require the following definitions first.

\begin{Definition}
	Let $\Gamma =  \gamma_0\gamma_2\ldots\gamma_k-1$ be a word in $X_n^{k}$ for some natural number $k >0$. Define the $i$\textsuperscript{th} rotation of $\Gamma$ to be the word: $\Gamma'=\gamma_{k-i}\gamma_{k-i+1}\ldots\gamma_0\gamma_1\ldots\gamma_{k-i-1}$.
\end{Definition}

\begin{Remark}
	One can think of $\Gamma$ as decorating a circle divided into $k$ intervals (counting from zero), and $\Gamma'$ is the result of rotating the circle clockwise by $i$. Then  the 0\textsuperscript{th} rotation of $\Gamma$ is simply $\Gamma$.
\end{Remark}

\begin{proposition}\label{automorphismiffbi-synch}
Let $A = \gen{X_n,Q_{A},\pi_{A},\lambda_{A}}$ be an element of $\shn{n} \backslash \hn{n}$  with synchronizing level $k$ and let $A^{-1} = \gen{X_n,Q_{A^{-1}},\pi_{A^{-1}},\lambda_{A^{-1}}}$ the inverse of $A$. Then there is an $l \in \mathbb{N}$ with $0< l \le k(|Q_A|^2 +1)$ such that $\overline{A}_{l}$ is not a permutation. In particular, the action of $A$ on $X_n^{\mathbb{Z}}$ is non-injective. Moreover there exists words $\Delta$ and $\Lambda$ in $X_n^+$ such that $\Delta$ is not a cyclic rotation of $\Lambda$ and the bi-infinite strings $(\ldots\Delta\Delta\ldots)$ and $(\ldots\Lambda\Lambda\ldots)$ have the same image under $A$.
\end{proposition}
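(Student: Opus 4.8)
The plan is to exploit the inverse transducer $A^{-1}$, which by hypothesis is \emph{not} synchronizing, and to extract from this failure a witness to non-injectivity of $\overline{A}_l$ for some controlled $l$. First I would recall the inversion algorithm of \cite{GriNekSus}: since $A$ is synchronous with a homeomorphism state, $A^{-1} = \gen{X_n, Q^{-1}, \pi^{-1}, \lambda^{-1}}$ exists and is a synchronous transducer on the state set $Q^{-1}$ in bijection with $Q_A$. Because $A \in \shn{n} \backslash \hn{n}$, the transducer $A^{-1}$ fails to be synchronizing at every level; concretely, for every $m \in \N$ there is a word $\Gamma_m \in X_n^m$ and two states $u^{-1}, v^{-1} \in Q^{-1}$ with $\pi^{-1}(\Gamma_m, u^{-1}) \ne \pi^{-1}(\Gamma_m, v^{-1})$. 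I would want to upgrade this to: there exist arbitrarily long words on which $A^{-1}$ \emph{remains} desynchronized in a periodic way. The standard pigeonhole move on the pair of active states in $Q^{-1} \times Q^{-1}$ — there are $|Q_A|^2$ such pairs — gives a word of length at most $k(|Q_A|^2 + 1)$ after reading which two distinct states are reached from two (possibly equal) starting states, while the \emph{pair} of states has repeated. That is where the bound $0 < l \le k(|Q_A|^2 + 1)$ comes from: take $l$ to be the length of the periodic part of such a word, or the full length; the reasoning in Claim \ref{cycls<k} (existence of a fixed state for every word length) combined with pigeonhole on $Q^{-1} \times Q^{-1}$ produces a nonempty word $\Theta$ of length $l$ in that range, and two states $a^{-1} \ne b^{-1}$ of $A^{-1}$ each fixed by $\Theta$ in the sense $\pi^{-1}(\Theta, a^{-1}) = a^{-1}$, $\pi^{-1}(\Theta, b^{-1}) = b^{-1}$ — this uses that $A^{-1}$, though not synchronizing, still has the fixed-state property for each word length only if it is in $\widetilde{\mathcal P}_n$, so more carefully I would argue directly with the pair-of-states graph of $A^{-1}$ and find a circuit of length $l$ labelled by (the periodic extension of) some word, visited from two distinct states.

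Next, transport this back through the bijection to $A$. Let $a, b \in Q_A$ correspond to $a^{-1}, b^{-1}$. Reading $\Theta$ from $a^{-1}$ in $A^{-1}$ and landing back at $a^{-1}$ means, by the defining relation $\pi(x,p) = q, \lambda(x,p) = y \iff \pi^{-1}(y, p^{-1}) = q^{-1}, \lambda^{-1}(y,p^{-1}) = x$, that there is a word $\Delta$ with $\pi(\Delta, a) = a$ and $\lambda(\Delta, a) = \Theta$; similarly $\Lambda$ with $\pi(\Lambda, b) = b$ and $\lambda(\Lambda, b) = \Theta$, and $\Delta, \Lambda$ both have length $l$ since $A$ is synchronous. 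Now consider the bi-infinite periodic points $(\ldots \Delta \dot\Delta \Delta \ldots)$ and $(\ldots \Lambda \dot\Lambda \Lambda \ldots)$: since $a$ is the state of $A$ forced by the $l$-letter block to its left in the first point (it is the unique state fixed by $\Delta$, using the remark after Claim \ref{cycls<k} that the fixed-state property holds in $\widetilde{\mathcal P}_n$), $A$ maps the first point to $(\ldots \Theta \dot\Theta \Theta \ldots)$, and likewise it maps the second to $(\ldots \Theta \dot\Theta \Theta \ldots)$. So $\overline{A}_l(\Delta) = \Theta = \overline{A}_l(\Lambda)$. If $\Delta \ne \Lambda$ we are done: $\overline{A}_l$ is not injective, hence not a permutation, and the two bi-infinite strings have the same image under $A$. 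To see $\Delta \ne \Lambda$ (indeed that $\Delta$ is not even a cyclic rotation of $\Lambda$): if $\Delta$ were a cyclic rotation of $\Lambda$, then the two bi-infinite periodic points $(\ldots\Delta\Delta\ldots)$ and $(\ldots\Lambda\Lambda\ldots)$ would literally be equal as elements of $X_n^{\Z}$ up to a shift of index, but then the corresponding fixed states $a$ and $b$ of $A$ would coincide (the forced state depends only on the periodic tail), contradicting $a^{-1} \ne b^{-1}$; here I would need to be slightly careful and instead choose $\Delta, \Lambda$ to be \emph{prime} words (or reduce to that case) so that "cyclic rotation" is the only coincidence that could occur, exactly as in the hypothesis statement.

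The main obstacle I expect is the bookkeeping around the bound $l \le k(|Q_A|^2 + 1)$ and ensuring the witness words can be taken prime and non-rotation-equivalent. The pigeonhole argument on $Q^{-1} \times Q^{-1}$ naturally produces a word whose \emph{eventual} behaviour is periodic, so I would factor a sufficiently long desynchronizing word as (prefix)(period)$^{\text{many}}$(suffix) and argue the period $\Theta$ — of length at most $|Q_A|^2$, or $k|Q_A|^2$ if one must first absorb the synchronizing level of $A$ to pin down the two states being compared — is the block that realizes the two distinct fixed states. The additive $+k$ and the factor of $k$ come from needing the compared states to be core states of $A$ and from Claim \ref{claim-SyncLengthsAdd}-type length accounting; getting the constant exactly to $k(|Q_A|^2+1)$ rather than something larger is the only genuinely delicate point, and it may require choosing the desynchronizing word of minimal length and analyzing the first repeated state-pair along it. Everything else — the passage through the inversion bijection, the identification $\overline{A}_l(\Delta) = \overline{A}_l(\Lambda) = \Theta$, and the conclusion about the two bi-infinite strings — is routine given Claim \ref{cycls<k}, Remark \ref{transformations}, and the definition of $\overline{A}_l$.
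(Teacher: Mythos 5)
Your construction of the desynchronizing witness is essentially the paper's: you take a long word on which $A^{-1}$ remains desynchronized, pigeonhole on the pair of active states in $Q_{A^{-1}}\times Q_{A^{-1}}$ to find a segment $v$ (your $\Theta$) looping at two distinct states $p^{-1}\ne q^{-1}$, pass back through the inversion bijection to obtain $\Lambda,\Delta$ with $\pi_A(\Lambda,p)=p$, $\pi_A(\Delta,q)=q$, $\lambda_A(\Lambda,p)=\lambda_A(\Delta,q)=v$, and conclude $\overline{A}_{|\Lambda|}$ is not injective and the periodic bi-infinite strings $(\ldots\Lambda\Lambda\ldots)$, $(\ldots\Delta\Delta\ldots)$ collide under $A$. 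The bookkeeping on $l\le k(|Q_A|^2+1)$ is, as you say, routine: the paper picks prefixes at positions that are multiples of $k$ apart so that the length of $v$ is $jk\ge k$, which is also what ensures $\Lambda\ne\Delta$ (if they coincided, a length-$k$ suffix would synchronize $p$ and $q$).

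The genuine gap is in the last step, where you argue $\Delta$ and $\Lambda$ can be taken not to be cyclic rotations of one another. Your claimed contradiction is that if $\Delta$ were a rotation of $\Lambda$ then the forced states $a$ and $b$ (your $p$ and $q$) would coincide because ``the forced state depends only on the periodic tail.'' That is false: for a transducer synchronizing at level $k$ the state forced by a word depends on its last $k$ letters, and a non-trivial cyclic rotation of $\Lambda$ generally has a different length-$k$ suffix, hence forces a different state. (Concretely, for a level-$1$ synchronizing transducer on $\{0,1\}$ with $\Lambda=10$ and $\Delta=01$, the forced states are those pinned by $0$ and $1$ respectively, which are typically distinct.) Restricting to prime $\Delta,\Lambda$, as you suggest, does not rescue this step; primality controls when two periodic bi-infinite words coincide up to shift, but it does not make the forced states equal. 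The paper's actual argument here is quite different and is where the real work of the proposition lies: if $\Lambda$ is a rotation of $\Delta$ with $\Lambda\ne\Delta$, then the common output $v$ must equal a non-trivial rotation of itself, so $v=\nu^r$ for a prime word $\nu$ with $|\nu|<|v|$; one then analyses the map $\overline{A}_{|\nu|}$ --- either it already fails injectivity (contradicting minimality of the chosen $\Lambda,\Delta$), or one obtains a preimage $u$ of $\nu$ and shows that $u^r$ together with $\Lambda$ gives the required non-rotation-equivalent pair mapping to $(\ldots v v\ldots)$. Without something of this kind, the ``not a cyclic rotation'' clause of the proposition is not established.
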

\begin{proof}
Suppose $A$ is synchronizing at level $k$.
Since $A^{-1}$ is not synchronizing it follows that $|Q_{A}|= |Q_{A^{-1}}|>1$. Moreover, there is a pair of states $(r_1,r_2)$ such that there is an infinite set $W_1$ of words $w_i \in X_n^{+}$ for which $\pi_{A^{-1}}(w_i,r_1) \ne \pi_{A^{-1}}(w_i,r_2)$. This follows since  $A^{-1}$ is not synchronizing at level  $l$ for any $l \in \mathbb{N}$. Therefore for each $l \in \mathbb{N}$ there is a pair states $(r^{l}_1, r^l_2)$ and a word $w_l \in X_n^{l}$ such that $\pi_{A^{-1}}(w_l,r^l_1) \ne \pi_{A^{-1}}(w_l,r^l_2)$. Since $A$ is a finite automaton there is a pair of states $(r_1, r_2)$  such that for infinitely many $l \in \mathbb{N}$, $(r^l_1, r^l_2) = (r_1, r_2)$, therefore taking $W_1 := \{w_l | l \in \mathbb{N} \mbox{ and } (r^l_1, r^l_2) = (r_1, r_2) \}$, $(r_1, r_2)$ and $W_1$ satisfy the conditions.

Now since $W_1$ is infinite, by an argument similar to that above, there is a pair of states $(s_1,s_2)$ such that $\pi_{A^{-1}}(w_i,r_1) =s_1$ and $ \pi_{A^{-1}}(w_i,r_2) = s_2$ and  $s_1 \ne s_2$ for infinitely many $w_i \in W_1$. Let $W_2$ denote the set of words $w_i$ such that $\pi_{A^{-1}}(w_i,r_1) =s_1$ and $ \pi_{A^{-1}}(w_i,r_2) = s_2$.

Let $w_i \in W_2$ be such that $|w_i| \ge k(|Q_A|^2 +1)$. Now since $s_1 \ne s_2$, then for any prefix $\varphi$ of $w_i$ we must have $\pi_{A^{-1}}(\varphi,r_1) \ne \pi_{A^{-1}}(\varphi,r_2)$. Moreover since $|w_i| \ge k(|Q_A|^2 +1)$ there are prefixes $\varphi_1$ and $\varphi_2$ of $w_i$ such that $\left||\varphi_1| - |\varphi_2|\right| = jk \le k(|Q_A|^2 +1)$ ($j \in \mathbb{N}\backslash\{0\}$) satisfying $\pi_{A^{-1}}(\varphi_1,r_1) = \pi_{A^{-1}}(\varphi_2,r_1) = p^{-1}$ and $\pi_{A^{-1}}(\varphi_1,r_2) = \pi_{A^{-1}}(\varphi_2, r_2) = q^{-1}$ with $p^{-1} \ne q^{-1}$, and  $p^{-1},q^{-1} \in Q_{A^{-1}}$.

Assume $\varphi_1$ is a prefix of $\varphi_2$ and let $v$ be the such that $\varphi_1v = \varphi_2$. By construction $v$ satisfies $\pi_{A^{-1}}(v,p) = p$ and  $\pi_{A^{-1}}(v,q) = q$ such that $p^{-1} \ne q^{-1}$. Let $\Lambda = \lambda_{A^{-1}}(v,p^{-1})$ and $\Delta = \lambda_{A^{-1}}(v,q^{-1})$. Since $A$ is synchronizing at level $k$ and synchronous, $\Lambda \ne \Delta$, otherwise $p = q$ and since $A$ is synchronous $|\Lambda| = |\Delta|$. 

Therefore in $A$ we have, $\pi_{A}(\Lambda, p) = p$ and $\pi_{A}(\Delta,q) = q$ moreover, $\lambda_{A}(\Lambda,p) = \lambda_{A}(\Delta,q) = v$.  This shows that  $\overline{A}_{\Lambda}$ is not a permutation of $X_n^{|\Lambda|}$. We now make the assumption that $\Lambda$ and $\Delta$ are the smallest words such that $\pi_{A}(\Lambda, p) = p$ and $\pi_{A}(\Delta,q) = q$ moreover, $\lambda_{A}(\Lambda,p) = \lambda_{A}(\Delta,q)$. Let $v \in X_n^{|\Lambda|}$ be such that $\lambda_{A}(\Lambda,p) = \lambda_{A}(\Delta,q) = v$.

In order to show that $A$ represents a non-injective map on $X_n^{\mathbb{Z}}$ observe that the bi-infinite strings $(\ldots\Lambda\Lambda\ldots)$ and $(\ldots\Delta\Delta\ldots)$ are mapped to the bi-infinite string $(\ldots vv\ldots)$ under $A$. Therefore taking $(\ldots\Theta\dot{\Theta}\Theta\ldots)$ for $\Theta \in X_n^+$ to represent the element $y \in {X_n^{\mathbb{Z}}}$ defined by $y_{j|\Theta|}y_{j|\Theta|+1}\ldots y_{j|\Theta|+ |\Theta|-1} := \Theta$ for any $j \in \mathbb{Z}$, we see that $(\ldots\Lambda\dot{\Lambda}\Lambda\ldots)$ and $(\ldots\Delta\dot{\Delta}\Delta\ldots)$ are distinct elements of $X_n^{\mathbb{Z}}$ which have the same image under $A$. This shows $A$ is non-injective.  

To conclude the proof we now need to argue that there exists words $\Lambda'$ and $\Delta'$ which are not cyclic rotations of each other such that $(\ldots\Lambda'\dot{\Lambda}'\Lambda'\ldots)$ and $(\ldots\Delta'\dot{\Delta}'\Delta'\ldots)$ are mapped by $A$ to the same place.

Suppose that $\Lambda$ is a cyclic rotation of $\Delta$, since we are done otherwise. 

Since $\pi_{A}(\Lambda, p) = p$ we must have that $v$ is equal to a non-trivial cyclic rotation of itself. This is the case if and only if $v$ is equal to some power of a third word $\nu$ strictly smaller than $v$ (see for instance \cite[Theorem 1.2.9]{MVSapir}). In fact if $v = v' v'' = v''v'$ then both $v''$ and $v'$ are powers of this word $\nu$.

 We may assume that $\nu$ is a prime word (that is, it cannot be written as a powers of a strictly smaller word). Let $r \in \N$ be such that $\nu^r = v$. Notice that $r|\nu| = |v| = |\Lambda|$.

First suppose that there is word $u \in X_n^{|\nu|}$ such that $A_{|\nu|}(u) = \nu$ and $u^r$ is a rotation of $\Lambda$. If a non-trivial suffix $u_1 \ne u$ of $u$ is a prefix of $\Lambda$, then since $\lambda_{A}(\Lambda,p) = \nu^{r} =v$, we must have that $\nu$ is equal to a non-trivial cyclic rotation of itself contradicting that $\nu$ is a prime word. Therefore $\Lambda = u^r$. However, since $A_{|\Lambda|}(\Delta) = A_{|\Lambda|}(\Lambda)$ and $\Delta$  is a  cyclic rotation  of $\Lambda$ then $u^r$ is also a cyclic rotation of $\Delta$. Therefore by the same argument we must have that $\Delta = u^r$. However this now implies that $\Delta = \Lambda$ yielding a contradiction since we assumed that $\Delta \ne \Lambda$.

Now since $|\nu| < |v|$, then either there is a word $u$, such that $|u| = |\nu|$ for which $\overline{A}_{|\nu|}(u) = \nu$ or $\overline{A}_{|\nu|}$ is not surjective from $X_n^{|\nu|}$ to itself, and so it is also not injective (since $X_n^{|\nu|}$ is finite). If the latter occurs, then there are strictly smaller distinct words $\Lambda'$ and $\Delta'$ and states ${p}'$ and ${q}'$ such that $\pi_{A}(\Lambda', {p}') = {p}'$ and $\pi_{A}(\Delta',{q}') = {q}'$ so that, $\lambda_{A}(\Lambda',{p}') = \lambda_{A}(\Delta',{q}')$. Notice that since $A \in \hn{n}$ all its states are homeomorphism states, therefore $p'$ and $q'$ cannot be equal or $A$ would have a non-homeomorphism state. However this is a contradiction since we assumed that $\Lambda$ and $\Delta$ were the smallest such words. Therefore there is a word $u$ so that $|u| = |\nu|$ and $\overline{A}_{|\nu|}(u) = \nu$. Notice that $u^r$ cannot be a  rotation of $\Lambda$ by an argument above.  Moreover the bi-infinite sequences  $(\ldots \dot{u^r}u^r\ldots)$ and $(\ldots \dot{\Lambda}\Lambda\ldots)$ are mapped by $A$ to the same  bi-infinite string $(\ldots \dot{v}v\ldots)$. 
\end{proof}

\begin{Remark}
Let $A$ be an element of $\shn{n} \backslash \hn{n}$ which is invertible as an automaton, then $A$ represents a surjective map from the Cantor space $X_n^{\mathbb{Z}}$ to itself. In particular as a consequence of the proposition above an element $A \in \shn{n}$  is injective on $X_n^{\Z}$  if and only if it is a homeomorphism if and only if it is bi-synchronizing.
\end{Remark}
\begin{proof}
Our argument shall proceed as follows, we shall make use of the well known results that the continuous image of a compact topological space is compact, and that a compact subset of a Hausdorff space is closed. This means it suffices to argue that the image of $A$ is dense in $X_n^{\mathbb{Z}}$.

Let $k \in \mathbb{N}$ be the minimal synchronizing level for $A$.

Notice that since $A$ is invertible as an automaton each state of $A$ defines an invertible map from $X_n^{\mathbb{N}}$ to itself. Therefore given an element $y \in X_n^{\mathbb{Z}}$, let  $p$ be a state of $A$ and fix an index $i \in \mathbb{Z}$, then defining $z:=y_{i}y_{i+1}y_{i+1}\ldots$ in $X_n^{\mathbb{N}}$, there exists $x \in X_n^{\mathbb{N}}$ such that the initial automaton $A_{p}: X_n^{\mathbb{N}} \to X_n^{\mathbb{N}}$ maps $x$ to $z$. 

Now let $y$, $p$, $z$ and $x$ be as in the previous  paragraph, and let $\Gamma \in X_n^k$ be a word such that the state of $A$ forced by $\Gamma$ is $p$. Let $u \in X_n^{\Z}$ be defined by $u_i u_{i+1}\ldots := x$, $u_{i-k}u_{i-k+1}\ldots u_{i-1} := \Gamma$, and $u_j:= 0$ for all $j < i-k$.

If $w \in X_n^{\Z}$ is the image of $u$ under $A$, then $w_{i}w_{i+1}\ldots = z$. Therefore for any $y \in X_n^{\Z}$ we can find an element in $A(X_n^{\Z})$ as arbitrarily close to $y$ with respect to the metric given by equation \ref{metriconX^Z}. 
\end{proof}
\begin{Remark}
Given an element, $A$, of $\shn{n}$ Proposition \ref*{automorphismiffbi-synch} gives an algorithm for determining if $A \in \hn{n}$ or if $A \in \shn{n} \backslash \hn{n}$, since we have only to check if $\overline{A}_{j}$ is a permutation for all $1 \le j \le kM(A)$, where $k$ is the synchronizing level of $A$ and $M(A)$ is quadratic in the states of $A$.
\end{Remark}

\begin{Remark} \label{primewordsmaptoprimewords}
It is a consequence of the proof of the proposition above that for $A \in \pn{n}$, $\overline{A}_{l}$ maps prime words to prime words for every $l \in \mathbb{N}$. This is because if $(\Gamma)\overline{A}_{l} = (\gamma)^r$ for $\Gamma$ a prime word, for $|\gamma| < |\Gamma|$ and $r \in \mathbb{N}$. Then either $\overline{A}_{l}: X_n^{|\gamma|}$ is not surjective and so it is not injective either, or there is a word $\delta \in X_n^{\gamma}$ such that $(\delta)\overline{A}_{l} = \gamma$. Since $\Gamma$ is a prime word it follows in either case, as in the proof of Proposition \ref{automorphismiffbi-synch}, that $A$ does not induce a homeomorphism of $X_n^{\Z}$. An alternative proof of this fact can be found in \cite{BlkYMaisANav}.
 
\end{Remark}

Proposition \ref{usefulnessofperms} indicates that if two elements $A$ and $B$ in $\hn{n}$ are such that $\overline{A}_{j}$ and $\overline{B}_{j}$ have the same disjoint cycle structure for all $j \in \mathbb{N}$ then $A$ and $B$ are likely to be conjugate. This however need not be the case, as will be seen below.  First we make the following  definition.

\begin{Definition}[Rotation]
	Let $A \in \pn{n}$ and let $l \in \mathbb{N}$. Given a prime word $\Gamma \in X_n^l$ let $C$ be the disjoint cycle of $\overline{A}_{l}$ containing $\Gamma$. Notice that $C$ consists only of prime words by Remark \ref{primewordsmaptoprimewords}. Let $1 \le s \le \mathrm{length}(C)$ be minimal in $\mathbb{N}$ such that $(\Gamma)\overline{A}_{l}^{s}$ is a rotation of $\Gamma$. Let $0 \le i < l$ be such that $\overline{A}^{s}_{l}(\Gamma)$ is the $i$\textsuperscript{th} rotation of $\Gamma$, then we say that $C$ has minimal \emph{rotation $i$} of $\Gamma$. We call the triple $(\mathrm{length}(C),s, i)_{\Gamma}$ \emph{the triple associated to $C$ for $\Gamma$}. 
\end{Definition}

\begin{lemma}
Let  $C \in \overline{A}_{l}$ be a disjoint cycle  with associated triple $(\mathrm{length}(C),s_C, r_C)_{\Gamma_0}$ for $\Gamma_0$ a prime word belonging to $C$. Then we have the following: 
\begin{enumerate}[label = (\roman*)]
\item for any other word $\Gamma$ belonging to $C$ we have: 
\[
(\mathrm{length}(C),s_C, r_C)_{\Gamma_0} = (\mathrm{length}(C),s'_C, r'_C)_{\Gamma},
\]  
\item and  $\mathrm{Length}(C) = o \cdot s_C$ where $o$ is the order of $r_C$ in the additive group $\mathbb{Z}_{l}$, if $r_C = 0$ then take $o =1$.
\end{enumerate}
\end{lemma}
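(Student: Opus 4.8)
The plan is to unpack the definition of the triple $(\mathrm{length}(C), s_C, r_C)_{\Gamma_0}$ and exploit the fact that $\overline{A}_l$ acts as a single cyclic permutation on the set of words forming $C$, all of which are prime by Remark \ref{primewordsmaptoprimewords}. Write $C = (\Gamma_0\ \Gamma_1\ \ldots\ \Gamma_{L-1})$ where $L = \mathrm{length}(C)$ and $\Gamma_{j} = (\Gamma_0)\overline{A}_l^{\,j}$, indices taken mod $L$. The key auxiliary fact I would isolate first is this: if $(\Gamma_0)\overline{A}_l^{\,s}$ is the $i$-th rotation of $\Gamma_0$, then for \emph{every} $j$, $(\Gamma_j)\overline{A}_l^{\,s} = (\Gamma_0)\overline{A}_l^{\,s+j}$ is the $i$-th rotation of $\Gamma_j$. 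This follows because the rotation map $\rho_i : X_n^l \to X_n^l$ commutes with $\overline{A}_l$: reading $\Gamma$ from the unique state $q$ it forces and reading the $i$-th rotation of $\Gamma$ from the state \emph{it} forces produce outputs that are the same $i$-th rotation of one another — this is essentially the observation already made in the Remark following Claim \ref{cycls<k} about the structure of these cycles, combined with the synchronizing property (the state forced by a rotation is the corresponding rotation of the cycle of states). So $\rho_i \circ \overline{A}_l^{\,s} = \overline{A}_l^{\,s} \circ \rho_i$ as maps restricted to $C$, and if $\overline{A}_l^{\,s}\rho_0 = \rho_i$ on $\Gamma_0$ then $\overline{A}_l^{\,s} = \rho_i$ holds on the whole orbit $C$.

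For part (i): given another word $\Gamma = \Gamma_j \in C$, I must show its associated triple has the same first and third coordinates (the second coordinate $s$ is defined as a minimum, so I should check it too — actually the statement writes $s'_C$ and $r'_C$, so strictly I only need $\mathrm{length}$ and these agree as defined; but in fact I expect $s'_C = s_C$ and $r'_C = r_C$ and would prove that). The length is manifestly $L$ for any word in $C$ since $C$ is the cycle. For the minimal $s'$ such that $(\Gamma_j)\overline{A}_l^{\,s'}$ is a rotation of $\Gamma_j$: by the commutation fact above, $(\Gamma_j)\overline{A}_l^{\,s_C}$ is the $r_C$-th rotation of $\Gamma_j$, so $s'_C \le s_C$. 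Conversely if $(\Gamma_j)\overline{A}_l^{\,t}$ is a rotation of $\Gamma_j$ for some $t < s_C$, say the $i'$-th rotation, then applying the commutation fact at $\Gamma_j$ (running the argument backward, i.e. conjugating by $\overline{A}_l^{\,-j}$ which also commutes with $\rho_{i'}$) gives $(\Gamma_0)\overline{A}_l^{\,t}$ is the $i'$-th rotation of $\Gamma_0$, contradicting minimality of $s_C$. Hence $s'_C = s_C$ and then $r'_C = r_C$ since $(\Gamma_j)\overline{A}_l^{\,s_C} = \rho_{r_C}(\Gamma_j)$ and a prime word determines its rotation-index uniquely (a prime word is not a nontrivial rotation of itself, by the cyclic-word/primitivity fact cited in Proposition \ref{automorphismiffbi-synch}).

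For part (ii): consider the sequence $\Gamma_0, (\Gamma_0)\overline{A}_l^{\,s_C}, (\Gamma_0)\overline{A}_l^{\,2s_C}, \ldots$. By iterating the commutation fact, $(\Gamma_0)\overline{A}_l^{\,m s_C} = \rho_{r_C}^{\,m}(\Gamma_0) = \rho_{m r_C}(\Gamma_0)$, the $(m r_C \bmod l)$-th rotation of $\Gamma_0$. This equals $\Gamma_0$ itself iff $\rho_{m r_C}(\Gamma_0) = \Gamma_0$, and since $\Gamma_0$ is prime this happens iff $m r_C \equiv 0 \pmod l$, i.e. iff $o \mid m$ where $o$ is the order of $r_C$ in $\mathbb{Z}_l$ (with the convention $o = 1$ when $r_C = 0$). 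The smallest positive such $m$ is $o$, so the smallest positive power of $\overline{A}_l$ fixing $\Gamma_0$ is $o \cdot s_C$; but that smallest power is exactly $L = \mathrm{length}(C)$ since $C$ is a single cycle. Hence $\mathrm{length}(C) = o \cdot s_C$.

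The main obstacle is establishing the commutation fact $\rho_i \circ \overline{A}_l = \overline{A}_l \circ \rho_i$ cleanly — more precisely, pinning down that the state forced by a rotation of a word is the corresponding rotation in the cycle of forced states, and that therefore the output under $\overline{A}_l$ rotates accordingly. This is morally the content of the Remark after Claim \ref{cycls<k} (the observation about the disjoint cycle of states $(q_0\ q_1\ \ldots)$), and I would want to state and prove it as a short preliminary lemma before the main argument; everything else is bookkeeping with cyclic groups and the primitivity of prime words.
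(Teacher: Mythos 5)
Your argument follows the same route as the paper's: the central tool in both is the observation that rotating $\Gamma_0$ by $r_C$ rotates the unique loop it labels, hence $\overline{A}_{l}$ intertwines with the rotation map $\rho_{r_C}$ along the cycle $C$. Your treatment of part (i) is, if anything, slightly more careful than the paper's, since you explicitly argue $s'_C = s_C$ by two inequalities rather than by the paper's ``repeat the argument.''

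There is, however, one small gap in part (ii). You establish that the smallest $m$ with $(\Gamma_0)\overline{A}_{l}^{\,m s_C} = \Gamma_0$ is $o$, and then assert ``so the smallest positive power of $\overline{A}_{l}$ fixing $\Gamma_0$ is $o\cdot s_C$.'' That inference also needs the fact that no power \emph{strictly between} consecutive multiples of $s_C$ can return $\Gamma_0$ to itself; otherwise $\mathrm{length}(C)$ could a priori be some $k$ with $ms_C < k < (m+1)s_C$. The paper makes this explicit (``no $\Gamma_k$ for $s_C < k < 2s_C$ is a rotation of $\Gamma_0$,'' by minimality of $s_C$ together with part (i) applied at $\Gamma_{s_C}$, iterated). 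You have all the pieces: if $(\Gamma_0)\overline{A}_{l}^{\,k} = \Gamma_0$ and $k = m s_C + r$ with $0 \le r < s_C$, then $(\Gamma_{m s_C})\overline{A}_{l}^{\,r} = \Gamma_0$ is a rotation of $\Gamma_{m s_C}$, so by part (i) applied to $\Gamma_{m s_C}$ you force $r=0$. Add that sentence and the proof is complete.
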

\begin{proof}
Let $C= (\Gamma_0 \ldots \Gamma_{j})$ and let $(\mathrm{length}(C), s_C, r_C)_{\Gamma_0}$ be the triple associated to $C$ for $\Gamma_0$, where $\Gamma_0$ is a prime word . Then $s_C$ is minimal such that $\Gamma_{s_{C}}$ is the $r_C$\textsuperscript{th} rotation of $\Gamma_0$. Now since $\Gamma_1$ is the output of the unique loop of $A$ labelled by $\Gamma_0$, then $\Gamma_{s_{C}+1}$ is also a $r_C$\textsuperscript{th} rotation of $\Gamma_1$. This is because the unique loop of $A$ labelled by $\Gamma_{s_{C}+1}$ is the $r_{C}$\textsuperscript{th} rotation of the loop labelled  by $\Gamma_{0}$. We can now replace $C$ with the disjoint cycle $(\Gamma_1 \ldots \Gamma_{j}\Gamma_{1})$ and repeat the argument, until we have covered all rotations of $C$. This shows that the triple $(\mathrm{length}(C),s_C, r_C)_{\Gamma_1}$ is independent of the choice of $\Gamma_1$.

For the second part of the lemma, first observe that if $s_C = \mathrm{length}(C)$, then $r_C = 0$ and we are done. Therefore we may assume that $1 \le s_c < \mathrm{length}(C)$. 

Now observe that by minimality of $s_C$ and the above argument, $\Gamma_{s_{C}+ s_{C}}$ is the $2r_C$\textsuperscript{th} rotation of $\Gamma_0$,  moreover no $\Gamma_k$ for $s_C< k < 2s_C$ is a rotation of $\Gamma_0$. Notice that $r_C$ has finite order in the additive group $\mathbb{Z}_{l}$. Let $o$ be the order of $r_C$. Then $\Gamma_{os_{C}}$ is the  $or_C$\textsuperscript{th} rotation of $\Gamma$ which is just $\Gamma$. Moreover by minimality of $s_C$, and repetitions of the argument in the previous paragraph, $o$ is minimal such that $\Gamma_{os_{C}} = \Gamma_0$. However by the first part of the lemma, we must also have $\overline{A}_{l}^{os_{C}}(\Gamma_k) = \Gamma_k$ $1\le k \le j$. Minimality now ensures that $os_C = j$.
\end{proof}

As a consequence of the remark above for a given disjoint cycle $C \in \overline{A}_{l}$ we shall simple refer to $(\mathrm{length}(C),s_C, r_C)$ as \emph{the triple associated to $C$}.

\begin{Definition}[Spectrum]
Let $A \in \pn{n}$, and let $k \in \mathbb{N}$. For each triple $(L_C, S_C, T_C)$ associated to a disjoint cycle of prime words in the disjoint cycle structure of $\overline{A}_{k}$, let $d_C$ denote the multiplicity with which it occurs as we consider all such triples associated to the disjoint cycles of $\overline{A}_{k}$. Then define $Sp_{k}(A):= \{(k,d_C,(L_C,S_C,T_C))\}$ as $C$ runs over all disjoint cycles of $\overline{A}_{k}$. Define $Sp(A) := \bigcup_{k \in \mathbb{N}}Sp_{k}(A)$. 
\end{Definition}

\begin{Theorem}\label{spectrumisconjinv}
Let $A \in \pn{n}$, and let $k \in \mathbb{N}$, then $Sp_{k}(A)$ is a conjugacy invariant of $A$ in $\pn{n}$. 
\end{Theorem}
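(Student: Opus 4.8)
The plan is to push everything through the homomorphism $\tau_k\colon \spn{n}\to\sym(X_n^k)$, $C\mapsto\overline{C}_k$, which is a monoid homomorphism by Claim~\ref{cyclswellbhvd}. If $A$ and $B$ are conjugate in $\pn{n}$ — so $B=h^{-1}Ah$ for some invertible $h\in\pn{n}$ — then applying $\tau_k$ gives $\overline{B}_k=g^{-1}\overline{A}_k g$, where $g:=\overline{h}_k\in\sym(X_n^k)$ and $\overline{A}_k,g$ are genuine permutations by Remark~\ref{transformations}. So it will suffice to prove that conjugating $\overline{A}_k$ by $g$ inside $\sym(X_n^k)$ preserves the entire datum recorded in $Sp_{k}(A)$: the cycle lengths, the triples $(L_C,S_C,T_C)$ attached to the disjoint cycles that lie in the \emph{prime} words, and the multiplicities $d_C$ with which those triples occur.

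The one genuine input I will need is a \emph{rotation-equivariance} property of $g$. Writing $\rho_i$ for the $i$-th cyclic rotation of $X_n^k$ (so $\{\rho_i\}$ is a $\Z_k$-action on $X_n^k$), I claim $g\rho_i=\rho_i g$ for every $i$. This I would prove directly: given $\Gamma\in X_n^k$ write $\Gamma=\alpha\beta$ with $|\beta|=i$, so $\rho_i(\Gamma)=\beta\alpha$; if $q$ is the unique state of $h$ with $\pi_h(\Gamma,q)=q$ (existence and uniqueness hold throughout $\widetilde{\T{P}}_n$), then $q':=\pi_h(\alpha,q)$ is the unique state fixing $\beta\alpha$, and expanding $\overline{h}_k$ at $\alpha\beta$ and at $\beta\alpha$ while using synchronicity (the output blocks have the lengths of $\alpha$ and $\beta$) shows $\overline{h}_k(\beta\alpha)$ is exactly the $i$-th rotation of $\overline{h}_k(\alpha\beta)$. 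Equivalently, this is just the statement that the shift-commuting homeomorphism of $X_n^\Z$ induced by $h\in\pn{n}$ sends the periodic point $\overline{\Gamma}^\infty$ to $\overline{\overline{h}_k(\Gamma)}^\infty$ and commutes with the shift. Rotation-equivariance of $g$ immediately gives the same for $g^{-1}$, and it also forces $g$ to map prime words to prime words (a word is prime precisely when its orbit under $\{\rho_i\}$ has full size $k$, and $g$ preserves rotation-stabilizers because $\Z_k$ is abelian); alternatively the latter is Remark~\ref{primewordsmaptoprimewords}.

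From here the argument is bookkeeping. Since $g$ and $\overline{A}_k$ map primes to primes and are bijections of the finite set $X_n^k$, they map non-primes to non-primes as well, so each disjoint cycle of $\overline{A}_k$ (and of $\overline{B}_k$) is either entirely prime or entirely non-prime, and conjugation by $g$ restricts to a length-preserving bijection between the prime cycles of $\overline{A}_k$ and those of $\overline{B}_k$. Given a prime cycle $C$ of $\overline{A}_k$ with base word $\Gamma$ and triple $(L_C,S_C,T_C)$, so that $S_C$ is minimal with $\overline{A}_k^{\,S_C}(\Gamma)=\rho_{T_C}(\Gamma)$, I would apply $g^{-1}$: using $\overline{B}_k^{\,s}=g^{-1}\overline{A}_k^{\,s}g$ and $g^{-1}\rho_{T_C}=\rho_{T_C}g^{-1}$ one gets $\overline{B}_k^{\,S_C}(g^{-1}\Gamma)=\rho_{T_C}(g^{-1}\Gamma)$, and reading the equivalence backwards shows $S_C$ is still the minimal such exponent, so the image cycle carries the triple $(L_C,S_C,T_C)$ too (independence of the base word is the Lemma preceding the Spectrum definition). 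Hence the cycle bijection preserves triples, each triple occurs with the same multiplicity for $A$ and for $B$, and therefore $Sp_{k}(A)=Sp_{k}(B)$.

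The crux I expect to be the rotation-equivariance of $\overline{h}_k$: that is exactly where the hypothesis $h\in\pn{n}$ — equivalently, that $h$ commutes with the shift on $X_n^\Z$ — is used, and it is what upgrades ``conjugation preserves cycle type'' to ``conjugation preserves the rotation numbers $T_C$ as well''. The remaining steps (transfer of minimality, purity of the cycles, counting multiplicities) are routine.
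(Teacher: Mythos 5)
Your proposal is correct and follows essentially the same route as the paper's proof: the paper likewise reduces to the conjugation $\overline{B}_k=\overline{h}_k^{-1}\overline{A}_k\overline{h}_k$ in $\sym(X_n^k)$ and then argues cycle by cycle that $L_C$, $S_C$, and $T_C$ are preserved, with the preservation of $T_C$ resting on exactly the fact that $\overline{h}_k$ sends rotations to rotations. Your version has the minor merit of isolating and proving that rotation-equivariance of $\overline{h}_k$ as a standalone lemma (via the factorisation $\Gamma=\alpha\beta$ and the unique self-loops), where the paper invokes it more informally as ``$\Delta_1$ is the output of the unique loop of $J$ labelled by $\Gamma_1$, hence $\Delta_{S_C}$ is the $T_C$-th rotation of $\Delta_1$.''
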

\begin{proof}
Let $C$ be a cycle in the disjoint cycle structure of $\overline{A}_{k}$ and let  $(L_C, S_C, T_C)$ be its associated triple. Let $J \in \pn{n}$ be arbitrary and invertible.

That $L_C$ is preserved under conjugation by $J$ follows from Proposition \ref{usefulnessofperms}, and standard results about permutation groups.

That $S_C$ is preserved under conjugation is a consequence of the fact that  $J \in \pn{n}$. To see this first suppose that $C = (\Gamma_1 \ldots \Gamma_{j})$ for some $j \in \mathbb{N}$. Let $\Delta_i = (\Gamma_i)\overline{J}_{k}$. Then $(\Delta_1 \ldots \Delta_{j})$ is a cycle of $\overline{J}_{k}^{-1}\overline{A}_{k}\overline{J}_{k}$. Since $\Delta_i$ is the output of the unique loop of $J$ labelled by $\Gamma_i$ ($1 \le i \le j$), and since $S_C$ is minimal so that $\Gamma_{S_{C}}$ is a rotation of $\Gamma_1$, then $S_C$ is also the minimal position so that $\Delta_{S_{C}}$ is a rotation of $\Delta_1$.

That $T_C$ is preserved under conjugation is once more a consequence of the fact that $J \in \pn{n}$. Let $\Gamma_i$ and $\Delta_i$ for $1\le i \le j$ be as in the previous paragraph. Since $\Gamma_{S_{C}}$ is the $T_C$\textsuperscript{th} rotation of $\Gamma_1$, then as $\Delta_1$ is the output of the unique loop of $J$ labelled by $\Gamma_1$, $\Delta_{S_{C}}$ is the $T_C$\textsuperscript{th} rotations of $\Delta_1$. 
\end{proof}

\begin{corollary}
Let $A \in \pn{n}$, then $Sp(A)$ is a conjugacy invariant of $A$ in $\pn{n}$. 
\end{corollary}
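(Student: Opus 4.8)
The plan is to derive the corollary directly from Theorem~\ref{spectrumisconjinv} by a routine union argument. Recall that $Sp(A) = \bigcup_{k \in \mathbb{N}} Sp_k(A)$, and Theorem~\ref{spectrumisconjinv} already establishes that each $Sp_k(A)$ is a conjugacy invariant of $A$ in $\pn{n}$. So the content of the corollary is essentially: a (disjoint) union of conjugacy invariants, taken over all levels $k$, is again a conjugacy invariant. The only subtlety is that the pieces $Sp_k(A)$ all carry the level $k$ as their first coordinate (the data has the shape $(k, d_C, (L_C, S_C, T_C))$), so the union over $k$ is genuinely disjoint and no collision between different levels can occur.

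First I would fix an invertible $J \in \pn{n}$ and set $B = J^{-1} A J$ (the minimal core transducer representing the product, in the sense used throughout Section~\ref{propertiesofpn}). By Theorem~\ref{spectrumisconjinv}, for every $k \in \mathbb{N}$ we have $Sp_k(A) = Sp_k(B)$. Then I would simply take the union over all $k$:
\[
  Sp(A) = \bigcup_{k \in \mathbb{N}} Sp_k(A) = \bigcup_{k \in \mathbb{N}} Sp_k(B) = Sp(B).
\]
Since $J$ was an arbitrary invertible element of $\pn{n}$ and conjugation in $\pn{n}$ is (by the standing conventions) always by such an invertible element, this shows $Sp$ is constant on each conjugacy class, i.e. $Sp(A)$ is a conjugacy invariant of $A$ in $\pn{n}$.

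There is really no obstacle here; the one point worth a sentence in the write-up is to note that the equality of sets $\bigcup_k Sp_k(A) = \bigcup_k Sp_k(B)$ follows termwise from $Sp_k(A) = Sp_k(B)$ for each $k$ — no interference between levels, because membership in $Sp_k$ is detected by the first coordinate of each tuple. If one wanted to be maximally careful, one could remark that $Sp(A)$ determines and is determined by the family $(Sp_k(A))_{k \in \mathbb{N}}$, so that invariance of the family is equivalent to invariance of the union; but this is immediate and I would keep the proof to two or three lines.
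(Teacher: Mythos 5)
Your argument is correct, and the paper treats the corollary in exactly the same spirit: it supplies no explicit proof, since the statement is immediate from Theorem~\ref{spectrumisconjinv} by taking the union over all $k$, with the first coordinate $k$ of each tuple ensuring no collisions between levels. Your two-to-three line write-up is the right level of detail.
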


It is possible to construct elements $M \in \hn{n}$ for which $Sp(M) \ne Sp(M^{-1})$. Clearly $\overline{M}_{j}$ and $\overline{(M^{-1})}_{j}$ have the same disjoint cycle structure for all $j \in \N$.

\section{The structure of the dual automaton for elements of \texorpdfstring{$\spn{n}$}{lg}}\label{tools}

In this section we shall start to introduce the tools needed to understand the order problem in the group $\hn{n}$. However as the techniques are applicable to $\spn{n}$ and will be relevant in later sections, we shall work in this set for majority of this section restricting our attention to $\hn{n}$ only when necessary.  It is standard in the literature to tackle the order problem by investigating the structure of the dual automaton, see for instance \cite{KlimmannPicantinSavchuk, AKLMP}, and this is what we do below.

\subsection{The dual at level \texorpdfstring{$k$}{Lg}}

Let $A = \gen{X,Q,\pi,\lambda}$ be a synchronous  transducer and let $k \in \mathbb{N}$.

We form the level $k$ dual, 

\[A^{\vee}_{k} = \gen{\dual{X}_k,\dual{Q}_k,\dual{\pi}_k,\dual{\lambda}_k}\]
 of $A$ as follows. The state set $\dual{Q}_k$ of $\dual{A}_{k}$ is the set of all words of length  $k$ in the input alphabet $X$. This dual automaton has its input alphabet equal to its output alphabet and they are both equal to  $\dual{X}_k:= Q$ the set of states of $A$. The transition function $\dual{\pi}_k$ is defined as follows: for states $q,q'\in Q$, and $\Gamma,\Gamma'\in \dual{Q}_k$ we have: 
\begin{enumerate}
\item  $\dual{\pi}_k(q,\Gamma) = \Gamma'$ if and only if $\lambda(\Gamma,q) = \Gamma'$, and
\item  $\dual{\lambda}_k(q, \Gamma) = q'$ if and only if $\pi(\Gamma, q) = q'$ .
\end{enumerate}

We observe that $\dual{A}_{k+1} = \dual{A}_{k} * \dual{A}_{1}$. For suppose that $\Gamma i$  a word of length  $k+1$ is a state in the dual, and $q$ is any state symbol of $A$ such that after reading $q$ from $\Gamma i$ in $\dual{A}_{k+1}$ we are in state $ \Delta j$ and the output is $p$. Then in $A$ we  have $\pi(\Gamma i,q)= p$ and $\lambda(\Gamma i,q) = \Delta j$. We can break up this transition into two steps. Suppose $\pi(\Gamma,q) = p'$, then we have $\lambda(\Gamma,q) = \Delta$, $\lambda(i,p') = j$ and $\pi(i,p') = p$. Hence in $\dual{A}_k$ we  read $q$ from $\Gamma$ and transition to $\Delta$ and $p'$ is the  output $p'$. Moreover in $\dual{A}_{1}$ we read $p'$ from $i$ and transition to $j$ with output $p$. Therefore the state $(\Gamma, i)$ of $\dual{A}_{k}*\dual{A}_{1}$, is such that  we  read $q$ from this state and transition to the state $(\Delta,j)$ and the output produced is $p$.

The following definition gives a tool which connects the the synchronizing level of powers of an element of $\spn{n}$ to a  property of the dual automaton. First we introduce some notation. 

Let $A \in \spn{n}$ be a transducer and $t \in Q_{A}$ be a state of $A$. Let $k \in \N$ be the synchronizing length of $A$, then denote by $W_{t}$ the synchronizing words for the state $t$. That is, $W_{t}$ is the set of all words $\Gamma \in X_{n}^{\ast}$ of length at least $k$  such that $\pi_{A}(\Gamma,q)  = t$ for some (and so any) state of $A$.

\begin{Definition}[Splits]
Let $A$ be an element of $\spn{n}$, with synchronizing level $k$. Then, for $r \ge k$, we say that $\dual{A}_{r}$ \emph{splits} if  there is a word $\Gamma \in X_n^{r}$, elements $(p_1, p_2, \ldots, p_l), (q_1, q_2, \ldots, q_{l}), (s_1, s_2, \ldots, s_l) \in Q_{A}^{l}$, where  $\Gamma \in W_{s_1}$, and distinct elements $t_1,t_2 \in Q_{A}$, such that the sequences $\Gamma_1, \Gamma_2, \ldots, \Gamma_{l}$ and $\Lambda_{1},\Lambda_{2}, \ldots, \Lambda_{l}$ defined by $\Gamma_1 = \lambda_{A}(\Gamma, p_1)$, $\Lambda_{1}= \lambda_{A}(\Gamma, q_1)$  and for $1 < i \le l$, $\Gamma_i = \lambda_{A}(\Gamma_{i-1}, p_i)$ and $\Lambda_{i} =  \lambda_{A}(\Lambda_{i-1}, q_i)$, satisfy: $\Gamma_i, \Lambda_{i} \in W_{s_{i+1}}$ for all $1 \le i \le l-1$, $\Gamma_{l} \in W_{t_1}$ and $\Lambda_{l} \in W_{t_2}$. In other words, the following picture depicting the transitions in $\dual{A}_{r}$ at the state $\Gamma$ is valid:
\begin{figure}[H]
\begin{center}
\begin{tikzpicture}[shorten >=0.5pt,node distance=3cm,on grid,auto] 
   \node[state] (q_0) [yshift = 1.5cm]   {$\Gamma$}; 
   \node[state] (q_1) [yshift=3cm, xshift=1.5cm] {$\Gamma_1$}; 
   \node[state] (q_2) [yshift=0cm,xshift=1.5cm] {$\Lambda_1$};
   \node[state] (q_a) [yshift=0cm, xshift= 3.5cm]{$\Lambda_2$};
   \node[state] (q_b) [yshift=3cm, xshift= 3.5cm]{$\Gamma_2$};
    \node[state] (q_3) [yshift=0cm, xshift=5cm] {$\Lambda_{l-1}$};
    \node[state] (q_4)  [yshift=3cm,xshift=5cm] {$\Gamma_{l-1}$};
    \node[state] (q_6) [yshift=0cm, xshift=7cm] {$\Lambda_l$};
    \node[state] (q_5)  [yshift=3cm,xshift=7cm] {$\Gamma_l$};
    \node		 (q_7)  [yshift=3cm,xshift=8.5cm] {};
    \node		 (q_8)  [yshift=0cm,xshift=8.5cm] {};
    \path[->] 
    (q_0) edge node {$q_1|s_1$} (q_1)
          edge node[swap] {$p_1|s_1$} (q_2)
    (q_1) edge node {$q_2|s_2$} (q_b)
    (q_2) edge node[swap]{$p_2|s_2$} (q_a);      
    \path[-,dotted]      
     (q_a) edge node{} (q_3)
     (q_b) edge node{} (q_4);
     \path[->]
     (q_3) edge node [swap] {$p_l| s_l$} (q_6)
     (q_4) edge node {$q_l|s_l$} (q_5)
     (q_5) edge node  {$\ast| t_1$} (q_7)
     (q_6) edge node [swap] {$\sharp| t_2$} (q_8);
\end{tikzpicture}
\end{center}
\caption{A split; the symbols $\ast$ and $\sharp$ represent arbitrary elements of $Q_{A}$.}
\label{split}
\end{figure}

We say that the $l$-tuples $(\Skip{p_1}{,}{p_l})$ and $(\Skip{q_1}{,}{q_l})$ \emph{split} $\dual{A}_{r}$. We shall call $\{p_1, q_1\}$ the \emph{top of the split}, $\{t_1, t_2\}$ the \emph{bottom of the split}, and the triple $((q_1 \ldots, q_l), (p_1, \ldots, p_l), \Gamma)$ a \emph{split} of $\dual{A}_{r}$.
\end{Definition}

\begin{Definition}\label{bottomdependsonlyontop}
Let $A$ be an element of $\spn{n}$, with synchronizing level $k$. Let $r \ge k$ and let $((q_1 \ldots, q_l), (p_1, \ldots, p_l), \Gamma)$ be a split of $\dual{A}_{r}$ for $\Gamma \in X_n^{r}$ and $(q_1 \ldots, q_l), (p_1, \ldots, p_l) \in Q_A^{l}$. Let $\{t_1, t_2\}$ be the bottom of this split. Then we say that \emph{the bottom of the split $((q_1 \ldots, q_l), (p_1, \ldots, p_l), \Gamma)$ depends only on the top} if for any other tuples $U_1, U_2 \in Q_A^{l-1}$ we have that $((q_1,U_1), (p_1, U_2), \Gamma)$ is also a split with bottom $\{t_1, t_2\}$ and, for any $u, u' \in Q$, $\pi_{Al}(\Gamma,(p_1, \ldots, p_l, u)) = \pi_{Al}(\Gamma, (p_1, U_2, u'))$ and $\pi_{Al}(\Gamma,(q_1, \ldots, q_l, u)) = \pi_{Al}(\Gamma, (q_1, U_2, u'))$. The last condition means that if $\lambda_{Al}(\Gamma,(q_1,\ldots,q_l)) \in W_{t_1}$ then so also is $\lambda_{Al}(\Gamma, (q_1, U_1))$ and likewise for $(p_1,\ldots, p_l)$, $(P_1, U_2)$ and $W_{t_2}$.
\end{Definition}

\begin{Definition}
For a transducer $A$, we define the \emph{ r splitting length of $A$} (for $r$ greater than or equal to the minimal synchronizing length) to be minimal $l$ such that there is a pair of $l$-tuples of states which split $\dual{A}_{r}$. If there is no such pair the we set the r splitting length of $A$ to be $\infty$.
\end{Definition}

\begin{Remark}\label{badpairsmakesense}
Let $A$ be a transducer with minimal $r$ splitting length $l < \infty$, by minimality of $l$ it follows that for a given pair in $Q^{l}\times Q^l$ which  splits $\dual{A}_{r}$, then the bottom of the split depends only on the top. Therefore the top and bottom of the split have cardinality two. In particular, for any split whose bottom depends only on its top, the top and bottom of the split both have cardinality two.
\end{Remark}
\begin{Remark}\label{rsplittinglengthzeromeansr+1splittinglenghtzzero}
Let $A$ be a transducer such that the minimal $r$ splitting length of $A$ is infinite for some $r$ then the minimal $r+1$ splitting length of $A$ is also infinite.
\end{Remark}

The following lemma demonstrates that for $A \in \spn{n}$  an $r >2$ the $r$ splitting length of $A$ is bigger than the $r-1$ splitting length of $A$.  

\begin{lemma}\label{minimalsplittinglengthstrictlyincreasing}
Let $A \in \spn{n}$ be synchronizing at level $k$, and suppose that the $mk$ splitting length of $A$ is finite for $m \in \N$, $m >0$, then the $(m+1)k$ splitting length of $A$ is  strictly greater than the $mk$ splitting length of $A$.
\end{lemma}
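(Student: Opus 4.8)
The plan is to argue by contradiction: suppose the $(m+1)k$ splitting length $l'$ of $A$ is less than or equal to the $mk$ splitting length $l$, and derive a split of $\dual{A}_{mk}$ of length strictly smaller than $l$. The intuition is the relation $\dual{A}_{(m+1)k} = \dual{A}_{mk} \ast \dual{A}_{k}$ established just before this lemma: a split occurring in $\dual{A}_{(m+1)k}$ should, after peeling off the final $\dual{A}_{k}$ factor, already be visible in $\dual{A}_{mk}$ — possibly with shorter tuples, because the extra length $k$ is precisely a synchronizing length and therefore cannot contribute new branching information. First I would take a minimal-length split $((q_1,\ldots,q_{l'}),(p_1,\ldots,p_{l'}),\Gamma)$ of $\dual{A}_{(m+1)k}$ with $\Gamma \in X_n^{(m+1)k}$, write $\Gamma = \Gamma_0 \Delta$ with $|\Gamma_0| = mk$ and $|\Delta| = k$, and track how the two branching paths in the split picture (Figure~\ref{split}) decompose under the product $\dual{A}_{mk}\ast\dual{A}_{k}$.

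The key step is to observe that the ``bottom'' of the split in $\dual{A}_{(m+1)k}$ records which state-set $W_{t}$ the final letters land in, and that since $A$ is synchronizing at level $k$, the $W_{t}$-membership of a length-$(m+1)k$ word is determined by its last $k$ letters — i.e. by the $\dual{A}_{k}$-coordinate alone. So if the two branches of the split in $\dual{A}_{(m+1)k}$ first become distinguishable (in terms of $W$-class) at position $i$, then at that position the two words already differ, and by Claim~\ref{claim-SyncLengthsAdd} / the synchronizing hypothesis the distinguishing must already occur within the $\dual{A}_{mk}$-factor — giving a split of $\dual{A}_{mk}$ using only the first $i \le l'$ coordinates. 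If $i < l'$ we are done immediately; the work is to rule out $i = l'$, i.e. to show the split cannot genuinely require all of the last block of length $k$ to separate the two branches, since any separation achieved purely inside a word of length $k$ read from the core is, by the definition of synchronizing level, already forced earlier. Concretely I would use Remark~\ref{badpairsmakesense}: minimality of $l'$ forces the top and bottom to have cardinality two and the bottom to depend only on the top, and then I would chase the pair of paths through the two-step decomposition to exhibit a strictly shorter splitting pair in $\dual{A}_{mk}$, contradicting the definition of $l$ as the $mk$ splitting length (which we assumed was $\ge l'$).

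The main obstacle I anticipate is bookkeeping in the decomposition of the split under $\dual{A}_{(m+1)k} = \dual{A}_{mk}\ast\dual{A}_{k}$: one must carefully match up the two diverging paths of Figure~\ref{split} with their images under the projection to the $\dual{A}_{mk}$-coordinate, keep track of which intermediate pairs $(\Gamma_i,\Lambda_i)$ remain in a common $W_{u_i}$ versus where they first separate, and ensure that the resulting shorter tuples genuinely form a split (both the top-distinctness and the bottom-distinctness conditions must survive the truncation). A secondary subtlety is handling the boundary case where the two branches of the split never separate inside the $\dual{A}_{mk}$-part but only at the junction between the two factors — here one uses that the $W$-class of the full word depends only on the last $k$ letters, so such a separation is not a ``new'' phenomenon at level $(m+1)k$ and can be pushed back into level $mk$ by reading an appropriate preimage word. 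Once these combinatorial matchings are set up correctly, strict inequality of the splitting lengths follows, since equality would contradict the minimality built into the $mk$-level splitting length.
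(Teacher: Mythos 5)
Your plan is sound and rests on the same two ingredients the paper uses (the synchronization hypothesis to reduce the $W$-class of a length-$(m+1)k$ word to its length-$k$ suffix, together with the consequence of minimality at level $mk$ that short output sequences in $\dual{A}_{mk}$ depend only on the word), but it is framed as a contradiction argument where the paper argues directly. The paper shows outright, via the product decomposition $A^{l+1} = A^{l}\ast A$ applied to the expression $\pi_{l+1}(\Gamma\gamma, Pp)$, that for any word $\Gamma\gamma \in X_n^{(m+1)k}$ and any $(l+1)$-tuple $Pp$, the output $\dual{\lambda}_{(m+1)k}(Pp,\Gamma\gamma) = \pi_{l+1}(\Gamma\gamma,Pp)$ depends only on $\Gamma\gamma$: the prefix $\pi_l(\gamma,\pi_l(\Gamma,P))$ depends only on $\Gamma\gamma$ because minimality at level $mk$ forces $\pi_l(\Gamma,P)$ to depend only on $\Gamma$, and the last coordinate is a forced state because $|\lambda_l(\gamma,\cdot)| = k$. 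That immediately rules out any split of length $\le l$.

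Your contradiction route can be made to work, but the bookkeeping worries and the ``boundary case'' you anticipate both dissolve once one makes explicit the single computational fact you are circling around: in the decomposition $\dual{A}_{(m+1)k} = \dual{A}_{mk}\ast\dual{A}_{k}$, the $\dual{A}_{k}$-coordinate of the state reached after reading $i$ state-letters from $(\Gamma,\gamma)$ is exactly $\lambda_{A^i}(\gamma, \pi_i(\Gamma,P))$, a function of $\gamma$ and the $\dual{A}_{mk}$-output tuple $\pi_i(\Gamma,P)$. Consequently, if $\gamma_{l'}$ and $\lambda_{l'}$ lie in different $W$-classes as length-$k$ words, you must have $\pi_{l'}(\Gamma,P) \ne \pi_{l'}(\Gamma,T)$; since the first coordinates always agree (the forced state of $\Gamma$), the first disagreement occurs at some position $j$ with $2 \le j \le l'$, and the pair of truncations at length $j-1$ then gives a split of $\dual{A}_{mk}$ of length $j-1 \le l'-1 < l$, contradicting minimality. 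In particular there is no residual case $i = l'$ to rule out, and the ``separation only at the junction'' scenario you worry about cannot occur: if the $\dual{A}_{mk}$-output sequences agree throughout, the $\dual{A}_{k}$-coordinates are identical words, hence in the same $W$-class, so there is no split at all. So your approach is correct, but the paper's direct computation is cleaner and avoids the case analysis you were bracing for.
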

\begin{proof}
Let $A$, $m$ and $k$ be as in the statement of the lemma. Suppose that $A$ has $mk$ splitting length $l$.  It suffices to show that for any word $\Gamma \in X_n^{(m+1)k}$, and any $l+1$-tuple $P$ in $Q_A^{l+1}$, the output of $P$ through $\Gamma$ depends only on $\Gamma$.

First we set up some notation. Let $A^{j} := \gen{X_n, Q_A^{j}, \lambda_{j}, \pi_{j}}$ and let $\dual{A}_{j} :=\gen{Q_A, X_n^{j}, \dual{\lambda}_{j}, \dual{\pi}_{j}}$ for $j \in \mathbb{N}$. For a word $\gamma \in X_n^{k}$ let $q_{\gamma}$ denote the state of $A$ forced by $\Gamma$.

Now since $A$ has $mk$ splitting length $l$, it follows that for any $P:= (p_1, \ldots, p_l)$ and $T:= (t_1, \ldots, t_l)$ in $Q_A^{l}$ and $\Gamma \in X_n^{mk}$ we have that $\dual{\lambda}_{mk}(P, \Gamma) = \dual{\lambda}_{mk}(T, \Gamma)$. By definition of the dual, $\dual{\lambda}_{mk}(P, \Gamma) = \pi_{l}(\Gamma, P)$. 

Now let $\gamma \in X_n^{k}$ be arbitrary and let $p \in Q_A$ and $P \in Q_A ^{l}$ also be arbitrary. Consider $\dual{\lambda}_{(m+1)k}(Pp, \Gamma \gamma)$, we have:

\[
\dual{\lambda}_{(m+1)k}(Pp, \Gamma \gamma) = \pi_{l+1}(\Gamma \gamma, Pp) =\pi_{l}(\gamma,\pi_{l}(\Gamma, P))\pi_{1}(\lambda_{l}(\gamma,\pi_{l}(\Gamma, P) ), \pi_{1}(\lambda_{l}(\Gamma, P),p))
\] 

However observe that since  $A$ is synchronizing at level $k$ that the suffix  $\pi_{1}(\lambda_{l}(\gamma,\pi_{l}(\Gamma, P) ), \pi_{1}(\lambda_{l}(\Gamma, P),p))$ depends only on $\lambda_{l}(\gamma,\pi_{l}(\Gamma, P) )$. However since $\dual{A}_{mk}$ has minimal splitting length $l$ we have that $\pi_{l}(\Gamma, P)$ depends only on $\Gamma$. Therefore we have that $\dual{\lambda}_{(m+1)k}(Pp, \Gamma \gamma)$ depends only on $\Gamma\gamma$.

\end{proof}

\begin{Remark}
It follows from the lemma above that if $A \in \spn{n}$ is synchronizing at level $k$ then the $mk$ splitting length of $A$, if it is finite, is at least $m$ for $m \in \N$, $m >0$.
\end{Remark}

The following lemma shows that the minimal splitting length is connected with the synchronizing level of powers of a transducer.

\begin{lemma} \label{minsplittinglengthandsynchlevel}
Let $A$ be  a transducer with synchronizing level less than or equal to $k$, then if $A$ has $k$ splitting length $l$, then $\min(\core(A^{l+1}))$ has minimal  synchronizing level $m \ge k+1$.
\end{lemma}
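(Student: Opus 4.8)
The plan is to show that the $k$-splitting length controls exactly the synchronizing behaviour of $\min(\core(A^{l+1}))$, by proving both directions of the bound $m \ge k+1$ and $m \le$ (something), but the statement only asks for $m \ge k+1$, so I would focus there. The core idea is this: a transducer $B$ is synchronizing at level $j$ precisely when, reading any word of length $j$ into $\dual{B}$, all states of $\dual{B}$ are sent by the output function to a single state-symbol, i.e. the $j$-tuple (or rather: the action of a length-$j$ word on the dual collapses the state set onto one point). Since $\dual{(A^{l+1})}_{j} = (\dual{A}_{1})^{* ?}$-type identities relate duals of powers to duals at higher levels via the observation $\dual{A}_{k+1} = \dual{A}_k * \dual{A}_1$ established in the excerpt, I would translate ``$\core(A^{l+1})$ is synchronizing at level $k$'' into a statement about $\dual{A}_{k}$ having $(l+1)$-tuples that fail to ``split'' — but by hypothesis $A$ has $k$-splitting length exactly $l$, so there \emph{is} a pair of $l$-tuples splitting $\dual{A}_k$, and adjoining one more coordinate keeps it a split (the bottom depends only on the top, by Remark~\ref{badpairsmakesense}). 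This splitting is exactly an obstruction to synchronizing $\core(A^{l+1})$ at level $k$.

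More concretely, first I would unwind the definitions: a state of $A^{l+1}$ is an $(l+1)$-tuple $P = (p_1,\dots,p_{l+1}) \in Q_A^{l+1}$ (those in the core), and $\core(A^{l+1})$ being synchronizing at level $j$ means that for every $\Gamma \in X_n^j$ the state $\pi_{l+1}(\Gamma, P)$ is independent of $P$ among core states — equivalently, by the dual correspondence $\dual{\lambda}_j(P,\Gamma) = \pi_{l+1}(\Gamma,P)$, that reading $\Gamma$ from every state-symbol of $\dual{A^{l+1}}$ gives the same output-symbol. Now suppose for contradiction that $m \le k$, i.e. $\core(A^{l+1})$ is synchronizing at level $k$. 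Take a split $((q_1,\dots,q_l),(p_1,\dots,p_l),\Gamma)$ of $\dual{A}_k$ with $|\Gamma| = k$, guaranteed to exist since the $k$-splitting length is $l < \infty$; let $\{t_1,t_2\}$ be its bottom, with $t_1 \ne t_2$. By Remark~\ref{badpairsmakesense} the bottom depends only on the top, so for an arbitrary extra coordinate $u \in Q_A$ the $(l+1)$-tuples $(q_1,\dots,q_l,u)$ and $(p_1,\dots,p_l,u')$ remain a split with the same bottom $\{t_1,t_2\}$. But then $\pi_{l+1}(\Gamma, (q_1,\dots,q_l,u))$ and $\pi_{l+1}(\Gamma,(p_1,\dots,p_l,u))$ land in $W_{t_1}$ and $W_{t_2}$ respectively — distinct state-symbols of $A$ in the last coordinate — so these two core states of $A^{l+1}$ are forced by the same word $\Gamma$ of length $k$ into distinct states. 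That contradicts synchronization at level $k$. Hence $m \ge k+1$.

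The main obstacle I anticipate is the bookkeeping needed to be sure that the two $(l+1)$-tuples I produce are genuinely \emph{core} states of $A^{l+1}$ (so that they are not vacuously excluded from the synchronization condition) and that the last-coordinate targets really are in distinct $W_{t_i}$ in a way that survives passing to the minimal transducer $\min(\core(A^{l+1}))$ — $\omega$-equivalent states must not accidentally identify the two witnesses. To handle coreness I would choose $\Gamma$ (and if necessary prepend a synchronizing prefix) long enough that all tuples in question are reached from somewhere, using that $A$ is synchronizing and invoking Claim~\ref{claim-SyncLengthsAdd}-style length arithmetic; and to handle minimality I would note that states forced into $W_{t_1}$ versus $W_{t_2}$ cannot be $\omega$-equivalent, since $t_1 \ne t_2$ are distinct states of the (minimal) core of $A$, so their induced maps differ, hence the distinction persists in $\min(\core(A^{l+1}))$. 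A secondary subtlety is justifying the claim, used above, that synchronization of $\core(A^{l+1})$ at level $k$ is equivalent to the corresponding output-collapse statement for $\dual{A^{l+1}}$; this is immediate from the definition of the dual ($\dual{\lambda}_k(P,\Gamma) = \pi_{l+1}(\Gamma,P)$) together with the iterated identity $\dual{A}_{k} = \dual{A}_1 * \cdots * \dual{A}_1$ ($k$ factors), which follows by induction from $\dual{A}_{k+1} = \dual{A}_k * \dual{A}_1$ proved in the excerpt, so I would state it as a short preliminary observation rather than a separate lemma.
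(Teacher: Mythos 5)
Your proposal is correct and follows essentially the same route as the paper: both extend the minimal-length split by one coordinate to obtain two (core) states of $A^{l+1}$ which, on reading the same length-$k$ word $\Gamma$, land on $(l{+}1)$-tuples that agree in their first $l$ entries (the common output tuple $s_1,\dots,s_l$ of the split) but have the distinct last entries $t_1 \ne t_2$, so they are not $\omega$-equivalent and the synchronizing level of $\min(\core(A^{l+1}))$ must exceed $k$; the paper simply phrases this directly via $\core(A * \core(A^{l}))$ rather than by contradiction. One wording to tighten in your final paragraph: the non-$\omega$-equivalence should be justified by the coincidence of the first $l$ coordinates together with $t_1 \ne t_2$, not by the blanket assertion that any two states ``forced into $W_{t_1}$ versus $W_{t_2}$'' are inequivalent — that stronger statement is not what is needed and is not true for arbitrary tuples.
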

\begin{proof}
Let $l$ be as in the statement of the lemma.
Now consider $\core(A^l)$. The states of $\core(A^l)$ will consist of all length $l$ outputs of $\dual{A}_{k}$. Moreover by choice of $l$, $\core(A^l)$ is also synchronizing at level $k$.

Let $\Gamma$ be a word for which there is a split $( (q_1, \ldots, q_l), (p_1, \ldots, p_l),\Gamma)$  of the minimal length  $l$ giving rise to the picture below:

\begin{figure}[h!]
\begin{center}\label{figure 1}
\begin{tikzpicture}[shorten >=0.5pt,node distance=3cm,on grid,auto] 
   \node[state] (q_0) [yshift = 1.5cm]   {$\Gamma$}; 
   \node[state] (q_1) [yshift=3cm, xshift=1.5cm] {$\Gamma_1$}; 
   \node[state] (q_2) [yshift=0cm,xshift=1.5cm] {$\Lambda_1$};
   \node[state] (q_a) [yshift=0cm, xshift= 3.5cm]{$\Lambda_2$};
   \node[state] (q_b) [yshift=3cm, xshift= 3.5cm]{$\Gamma_2$};
    \node[state] (q_3) [yshift=0cm, xshift=5cm] {$\Lambda_{l-1}$};
    \node[state] (q_4)  [yshift=3cm,xshift=5cm] {$\Gamma_{l-1}$};
    \node[state] (q_6) [yshift=0cm, xshift=7cm] {$\Lambda_l$};
    \node[state] (q_5)  [yshift=3cm,xshift=7cm] {$\Gamma_l$};
    \path[->] 
    (q_0) edge node {$q_1|s_1$} (q_1)
          edge node[swap] {$p_1|s_1$} (q_2)
    (q_1) edge node {$q_2|s_2$} (q_b)
    (q_2) edge node[swap]{$p_2|s_2$} (q_a);      
    \path[-,dotted]      
     (q_a) edge node{} (q_3)
     (q_b) edge node{} (q_4);
     \path[->]
     (q_3) edge node [swap] {$p_l| s_l$} (q_6)
     (q_4) edge node {$q_l|s_l$} (q_5);
\end{tikzpicture}
\end{center}
\caption{A minimal split}
\end{figure}
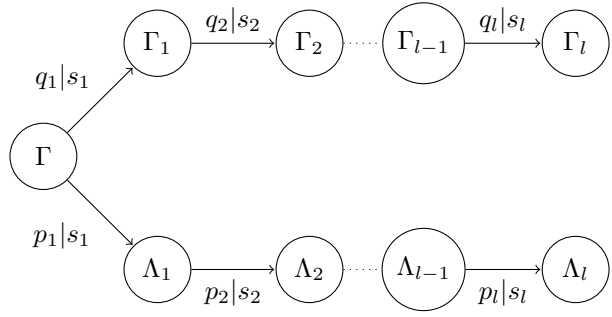
where $\Gamma_l \in W_{t_1}$ and $\Lambda_l \in W_{t_2}$ for distinct states $t_1$ and $t_2$.

 We now consider $ B:=\core(A \times \core(A^{l}))$. It is easy to see that there are states in $B$ of the form $(p_1, P)$, $(q_1,Q)$ for appropriate $P,Q \in \core(A^l)$. Therefore in $B$  when we have read $\Gamma$ through $(p_1,P)$, we are in state $(\Skip{s_1}{,}{s_l},t_1)$, and when we have read $\Gamma$ through state $(q_1,Q)$ we go to state $(\Skip{s_1}{,}{s_l},t_2)$. Since $t_1 \ne t_2$ these states are not $\omega$ equivalent. This concludes the proof.
\end{proof}

We begin to make connections to the order problem.

\begin{lemma}
Let $A \in \hn{n}$. Then either $A$ has finite order or for all $k \in \mathbb{N}$ there is an $N \in \mathbb{N}$ such that for all $m \in \N$ $A^{Nm}$ is bi-synchronizing at level greater than $k$. Moreover this $N$ is computable.
\end{lemma}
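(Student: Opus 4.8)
The plan is to prove this directly from Corollary~\ref{corollaryusefulnessofperms}(i), bypassing the splitting-length machinery, by exploiting that $\tau_{j+1}$ is a monoid homomorphism into the \emph{finite} group $\mathrm{Sym}(X_n^{j+1})$. Fix $A \in \hn{n}$; if $A$ has finite order we are in the first alternative, so assume $A$ has infinite order. Given $k \in \N$, I would set $j := \max(k,1)$ --- so that Corollary~\ref{corollaryusefulnessofperms}(i) applies at level $j$, and a transducer that fails to be synchronizing at level $j$ has minimal synchronizing level $> j \ge k$ --- and let $d \ge 1$ be the order of the permutation $\overline{A}_{j+1}$ in $\mathrm{Sym}(X_n^{j+1})$. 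This $d$ is finite (the order of a permutation of a finite set) and computable, since $\overline{A}_{j+1}$ is read off from the finite transducer $A$ by Claim~\ref{cycls<k}. The claim is that $N := d$ works.

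To see this, fix $m \ge 1$ and suppose for contradiction that $A^{dm}$ is synchronizing at level $j$. The identity transducer $\id$ is synchronizing at level $0$, hence (the set of synchronizing levels of a transducer being upward closed) also at level $j$, so Corollary~\ref{corollaryusefulnessofperms}(i) applies to the pair $A^{dm},\,\id\in\spn{n}$ at level $j$: it gives $A^{dm} = \id$ if and only if $\overline{A^{dm}}_{j+1} = \overline{\id}_{j+1}$. Since $\tau_{j+1}$ is a monoid homomorphism (Claim~\ref{cyclswellbhvd}, see also the proof of Proposition~\ref{usefulnessofperms}), $\overline{A^{dm}}_{j+1} = (\overline{A}_{j+1})^{dm} = ((\overline{A}_{j+1})^{d})^{m} = \overline{\id}_{j+1}$, the last step because $(\overline{A}_{j+1})^{d}$ is the identity of $\mathrm{Sym}(X_n^{j+1})$, i.e.\ $\overline{\id}_{j+1}$. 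Hence $A^{dm} = \id$, contradicting that $A$ has infinite order. So $A^{dm}$ is not synchronizing at level $j$; its minimal synchronizing level therefore exceeds $j \ge k$, and since $A^{dm} \in \hn{n}$ is bi-synchronizing with bi-synchronizing level at least its one-sided synchronizing level, $A^{dm}$ is bi-synchronizing at level greater than $k$. As $m \ge 1$ was arbitrary, this is the second alternative, with $N = d$ computable.

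I do not expect a genuine obstacle; the single conceptual point is that $\tau_{j+1}$ being a homomorphism collapses the infinitary hypothesis ``$A$ has infinite order'' onto the finite group $\mathrm{Sym}(X_n^{j+1})$, so that one application of Corollary~\ref{corollaryusefulnessofperms}(i) suffices, after which upward-closure of synchronizing levels and the definition of the bi-synchronizing level as the maximum of the two one-sided levels close the argument, with computability of $N$ immediate. As a byproduct the same computation re-derives the remark preceding the lemma: if every power of $A$ were synchronizing at one fixed level $j$, then $l \mapsto (\overline{A}_{j+1})^{l}$ would embed $\{A^{l} : l \in \N\}$ into $\mathrm{Sym}(X_n^{j+1})$ and force $A$ to have finite order, which is precisely the assertion that finiteness of $\gen{A}$ forces some $j$-splitting length of $A$ to be infinite.
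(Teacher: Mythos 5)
Your proof is correct, and it is a genuinely cleaner route than the paper's own argument. The paper also exploits that $\tau_{m_0}$ lands in a finite symmetric group, but it reaches the contradiction through a longer chain: it first replaces $A$ by a power $A^{N_1}$ whose minimal synchronizing level $m_0$ already exceeds $k$; it then studies the cycle decomposition of $\overline{A^{N_1}}_{m_0}$, builds from each cycle a tuple of states fixed by the transition function, takes a further lcm $t$ of the orders of their induced actions on $X_n^{m_0}$, and argues by hand that the resulting states of $A^{N_1 dt}$ act locally as the identity and are the forced states, so $A^{N_1 dt}$ would itself be trivial. Your argument shortcuts all of this: because $\overline{A}_{j+1}$ is defined for every level $j+1 \ge 1$ (not only for levels at or above the synchronizing level of $A$), and because Corollary~\ref{corollaryusefulnessofperms}(i) applies to the pair $A^{dm}, \id$ precisely under the hypothesis (assumed for contradiction) that $A^{dm}$ synchronizes at level $j$, you may take $N$ to be simply the order of $\overline{A}_{j+1}$ in $\mathrm{Sym}(X_n^{j+1})$. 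This yields a smaller $N$, eliminates the intermediate power $A^{N_1}$ and the lcm $t$, makes the computability claim immediate, and — unlike the paper's proof, which never states that $m_0$ should be taken $\ge k$ — makes the dependence on $k$ explicit and correct. The only cosmetic caveat is that ``for all $m \in \N$'' in the statement should be read as $m \ge 1$ (the paper's own proof in fact only argues for $m \ge N$), since $A^0 = \id$ has bi-synchronizing level $0$; you treat $m \ge 1$ uniformly, which is what is intended.
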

\begin{proof}
 Suppose that $A$ does not have finite order. Since $A$ has infinite order then $\dual{A}_{j}$ splits for every $j \in \mathbb{N}$, therefore there is an $m_0 \in \mathbb{N}$ and $N_1 \in \mathbb{N}$ such that $\min\core(A^{N_1})$ is bi-synchronizing at level $m_0$. In order to simplify the notation we shall identify  $A^{N_1}$ with the minimal, core transducer $\min \core(A^{N_1})$. 
 
 Now consider the permutation $\overline{A}^{N_1}_{m_0}$, we shall assume that it is written as a product of disjoint cycles. Let $d$ be the order of this permutation. Let $\Gamma \in X_n^{m_0}$, then $\Gamma$ belongs to a cycle $(\Gamma_1 \Gamma_2 \ldots \Gamma_{d_{\Gamma}})$ where  $\Gamma_1 := \Gamma$ and $d_{\Gamma}| d$. Let $e_{\Gamma} = d/d_{\Gamma}$.  To this cycle there is associated a tuple of states $(q_{\Gamma_1}, q_{\Gamma_2}, \ldots, q_{\Gamma_{d_{\Gamma}}})$ where $q_{\Gamma_i}$ is the state of $A^{N_1}$ forced by $\Gamma_i$ for $1 \le i \le d_{\Gamma}$.  Now  observe that $(q_{\Gamma_1}, q_{\Gamma_2}, \ldots, q_{\Gamma_{d_{\Gamma}}})$ is a state of $A^{N_1 d_{\Gamma}}$, moreover since $\lambda_{N_1}(\Gamma_i, q_{\Gamma_i}) = \Gamma_{i+1}$ for $1 \le i < d_{\Gamma}$, and $\lambda_{N_1}(\Gamma_{d_{\Gamma}}, q_{\Gamma_{d_{\Gamma}}}) = \Gamma_{1}$, then we have that $$\pi_{N_1d_{\Gamma}}(\Gamma_1, (q_{\Gamma_1}, q_{\Gamma_2}, \ldots, q_{\Gamma_{d_{\Gamma}}})) = (q_{\Gamma_1}, q_{\Gamma_2}, \ldots, q_{\Gamma_{d_{\Gamma}}}).$$ Now let $t_{\Gamma}$ be the order of the permutation induced by   $(q_{\Gamma_1}, q_{\Gamma_2}, \ldots, q_{\Gamma_{d_{\Gamma}}})^{e_{\Gamma}}$ on  $X_n^{m_0}$. 
 
 Let $t$ be the lowest common multiple of the set  $\{t_{\Gamma} | \gamma \in X_n^{m_0}\}$. In order to keep the notation concise let $P_{\Gamma}$ represent the state $(q_{\Gamma_1}, q_{\Gamma_2}, \ldots, q_{\Gamma_{d_{\Gamma}}})^{e_{\Gamma}t}$. Notice that $P_{\Gamma}$ acts locally as the identity for all $\Gamma \in X_n^{m_0}$. Moreover $P_{\Gamma}$ is the state of $A^{N_1 dt}$ such that $\pi_{N_1 dt}(\Gamma, P_{\Gamma}) = P_{\Gamma}$.
 
 Now let $N = N_1 dt$, and let $m \in N$. Suppose that $A^{mN}$, (where again $A^{mN}$ is identified with  $\min\core(A^{mN})$) is synchronizing at level $m_0$. Since $m \ge N$ we may write $m = rN + s$ for some $r \in \N$ and $ 0 \le s < N$. Therefore states of $A^{m}$ can be identified with states $P^{m}$ for $P$ a state of $A^{N}$, since for a word $\Gamma \in X_{n}^{m_0}$, the state $P_{\Gamma}$ satisfies $\pi_{N}(\Gamma, P_{\Gamma}) = \Gamma$ and $\lambda_{N}(\Gamma,P_{\Gamma}) = \Gamma$. Furthermore, since $A^{mN}$ is synchronizing at level $m_0$, it must be the case that the state of  $A^{mN}$ forced by a word $\Gamma \in x_{n}^{m_0}$ is precisely $P_{\Gamma}^{m}$.  Now as all of these states are locally identity, it follows that $A^{mN}$ is the identity. However, this is contradicts the initial assumption that $A$ does not have finite order. Therefore $A^{mN}$ must be synchronizing at level greater than $m_0$.
\end{proof}

Now suppose that $A \in \spn{n}$ and the semigroup $\gen{A} :=  \{ A^{i} | i \in \N \}$ is finite. Notice that if $A \in \shn{n}$ and the semigroup $\gen{A}$ is finite then it coincides with the group generated by $A$. The next result demonstrates that in the case where the semigroup $\gen{A}$ is finite, there must be some $j \in \N$, $j >0$ for which the $j$ splitting length of $A$ is infinite.

\begin{lemma}\label{lemma 1}
Let $A \in \spn{n}$ be synchronizing at level $k$. Suppose that the semigroup $\gen{A}$ is finite, and that $j$ is the maximum of the minimal synchronizing level of the elements of $\gen{A}$. Then $A$ has infinite $j$ splitting length. 
\end{lemma}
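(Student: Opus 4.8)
The plan is to prove the contrapositive in a suitable form: if the $j$ splitting length of $A$ were finite, then some power $A^{l+1}$ would have synchronizing level strictly greater than $j$, contradicting the hypothesis that $j$ bounds the synchronizing levels of all elements of $\gen{A}$. The key point is that $\gen{A}$ being finite means that $A^{l+1}$ is again one of the finitely many elements $A^{1},A^{2},\ldots$, hence (after passing to $\min(\core(\cdot))$) it is synchronizing at level at most $j$. So a finite $j$ splitting length is directly incompatible with $j$ being the uniform synchronizing bound.

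First I would recall that, by Lemma~\ref{minsplittinglengthandsynchlevel}, if $A$ has synchronizing level at most $k \le j$ and has finite $k$ splitting length $l$, then $\min(\core(A^{l+1}))$ has minimal synchronizing level at least $k+1$. To use this at level $j$ rather than at the minimal synchronizing level $k$, I would note two things. Since $A$ is synchronizing at level $k \le j$, it is also synchronizing at level $j$, and by Remark~\ref{rsplittinglengthzeromeansr+1splittinglenghtzzero} together with Lemma~\ref{minimalsplittinglengthstrictlyincreasing} (more precisely, the monotonicity of splitting lengths under increasing $r$) a finite $j$ splitting length forces the $r$ splitting length to be finite for all $r \ge j$; conversely, were some $r$ splitting length infinite, all larger ones would be infinite too. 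Thus, assuming for contradiction that the $j$ splitting length $l$ is finite, I would apply the argument of Lemma~\ref{minsplittinglengthandsynchlevel} with $k$ replaced by $j$: forming $B := \core(A \times \core(A^{l}))$, the witnessing word $\Gamma \in X_n^{j}$ and the tuples producing distinct bottom states $t_1 \ne t_2$ exhibit two states of $A^{l+1}$ which agree on all inputs of length $j$ read from them but are forced by the same word $\Gamma$, hence $\min(\core(A^{l+1}))$ cannot be synchronizing at level $j$.

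This contradicts the hypothesis directly: since $\gen{A}$ is finite, $A^{l+1} \in \gen{A}$, and $\min(\core(A^{l+1}))$ is one of the finitely many distinct transducers occurring in $\gen{A}$, each of which by assumption has minimal synchronizing level at most $j$. Therefore no finite $l$ can be the $j$ splitting length of $A$, i.e. $A$ has infinite $j$ splitting length.

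The main obstacle I expect is purely technical: making sure the splitting-to-non-synchronization implication of Lemma~\ref{minsplittinglengthandsynchlevel} goes through verbatim when $k$ is replaced by the (possibly larger) uniform bound $j$ rather than the minimal synchronizing level. This requires checking that the split picture (Figure~\ref{split}) at level $j$ still produces, via $B = \core(A\times\core(A^{l}))$, two non-$\omega$-equivalent states of $A^{l+1}$ forced by a common length-$j$ word; the fact that $t_1 \ne t_2$ lie in distinct $W$-classes is unchanged by enlarging $r$, so this should be routine, but it is the step where one must be careful about which synchronizing level is being used where. No delicate combinatorics beyond that is needed.
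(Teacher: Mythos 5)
Your proposal is correct and takes essentially the same route as the paper: assume a finite $j$ splitting length $l$, apply Lemma~\ref{minsplittinglengthandsynchlevel} with $j$ in place of $k$ to conclude $\min(\core(A^{l+1}))$ is synchronizing only at some level $\ge j+1$, and derive a contradiction with the maximality of $j$. The extra caution you express about re-deriving the lemma at level $j$ rather than at $A$'s minimal synchronizing level is unnecessary, since Lemma~\ref{minsplittinglengthandsynchlevel} is stated with the hypothesis ``synchronizing level less than or equal to $k$'' precisely so it applies verbatim at any $j \ge k$.
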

\begin{proof}
This is a consequence of Lemma \ref{minsplittinglengthandsynchlevel}. Since if $A$ has $j$ splitting length $l$, then by Lemma \ref{minsplittinglengthandsynchlevel} $\min(\core(A^{l+1}))$ has minimal synchronizing level $j+1$, which is a contradiction.
\end{proof}

\begin{Remark} \label{remark 1}
The above means that we can partition $\dual{A}_{j}$ into components $D_1, \ldots, D_i$ and for each component there is a pair of words $W_{i,1}$ and $W_{i,2}$ in the states of $A$ such that for any input into a state of a component of $D_i$, the only possible output has the form $u(W_{i,2})^{l}v$ where $u$ is a possibly empty suffix of $W_{i,1}W_{i,2}$, and $v$ is a possibly empty prefix of $W_{i,2}$.
\end{Remark}

Below we illustrate Lemma~\ref{lemma 1} with some examples.

\begin{example}
Consider the transducer $C$ from example \ref{example 1}. This is a transducer of order $3$, in particular, it is a conjugate of the single state transducer which can be identified with the permutation $(0,1,2)$.
\begin{figure}[h!]
\begin{center}
\begin{tikzpicture}[shorten >=0.5pt,node distance=3cm,on grid,auto] 
    \node[state] (q_0)   {$q_0$}; 
   \node[state] (q_1) [below left=of q_0] {$q_1$}; 
   \node[state] (q_2)[below right=of q_0] {$q_2$}; 
    \path[->] 
    (q_0) edge [in=105,out=75,loop] node [swap] {$0|1$} ()
          edge [out=335,in=115]  node [swap] {$2|0$} (q_2)
          edge [out=185,in=85]  node [swap]  {$1|2$} (q_1)
    (q_1) edge [out=65,in=205]  node [swap]  {$0|0$} (q_0)
          edge [in=240,out=210, loop] node [swap] {$1|2$} ()
          edge [out=330,in=210]  node [swap] {$2|1$} (q_2)
    (q_2) edge[out=95,in=355]  node [swap] {$1|1$} (q_0)
          edge[out=195,in=345]  node [swap] {$0|2$} (q_1) 
          edge [in=330,out=300, loop] node [swap] {$2|0$} ();
\end{tikzpicture}
\end{center}
\caption{An element of order 3 in $\hn{n}$}
\end{figure}

This transducer, as noted before, is bi-synchronizing at the second level. The level 3 dual has 27 nodes and so we shall not give this below. However utilising either the AAA package or the AutomGrp package \cite{AutomGrp1.2.4} in GAP \cite{GAP4}, together with (in AutomGrp) the function ``MinimizationOfAutomaton( )'' which returns an $\omega$-equivalent automaton, applied to the third power of the dual automaton, we get the following result:
\begin{figure}[H]
\begin{center}
\begin{tikzpicture}[shorten >=0.5pt,node distance=3cm,on grid,auto] 
   \node[state] (q_0)   {$a_0$}; 
   \node[state] (q_1) [below left=of q_0] {$a_1$}; 
   \node[state] (q_2) [below right=of q_0] {$a_2$}; 
    \path[->] 
    (q_0) edge [bend right] node [swap] {$q_0|q_0, \ q_1|q_0,\ q_2|q_0$} (q_1)
    (q_1) edge [bend right] node [swap]  {$q_0|q_1, \ q_1|q_1, \ q_2|q_1 $} (q_2)
    (q_2) edge [bend right] node [swap] {$q_0|q_2,\ q_1|q_2, \ q_2|q_2 $} (q_0);
\end{tikzpicture}
\end{center}
\caption{The level 3 dual of $C$.}
\end{figure}
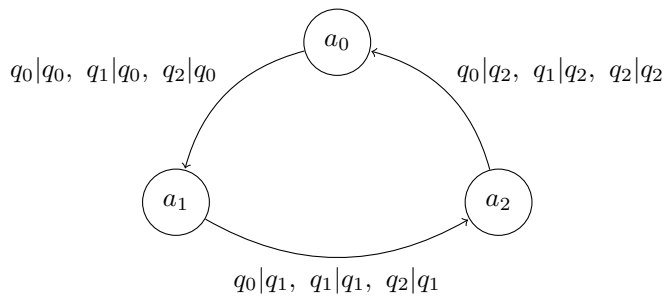

Since the original transducer $C$ has order 3 we can see from its level $3$ dual above that the states in the core will be cyclic rotations of $(q_0,q_1,q_2)$ all of which are locally identity. 
\end{example}

We illustrate another example below, but now with an element  of order 2.

\begin{example}
Consider the automaton of order two given below.
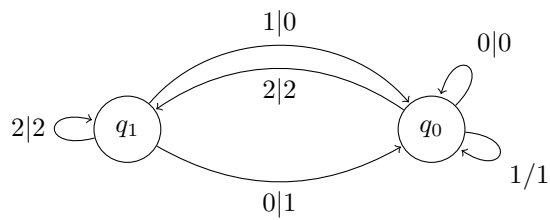
\begin{figure}[H]
\begin{center}
\begin{tikzpicture}[shorten >=0.5pt,node distance=3cm,on grid,auto] 
   \node[state] (q_0)   {$q_0$}; 
   \node[state] (q_1) [xshift=-4cm,yshift=0cm] {$q_1$};  
    \path[->] 
    (q_0) edge [in=75,out=45,loop] node[swap] {$0|0$} ( )
          edge [in=325,out=355,loop] node  {$1/1$} ( )
          edge [out=145,in=35] node  {$2|2$} (q_1)
    (q_1) edge [bend right] node [swap]  {$0|1$} (q_0)
          edge [out=50,in=130] node  {$1|0$} (q_0)
          edge [loop left] node [swap]  {$2|2$} ( );
\end{tikzpicture}
\end{center}
\caption{An element of order 2}
\end{figure}
This automata is synchronizing on the first level. We give the dual below.

\begin{figure}[h!]
\begin{center}
\begin{tikzpicture}[shorten >=0.5pt,node distance=3cm,on grid,auto] 
   \node[state] (q_0)   {$0$}; 
   \node[state] (q_1) [below left=of q_0] {$1$}; 
   \node[state] (q_2) [below right=of q_0] {$2$}; 
    \path[->] 
    (q_0) edge [bend left] node {$q_1|q_0$} (q_1)
          edge [in=45,out=75,loop] node {$q_0|q_0$} ()
    (q_1) edge [bend left] node {$q_1|q_0$} (q_0)
          edge [out=225,in=255,loop] node[swap] {$q_0|q_0$} ()
    (q_2) edge [loop above] node {$q_0|q_1,\ q_1|q_1$} ( );
\end{tikzpicture}
\end{center}
\caption{The level 1 dual.}
\end{figure}
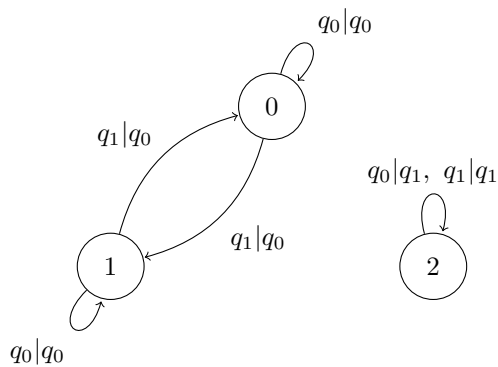

It is easy to see that the states $0$ and $1$ are $\omega$-equivalent, and can be identified to a single node that produces $q_0$ for all inputs. The states in the core of the square of the original automaton will be $(q_0,q_0)$ and $(q_1,q_1)$.
\end{example}

For a transducer of finite order, $A$, as above, we have the following result about the semigroup $\gen{\dual{A}}$.
\begin{Theorem} \label{lemma 2}
Let $A \in \spn{n}$ be synchronizing at level $k$ and suppose that the semigroup  $\gen{A}$ is finite and let $j$ be the maximum of the minimal synchronizing levels of the elements of $\gen{A}$. Then $\dual{A}_{j} = (\dual{A})^{j}$ is a zero in $\gen{\dual{A}}$, the semigroup generated by $\dual{A}$.
\end{Theorem}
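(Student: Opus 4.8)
The statement claims that $\dual{A}_j = (\dual{A})^j$ acts as a zero in the semigroup $\gen{\dual{A}}$, i.e. $\dual{A}_m \ast \dual{A}_j$ and $\dual{A}_j \ast \dual{A}_m$ both equal $\dual{A}_j$ (up to $\omega$-equivalence / minimisation) for every $m \ge 1$. Since we already know $\dual{A}_{k+1} = \dual{A}_k \ast \dual{A}_1$ and, iterating, $\dual{A}_{a+b} = \dual{A}_a \ast \dual{A}_b$, the claim is equivalent to showing $\dual{A}_{j+m} \wequal \dual{A}_j$ for all $m \ge 1$; and by induction it suffices to treat $m = 1$, i.e. $\dual{A}_{j+1} \wequal \dual{A}_j$. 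The plan is to read this off from Lemma~\ref{lemma 1} together with Remark~\ref{remark 1}: because $\gen{A}$ is finite and $j$ is the maximum of the minimal synchronising levels of its elements, Lemma~\ref{lemma 1} gives that $A$ has infinite $j$ splitting length, so $\dual{A}_j$ does not split. By Remark~\ref{remark 1} this means $\dual{A}_j$ partitions into components $D_1, \dots, D_i$, each carrying a fixed pair of words $W_{t,1}, W_{t,2}$ over $Q_A$ such that every output read from any state of $D_t$ has the restricted form $u (W_{t,2})^l v$.

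First I would make precise what "no split" buys us at the level of the state set of $\dual{A}_{j+1}$. A state of $\dual{A}_{j+1}$ is a word $\Gamma i$ with $\Gamma \in X_n^j$, $i \in X_n$; under the factorisation $\dual{A}_{j+1} = \dual{A}_j \ast \dual{A}_1$ it corresponds to the pair $(\Gamma, i)$, and reading an input letter $q \in Q_A$ from it produces output $\pi(\Gamma i, q) \in Q_A$ and moves to the state corresponding to $\lambda(\Gamma i, q)$. I would argue that, because $\dual{A}_j$ has infinite $j$ splitting length, for any two states $\Gamma_1 i_1$ and $\Gamma_2 i_2$ lying over the same component $D_t$ of $\dual{A}_j$ (i.e. with $\Gamma_1, \Gamma_2$ in the same part of the partition), the initialised transducers $(\dual{A}_{j+1})_{\Gamma_1 i_1}$ and $(\dual{A}_{j+1})_{\Gamma_2 i_2}$ are $\omega$-equivalent: the non-splitting condition forces the outputs read along any input word to depend only on which component is being visited, not on the particular state within it, precisely by the argument template already used in Lemma~\ref{minimalsplittinglengthstrictlyincreasing} (the suffix of the output depends only on $\lambda_l(\gamma, \pi_l(\Gamma,P))$, and the non-split condition pins $\pi_l(\Gamma, P)$ down to depend only on $\Gamma$). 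Consequently the minimal transducer of $\dual{A}_{j+1}$ has one state per component, exactly as for $\dual{A}_j$, and the transition/output data match up: $\min \dual{A}_{j+1} \wequal \min \dual{A}_j$.

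From $\dual{A}_{j+1} \wequal \dual{A}_j$ I would then finish by a short induction: $\dual{A}_{j+m+1} = \dual{A}_{j+m} \ast \dual{A}_1 \wequal \dual{A}_j \ast \dual{A}_1 = \dual{A}_{j+1} \wequal \dual{A}_j$, using that $\wequal$ is a congruence for the automaton product $\ast$ (which follows from Proposition~\ref{usefulnessofperms}, since $\tau_l$ is a monoid homomorphism and two elements of $\spn{n}$ agree iff all the $\overline{(\cdot)}_l$ agree). Then for any $\dual{A}_m = (\dual{A})^m$ with $m \ge 1$ we get $(\dual{A})^m \ast (\dual{A})^j = \dual{A}_{m+j} \wequal \dual{A}_j$ and similarly $(\dual{A})^j \ast (\dual{A})^m = \dual{A}_{j+m} \wequal \dual{A}_j$, which is exactly the assertion that $(\dual{A})^j$ is a zero of $\gen{\dual{A}}$.

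I expect the main obstacle to be the middle step: carefully showing that "$\dual{A}_j$ does not split" really does force all states of $\dual{A}_{j+1}$ over a common component of $\dual{A}_j$ to be $\omega$-equivalent. One has to be sure that passing from $\dual{A}_j$ to $\dual{A}_{j+1}$ cannot create new distinctions — i.e. that the extra letter $i$ appended to $\Gamma$ cannot separate two states that were merged at level $j$ — and this is where the precise form of the outputs in Remark~\ref{remark 1} (outputs are $u(W_{t,2})^l v$ with $u$ a suffix of $W_{t,1}W_{t,2}$ and $v$ a prefix of $W_{t,2}$) must be invoked, together with the observation that the state of $A$ forced by a length-$k$ prefix read off such an output is determined by the component. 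Handling the bookkeeping of these words, and confirming that the "bottom of the split depends only on the top" phenomenon (Remark~\ref{badpairsmakesense}) degenerates in the non-splitting case to "everything depends only on the component", is the delicate part; the rest is formal manipulation of $\ast$ and $\wequal$.
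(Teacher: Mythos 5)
Your proposal is correct and takes essentially the same route as the paper. Both reduce the claim to showing $\dual{A}_{j+1} \wequal \dual{A}_j$, both rely on Remark~\ref{remark 1} to describe the constrained output form on each component of $\dual{A}_j$, and both conclude that a state of $\dual{A}_{j+1}$ is $\omega$-equivalent to the corresponding state of $\dual{A}_j$; the only presentational difference is that the paper pairs each state $\Gamma \in X_n^{j+1}$ directly with its length-$j$ suffix $\gamma$ and argues their outputs agree, and reduces to a right-zero check via commutativity of $\gen{\dual{A}}$, whereas you group states over a common component and make the closing induction and the congruence property of $\wequal$ explicit.
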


\begin{proof}
It suffices to show that $\dual{A}_{j}$ is a right zero of the semigroup since the semigroup $\gen{\dual{A}}$ is commutative.

Our strategy shall be to to show that for any state $\Gamma$ of $\dual{A}_{j}$ and any state $x$ of $\dual{A}$, that the state $\Gamma x$ of $\dual{A}_{j+1}$ is $\omega$-equivalent to a state of $\dual{A}_{j}$. To this end let  $\Gamma  \in X_n^{j}$ be a word of length $j+1$. By Lemma \ref{minimalsplittinglengthstrictlyincreasing} and Remark \ref{remark 1} there is a pair of words $W_{1,\Gamma_1}$ and $W_{2, \Gamma_1}$ such that any input read from $\Gamma_1$ has output of the form $W_1 (W_2)^{l} v$ for $l \in \mathbb{N}$ and $v$ a prefix of $W_2$, otherwise the output is a prefix of $W_1$. Let $\gamma \in X_n^{j}$ be the length $j$ suffix of $\Gamma_{1}$. Observe that the outputs of the state $\gamma$ of $\dual{A}_{j}$ must also all be of the form $W_1 (W_2)^{l} v$ for $l \in \mathbb{N}$ and $v$ a prefix of $W_2$, otherwise the output is a prefix of $W_1$ and the output depends only on the length of the input word. Therefore we must have that $\Gamma$ and $\gamma$ are $\omega$-equivalent.

On the other hand, given a word $\gamma \in X_n^j$, then a similar argument demonstrates that the state  $x \gamma$ for any $x \in X_n^{j}$ is $\omega$-equivalent to $\gamma$.
\end{proof}

The next lemma observes that Lemma \ref{lemma 1} gives a complete characterisation of elements of $\T{H}_{n}$ with finite order.

\begin{proposition}\label{lemma 1.3}
Let $A$ be an element of $\T{P}_{n}$ and suppose $A$ is synchronizing at level $k$. Then the semigroup $\gen{A}$ generated by  $A$  is finite if and only if there is some $m \in \mathbb{N}$ such that the following holds:
\begin{enumerate}[label = (\roman*)]
\item $\dual{A}_{m}$ is a zero of the semigroup $\langle \dual{A} \rangle$  
\item $\dual{A}_{m}$ is $\omega$-equivalent to a transducer with $r$  components $D_i$ $1\le i \le r$. For each component $D_i$ there is a fixed pair of words $w_{i,1}, w_{i,2}$ (in the states of $A$) associated to $D_i$ such that whenever we read any input from a state in the $D_i$, the output is of the form $w_{i,1}w_{i,2}^{l}v$ for $l \in \mathbb{N}$ and $v$ a prefix of $w_{i,2}$ or has the from $u$ for some prefix $u$ of $w_{i,1}$. Moreover the output depends only on the state in the component $D_i$ from which the input is processed. \end{enumerate}
\end{proposition}
\begin{proof}

$\Rightarrow$: This direction follows from Lemma \ref{lemma 1}, Remark \ref{remark 1} and Theorem \ref{lemma 2}.

\hspace{0.6cm}$\Leftarrow$: Assume that $\dual{A}_{m}$ has $r$ components and let $w_{i,1}$ and $w_{i,2} \ 1\le i\le r$ be the pair of words in the states of $A$ associated with each component $D_i$. To see that the semigroup $\gen{A}$ is finite observe that the assumptions that $\dual{A}_{m}$ is a zero of the semigroup $\gen{\dual{A}}$ and that the output depends only on which state in the component $D_i$ of $\dual{A}_{m}$ we begin processing inputs means that $A^{l}$ is synchronizing at level  $m$ for all $l \in \mathbb{N}$. Therefore the set $\{A^{l}| l \in \N \}$ is finite, since there are only finitely many automata which are synchronizing at level $l$.
\end{proof}

\begin{Remark}
In the case where $A$ is an element of $\hn{n}$ in the above proposition, then each component $D_i$ is a strongly connected component. In particular one of $w_{i,1}$ or $w_{i,2}$ will be the empty string for any component $D_i$.
\end{Remark}

\begin{Remark}
Given a transducer $A \in \spn{n}$ which is synchronizing at level $k$, by increasing the alphabet size to $n^{k}$, one can identify $A$ with an element of $A'$ of $\spn{n^{k}}$. It is an easy exercise to verify that $\dual{(\dual{A}_{k})} = A'$. Therefore  $\dual{A'} = \dual{A}_{k}$, hence by Proposition \ref{lemma 1.3}, $A'$ has finite order if and only if $A$ has finite order. In order to simplify calculations, we shall often assume that our transducer if bi-synchronizing at level 1.
\end{Remark}

\subsection{Applications to the order problem}

On the surface Lemma \ref{lemma 1.3} appears to reduce the order problem in $\T{H}_{n}$ to an equivalent problem of deciding whether the semigroup generated by the dual has a zero. However a consequence of the above lemmas (in particular Lemma \ref{lemma 1}), is that for certain transducers where the dual at the bi-synchronizing level has some property we introduce below, we are able to conclude that this transducer will be an element of infinite order.
We shall need a few definitions first. Once more we shall make these definitions for elements of  $\spn{n}$ restricting to $\hn{n}$ as required.

\begin{Definition}[Bad pairs]
Let $A \in \spn{n}$ be a transducer which is synchronizing at level $k$, and let $r \ge k$. Let $l$ be the minimal splitting length of $\dual{A}_{r}$. Let $\mathscr{B}_{r}$ be the set of tops of those pairs $((q_1 \ldots, q_m), (p_1, \ldots, p_m))$ of $m$ tuples, $m \ge l$, which split $\dual{A}_{r}$ and for which there is a split $((q_1 \ldots, q_m), (p_1, \ldots, p_m), \Gamma)$ such that the bottom of the split depends only on the top. Then we call $\mathscr{B}_{r}$ the set of \emph{bad pairs} associated to $\dual{A}_{r}$. Notice that if $B \in \mathscr{B}_{r}$ then $ B \subset Q$ and $|B| = 2$. Furthermore observe that by minimality of $l$, $\mathscr{B}_{r}$ contains the tops of all splits consisting of a pair of $l$ tuples and a word in $X_n^{r}$. Let $B_{r} \subset \mathscr{B}_{r}$ be this subset. We call $B_{r}$ the \emph{minimal bad pairs} associated to $\dual{A}_{r}$.
\end{Definition}

\begin{Definition}[Graph of Bad pairs]
For a transducer $A \in \spn{n}$, and for $r$ greater than or equal to the minimal synchronizing level, such that $\dual{A}_{r}$ has minimal splitting length $l$, form a directed graph $G_{r}(A)$ associated to $\dual{A}_{r}$ as follows:
\begin{enumerate}[label = (\roman{*})]
\item The vertex set of $G_r(A)$ is the  set $\mathscr{B}_r$ of bad pairs.
\item Two elements $\{x_1, x_2\}$, and $\{{y}_1, {y}_2\}$ of $\mathscr{B}_r$ are connected by an arrow going from $\{x_1, x_2\}$ into $\{{y}_1, {y}_2\}$, if there are pairs $(T_1, T_2) \in Q^{m} \times Q^{m}$, for some $m \ge l$, splitting $\dual{A}_{r}$, with top $\{x_1, x_2\}$ and bottom $\{y_1, y_2\}$ and such that the bottom depends only on the top. 
\end{enumerate}

We call $G_r(A)$ the \emph{graph of bad pairs} associated to $\dual{A}_{r}$. By Remark \ref{badpairsmakesense} this definition makes sense. 

The graph $G_{r}(A)$ possesses an interesting subgraph $\overline{G}_{r}(A)$ whose vertices are elements of $B_r$ the set of minimal bad pairs, with an edge from $\{x_1, x_2\}$ to $\{y_1, y_2\}$, $\{x_1, x_2\}, \{y_1, y_2\} \in B_{r}$ if there is a split of minimal length $l$, with top $\{x_1, x_2\}$ and bottom $\{y_1, y_2\}$. We call $\overline{G}_{r}(A)$ the \emph{minimal graph of bad pairs}.

\end{Definition}

\begin{Remark}
There is a much larger graph containing $G_{r}(A)$ which we do not consider here. This graph has all subsets of $Q_A \times Q_A$ with size exactly two, and there is a directed edge between two such vertices if there is a split of $\dual{A}_{r}$ with top the initial vertex and bottom the terminal vertex. The reason we do not consider this larger graph is due to the existence of elements of finite order in $\shn{n}$ whose dual at the synchronizing level splits (see Example \ref{exampleelelementoffiniteorderwhosegraphofbadpairssplits}). This means that in the larger graphs contain information which is not carried by powers of the transducers.
\end{Remark}

The following results link graph theoretic properties of $G_{r}(A)$ and the order of $A$ when $A \in \shn{n}$. All of these results apply also to the minimal graph of bad pairs $\overline{G}_{r}(A)$. In most cases the information given by $G_{r}(A)$ can already be seen in $\overline{G}_{r}(A)$, however this is not always.

\begin{lemma} \label{lemmagphofbdprs}
Let $A \in \shn{n}$ be a transducer, and suppose that $k$ is the minimal synchronizing level of $A$. Let $r \ge k$ and let $G_{r}(A)$ be the  graph of bad pairs associated to $\dual{A}_{r}$. If $G_{r}(A)$ is non-empty and contains a circuit i.e there is a vertex which we can leave and return to, then  $A$ has infinite order.
\end{lemma}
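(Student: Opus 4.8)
The plan is to exploit the multiplicativity of duals, $\dual{A}_{r+1} = \dual{A}_r \ast \dual{A}_1$, together with Lemma~\ref{minsplittinglengthandsynchlevel}, which converts a split of $\dual{A}_r$ of length $l$ into a genuine inequivalence of states in $\min(\core(A^{l+1}))$. The key observation is that a circuit in $G_r(A)$ lets us compose splits: if there is an edge from $\{x_1,x_2\}$ to $\{y_1,y_2\}$, realised by a pair of $m$-tuples $(T_1,T_2)$ splitting $\dual{A}_r$ through some word $\Gamma$ whose bottom $\{y_1,y_2\}$ depends only on the top, and an edge from $\{y_1,y_2\}$ onwards, then because the bottom depends only on the top we may feed $(y_1,\dots)$ and $(y_2,\dots)$ into the next split as their mandatory first coordinate. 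Following a circuit of length $c$ around $G_r(A)$ and back to its starting vertex, we concatenate the corresponding tuples into a single pair of long tuples (of length roughly a multiple of $c$ times the splitting lengths involved) which splits $\dual{A}_r$, and which we can iterate arbitrarily many times by going around the circuit repeatedly.

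First I would set up the composition lemma carefully: given a split $((q_1,\dots,q_l),(p_1,\dots,p_l),\Gamma)$ with bottom $\{t_1,t_2\}$ depending only on the top $\{q_1,p_1\}$, and a second split with top $\{t_1,t_2\}$, the condition ``bottom depends only on top'' (Definition~\ref{bottomdependsonlyontop}) is exactly what allows the two tuples to be spliced: the state of $A^l$ reached after reading $\Gamma$ from $(q_1,\dots,q_l,u)$ is independent of the tail $u$ and lies in $W_{t_1}$, so any continuation tuple beginning with $t_1$ can be appended. Iterating along the circuit $N$ times produces, for each $N$, a pair of tuples of length growing linearly in $N$ which split $\dual{A}_r$; hence the $r$ splitting length of $A$ is not bounded, so by Lemma~\ref{minsplittinglengthandsynchlevel} (applied along a subsequence, or by noting that a split of length $l$ forces $\min(\core(A^{l+1}))$ to be synchronizing at level $\ge k+1$, and larger $l$'s push this level up without bound via Lemma~\ref{minimalsplittinglengthstrictlyincreasing} and its Remark) the synchronizing levels of the powers $A^{m}$ are unbounded. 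Since there are only finitely many synchronous transducers on $X_n$ synchronizing at any fixed level, the semigroup $\gen{A}$ must be infinite, i.e.\ $A$ has infinite order. The same argument verbatim works for $\overline{G}_r(A)$ since a circuit there is in particular a circuit in $G_r(A)$.

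The main obstacle I anticipate is the bookkeeping in the composition step: one must check that concatenating the tuples really yields a \emph{split} in the technical sense — that the intermediate pairs $(\Gamma_i,\Lambda_i)$ all lie in a common $W_{u_i}$ while only the final pair lands in distinct $W_{t_1}, W_{t_2}$ — and that going around the circuit and returning to the start vertex keeps the two ``tracks'' distinct at every lap (they re-enter the same bad pair $\{x_1,x_2\}$ but must re-enter it as an \emph{ordered} pair consistently, which is where one uses that the top has cardinality exactly two, Remark~\ref{badpairsmakesense}, and a possible relabelling after at most two laps). A cleaner route, which I would try first, is to avoid explicit concatenation: simply argue that the existence of a circuit implies that for every $l$ there is \emph{some} split of length $\ge l$ whose bottom depends only on its top (walk the circuit long enough), hence the $r$ splitting length, while finite for each fixed power estimate, forces — via Lemma~\ref{minsplittinglengthandsynchlevel} — arbitrarily large synchronizing levels among the $A^{m}$, contradicting finiteness of $\gen{A}$. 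This sidesteps the combinatorial subtleties at the cost of one extra appeal to the monotonicity lemmas.
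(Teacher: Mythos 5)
There is a genuine gap, and it stems from a misreading of what gets concatenated. You claim that going around the circuit $N$ times produces a pair of long tuples ``which splits $\dual{A}_r$,'' with $r$ fixed, and then conclude that ``the $r$ splitting length of $A$ is not bounded.'' Both steps fail. First, the $r$ splitting length is by definition the \emph{minimal} $l$ for which a splitting pair of $l$-tuples exists; it is a single number and cannot be ``unbounded.'' Second, and more seriously, you cannot extend a split of $\dual{A}_{r}$ by appending further input states while keeping the state word of length $r$. After the first split of length $l_1$ through $\Gamma$, the two trajectories sit in words $\Gamma_{l_1}$ and $\Lambda_{l_1}$ of length $r$ lying in \emph{distinct} classes $W_{t_1}\ne W_{t_2}$, so the very next output letters of $\dual{A}_r$ already differ; the two trajectories are also at two different states of $\dual{A}_{r}$, so there is no common word $\Gamma'$ from which to apply the next edge's split. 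A split, by definition, requires the intermediate state words to agree on forced state for all but the last step, so once divergence occurs the concatenated pair is not a split of $\dual{A}_r$ at all. Note also that $\dual{A}_{r}$ splitting by itself cannot force infinite order — the paper exhibits a finite-order $H\in\hn{5}$ whose dual splits at its synchronizing level (Example~\ref{exampleelelementoffiniteorderwhosegraphofbadpairssplits}) — so any argument that only increases tuple length at fixed $r$ is doomed.

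What the paper actually does is grow the \emph{word length} in step with the tuples: it constructs a word $w(rm)=\Gamma_1\Lambda_1'\Delta_1'\cdots$ of length $rm$ and tuples of length roughly $ml$, and shows these split $\dual{A}_{rm}$. The step you are missing is the choice of the new $r$-block $\Lambda_1'$ as a preimage (this is where invertibility of $A$ enters) so that after the first $l_1$ inputs the second $r$-block of the state word is exactly the word $\Lambda_1$ through which the next edge's split is realised, while the now-divergent first $r$-block no longer contributes to the forced-state outputs because those are determined by the length-$k$ suffix. This is what lets the two tracks re-agree on outputs and continue around the circuit. The contradiction with finite order is then obtained not from Lemma~\ref{minsplittinglengthandsynchlevel} directly but from Lemma~\ref{lemma 1} (and Remark~\ref{remark 1}): if $\gen{A}$ were finite there would be a $j$ such that $\dual{A}_{j}$ has infinite splitting length, and hence no splits of $\dual{A}_{rm}$ exist once $rm\ge j$ — contradicting the constructed family. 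Your ``cleaner route'' has the same flaw, since it again speaks of arbitrarily long splits without specifying the growing dual $\dual{A}_{rm}$ in which they live, and a single application of Lemma~\ref{minsplittinglengthandsynchlevel} to $\dual{A}_r$ only yields synchronizing level $\ge r+1$, not an unbounded family. Your overall intuition — compose along the circuit, using ``bottom depends only on top'' to control the handover — is the right one, but the object you must grow is the state word of the dual, not merely the input tuple.
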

\begin{proof}
Let $l$ be the minimal splitting length of $\dual{A}_{r}$. The proof will proceed as follows, for every $m \ge 1$, we construct a word, $w(rm)$, of length  $rm$, such that there are two distinct elements of $Q^{ml+1}$ which have different outputs when processed through $w(rm)$ (in $\dual{A}_{rm}$). This will contradict $A$ having finite order, since by Lemma \ref{lemma 1} above, if $A$ has finite order, then there will be a $j$ such that any two sequences of states of any length will have the same output when processed through a word of length $rj$ (see Remark \ref{remark 1}).

Since $G_{r}(A)$ is non-empty, and has a circuit, there exists a circuit: $\{x_1,y_1\} \to \{x_2, y_2\} \to \ldots \to \{x_j, y_j\}\to \{x_1, y_1\}$. For $i \in \mathbb{N}$ let $A^{i} = \gen{X_n, Q^{i}, \pi_{i}, \lambda_{i}}$.
 
Now for $m =1$, since $\{x_1, y_1\}$ is a vertex of $G_{r}(A)$ with at least one incoming edge, there is a state $\Gamma_1$ of $\dual{A}_{r}$, and a pair $(S_1,T_1) \in Q^{l_1}\times Q^{l_1}$ (for $l_1 \ge l$) such that $(S_1,T_1, \Gamma_1)$ is a split of $\dual{A}_{r}$ with bottom $\{x_1, y_1\}$ and such that the bottom depends only on the top. We may assume  that the top of this split is $\{x_j,y_j\}$. Therefore for any $p \in Q$, the output of $S_1p$ when processed through $\Gamma_1$ is not equal to the output of $T_1p$ when processed through $\Gamma_1$. However the output of $S_1$ and $T_1$ are equal when processed through $\Gamma_1$ since the bottom of the split depends only on its top. Hence the following picture is valid, for appropriate $U \in Q^{l_1}$.
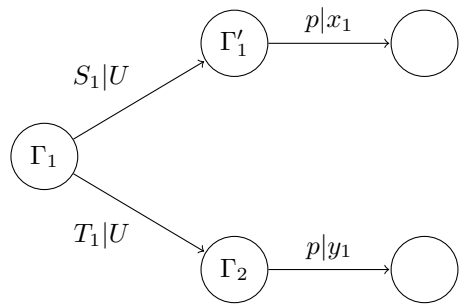
\begin{figure}[h!]
\begin{center}
\begin{tikzpicture}[shorten >=0.5pt,node distance=3cm,on grid,auto] 
   \node[state] (q_0) [yshift = 1.5cm]   {$\Gamma_1$}; 
   \node[state] (q_1) [yshift=3cm, xshift=2.5cm] {$\Gamma_1'$}; 
   \node[state] (q_2) [yshift=0cm,xshift=2.5cm] {$\Gamma_2$}; 
   \node[state] (q_3) [yshift=0cm, xshift=5cm] {};
   \node[state] (q_4)  [yshift=3cm,xshift=5cm] {};
    \path[->] 
    (q_0) edge node {$S_1|U$} (q_1)
          edge node[swap] {$T_1|U$} (q_2)
     (q_1) edge node {$p| x_1$} (q_4)
     (q_2) edge node {$p|y_1$} (q_3);
\end{tikzpicture}
\end{center}
\caption{Stage 1 of construction}
\end{figure}

Now since $\{x_1, y_1\}$ is connected to $\{x_2, y_2\}$ there is a word $\Lambda_{1} \in X_n^{r}$ such that there is a pair $(S_2,T_2) \in Q^{l_2} \times Q^{l_2}$ and $(S_2,T_2, \Lambda_{1})$ is a split with top $\{x_1, y_1\}$ and bottom $\{x_2, y_2\}$ such that the bottom depends only on the top. Let $\Lambda_1'$ be the word of length $r$ such that $\lambda_{l_1}(\Lambda_1', U) = \Lambda_1$ (such a word exists since $A$ is invertible). Since the bottom of the split depends only on the top there is $V \in Q^{l_2}$ such that  for any $P \in Q^{l_2 -1}$ we have $\pi_{l_2}(\Lambda_1, (x_1,P))= V = \pi_{l_2}(\Lambda_1, (y_1,P)) $. Let $V'$ be the state  of $A^{l_1}$ such that $\ \pi_{l_1}(\Lambda_1',U) = V'$. Then let $w(k2) = \Gamma_1\Lambda_1'$. Now by the Remark \ref{badpairsmakesense}, we have the following transition 
\begin{figure}[h!]
\begin{center}
\begin{tikzpicture}[shorten >=0.5pt,node distance=3cm,on grid,auto] 
   \node[state] (q_0) [yshift = 1.5cm]   {$\Gamma_1\Lambda_1'$}; 
   \node[state] (q_1) [yshift=3cm, xshift=2.5cm] {$\Gamma_1'\Lambda_1$}; 
   \node[state] (q_2) [yshift=0cm,xshift=2.5cm] {$\Gamma_2\Lambda_1$}; 
   \node[state] (q_3) [yshift=0cm, xshift=5cm] {$\#\Lambda_2$};
   \node[state] (q_4)  [yshift=3cm,xshift=5cm] {$\ast\Lambda_2'$};
    \node[state] (q_6) [yshift=0cm, xshift=7.5cm] {};
    \node[state] (q_5)  [yshift=3cm,xshift=7.5cm] {};
    \path[->] 
    (q_0) edge node {$T_1|V'$} (q_1)
          edge node[swap] {$S_1|V'$} (q_2)
     (q_1) edge node {$P| V$} (q_4)
     (q_2) edge node[swap] {$P|V$} (q_3)
     (q_3) edge node [swap] {$p| x_2$} (q_6)
     (q_4) edge node {$p|y_2$} (q_5);;
\end{tikzpicture}
\end{center}
\caption{Stage 2 of construction.}
\end{figure}
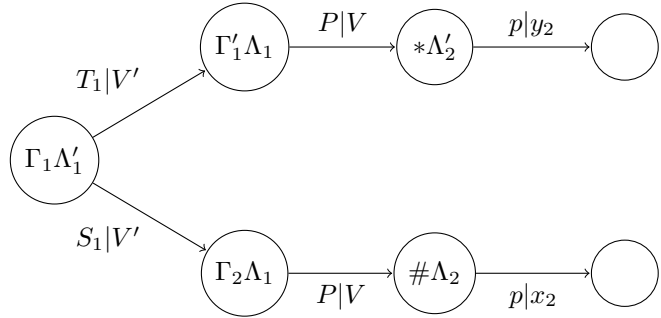
for some $P \in Q^{l_2}$ and $p \in Q$. Now we can iterate the above process, since $\{x_2, y_2\}$ is a vertex of $G_{r}(A)$ with an outgoing edge to another vertex of $G_{r}(A)$, and the output of $S_1P$ and $T_1P$ when processed through $\Gamma_1\Lambda_1'$ are the same. 

Label the levels of the above picture by $1,2,3,4$. We grow our word from bottom to top. Let $\Delta_1$ be the word such that there is a pair $(S_3, T_3)$ in $Q^{l_3} \times Q^{l_3}$ and $(S_3,T_3, \Delta_1)$ is a split of $\dual{A}_{r}$ with top $\{x_2, y_2\}$ and bottom $\{x_3, y_3\}$ such that the bottom depends only on the top. Attach $\Delta_1$ to right end of both words representing the states of level  $3$. There is a word $\Delta_1'$ such that $\lambda_{l_1+l_2}(\Delta_1', V'V) = \Delta_1$. Hence $W(r3) = \Gamma_1\Lambda_1'\Delta_1'$. Moreover since the bottom of the split $(S_3,T_3, \Delta_1)$ depends only on its top we see that  $\dual{A}_{3r}$ has a split of length $l_1 + l_2 + l_3$  with bottom $\{x_3, y_3\}$ whose bottom depends only on its top.  We repeat the above process and in this way construct the words $w(rm)$ demonstrating that $\dual{A}_{rm}$ has a  split with bottom $\{x_i, y_i\}$ where $1 \le i \le j$, and $i \equiv m \mod j$ such that the bottom depends only on the top.
\end{proof}

\begin{Remark}\label{circuitimpliesloop}
The proof above actually demonstrates that if \(A \in \widetilde{\T{H}}_{n}\) is such that for some $r$ bigger than or equal to its synchronizing level, the graph $G_{r}(A)$ of bad-pairs has a circuit, then there is an  $r' >r$ depending on the length of the circuit in $G_{r}(A)$, such that $G_{r'}(A)$ has a loop.
\end{Remark}

The above gives a sufficient condition for determining when an element of $\shn{n}$ has infinite order, although it does not produce  a witness. The next results show that when the graph of bad pairs contains a circuit for an element of $\shn{n}$ then we are also able to generate a rational word on an infinite orbit under the action of the transducer. 

We start with the case where the graph of bad pairs is contains a loop and prove the general case by reducing to the loop case.

\begin{proposition}\label{loopgiveswitness}
Let $A \in \widetilde{\T{H}}_{n}$ be synchronizing at level $k$, and let $r \ge k$. If the graph $G_{r}(A)$ of bad pairs has a loop, then there is rational word in $X_n^{\Z} $ on an infinite orbit under the action of $A$.
\end{proposition}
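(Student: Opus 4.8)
The plan is to exploit the loop in $G_r(A)$ to manufacture a periodic point of a high power $\dual{A}_{r m}$ that ``grows'' in a controlled way, and then dualize this back to a rational point of $X_n^{\Z}$ on which $A$ acts with infinite orbit. Concretely, suppose the loop is at the bad pair $\{x_1,y_1\}$. By the construction in the proof of Lemma~\ref{lemmagphofbdprs}, iterating the loop $m$ times produces, for every $m$, a word $w(rm)\in X_n^{rm}$ and a pair of tuples $(S^{(m)},T^{(m)})$ in $Q^{l_1 m}\times Q^{l_1 m}$ which split $\dual{A}_{rm}$ with top and bottom both equal to $\{x_1,y_1\}$, and whose bottom depends only on the top. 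The reason this is the right object: in the dual picture, reading $w(rm)$ from the paired state sequences gives a common output, and appending one more state symbol $p\in Q$ makes the outputs differ in their final coordinate; so the difference between the two orbits persists at deeper and deeper levels. First I would make this quantitative: show that the state of $\dual{A}_{rm}$ reached after reading the common output stabilizes into a periodic pattern (with period $r$, or $r\cdot(\text{order of the rotation associated to the loop})$ — here Lemma on rotations from Section~\ref{propertiesofpn} may be needed to pin down the period), because the loop is a single self-referential split.

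Next I would translate the dual statement into a statement about $A$ acting on $X_n^{\Z}$. The key dictionary: a state of $\dual{A}_r$ is a word $\Gamma\in X_n^r$; reading a state-symbol $q\in Q$ from $\Gamma$ in $\dual{A}_r$ and transitioning to $\Gamma'$ with output $q'$ means exactly $\lambda(\Gamma,q)=\Gamma'$ and $\pi(\Gamma,q)=q'$ in $A$. So the infinite sequence of dual-transitions built from the loop corresponds to a bi-infinite (or one-sided, then extended) word fed through $A$. The loop being self-referential means the relevant word is eventually periodic in both directions — i.e.\ rational. I would build the candidate rational word $z\in X_n^{\Z}$ as follows: take the ``persisting'' word produced by the loop construction and close it up periodically, using that $A$ is bi-synchronizing so that periodic tails are legitimately determined by a finite block (the $k$ preceding letters forcing the correct state). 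Then $A(z)$, $A^2(z),\dots$ are computed by following the same loop deeper each time, and the point is that $A^m(z)\ne A^{m'}(z)$ for $m\ne m'$ because at stage $m$ the construction exhibits a split of length growing with $m$ whose bottom depends only on its top — by Lemma~\ref{lemma 1}, if $A$ had finite order all outputs at a fixed dual level would coincide, contradicting the split persisting to unbounded depth. More directly: the orbit of $z$ cannot be finite, since a finite orbit would force $A^{N!}$ (for suitable $N$) to fix $z$, hence to act trivially on the periodic structure of $z$, contradicting that $z$ is built to witness a nontrivial split at every level.

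The main technical step — and the place I expect to spend the most effort — is the bookkeeping that turns ``$G_r(A)$ has a loop'' into an honest \emph{rational} (eventually two-sided periodic) point rather than merely some point on an infinite orbit (which Lemma~\ref{lemmagphofbdprs} already half-gives via non-finiteness of order). The subtlety is that the word $w(rm)$ produced in Lemma~\ref{lemmagphofbdprs} is assembled inductively and a priori grows without any visible periodicity; I need to argue that because we reuse the \emph{same} loop edge each time, the blocks $\Lambda_i'$ appended at each stage eventually repeat, so that $w(rm)$ is a prefix of a fixed periodic word $w^\infty\in X_n^{\N}$, and symmetrically extend to the left. This is where bi-synchronicity is essential: the level-$k$ synchronizing map lets me realize the periodic left-infinite tail by a fixed finite block, and the analogous property for $A^{-1}$ (available since $A\in\widetilde{\T H}_n$ — wait, $\widetilde{\T H}_n$ is one-way synchronizing, so here I should instead use that $A$, being in $\hn{n}$ when $G_r(A)$ has a circuit, or at least invertible as an automaton, lets me solve $\lambda(\Lambda',U)=\Lambda$ for $\Lambda'$) guarantees the preimages needed to push the periodic pattern back through $A$ at each stage. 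Once the periodic word $z$ is in hand, verifying $\{A^m(z):m\in\N\}$ is infinite is short: each $A^m(z)$ carries, in a fixed window, the distinguishing data of the depth-$m$ split, and these are pairwise distinct.

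Finally, I would record that the loop case is genuinely the base case for the circuit case: by Remark~\ref{circuitimpliesloop}, a circuit in $G_r(A)$ yields a loop in $G_{r'}(A)$ for some $r'>r$, so Theorem~\ref{Thm:badpairshascircuitimpliesifiniteorder} will follow from this proposition by passing from $r$ to $r'$ (the rational witness produced at level $r'$ is of course also a rational word in $X_n^{\Z}$ on an infinite $A$-orbit). So the proposition as stated — the loop case — is exactly the content to nail down, and the circuit case is then a one-line reduction.
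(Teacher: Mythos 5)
Your general plan is right --- exploit the loop to build an eventually periodic (hence rational) witness --- but the concrete construction you propose is not the one that works, and the step you flag as ``where I expect to spend the most effort'' is where your route breaks down. You want to revisit the words $w(rm)$ produced in the proof of Lemma~\ref{lemmagphofbdprs} and show that, when the same loop edge is reused at every stage, the appended blocks $\Lambda_i'$ eventually repeat, so that $w(rm)$ is a prefix of a fixed periodic one-sided word. That claim is not justified, and there is no visible reason it should hold: the block appended at stage $m$ is determined by solving $\lambda_{l_1+\cdots+l_m}(\,\cdot\,,\,V_1\cdots V_m)=\Gamma$ for a fresh preimage, and the cumulative state tuple $V_1\cdots V_m$ keeps growing, so the preimages need not stabilize into a period.

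The paper sidesteps this entirely. Instead of trying to make the $w(rm)$ of Lemma~\ref{lemmagphofbdprs} periodic, it builds a \emph{different} rational word directly from the loop. With $(pP,qQ,\Gamma_1)$ a length-$l$ split with top and bottom both $\{p,q\}$ whose bottom depends only on its top, let $S_0$ be the common target state in $A^l$, set $S_1=\pi_l(\Gamma_1,S_0)$, and let $\Gamma_2\in X_n^r$ be the unique word with $\lambda_l(\Gamma_2,S_1)=\Gamma_1$; continue inductively, producing states $S_i$ and words $\Gamma_{i+1}$ with $\lambda_l(\Gamma_{i+1},S_i)=\Gamma_i$. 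Finiteness forces $\Gamma_i=\Gamma_{j+1}$ for some $i\le j$, so the sequence $\Gamma_1,\Gamma_2,\dots$ enters a cycle $(\Gamma_i\cdots\Gamma_j)$ after a finite preperiod. The rational witness is then
\[
\ldots\,\Delta\,\overset{\centerdot}{\Delta}\,\Gamma_1\cdots\Gamma_{i-1}\,(\Gamma_i\cdots\Gamma_j)(\Gamma_i\cdots\Gamma_j)\ldots
\]
with $\Delta=\lambda_l(\Gamma_1,pP)$ (or $\Lambda=\lambda_l(\Gamma_1,qQ)$ in a complementary subcase). The left tail is the constant block $\Delta$ (or $\Lambda$), the right tail is the cycle, so the word is rational by fiat --- no periodicity of the Lemma~\ref{lemmagphofbdprs} construction is needed. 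The subsequent verification that this word is on an infinite $A^l$-orbit is also more delicate than your ``a finite orbit would force $A^{N!}$ to act trivially'' shortcut: the paper runs a two-case analysis ($\Delta\in W_p$ vs.\ $\Delta\in W_q$) and, within each case, subcases depending on whether $\Gamma_i=\Gamma_1$ and whether $\Gamma_{i-1}$ lies in $W_p$, $W_q$, or neither, because the orbit only visibly escapes periodicity when you track which $W_\ast$ the evolving $\Delta_t$ or $\Lambda_t$ falls into. Your sketch, as written, does not locate where the orbit fails to close up, and without that the infinitude of the orbit is asserted rather than proved. Your closing remark on the loop case being the base case for circuits (via Remark~\ref{circuitimpliesloop}) is correct and matches the paper.
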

\begin{proof}
Let $(p,q)$ be a vertex of $G_{r}$ with a loop. Furthermore assume that $m \in \mathbb{N}$ is the minimum splitting length of $\dual{A}_{r}$. Let $\Gamma_1 \in X_n^{r}$, and $P,Q \in Q_A^{l-1}$ be such that $(pP, qQ, \Gamma_1)$ is a split with top and bottom equal to $\{p,q\}$ such that the bottom depends only on the top. 

Let  $A^{l} = \gen{X_n Q^{l}, \lambda_{l}, \pi_{l}}$, then since $(pP, qQ, \Gamma_1)$ is a split with top and bottom equal to $\{p,q\}$ such that the bottom depends only on the top, there is a state $S_0$ such that $\pi_{l}(pP, \Gamma_1)= S_0 = \pi_{l}(qQ, \Gamma_1)$ for any $P$ and $Q$ in $Q_A^{l-1}$. Therefore let $S_1 := \pi_{l}(\Gamma_1, S_0)$. Let $\Gamma_2 \in X_n^r$ be the unique word such that $\lambda_{l}(\Gamma_2, S_1) = \Gamma_1$. Assume that $\Gamma_i$ is defined and $S_i$ is equal to $\pi_{l}(\Gamma_i, S_{i-1})$, then let $\Gamma_{i+1}$ be the unique word in $X_n^r$ such that $\lambda_{l}(\Gamma_{i+1}, S_{i}) = \Gamma_i$. Eventually we find there are $i \le j \in \mathbb{N}$ such that $\Gamma_{i} = \Gamma_{j+1}$.

Suppose that $\lambda_{l}(\Gamma_1, pP) = \Delta$  and $\lambda_{l}(\Gamma_1, qP) = \Lambda$. Consider the bi-infinite word: 
\[
\ldots \Delta\overset{\centerdot}{\Delta}\Gamma_1\ldots\Gamma_{i-1}(\Gamma_i\ldots\Gamma_j)(\Gamma_i\ldots\Gamma_j)\ldots
\]

where `$\stackrel{\centerdot}{\Delta}$' indicates that $\Delta$ starts at the zero position.
There are two cases to be considered.

{\bf{Case 1:}} $\Delta \in W_{p}$ and $\Lambda \in W_{q}$. We consider how powers of $A^{l}$ act on this word. Since $\Delta \in W_{p}$ and for any $T \in pQ^{l-1}$, $\lambda_{l}(\Gamma_1, T) \in W_{p}$, the bottom of the split $(pP, qQ, \Gamma_1)$ depends only on the top, we must have that:

\[
 \ldots \Delta\overset{\centerdot}{\Delta}\Gamma_1\ldots\Gamma_{i-1}(\Gamma_i\ldots\Gamma_j)(\Gamma_i\ldots\Gamma_j)\ldots \overset{A^{l}}{\rightarrow}\ldots \overset{\centerdot}{\ast}_{0}\Delta_1\Gamma_1\ldots\Gamma_{i-1}(\Gamma_i\ldots\Gamma_j)(\Gamma_i\ldots\Gamma_j)\ldots
\]

Now since $\Delta_{1} \in W_{p}$, we can repeat the above
\[
 \ldots \overset{\centerdot}{\ast}_{0}\Delta_1\Gamma_1\ldots\Gamma_{i-1}(\Gamma_i\ldots\Gamma_j)(\Gamma_i\ldots\Gamma_j)\ldots
  \overset{A^{l}}{\rightarrow}\ldots \overset{\centerdot}{\ast}_{0}'\ast_{1}\Delta_2\Gamma_1\ldots\Gamma_{i-1}(\Gamma_i\ldots\Gamma_j)(\Gamma_i\ldots\Gamma_j)\ldots
\]

Therefore after applying $A^{l}$ $t$ times for some $t \in \mathbb{N}$ we see that from the  position $i$ onwards the output is of the form $\Delta_{r}\Gamma_1\ldots\Gamma_{i-1}(\Gamma_i\ldots\Gamma_j)(\Gamma_i\ldots\Gamma_j)\ldots$, and $\Delta_m \in W_p$. Therefore if $\Gamma_i \ne \Gamma_1$, \(\ldots \Delta\smash{\overset{\centerdot}{\Delta}}\Gamma_1\ldots\Gamma_{i-1}(\Gamma_i\ldots\Gamma_j)(\Gamma_i\ldots\Gamma_j)\ldots \) is on an infinite orbit under the action of $A^{l}$. This follows for if $t, t' \in \mathbb{N}$ such that $t \ne t'$, then we have:

\[
( \ldots \Delta\overset{\centerdot}{\Delta}\Gamma_1\ldots\Gamma_{i-1}(\Gamma_i\ldots\Gamma_j)(\Gamma_i\ldots\Gamma_j)\ldots)A^{l*t} \ne ( \ldots \Delta\overset{\centerdot}{\Delta}\Gamma_1\ldots\Gamma_{i-1}(\Gamma_i\ldots\Gamma_j)(\Gamma_i\ldots\Gamma_j)\ldots)A^{l* t'}
\]
otherwise:

\[
\ldots \Delta\overset{\centerdot}{\Delta}\Gamma_1\ldots\Gamma_{i-1}(\Gamma_i\ldots\Gamma_j)(\Gamma_i\ldots\Gamma_j)\ldots= ( \ldots \Delta\overset{\centerdot}{\Delta}\Gamma_1\ldots\Gamma_{i-1}(\Gamma_i\ldots\Gamma_j)(\Gamma_i\ldots\Gamma_j)\ldots)A^{l* |t-t'|}
\]

However by minimality of $i$, we have that $\Gamma_1 \ne \Gamma_{t}$ for $1< t \le j$, yielding a contradiction.

If $\Gamma_i = \Gamma_1$ then our original word $\ldots \Delta\overset{\centerdot}{\Delta}\Gamma_1\ldots\Gamma_{i-1}(\Gamma_i\ldots\Gamma_j)(\Gamma_i\ldots\Gamma_j)\ldots$, becomes 
\[
\ldots \Delta\overset{\centerdot}{\Delta}(\Gamma_1\ldots\Gamma_{i-1})(\Gamma_1\ldots\Gamma_{i-1})\ldots.
\]

 Notice that the infinite word corresponding to the coordinates $\mathbb{N}\backslash\{0\}$ is periodic with period $i-1$. Now if $\Gamma_{i-1} \notin W_{p}$, Then for any $t \in \mathbb{N}$ such that $t > 0 $ we have:
\[
(\ldots \Delta\overset{\centerdot}{\Delta}(\Gamma_1\ldots\Gamma_{i-1})(\Gamma_1\ldots\Gamma_{i-1})\ldots)A^{lt(i-1)} \ne \ldots \Delta\overset{\centerdot}{\Delta}(\Gamma_1\ldots\Gamma_{i-1})(\Gamma_1\ldots\Gamma_{i-1})\ldots.
\]

since $\Delta_{t(i-1)} \ne \Gamma_{t(i-1)} = \Gamma_{i-1}$. Therefore for any $t, t' \in \mathbb{N}$ such that $t \ne t'$, we must have that:
\[
(\ldots \Delta\overset{\centerdot}{\Delta}(\Gamma_1\ldots\Gamma_{i-1})(\Gamma_1\ldots\Gamma_{i-1})\ldots)A^{lt(i-1)} \ne (\ldots \Delta\overset{\centerdot}{\Delta}(\Gamma_1\ldots\Gamma_{i-1})(\Gamma_1\ldots\Gamma_{i-1})\ldots)A^{lt'(i-1)}.
\]

If $\Gamma_i = \Gamma_1$ and $ \Gamma_{i-1} \in W_{p}$, then consider the bi-infinite word $\ldots \Lambda\overset{\centerdot}{\Lambda}\Gamma_1\ldots\Gamma_{i-1}(\Gamma_1\ldots\Gamma_{i-1})$, since $q \ne p$ and $\Lambda \in W_q$ and $\Gamma_{i-1} \in W_p$ we have $\Gamma_{t(i-1)} =\Gamma_{i-1} \ne \Lambda_{t(i-1)}$ for any $m \in \mathbb{N}$. Here $\Lambda_{t(i-1)}$ is defined analogously to $\Delta_{t(i-1)}$. Therefore by the argument above $\ldots \Lambda\overset{\centerdot}{\Lambda}\Gamma_1\ldots\Gamma_{i-1}(\Gamma_{1}\ldots\Gamma_{i-1})\ldots$ is on an infinite orbit under the action of $A^{l}$.

{\bf{Case 2 :}} We assume now that $\Delta \in W_{q}$ and $\Lambda \in W_{p}$. As in the previous case we consider how powers of $A^{l}$ act on the word \(\ldots \Delta\overset{\centerdot}{\Delta}\Gamma_1\ldots\Gamma_{i-1}(\Gamma_i\ldots\Gamma_j)(\Gamma_i\ldots\Gamma_j)\ldots\). 

Making an argument similar to case 1, we have that: 

\[
 \ldots \Delta\overset{\centerdot}{\Delta}\Gamma_1\ldots\Gamma_{i-1}(\Gamma_i\ldots\Gamma_j)(\Gamma_i\ldots\Gamma_j)\ldots \overset{A^{l}}{\rightarrow}\ldots \overset{\centerdot}{\ast}_{0}\Delta_1\Gamma_1\ldots\Gamma_{i-1}(\Gamma_i\ldots\Gamma_j)(\Gamma_i\ldots\Gamma_j)\ldots
\]

However, in this case $\Delta_1 \in W_{p}$. Applying  $A^{l}$ again we have:
\[
 \ldots \overset{\centerdot}{\ast}_{0}\Delta_1\Gamma_1\ldots\Gamma_{i-1}(\Gamma_i\ldots\Gamma_j)(\Gamma_i\ldots\Gamma_j)\ldots
  \overset{A^{l}}{\rightarrow}\ldots \overset{\centerdot}{\ast}_{0}\ast_{1}\Delta_2\Gamma_1\ldots\Gamma_{i-1}(\Gamma_i\ldots\Gamma_j)(\Gamma_i\ldots\Gamma_j)\ldots
  \]
where $\Delta_2 \in W_q$. Therefore given $t \in \mathbb{N}$ we know that after applying $A^{l}$ $t$ times, the resulting word, from the $t$\textsuperscript{th} position onwards is of the form:
$\Delta_{t}\Gamma_1\ldots\Gamma_{i-1}(\Gamma_i\ldots\Gamma_j)(\Gamma_i\ldots\Gamma_j)\ldots$ where $\Delta_{t} \in W_q$ if $t$ is even, and $\Delta_{t} \in W_{p}$ if $t$ is odd.

By considering the bi-infinite word: $\ldots \Lambda\overset{\centerdot}{\Lambda}\Gamma_1\ldots\Gamma_{i-1}(\Gamma_i\ldots\Gamma_j)(\Gamma_i\ldots\Gamma_j)\ldots$, and similarly defining the $\Lambda_{t}$'s $t \in \mathbb{N}$, we see that after applying $A^{l}$ $t$-times to this word, the output, from the $t$\textsuperscript{th} position onwards is of the form: $\Lambda_{t}\Gamma_1\ldots\Gamma_{i-1}(\Gamma_i\ldots\Gamma_j)(\Gamma_i\ldots\Gamma_j)\ldots$ where $\Lambda_{t} \in W_q$ if $t$ is odd, and $\Lambda_{t} \in W_{p}$ if $t$ is even.

Now we go through the subcases as in Case 1. If $\Gamma_{i} \ne \Gamma_1$, then the argument proceeds exactly as in Case 1.

Hence consider the case \( \Gamma_{i} = \Gamma_1\). Again we split into subcases. Now either $\Gamma_{i-1} \in W_p \sqcup W_q$ or it is disjoint from these two sets. We assume $\Gamma_{i-1} \in W_q$ (the other case is proved analogously). Since $\Lambda_{2t(i-1)} \in W_{p}$, for $t \in \mathbb{N}$ then by similar arguments to Case 1 above we conclude that $\ldots \Lambda\overset{\centerdot}{\Lambda}\Gamma_1\ldots\Gamma_{i-1}(\Gamma_i\ldots\Gamma_j)(\Gamma_i\ldots\Gamma_j)\ldots$ is on an infinite orbit under the action of $A^{2l}$ and so under the action of $A^{l}$.

If $\Gamma_{i-1} \cap (W_p \sqcup W_q) = \emptyset$ then \(\ldots \Delta\overset{\centerdot}{\Delta}\Gamma_1\ldots\Gamma_{i-1}(\Gamma_1\ldots\Gamma_{i-1})\ldots\) and $\ldots \Lambda\overset{\centerdot}{\Lambda}\Gamma_1\ldots\Gamma_{i-1}(\Gamma_1\ldots\Gamma_{i-1})\ldots$ are on infinite orbits under the action of $A^{l}$ by repeating the argument in Case one. 

\end{proof}

\begin{Remark}
In the proof above, in showing that the witness is on an infinite orbit under the action  of the transducer, our argument has made use only of the  the right infinite portion corresponding to the coordinates $\mathbb{N}\sqcup\{-1,\ldots, -r \}$.  In particular this means that we can replace the left infinite portion  corresponding to the coordinates $\{\ldots,-r-3,-r-2,-r-1\}$ by any infinite word in $X_n^{\mathbb{N}}$.
\end{Remark}
\begin{corollary}\label{circuitgiveswitness}
Let $A \in \widetilde{\T{H}}_{n}$ be synchronizing at level $k$, and let $r \ge k$. If the graph $G_{r}(A)$ of bad pairs has a circuit, then there is rational word in $X_n^{\Z} $ on an infinite orbit under the action of $A$.
\end{corollary}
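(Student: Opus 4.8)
The plan is to deduce the corollary from Proposition~\ref{loopgiveswitness} by reducing the presence of a circuit in $G_{r}(A)$ to the presence of a loop in $G_{r'}(A)$ for a suitable $r' \ge r$. Suppose $G_{r}(A)$ contains a circuit
\[
\{x_1,y_1\} \to \{x_2,y_2\} \to \cdots \to \{x_j,y_j\} \to \{x_1,y_1\}
\]
of length $j$. The first step is to observe, as recorded in Remark~\ref{circuitimpliesloop}, that this forces a loop in a higher dual: taking $r' = rj$ one composes the $j$ splits witnessing the consecutive edges of the circuit, exactly as in the construction of the words $w(rm)$ inside the proof of Lemma~\ref{lemmagphofbdprs}. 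Concretely, starting from a split of $\dual{A}_{r}$ with top $\{x_j,y_j\}$ and bottom $\{x_1,y_1\}$ whose bottom depends only on its top, one attaches on the right, one after another, the words realising the edges $\{x_i,y_i\}\to\{x_{i+1},y_{i+1}\}$ for $i = 1,\ldots,j-1$, at each stage using invertibility of $A$ to pull the appropriate word back to the correct level. After going once around the circuit one obtains a word $\Gamma \in X_n^{rj}$ together with tuples $S,T \in Q_A^{m}$ (for some $m$) so that $(S,T,\Gamma)$ is a split of $\dual{A}_{rj}$ with top $\{x_1,y_1\}$ and bottom $\{x_1,y_1\}$, and the bottom depends only on the top; that is, $\{x_1,y_1\}$ carries a loop in $G_{rj}(A)$. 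Since $r' = rj \ge r \ge k$, the graph $G_{r'}(A)$ is defined and has a loop.

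The second step is then immediate: applying Proposition~\ref{loopgiveswitness} to $A$ with $r'$ in place of $r$ produces a rational word in $X_n^{\Z}$ on an infinite orbit under the action of $A$, which is exactly what is required.

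The only point demanding any care is that the property ``the bottom of the split depends only on the top'' is preserved under the composition of splits along the circuit --- that is, that stringing the edge-witnessing words together yields a single split of the longer dual whose bottom is a function of its top alone, and that after $j$ stages this bottom coincides with the top one began with. But this is precisely the invariant maintained at every stage of the inductive construction in the proof of Lemma~\ref{lemmagphofbdprs} (each $w(rm)$ is shown there to determine a split of $\dual{A}_{rm}$ with bottom depending only on its top), and the circuit closing up guarantees the bottom returns to $\{x_1,y_1\}$. Hence the main obstacle is bookkeeping rather than new mathematical content; nothing beyond Proposition~\ref{loopgiveswitness} and the construction already carried out for Lemma~\ref{lemmagphofbdprs} is needed.
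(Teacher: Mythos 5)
Your proof follows the paper's argument exactly: invoke Remark~\ref{circuitimpliesloop} to upgrade the circuit in $G_{r}(A)$ to a loop in $G_{r'}(A)$ for some $r' \ge r$, and then apply Proposition~\ref{loopgiveswitness}. You have merely unpacked the remark in more detail than the paper's one-line proof does, but the route is the same.
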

\begin{proof}
This is a consequence of the proposition above and Remark \ref{circuitimpliesloop}.
\end{proof}

Below is an example of a transducer $B$ whose graph of bad pairs at level 1 $G_{1}(B)$ satisfies the conditions of  Lemma \ref{lemmagphofbdprs}, we also construct a witness as in Proposition \ref{loopgiveswitness} which demonstrates that the transducer has infinite order.

\begin{example}
Let $B$ be the transducer in Figure \ref{exampleillustratingrationalwitness}. Its dual is given by Figure \ref{exampleofdualwithloop}.
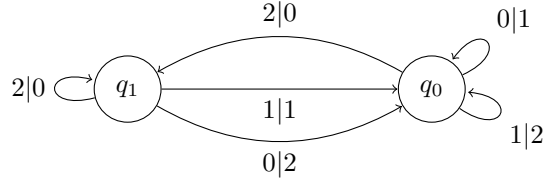
\begin{figure}[h!]
\begin{center}
\begin{tikzpicture}[shorten >=0.2pt,node distance=3cm,on grid,auto] 
   \node[state] (q_0)   {$q_0$}; 
   \node[state] (q_1) [ xshift=-4cm,yshift=0cm] {$q_1$}; 
    \path[->] 
    (q_0) edge[in=55,out=25,loop] node[swap] {$0|1$} ()
          edge[in=355,out=325,loop] node[swap] {$1|2$} ()
          edge[bend right] node [swap] {$2|0$} (q_1)
     (q_1)edge node [swap] {$1|1$} (q_0)
          edge[bend right] node[swap] {$0|2$} (q_0)
          edge[loop left] node {$2|0$} ();
\end{tikzpicture}
\end{center}
\caption{An element of infinite order in $\hn{n}$}
\label{exampleillustratingrationalwitness}
\end{figure}
\begin{figure}[h!]
\begin{center}
\begin{tikzpicture}[shorten >=0.2pt,node distance=3cm,on grid,auto] 
   \node[state] (q_0) [xshift = 0cm, yshift=0cm]   {$0$}; 
   \node[state] (q_1) [xshift=-3cm,yshift=0cm] {$1$};
   \node[state] (q_2) [xshift = 3cm, yshift= 0cm] {$2$}; 
    \path[->] 
    (q_0) edge node[swap] {$q_0|q_0$} (q_1)
          edge node {$q_1|q_0$} (q_2)
     (q_1)edge[out=30,in=150] node {$q_0|q_0$} (q_2)
          edge[loop left] node {$q_1|q_0$} ()
     (q_2)edge[bend left] node {$q_0|q_1$,$q_1|q_1$} (q_0);
\end{tikzpicture}
\end{center}
\caption{The level 1 dual of $B$}
\label{exampleofdualwithloop}
\end{figure}
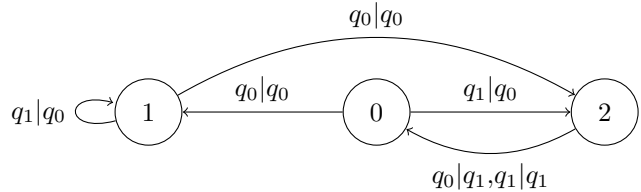
From this it is easy to see that  the pair $\{q_0, q_1\}$ is a vertex of $G_{1}$ and there is a directed edge with initial and terminal vertex $\{q_0, q_1\}$. Therefore the conditions of Lemma \ref{lemmagphofbdprs} are satisfied and $B$ has infinite order. Going through the construction in the proof of Proposition \ref{loopgiveswitness}, we see that \smash{$\ldots11\overset{\centerdot}{1}(02)(02)\ldots$} is on an infinite orbit under the action of $B$.
\end{example}

 \begin{example}
 The transducer $A$ shown in Figure  \ref{elementofinifiniteorderwhosegrphofbadpairshasnocircuit} demonstrates that though the  graph of bad pairs may contain a circuit at some level, the minimal graph of bad pairs at the same level may not do so. 
 
 \begin{figure}[H]
 \begin{center} \label{figure 3}
 \begin{tikzpicture}[shorten >=0.5pt,node distance=5cm,on grid,auto] 
    \node[state] (q_1) [xshift = 0cm]   {$q_1$}; 
    \node[state] (q_2) [xshift=3cm, yshift= -3cm] {$q_2$}; 
    \node[state] (q_3) [xshift=6cm,yshift=0cm] {$q_3$}; 
     \node[state] (q_4) [xshift=3cm, yshift=0cm] {$q_4$};
     
     \path[->] 
     (q_1) edge[loop left] node  {$1|2$} ()
           edge [bend right] node {$2|3$} (q_2)
           edge node  {$3|1$} (q_4)      
      (q_2) edge [in=270,out=180] node {$1|2$} (q_1)
            edge [in=275 ,out=0] node[swap] {$2|3$} (q_3)
            edge node {$3|1$} (q_4)
      (q_3) edge [loop right] node {$2|3$} ()
            edge [in=60,out=120] node [swap] {$1|1$} (q_1)
            edge node [swap] {$3|2$} (q_4)
      (q_4) edge [in=45,out=75,loop] node {$3|1$} (q_4)
            edge [in=340,out=200] node{$2|2$} (q_1)
            edge [in=65,out=295] node{$1|3$} (q_2);
 \end{tikzpicture}
 \end{center}
 \caption{An element of infinite order whose minimal graph of bad pairs has no circuits.}
 \label{elementofinifiniteorderwhosegrphofbadpairshasnocircuit}
 \end{figure}
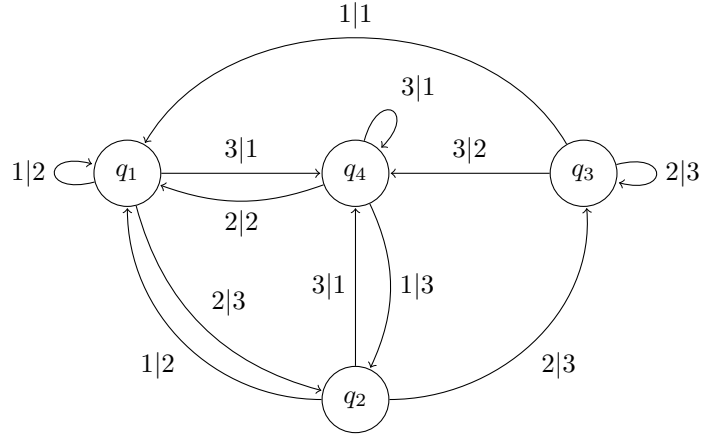
 
 It is easy to see that this transducer is bi-synchronizing at level $3$ using the minimisation procedure outlined in \cite{BlkYMaisANav}, or by direct computation in GAP. The graph $G_3(A)$ of bad pairs has a loop at the vertex $\{q_1, q_2\}$. The minimal graph of bad pairs $\overline{G}_{3}(A)$ is as shown in Figure \ref{exampleofgrphofbadpairswithnocircuit}:
 \begin{figure}[h!]
 \begin{center} \label{figure 4}
  \begin{tikzpicture}[shorten >=0.5pt,node distance=3cm,on grid,auto] 
  
     \node[state, accepting] (q_1) [xshift = 0cm]   {$(q_2, q_3)$}; 
     \node[state] (q_2) [xshift=-3cm, yshift= 0cm] {$(q_1,q_2)$}; 
     \node[state] (q_3) [xshift= 3cm,yshift=0cm] {$(q_1,q_3)$}; 
     \node[state] (q_4) [xshift=0cm, yshift= -3cm] {$(q_1,q_4)$};
     \node[state] (q_5) [xshift=-3cm, yshift= -3cm] {$(q_3,q_4)$};
     \node[state] (q_6) [xshift=3cm, yshift= -3cm] {$(q_2,q_4)$};

      \path[->] 
      (q_2) edge node  {} (q_1)
      (q_3) edge node {} (q_1)
      (q_5) edge node {} (q_2)
            edge node {} (q_1)
      (q_4) edge node {} (q_1)
            edge node {} (q_2)
      (q_6) edge node {} (q_1)
            edge node {} (q_2);
  \end{tikzpicture}
  \end{center}
  \caption{The graph $\overline{G}_{3}(A)$ of minimal bad pairs}
  \label{exampleofgrphofbadpairswithnocircuit}
 \end{figure}
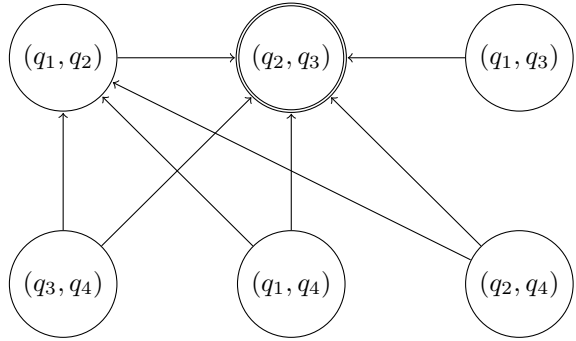
Here the state pair $(q_2,q_3)$ is not a minimal bad pair, and in fact reads any word  of length 3 into a pair of the form $(p,p)$ (it acts like a sink through which we escape the minimal bad pairs).
\end{example}

\begin{example}\label{exampleelelementoffiniteorderwhosegraphofbadpairssplits}
The transducer $H \in \hn{5}$ shown in Figure \ref{elementoffiniteorderwhosegrphofbadpairsplits} is an element of finite order whose dual at its minimal bi-synchronizing level splits. However the next power of its dual is the zero of the semigroup generated by the dual. This means that the splits can be fixed by taking powers of the dual.

 \begin{figure}[h!]
 \begin{center} 
 \begin{tikzpicture}[shorten >=.5pt,node distance=1cm,on grid,auto] 
    \node[state] (q_1) [xshift = -2cm]   {$q_1$}; 
    \node[state] (q_2) [xshift=3.5cm, yshift= -2.5cm] {$q_2$}; 
    \node[state] (q_3) [xshift=10cm,yshift=-0.5cm] {$q_3$}; 
     \node[state] (q_4) [xshift=3.5cm, yshift=0cm] {$q_4$};
     \node[state] (q_5) [xshift=6cm, yshift=-2cm] {$q_5$};
     
     \path[->] 
     (q_1) edge[out=140, in=110, loop] node  {$1|0$} ()
           edge node[swap] {$0|4$} (q_4)
           edge[out=260, in=270] node  {$2|2$} (q_5)
           edge[in=100, out = 100] node {$3|3$} (q_3)
           edge[in=165, out=325] node[swap] {$4|1$} (q_2)      
      (q_2) edge node[swap] {$0|0$} (q_1)
            edge [out=100, in=260] node {$1|4$} (q_4)
            edge [out=290, in=270 ] node  {$2|3$} (q_3)
            edge node {$3|2$} (q_5)
            edge[in=350, out=320, loop] node[swap] {$4|1$} (q_2)
      (q_3) edge[in=85, out=120] node {$0|1$} (q_1)
            edge [bend right] node[swap] {$1|3$} (q_4)
            edge[in=60, out= 90, loop] node  {$2|2$} ()
            edge node [swap] {$3|4$} (q_5)
            edge[out = 290, in=270] node {$4|0$} (q_2)
      (q_4) edge [loop above] node {$0|4$} (q_4)
            edge [in=10,out=170] node[swap]{$1|1$} (q_1)
            edge [in=105, out=350] node{$2|3$} (q_5)
            edge [in=150, out=15] node[swap] {$3|2$} (q_3)
            edge node {$4|0$} (q_2)
      (q_5) edge [out=110, in=340] node {$0|4$} (q_4)
            edge[in= 280, out =250] node {$1|1$} (q_1)
            edge[out=15, in=210] node[swap] {$2|2$} (q_3)
            edge[out=345, in=315, loop] node {$3|3$} ()
            edge [in=0, out=200] node {$4|0$} (q_2);
 \end{tikzpicture}
 \end{center}
 \caption{An element of finite order whose graph of bad pairs splits.}
 \label{elementoffiniteorderwhosegrphofbadpairsplits}
 \end{figure}
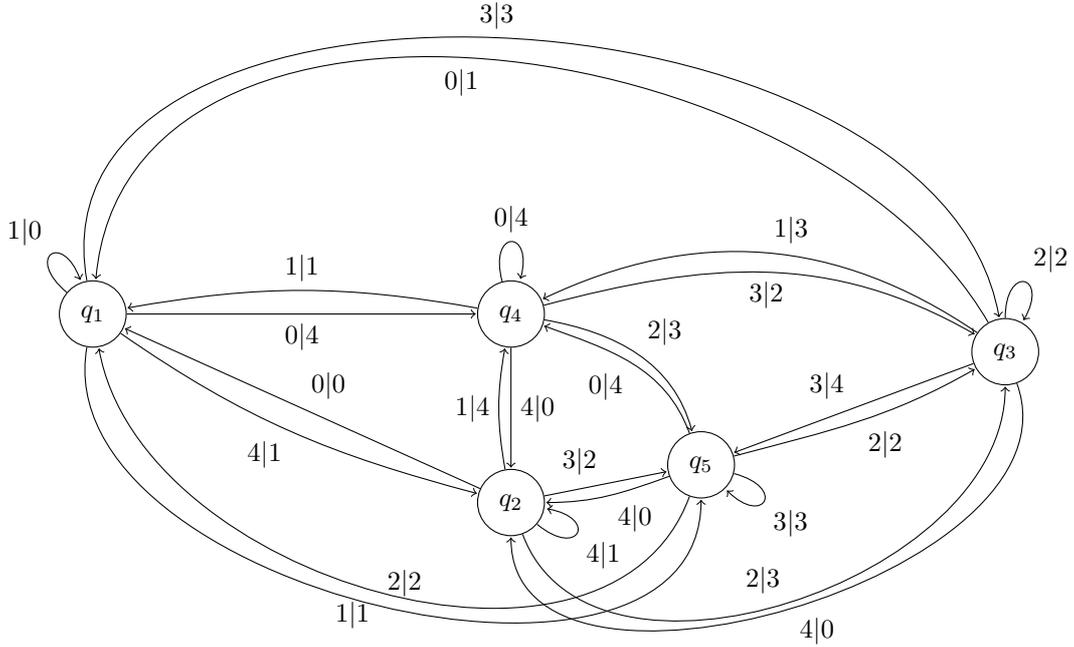

\end{example}
\subsection{Alternative conditions for having infinite order} 

In this section we give an algebraic condition which implies that the graph of bad pairs for a transducer for some power of its dual has a loop. 

To this end let $A \in \widetilde{\T{H}}_{n}$, and let $r \in \mathbb{N}$ be greater than or equal to the minimal synchronizing level of $A$. Let $l \in \mathbb{N}$ be the minimal splitting length of $\dual{A}_{r}$.

To each state $\Gamma$ of $\dual{A}_{r}$ we associate a transformation $\sigma_{\Gamma}$ of the set $Q_A$ of states of $A$. We do this as follows. For each state $q \in Q_{A}$ and for any $l-1$ tuple $S \in Q^{l-1}$, there is a unique state $p \in Q_{A}$, such that if $\Delta$ is the output when $\Gamma$ is processed through $qS$ in $A^{l}$, then $\Delta \in W_{p}$. Since $l$ is the minimal splitting length of $\dual{A}_{r}$, $p$ is independent of which $l-1$ tuple $S$ we chose. Therefore define $\sigma_{\Gamma}$ such that \smash{$q \overset{\sigma_{\Gamma}}{\mapsto} p$}. 

For $j \in \mathbb{N}$ let $\mathfrak{S}_{r,j}$ be the set of all products of length  $j$ of elements of the set $\{\sigma_{\Gamma} | \Gamma \in X_n^{r}\}$. We have the following result:

\begin{proposition}
$A \in \widetilde{\T{H}}_{n}$, and let $r \in \mathbb{N}$ be greater than or equal to the minimal synchronizing level of $A$. Let $l \in \mathbb{N}$ be the minimal splitting length of $\dual{A}_{r}$. Then $\mathfrak{S}_{r,|Q_{A}|^2+1}$ contains a transformation of $Q_{A}$ which is not a right zero if and only if the graph $\overline{G}_{r}(A)$  of minimal bad pairs contains a circuit.
\end{proposition}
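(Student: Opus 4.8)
The plan is to translate the combinatorial notion of a split into the algebraic language of the transformations $\sigma_\Gamma$ and then run a pigeonhole/chaining argument of exactly the flavour used in Proposition~\ref{automorphismiffbi-synch}. First I would unwind the definitions to observe the key dictionary: a pair $\{x_1,x_2\}\subseteq Q_A$ with $x_1\ne x_2$ is the bottom of a split $((q_1,\ldots,q_m),(p_1,\ldots,p_m),\Gamma)$ of $\dual{A}_r$ whose bottom depends only on its top precisely when there is a state $\Gamma$ of $\dual A_r$ such that $\sigma_\Gamma$ sends the top pair to $\{x_1,x_2\}$ (still a $2$-element set, i.e. $\sigma_\Gamma$ does not collapse the two top states). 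Here I use that $l$ is the minimal splitting length, so that $\sigma_\Gamma$ is well defined, and the observation recorded in Remark~\ref{badpairsmakesense} that tops and bottoms of minimal splits are exactly the $2$-element sets arising this way. Thus an edge $\{x_1,x_2\}\to\{y_1,y_2\}$ in $\overline G_r(A)$ exists iff some $\sigma_\Gamma$ (for $\Gamma\in X_n^r$) maps $\{x_1,x_2\}$ bijectively onto $\{y_1,y_2\}$; and more generally a directed path of length $j$ from $\{x_1,x_2\}$ to $\{z_1,z_2\}$ exists iff some product $\sigma\in\mathfrak S_{r,j}$ carries $\{x_1,x_2\}$ onto $\{z_1,z_2\}$ without collapsing. (One has to check the composition behaves correctly; this is essentially the $\dual A_{r(i+1)}=\dual A_{r}\ast\dual A_{r}^{\ast i}$ type decomposition combined with the splitting-length bookkeeping of Lemma~\ref{minimalsplittinglengthstrictlyincreasing}, so the chained splits really do compose and the ``depends only on the top'' property is inherited, exactly as in the inductive construction in the proof of Lemma~\ref{lemmagphofbdprs}.)

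With this dictionary in hand, the forward direction goes as follows. Suppose $\sigma\in\mathfrak S_{r,|Q_A|^2+1}$ is not a right zero; write $\sigma=\sigma_{\Gamma_{m}}\cdots\sigma_{\Gamma_1}$ with $m=|Q_A|^2+1$. ``Not a right zero'' means $\sigma$ does not have image of size $1$ on every pair — equivalently there exist $a\ne b\in Q_A$ with $a\sigma\ne b\sigma$, so that the pair $\{a,b\}$ survives (stays a $2$-set) under the whole product. Then each partial product $\{a,b\}\sigma_{\Gamma_i}\cdots\sigma_{\Gamma_1}$ is a $2$-element subset of $Q_A$; since $Q_A$ has only $\binom{|Q_A|}{2}<|Q_A|^2$ two-element subsets and we have $m=|Q_A|^2+1$ of these partial images, two of them coincide, say at steps $i<i'$. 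The sub-product $\sigma_{\Gamma_{i'}}\cdots\sigma_{\Gamma_{i+1}}$ then maps some $2$-set $\{c,d\}$ onto itself, so by the dictionary there is a directed path in $\overline G_r(A)$ from $\{c,d\}$ back to $\{c,d\}$ — a circuit. For the converse, a circuit in $\overline G_r(A)$ of length $t$ gives (concatenating the corresponding $\sigma_\Gamma$'s around the loop, and raising to a suitable power) arbitrarily long products in $\bigcup_j\mathfrak S_{r,j}$ that fix a $2$-set $\{c,d\}$; in particular I can produce an element of $\mathfrak S_{r,|Q_A|^2+1}$ that still does not collapse $\{c,d\}$ — I just pad the circuit word out to the required length $|Q_A|^2+1$ by going around the loop enough times (and, if needed, prepending one more edge into the circuit so the length is attained exactly), using that going around the loop never collapses $\{c,d\}$. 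Such a transformation is not a right zero.

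I expect the main obstacle to be the bookkeeping in establishing the dictionary rigorously — specifically, verifying that composing two splits ``whose bottom depends only on the top'' really does produce a split with the same property, and that the associated transformations compose as $\sigma$ of the concatenated word, i.e. that $\sigma_{\Delta\Gamma}=\sigma_\Delta\circ\sigma_\Gamma$ in the appropriate sense where $\Delta$ is the output word of $\Gamma$ at the relevant state tuple (so that one really is reading states of $\dual A_{2r}$). This is morally already contained in the construction carried out in the proof of Lemma~\ref{lemmagphofbdprs} (stages $1,2,3,\ldots$ there are exactly the chaining of splits along a path of $G_r(A)$), so I would cite and adapt that construction rather than redo it; once the dictionary is clean, the pigeonhole count on $2$-element subsets is the routine part. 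A minor point to be careful about: ``right zero'' of the transformation monoid must be interpreted as a transformation whose image on every $2$-set is a singleton (equivalently, a constant map), and one should note $\mathfrak S_{r,j}$ is closed under the relevant composition for the padding argument in the converse to make sense; both are immediate from the definitions.
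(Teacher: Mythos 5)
Your proof is correct and follows essentially the same route as the paper: translate minimal-length splits whose bottom depends only on the top into the transformations $\sigma_{\Gamma}$, then track the orbit of a non-collapsed pair $\{p_0,q_0\}$ through the sequence of partial products and apply the pigeonhole principle on two-element subsets of $Q_A$ (since $\binom{|Q_A|}{2} < |Q_A|^2+1$) to find a repeat, yielding a circuit in $\overline{G}_{r}(A)$; and conversely concatenate the $\sigma_{\Gamma_i}$ around a circuit to obtain a product that fixes a two-element set and hence is not constant. The one place you are more careful than the paper is in the converse: the paper constructs a non-collapsing product of length equal to the circuit length $j$ and immediately concludes that $\mathfrak{S}_{r,|Q_A|^2+1}$ contains a non-right-zero, without noting that one must pad the product (e.g.\ by repeating the circuit or prepending a path into it) to reach length exactly $|Q_A|^2+1$; your explicit padding remark closes that small gap.
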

\begin{proof}
Let $A$, $r$, and $l$ be as in the statement of the proposition.

Now $\mathfrak{S}_{r,|Q_{A}|^2+1}$ contains a  transformation of $Q_{A}$ which is not a right zero if and only if there is a product $\sigma_{\Gamma_{1}}\sigma_{\Gamma_{2}}\ldots\sigma_{\Gamma_{|Q_{A}|^2+1}}$, for $\Gamma_{i} \in X_n^{r}$, $1 \le i \le |Q_{A}|^2+1$, whose image set has size at least 2. This occurs if and only if there are $p_0,q_0 \in Q_{A}$ which map to distinct elements under $\sigma_{\Gamma_{1}}\sigma_{\Gamma_{2}}\ldots\sigma_{\Gamma_{|Q_{A}|^2+1}}$. Let $p_i := (p_0)\sigma_{\Gamma_{1}}\sigma_{\Gamma_{2}}\ldots\sigma_{\Gamma_{i}}$ and $q_i := (q_0)\sigma_{\Gamma_{1}}\sigma_{\Gamma_{2}}\ldots\sigma_{\Gamma_{i}}$ for $1 \le i \le |Q_A|^2+1$. Notice that $p_i \ne q_i$ since $p_0$ and $q_0$ have distinct images under $\sigma_{\Gamma_{1}}\sigma_{\Gamma_{2}}\ldots\sigma_{\Gamma_{|Q_{A}|^2+1}}$.  

By the pigeon hole principle there exists $1 \le i,j \le |Q_A|^2+1$ such that $\{p_i, q_i\}:=\{p_j, q_j\}$.

This implies that in the graph $\overline{G}_{r}(A)$ of minimal bad pairs we have: $\{p_i, q_i\} \to  \{p_{i+1}, q_{i+1}\} \to \ldots \to \{p_j, q_j\} \to \{p_i, q_i\}$. This follows by definition of the $\sigma_{\Delta}$, $\Delta \in X_n^{r}$ and of the graph $\overline{G}_{r}(A)$.

Now suppose the graph $\overline{G}_{r}(A)$ contains a circuit. Let $j \in \mathbb{N}$ be the length of the circuit, and let $\{p_i,q_i\}$ $1\le i \le j$ be the vertices on the circuit. 

Let $1 \le i < j$ be arbitrary. Now an edge $\{p_i,q_i\} \to \{p_{i+1}, q_{i+1}\}$ corresponds to the existence of some $\Gamma_i \in X_n^{r}$ and $S_i, T_i \in Q_{A}^{l-1}$  such that $(p_iS_i, q_iT_i, \Gamma_i)$ is a split of $\dual{A}_{r}$ with bottom $\{p_{i+1}, q_{i+1}\}$. It then follows that the product $\sigma_{\Gamma_1}\ldots\sigma_{\Gamma_j}$ maps $\{p_1, q_1\} \to \{p_1, q_1\}$. This means that $\mathfrak{S}_{r,|Q_{A}|^2+1}$ contains a transformation of $Q_{A}$ which is not a right zero.
\end{proof}

\begin{corollary}
$A \in \widetilde{\T{H}}_{n}$, and let $r \in \mathbb{N}$ be greater than or equal to the minimal synchronizing level of $A$. Let $l \in \mathbb{N}$ be the minimal splitting length of $\dual{A}_{r}$. If $\mathfrak{S}_{r,|Q_{A}|^2+1}$ contains a transformation of $Q_{A}$ which is not a right zero then $A$ has infinite order. Moreover there is a rational word in $X_n^{\Z}$ on an infinite orbit under the action of $A$.
\end{corollary}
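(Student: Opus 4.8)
The plan is to deduce this corollary immediately from the Proposition just proved, together with Corollary~\ref{circuitgiveswitness}. First I would invoke that Proposition: under the hypothesis that $\mathfrak{S}_{r,|Q_{A}|^2+1}$ contains a transformation of $Q_A$ which is not a right zero, the minimal graph of bad pairs $\overline{G}_{r}(A)$ contains a circuit. So the algebraic condition has already been translated into a combinatorial statement about splits of $\dual{A}_{r}$, and nothing further needs to be reproved there.

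Next I would observe that $\overline{G}_{r}(A)$ is, by construction, a subgraph of the full graph of bad pairs $G_{r}(A)$: its vertices lie in $B_{r}\subseteq \mathscr{B}_{r}$, and each of its edges arises from a split of minimal length $l$, which is in particular a split witnessing an edge of $G_{r}(A)$. Hence a circuit in $\overline{G}_{r}(A)$ is also a circuit in $G_{r}(A)$. Applying Corollary~\ref{circuitgiveswitness} (equivalently Proposition~\ref{loopgiveswitness} together with Remark~\ref{circuitimpliesloop}), since $A\in\widetilde{\T{H}}_{n}$ and $G_{r}(A)$ has a circuit, there is a rational word in $X_n^{\Z}$ on an infinite orbit under the action of $A$. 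The existence of such a word forces $A$ to have infinite order (this is also the content of Lemma~\ref{lemmagphofbdprs}), since a point on an infinite orbit is not fixed by any power $A^{m}$; this completes both assertions of the corollary.

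The only point requiring care — and it has already been dealt with in the Proposition and in Corollary~\ref{circuitgiveswitness} — is the bookkeeping that links products of the transformations $\sigma_{\Gamma}$ to chains of splits and then to a concrete rational witness; in the present statement no new work is needed beyond noting the subgraph containment $\overline{G}_{r}(A)\subseteq G_{r}(A)$ and chaining the cited results, so I expect the proof to be essentially a one-line reduction.
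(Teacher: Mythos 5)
Your proof is correct and is evidently the intended argument: the paper states this corollary immediately after the Proposition without writing out a proof, and your chain (Proposition $\Rightarrow$ circuit in $\overline{G}_{r}(A)$; subgraph containment $\overline{G}_{r}(A)\subseteq G_{r}(A)$, which follows from $B_r\subseteq\mathscr{B}_r$ and Remark~\ref{badpairsmakesense}; then Corollary~\ref{circuitgiveswitness} for the rational witness, with infinite order following as in Lemma~\ref{lemmagphofbdprs}) is exactly the reduction the author had in mind.
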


\begin{Remark}\label{ifinfiniteorderwithnocircuitsthenmusthavezero}
The above now implies that if $A \in \shn{n}$ has infinite order, but none of its graph of minimal bad pairs,  $\overline{G}_{r}(A)$, for $r \in \mathbb{N}$ greater than or equal to the minimal synchronizing length, has a loop, then $\mathfrak{S}_{r, |Q_A|^2 +1}$ consists entirely of right zeroes.
\end{Remark}

We have already seen that given an element  $A \in \spn{n}$ we can associate a transformation $\overline{A}_{j}$ of the set $X_n^{j}$ to  $A$. We shall now introduce a new transformation, which is defined only for elements of $\shn{n}$.

\begin{Definition}
Let $H \in \shn{n}$, and let $ j \in \mathbb{N}$, we shall define a transformation $\underline{H}_{j}$ of $X_n^{j}$ by  $$\Gamma \mapsto (\Gamma)q_{\Gamma}^{-1}$$

where $q_{\Gamma}$ is the unique state of $H$ forced by $\Gamma$, and $(\Gamma)q_{\Gamma}^{-1}$ is the unique element of $X_n^{j}$ such that $\lambda_{H}((\Gamma)q_{\Gamma}^{-1}, q_{\Gamma}) = \Gamma$. If  $j$ is zero, then $\underline{H}_{j}$ is simply the identity map on the set containing the empty word. 
\end{Definition}

\begin{Remark}
Given an element $H \in \shn{n}$ and a $ j \in \mathbb{N}$ such that $j >1$, then $\underline{H}_{j}$ is not injective in general. One can check that for the transducer $B$ of Figure \ref{exampleillustratingrationalwitness}, $\underline{B}_{1}$ is not injective. The map $\phi_{j}$  from $\shn{n}$ to the full transformation semigroup on $X_n^{j}$ which maps $H$ to $\underline{H}_{j}$ is not a homomorphism. 
\end{Remark}

\begin{lemma}
Let $H \in \shn{n}$ and let $j \in \N$ be greater than or equal to the minimal synchronizing level of $H$, then $\underline{H}_{j}$ is not injective if and only if  $\dual{H}_{j}$ has  a  split of length one such that the top and bottom of the split are equal.
\end{lemma}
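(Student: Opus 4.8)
The plan is to translate both sides of the equivalence into statements about fixed points of the transformations $\sigma_\Gamma$ of $Q_H$ attached to the states $\Gamma$ of $\dual H_j$ (the transformations introduced earlier in this subsection, here in the case where the relevant splitting length is one), and then to exploit a simple counting identity. First I would record the elementary description of $\underline H_j$: since $H$ has a homeomorphism state, each state $q$ of $H$ acts as a bijection $\rho_q\colon X_n^j\to X_n^j$, $w\mapsto\lambda_H(w,q)$, and on the block $W_q$ of words of length $j$ forcing $q$ the map $\underline H_j$ is exactly $\Gamma\mapsto\rho_q^{-1}(\Gamma)$, an injection with image $V_q:=\{w\in X_n^j:\lambda_H(w,q)\in W_q\}=\{w:\sigma_w(q)=q\}$, and $|V_q|=|W_q|$. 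Because the $W_q$ partition $X_n^j$ we get $\sum_q|V_q|=\sum_q|W_q|=n^j$, so $\underline H_j$ is injective iff the $V_q$ are pairwise disjoint iff they partition $X_n^j$ iff every $w$ lies in exactly one $V_q$, i.e. iff every $\sigma_w$ has exactly one fixed point. Hence $\underline H_j$ is non-injective precisely when there exist a word $w\in X_n^j$ and distinct states $a\ne b$ with $\sigma_w(a)=a$ and $\sigma_w(b)=b$.

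Next I would do the translation to splits. Given such $w,a,b$: reading the two distinct letters $a,b$ of $\dual H_j$ from the state $w$ produces the \emph{same} output (namely the state of $H$ forced by $w$, since $|w|\ge k$) and sends $w$ to $\lambda_H(w,a)\in W_a$ and $\lambda_H(w,b)\in W_b$ with $a\ne b$; this is visibly a split of $\dual H_j$ of length one with top $\{a,b\}$ equal to the bottom $\{a,b\}$. Conversely, suppose $\dual H_j$ has a split of length one realised at a state $w$ with top $\{a,b\}$ equal to the bottom $\{t_1,t_2\}$; the letters $a,b$ send $w$ to words $\Gamma_1\in W_{t_1}$, $\Lambda_1\in W_{t_2}$ with $\{t_1,t_2\}=\{a,b\}$. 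If the matching is diagonal, $t_1=a$, $t_2=b$, then $\Gamma_1\in W_a$ and $\Lambda_1\in W_b$, whence $\underline H_j(\Gamma_1)=\rho_a^{-1}(\Gamma_1)=w=\rho_b^{-1}(\Lambda_1)=\underline H_j(\Lambda_1)$ while $\Gamma_1\ne\Lambda_1$ (they force different states), so $\underline H_j$ is non-injective, closing the loop with the previous paragraph.

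The remaining case is the ``swap'' matching $t_1=b$, $t_2=a$, i.e. $\sigma_w$ restricted to $\{a,b\}$ is the transposition; this is the step I expect to be the genuine obstacle, because one cannot read off a collision of $\underline H_j$ directly from $\Gamma_1$ and $\Lambda_1$. The plan here is to reduce a swap split to a diagonal split at the same level $j$: a swap split of length one at $w$ is a loop at $\{a,b\}$ of transposition type in the minimal graph $\overline G_j(H)$ of bad pairs, and iterating the bottom-to-top construction from the proof of Proposition~\ref{loopgiveswitness} (Case 2) around this loop — equivalently, following the finite $\rho_a,\rho_b$-orbit of $w$ inside $X_n^j$ — returns after an even number of steps to a word $w'$ for which reading $a$ from $w'$ lands in $W_a$ and reading $b$ lands in $W_b$, i.e. to a diagonal split of length one at level $j$, to which the previous paragraph applies. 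The delicate point to check is that this return really gives a split of length \emph{exactly one} and at the \emph{same} level $j$; I would argue this from the fact that the splitting length of $\dual H_j$ is one together with Remark~\ref{badpairsmakesense}, which forces every relevant split at level $j$ to have two-element top and bottom, so that no escape to longer splits or higher levels can occur.
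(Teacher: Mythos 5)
Your counting reformulation is genuinely different from, and cleaner than, the paper's argument: the paper proves both directions by direct manipulation of the words $\Gamma,\Delta,\Lambda$, whereas you characterise injectivity of $\underline{H}_{j}$ via the transformations $\sigma_w$ and the identity $\sum_q |V_q| = \sum_q |W_q| = n^j$, which makes the structure of the problem transparent. You also correctly isolate the real issue, namely that ``top equals bottom'' is set equality of two-element sets and so admits both a diagonal pairing and a transposition pairing; the paper's proof of the backward implication silently assumes the diagonal one (it sets $\lambda(\Lambda, q_\Gamma) = \Gamma$ and $\lambda(\Lambda, q_\Delta) = \Delta$, so that each state read matches the state forced by the word reached) and never addresses the other.

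Your suspicion that the transposition case is a ``genuine obstacle'' is warranted, but more strongly than you anticipated: the proposed iteration around the loop cannot close the gap, because the transposition case is a genuine counterexample to the stated equivalence. Take $H \in \shn{3}$ with states $\{a,b,c\}$, synchronizing at level one with $\pi(0,\cdot)=a$, $\pi(1,\cdot)=b$, $\pi(2,\cdot)=c$, and with local permutations $\lambda(\cdot,a)=(1\,2)$, $\lambda(\cdot,b)=(0\,2)$, $\lambda(\cdot,c)=(0\,1)$. Each $\rho_q$ fixes the unique letter that forces $q$, so $\underline{H}_{1}$ is the identity on $X_3$ and in particular injective. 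Yet from state $0$ of $\dual{H}_{1}$, reading $b$ transitions to $2 \in W_c$ and reading $c$ transitions to $1 \in W_b$, both with output $a$: a split of length one whose top $\{b,c\}$ equals its bottom $\{b,c\}$, of transposition type. Here $\sigma_0 = (b\,c)$, $\sigma_1 = (a\,c)$, $\sigma_2 = (a\,b)$, so every $\sigma_w$ has exactly one fixed point and there is nowhere for an iteration around the loop at $\{b,c\}$ in $\overline{G}_{1}(H)$ to land on a diagonal split. The lemma is therefore false as stated; the condition on the right should be strengthened to a split of length one whose top equals its bottom \emph{with the diagonal pairing}, equivalently the existence of some $\sigma_w$ with at least two fixed points, which is precisely what your counting argument characterises. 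Only the forward direction is invoked in the corollary that follows, so the downstream use is unaffected.
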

\begin{proof}
($\Rightarrow$): suppose that, for $j \in \N$  and $H$ as in the statement of the lemma, $\underline{H}_{j}$ is not injective. This means that there are two distinct elements $\Gamma$ and $\Delta$ of $X_n^{j}$ such that $(\Gamma)\underline{H}_{j} =   (\Delta)\underline{H}_{j}$. Let $\Lambda := (\Gamma)\underline{H}_{j}$. Let $q_{\Gamma}$ and $q_{\Delta}$ be the states of $H$ forced by $\Gamma$ and $\Delta$ respectively. Then by definition of $\underline{H}_{j}$ we have: $\lambda(\Lambda, q_{\Gamma}) = \Gamma$ and $\lambda(\Lambda, q_{\Delta}) = \Delta.$

Observe that as  consequence it must be the case that $q_{\Gamma} \ne q_{\Lambda}$. Therefore it follows that we have the following split in $\dual{H}_{j}$:

\begin{figure}[H]
\begin{center}
\begin{tikzpicture}[shorten >=0.5pt,node distance=3cm,on grid,auto] 
   \node[state] (q_0) [yshift = 1.5cm]   {$\Lambda$}; 
   \node[state] (q_1) [yshift=3cm, xshift=2.5cm] {$\Gamma$}; 
   \node[state] (q_2) [yshift=0cm,xshift=2.5cm] {$\Delta$}; 
    \path[->] 
    (q_0) edge node {$q_{\Gamma}|q_{\Lambda}$} (q_1)
          edge node[swap] {$q_{\Delta}|q_{\Lambda}$} (q_2);
\end{tikzpicture}
\end{center}
\caption{Split in $\dual{A}_{j}$ with top equal to bottom}
\end{figure}
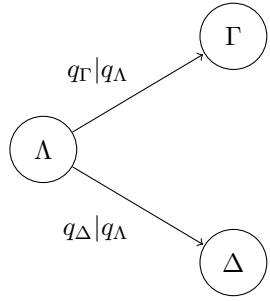
($\Leftarrow$) Suppose that $\dual{A}_{j}$ has a split of length $1$ such that the bottom of the split is equal to its top. This means that there exists $\Gamma$, $\Delta$ and $\Lambda$ in $X_n^{j}$  so that if $q_{\Delta}$ is the state of $H$ forced by $\Delta$ and $q_{\Gamma}$ is the state of $H$ forced by $\Gamma$, then we have $\lambda(\Lambda, q_{\Delta}) = \Delta$ and $\lambda(\Lambda, q_{\Gamma}) = q_{\Gamma}$. This now means that $\Lambda = (\Gamma)\underline{H}_{j} = (\Delta)\underline{H}_{j}$ and  $\underline{H}_{j}$ is not injective.
\end{proof}

\begin{corollary}
Let $H \in \shn{n}$ and let $j \in \N$ be greater or equal to the minimal synchronizing level of $H$. If $\underline{H}_{j}$ is not injective then $H$ has infinite order and there is a rational word in $X_n^{\Z}$ an infinite orbit under the action of $H$.
\end{corollary}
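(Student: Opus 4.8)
The plan is to read this corollary off directly from the lemma immediately preceding it together with Proposition~\ref{loopgiveswitness}. By that lemma, the hypothesis that $\underline{H}_{j}$ fails to be injective is equivalent to the existence of a split of $\dual{H}_{j}$ of length one whose top coincides with its bottom; concretely, there are distinct words $\Gamma,\Delta\in X_n^{j}$, with $\Lambda:=(\Gamma)\underline{H}_{j}=(\Delta)\underline{H}_{j}$, such that in $\dual{H}_{j}$ reading $q_{\Gamma}$ from $\Lambda$ produces output $q_{\Lambda}$ and transitions to $\Gamma$, while reading $q_{\Delta}$ from $\Lambda$ produces output $q_{\Lambda}$ and transitions to $\Delta$, where $q_{\Gamma},q_{\Delta},q_{\Lambda}$ denote the states of $H$ forced by $\Gamma,\Delta,\Lambda$ respectively.

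First I would verify that this configuration is genuinely a split in the sense of the relevant definition and that it records a loop in $G_{j}(H)$. Since $\Gamma\ne\Delta$ while $\lambda_H(\Lambda,q_{\Gamma})=\Gamma$ and $\lambda_H(\Lambda,q_{\Delta})=\Delta$, we must have $q_{\Gamma}\ne q_{\Delta}$, so the top $\{q_{\Gamma},q_{\Delta}\}$ has cardinality two; and because $j$ is at least the synchronizing level of $H$, the word $\Gamma$ forces $q_{\Gamma}$ and $\Delta$ forces $q_{\Delta}$, so the bottom of this length-one split is exactly $\{q_{\Gamma},q_{\Delta}\}$, i.e. it equals the top. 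A split of length one is as short as a split can be, so the minimal splitting length $l$ of $\dual{H}_{j}$ equals $1$; the auxiliary tuples occurring in such a split lie in $Q_A^{\,l-1}=Q_A^{0}$, a one-point set, so the bottom of the split depends (vacuously) only on its top. Hence $\{q_{\Gamma},q_{\Delta}\}$ is a bad pair, i.e. a vertex of $G_{j}(H)$, and the split witnesses a directed edge from this vertex to itself. Thus $G_{j}(H)$ contains a loop.

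With the loop in hand, Proposition~\ref{loopgiveswitness} applies verbatim with $r=j$: there is a rational word $w\in X_n^{\Z}$ lying on an infinite orbit under the action of $H$. Finally, an infinite orbit forces infinite order: if $H^{s}=H^{t}$ for some $s\ne t$ then $H^{s}(w)=H^{t}(w)$, so the $H$-orbit of $w$ would be finite, a contradiction; hence the powers $H^{t}$ are pairwise distinct and $H$ has infinite order, which completes the proof.

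I do not anticipate a serious obstacle here, since the argument is essentially a repackaging of the preceding lemma and Proposition~\ref{loopgiveswitness}. The only point deserving a little care is the bookkeeping that a length-one split with top equal to bottom really does satisfy the ``bottom depends only on the top'' clause in the definition of $G_{j}(H)$ and so legitimately contributes a loop; this is immediate once one observes that $Q_A^{\,l-1}$ is trivial when $l=1$, and that the minimal splitting length is indeed $1$ in this situation.
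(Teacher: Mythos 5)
Your argument is correct and is precisely the derivation the paper intends: extract a length-one split with coinciding top and bottom from the preceding lemma, observe that the minimal splitting length is therefore $1$ so that (by Remark~\ref{badpairsmakesense}) the bottom depends only on the top and $G_{j}(H)$ has a loop, and then apply Proposition~\ref{loopgiveswitness} to produce the rational witness, with infinite order following from the infinite orbit. The only cosmetic imprecision is calling the ``bottom depends only on top'' clause vacuous for $l=1$ --- the clause also asks that reading $\Gamma$ through a pair $(p_1,u)$ land in a state independent of $u$, which is a genuine condition but holds here because $j$ is at least the synchronizing level and $H$ is synchronous --- and your fallback appeal to minimality and Remark~\ref{badpairsmakesense} is the clean way to discharge it.
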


\section{Combining elements of \texorpdfstring{$\spn{n}$}{Lg} and some embedding results }\label{embeddings}

In this Section we describe a method for combining two elements of $\spn{n}$ and $\spn{m}$, into a single element of $\spn{m+n}$ in such a way that the property of having finite order is preserved.

Let $A = \gen{X_n, Q_A, \pi_A, \lambda_A}$ and $B = \gen{X_m, Q_B, \pi_B, \lambda_B}$ be elements of $\spn{n}$ and $\spn{m}$ respectively which have finite order. It is a consequence of Claim \ref{cycls<k} that for all $1 \le i \le n$ an $ 1 \le j \le m$ there is a state, $q_i$ of $A$ and $P_j$ of $A$ and $B$ respectively such that $\pi_A(i, q_i) = q_i$ and $\pi_B(j,p_j) = p_j$.

Form a new transducer $\widetilde{B} = \gen{\{n,\ldots,n+m-1\}, Q_{\widetilde{B}}, \pi_{\widetilde{B}}, \lambda_{\widetilde{B}}}$ with input and output alphabet $\{n,\ldots, n+m-1\}$ such that the states of $\widetilde{B}$ are in bijective correspondence with the states of $B$; we denote them  by $\widetilde{q}$, where $q$ is a state of $B$.

The transition and rewrite function of $\widetilde{B}$ are defined by the following rules for $i,j \in X_m$:

\begin{IEEEeqnarray*}{rCl}
\pi_{\widetilde{B}}(n+i, \widetilde{q}) = \widetilde{p} &\iff& \pi_{B}(i,q) = p \nonumber \\
\lambda_{\widetilde{B}}(n+i, \widetilde{q}) = n+j &\iff& \lambda_{B}(i,q) = j
\end{IEEEeqnarray*}

Now we form a new transducer, $A \sqcup B = \gen{ X_{n+m}, Q_{A\sqcup B}, \pi_{A \sqcup B}, \lambda_{A\sqcup B}}$ as follows: 

$Q_{A\sqcup B} = Q_A \sqcup Q_{\widetilde{B}}$; the states of $Q_A$ transition exactly as in $A$ for all inputs in $X_{n}$ and the states of $Q_{\widetilde{B}}$ transition as in $\widetilde{B}$ for all inputs in $X_{n+m}\backslash X_{n}$. Finally for all $i \in X_n$, and for any $\widetilde{q} \in Q_{\widetilde{B}}$, we have $\pi_{A\sqcup B} (i, \widetilde{q}) = p_i$, where $p_i$ is the state of $Q_A$ such that $\pi_{A}(i, p_i) = p_i$, furthermore, $\lambda_{A\sqcup B} (i, \widetilde{q}) = i$. An analogous condition holds in  $A\sqcup B$ for the states of $A$ on inputs in $X_{2n}\backslash X_{n}$. We shall demonstrate this in Example \ref{exampleofcombiningelements}.

We now argue that the order of $A \sqcup B$ is at least $lcm(O(A), O(B))$ --- the lowest common multiple of the orders of $A$ and $B$.

First we show  that $A \sqcup B$ has finite order.

Let $k$ be the maximal bi-synchronizing level of $A$ and $B$. We shall show that $A \sqcup B$ is synchronizing; similar arguments show that $(A\sqcup B)^{-1}$ is also synchronizing at level $k+1$.

\begin{claim} \label{newproductpreservessynch}
$A \sqcup B$ is bi-synchronizing at level $k+1$
\end{claim}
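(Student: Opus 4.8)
The plan is to show that any word of length $k+1$ in the alphabet $X_{n+m}$ is a synchronizing word for $A \sqcup B$, and similarly for $(A\sqcup B)^{-1}$. The key structural observation is that the state set $Q_{A\sqcup B} = Q_A \sqcup Q_{\widetilde B}$ behaves almost like two disjoint copies of the original automata, except that a single letter suffices to "jump" from one side to the other and land in a prescribed fixed state. So first I would analyze what happens when the length-$(k+1)$ input word $\Gamma$ uses letters from only one of the two alphabets $X_n$ or $X_{n+m}\setminus X_n$: in that case, after reading the first letter we are guaranteed to be on the correct side (in $Q_A$ if the letter is in $X_n$, in $Q_{\widetilde B}$ if it is in $X_{n+m}\setminus X_n$), regardless of the starting state, by the definition of $\pi_{A\sqcup B}$ on cross-alphabet inputs. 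Then the remaining $k$ letters are processed entirely within $A$ (resp.\ $\widetilde B$), and since $A$ (resp.\ $B$, hence $\widetilde B$) is synchronizing at level $k$, the final state depends only on those $k$ letters. Hence $\Gamma$ is synchronizing.

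Next I would handle the mixed case: suppose $\Gamma = \gamma_0 \gamma_1 \cdots \gamma_k$ contains letters from both alphabets. Let $i$ be the largest index such that $\gamma_i$ and $\gamma_{i+1}$ lie in different alphabets (a "switch" occurs between positions $i$ and $i+1$); if no switch at all occurs we are in the previous case. After reading $\gamma_0\cdots\gamma_i$ we are in some state (depending possibly on the start), but reading $\gamma_{i+1}$ — a letter from the opposite alphabet — forces us, by the definition of $\pi_{A\sqcup B}$ on cross-alphabet inputs, into the unique fixed state $p_{\gamma_{i+1}}$ (or $\widetilde{p}_{\gamma_{i+1}}$) on the new side, independently of where we were. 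From position $i+1$ onwards there are no more switches, so the suffix $\gamma_{i+1}\cdots\gamma_k$ has length $\le k$ and is processed entirely within one of $A$ or $\widetilde B$; I need the suffix to have length at least... actually it may be short, but the point is that after the forced letter $\gamma_{i+1}$ the active state is completely determined by $\gamma_{i+1}$ alone, and then the subsequent letters deterministically continue within that automaton, so the final state depends only on $\gamma_{i+1}\cdots\gamma_k$. Thus $\Gamma$ is synchronizing, and the synchronizing map $\mathfrak{s}$ is well-defined on $X_{n+m}^{k+1}$.

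Finally I would repeat the argument for $(A\sqcup B)^{-1}$. Here I need to check that the inverse of $A\sqcup B$ has the same "forced jump" structure: a state $\widetilde q \in Q_{\widetilde B}$ on input $i \in X_n$ outputs $i$ and transitions to $p_i$, so in the inverse, reading the letter $i$ (which is what $i$ maps to) from $\widetilde q^{-1}$ lands in $p_i^{-1}$; more care is needed since in the inverse the relevant "cross-alphabet" letters are the \emph{outputs} of the cross transitions, but by construction these outputs are exactly the same letters (a letter in $X_n$ is output as itself, a letter in $X_{n+m}\setminus X_n$ as itself), so the inverse automaton has the identical structure with $A^{-1}, \widetilde{B}^{-1}$ in place of $A, \widetilde B$ and the same fixed states. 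Since $A^{-1}$ and $B^{-1}$ are synchronizing at level $k$ (as $A,B$ are bi-synchronizing at level $\le k$), the same two-case analysis shows $(A\sqcup B)^{-1}$ is synchronizing at level $k+1$, so $A\sqcup B$ is bi-synchronizing at level $k+1$.

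The main obstacle I anticipate is bookkeeping around the inverse: verifying precisely that the cross-alphabet transitions of $A\sqcup B$ dualize to cross-alphabet transitions of $(A\sqcup B)^{-1}$ with the analogous "forced into a fixed state" behaviour, rather than something that breaks the clean two-sided structure. The forward direction is essentially immediate from the definitions once the "a single foreign letter resets you to a known state" principle is isolated; the inverse direction requires checking that the inversion algorithm of \cite{GriNekSus} respects the block decomposition $Q_A \sqcup Q_{\widetilde B}$, which it does because the output letter always stays in the same sub-alphabet as the input letter for every transition of $A \sqcup B$.
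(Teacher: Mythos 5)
Your proposal is correct and follows essentially the same route as the paper's first proof: the same key observation (after reading any letter from $X_n$ or from $X_{n+m}\setminus X_n$ the active state lies in $Q_A$ or $Q_{\widetilde B}$ respectively, with a cross-alphabet letter forcing a unique target state), the same case split into single-alphabet and mixed words, and the same structural point for the inverse --- that every transition of $A\sqcup B$ outputs a letter in the same sub-alphabet as its input, so $(A\sqcup B)^{-1}$ inherits the block decomposition and forced-jump behaviour from $A^{-1}$ and $\widetilde B^{-1}$. Your choice of the \emph{last} alphabet switch is slightly tidier bookkeeping than the paper's (which picks a first $B$-letter occurring after a chosen $A$-letter, and contains a minor index slip), but the underlying idea is identical; the paper's alternative second proof via the collapsing procedure is a genuinely different argument, but you were not required to reproduce it.
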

\begin{proof}[Proof 1]
Observe that as soon we read $i$, $i \in X_n$, we must be processing from a state of $A$ and if we read an $n+i$, $i \in X_n$ we must be processing from a state of $\widetilde{B}$.

Let $\Gamma \in X_{2n}^{k+1}$ a word of length $k+1$. 

If ${\Gamma} \in X_n^{k+1}$ or ${\Gamma} \in \{n,\ldots, 2n-1\}^{k+1}$, then the state of $A \sqcup B$ forced by $\Gamma$ is the state of $A$ or $\widetilde{B}$ forced by $\Gamma$. Since reading the first letter guarantees, by the observation in the first paragraph, that the active state is a state of $A$ (or $B$ if $\overline{\Gamma} \in \{n,\ldots, 2n-1\}^{k+1}$), and $A$ and $B$ are bi-synchronizing at level $k$.

Hence we need only consider the case that $\overline{\Gamma}$ contains at least one letter from $X_n$ and one letter from $X_{2n}\backslash X_{n}$. 

Let  ${\Gamma} = g_0 \ldots g_k$. Let $g_i \in X_n$ and assume $g_0 \in X_{2n}\backslash X_n$ (the other case follows by a similar argument) and suppose that $0 < j$ and $j$ is minimal bigger than $i$ such that $g_j \in X_{2n}\backslash X_n$ . By the observation in the first paragraph, regardless of the starting state, after processing the $g_0$, the active state must be some state of $A$. By the minimality of $j$, after processing $g_{j-1}$, the active state is still some state of $A$. Now, notice that every state of $A$ will read $g_j$ to a fixed state $\widetilde{q}_{g_j}$ of $B$. Therefore regardless of the starting position, we always process the final $k-j$ inputs from the state $q_{g_j}$. 

To see that $(A \sqcup B)^{-1}$ is also synchronizing at level  $k+1$ observe that the states corresponding to states $A^{-1}$ in $(A \sqcup B)^{-1}$ process words in $X_n^*$ exactly as $A^{-1}$ does. Moreover all states of $(A\sqcup B)^{-1}$ corresponding to states of $A^{-1}$ read a fixed letter $j$ in $\{n, \ldots, 2n-1\}$  to a unique state $\widetilde{q}_j^{-1}$ corresponding to the state $\widetilde{q}_{j}^{-1}$ of $\widetilde{B}^{-1}$. Analogously for the states of $(A\sqcup B)^{1}$ corresponding to the states of $\widetilde{B}^{-1}$  and  elements of $\{n, \ldots, 2n-1\}^{*}$ and letters in  $X_n$. Therefore we may repeat the argument already given for $(A \sqcup B)$ to show that $(A \sqcup B)^{-1}$ is synchronizing at level $k+1$ also.
\end{proof}

We now free the symbol $k$. To show that $A\sqcup B$ has finite order, we show that its level $k+1$ dual is $\omega$-equivalent to a disjoint union of cycles as in Lemma \ref{lemma 1.3}, where now $k$ is minimal such that both $\dual{A}_{k}$ and $\dual{B}_{k}$ are a disjoint union of cycles.

First we consider the case where $\Gamma \in X_n^{k+1}$ or $\Gamma \in \{n+1, \ldots, n+m-1\}^{k+1}$. Let $\Gamma = g_0 \ldots g_k$. By the assumption that both $A$ and $B$ (hence $\widetilde{B}$) have finite order, this implies that there is a state $q$ of $A$ (or $\widetilde{B}$ if $\Gamma \in \{n+1, \ldots, n+m-1\}^{k+1}$ ) such that the image of $\Gamma$ is in the set $W_q$. This is because after reading $g_0$ we are enter a state of $A$ (or $\widetilde{B}$ if $\Gamma \in \{n+1, \ldots, n+m-1\}^{k+1}$), and using the fact that $g_1\ldots g_k$ belongs to a cycle of states as in Lemma \ref{lemma 1.3} in $\dual{A}_{k}$ (or $\dual{\widetilde{B}}$ if $\Gamma \in \{n+1, \ldots, n+m-1\}^{k+1}$). Moreover if $\Gamma \in X_n^{k}$, then the image of $\Gamma$ through any state of $A\sqcup B$ is also in $X_n^{k}$ and analogously if $\Gamma \in \{n+1, \ldots, n+m-1\}^{k+1}$. Therefore the fact that $\Gamma$  belongs to such a cycle of states is a consequence of the fact that $A$ and $B$ have finite order.

Now we consider the case where $\Gamma$ contains a letter in $X_n$ and a letter in $X_{n+m}\backslash X_{n}$. Similarly to the proof of Claim \ref{newproductpreservessynch}, let $\Gamma = g_0g_1\ldots g_k$. Suppose that  a letter from $X_n$ (the other case being analogous) occurs first and let $j$ be minimal such that $j > 0$ and $g_j \in X_{n+m} \backslash X_n$. The proof of Claim \ref{newproductpreservessynch} shows that the state of $A\sqcup B$ forced by $\Gamma$ depends only on the suffix $g_jg_{j+1}\ldots g_k$. Let $q_{g_{j}}$  be the state of $\widetilde{B}$ such that $\pi_{\widetilde{B}}(g_{j},q_{g_{j}}) = Q_{g_{j}}$. Since every state of $A$ acts as the identity on $X_{n+m}\backslash X_{n}$, it is the case that processing $\Gamma$ from any state of $A \sqcup B$, the first letter of the output is an element of $X_n$ and the $j$\textsuperscript{th} letter is the minimal element in $X_{n+m}\backslash X_{n}$ and is in fact equal to $g_j$, moreover since we process the length $j-k$ suffix from the state $q_{g_{j}}$, the length $j-k$ suffix is independent of the state we begin processing from. However we can now repeat this argument to show that the output through any state of the set of images of $\Gamma$, have  the same $j-k$ suffix and the $j$\textsuperscript{th} letter equal to $g_j$. It now follows from the observation above that the state of $A\sqcup B$ forced depends only on the length $j-k$ suffix, and by induction, that $\Gamma$ belongs to a cycle of states as in Lemma \ref{lemma 1.3}.

The above two paragraphs  show that it is possible to decompose $A \sqcup B$ into a disjoint union of cycles as in Lemma \ref{lemma 1.3} and so $A \sqcup B$ has finite order.

The following alternative way of combining finite order elements of  $\spn{n}$ results in elements of finite order by mechanical substitutions in the arguments above. 

Let $A \in \spn{n}$ and  $B \in \spn{m}$ be as above and form $\widetilde{B}$ as before. For each  $1 \le i \le n$,  let $p_i$ be the state of $A$ such that $\pi_{A}(i,p_i) = p_i$, by the definition of the transformation (in the case where $A$ and $B$ have finite order a permutation), $\overline{A}_{1}$, we have $\lambda_{A}(i, p_i) = \overline{A}_{1}(i)$, likewise there is a state $q_j$ of $B$ such that $\lambda_{B}(j,q_j) = \overline{B}_{1}(j)$ for $1 \le j \le m$. 

We form $A \oplus B$ analogously to $A \sqcup B$. The set of states, and  the transition function, $\pi_{A \oplus B}$ are identical but we make some adjustments to the rewrite function. For any letter $i \in X_n$, and any state $\widetilde{q}$ of $\widetilde{B}$, we take  $\lambda_{A\oplus B}(i, \widetilde{q}) = \overline{A}_{1}(i)$, likewise for any letter $n+j \in \{n, \ldots n+m-1\}$ and any state $p$ of $A$, we have $\lambda_{A\oplus B}(j, p) = n + \overline{B}_{1}(j)$.

The methods described above of combining elements of $\spn{n}$ do not exhaust all possibilities, for instance we could fix a state  of $\widetilde{B}$ such that reading any letter $X_{n+m} \backslash X_n$ from $A$ goes into this state, and likewise we could fix such a state of $\widetilde{B}$. Similar arguments to those given above will show that these methods also give rise to elements of finite order whenever the initial elements have finite order. 

We remark also that as there are new cycles of states introduced in $\dual{(A\sqcup B)}_{k+1}$ and $\dual{(A\oplus B)}_{k+1}$ that are not present in $\dual{A}_{k}$ or $\dual{B}_{k}$ it might be that the order of $A \sqcup B$ is strictly greater than  $lcm(O(A),O(B))$ in some cases.

We give an example below.

\begin{example}\label{exampleofcombiningelements}
Consider the elements of $\T{H}_{3}$ and $\T{H}_{2} \cong C_2$ of order three and order 2 respectively,
\begin{figure}[h!] 
 \begin{center}
\begin{tikzpicture}[shorten >=0.5pt,node distance=3cm,on grid,auto] 

   \node[state] (q_0)   {$q_0$}; 
   \node[state] (q_1) [below left=of q_0] {$q_1$}; 
   \node[state] (q_2) [below right=of q_0] {$q_2$}; 
   \node[state] (q_3) [xshift = 5cm, yshift = -1.5cm]   {$p$}; 
    \path[->] 
    (q_0) edge [in=105,out=75,loop] node [swap]{$1|2$} ()
          edge  [out=335,in=115]node [swap]{$2|0$} (q_2)
          edge  [out=185,in=85]node [swap]{$0|1$} (q_1)
    (q_1) edge[out=65,in=205]  node  [swap]{$2|2$} (q_0)
          edge [in=240,out=210, loop] node [swap] {$0|1$} ()
          edge [out=330,in=210] node [swap]{$1|0$} (q_2)
    (q_2) edge[out=95,in=355]  node [swap]{$1|1$} (q_0)
          edge [out=195,in=345]  node [swap]{$0|2$} (q_1) 
          edge [in=330,out=300, loop] node [swap] {$2|0$} ()
    (q_3) edge [in=130,out=100,loop] node [swap]{$0|1$} ()
          edge [in=80,out=50,loop] node [swap]{$1|0$} ();
\end{tikzpicture}
 \end{center}
 \caption{Elements of $\hn{3}$ and $\hn{2}$.}
 \label{precombination}
\end{figure} 
we now combine them to give an element of order 6 in $\T{H}_{5}$. Using any of the methods describe above will yield an element of order 6, we only illustrate one such method.

\begin{figure}[h!]
\begin{center}
\begin{tikzpicture}[shorten >=0.5pt,node distance=3cm,on grid,auto] 

   \node[state] (q_0)   {$q_0$}; 
   \node[state] (q_1) [xshift = -4.5cm, yshift = -3.5cm] {$q_1$}; 
   \node[state] (q_2) [xshift = 4.5cm, yshift = -3.5cm] {$q_2$}; 
   \node[state] (q_3) [xshift = 0cm, yshift = -3cm]   {$p$}; 
    \path[->] 
    (q_0) edge [in=105,out=75,loop] node [swap]{$1|2$} ()
          edge  [out=342,in=108]node [swap]{$2|0$} (q_2)
          edge  [out=185,in=85]node [swap]{$0|1$} (q_1)
          edge  [out=260, in=100] node [swap] {$3|3$,$4|4$} (q_3)
    (q_1) edge[out=72,in=198]  node  [swap]{$2|2$} (q_0)
          edge [in=240,out=210, loop] node [swap] {$0|1$} ()
          edge [out=330,in=210] node [swap]{$1|0$} (q_2)
          edge [out=20, in=180] node  {$3|3$, $4|4$} (q_3)
    (q_2) edge[out=95,in=355]  node [swap]{$1|1$} (q_0)
          edge [out=202,in=338]  node [swap]{$0|2$} (q_1) 
          edge [in=330,out=300, loop] node [swap] {$2|0$} ()
          edge [out=160, in=0] node[swap] {$3|3$,$4|4$} (q_3)
    (q_3) edge [in=160,out=130,loop] node [swap]{$3|4$} ()
          edge [out=50,in=20,loop] node         {$4|3$} ()
          edge [out=80, in=280] node [swap] {$1|1$} (q_0)
          edge [out= 190, in= 0] node {$0|0$} (q_1)
          edge [in= 180, out= 350] node[swap] {$2|2$}(q_2); 
\end{tikzpicture}
 \end{center}
 \caption{The result of combining the elements in Figure \ref{precombination}.}
 \label{postcombination}
 \end{figure}
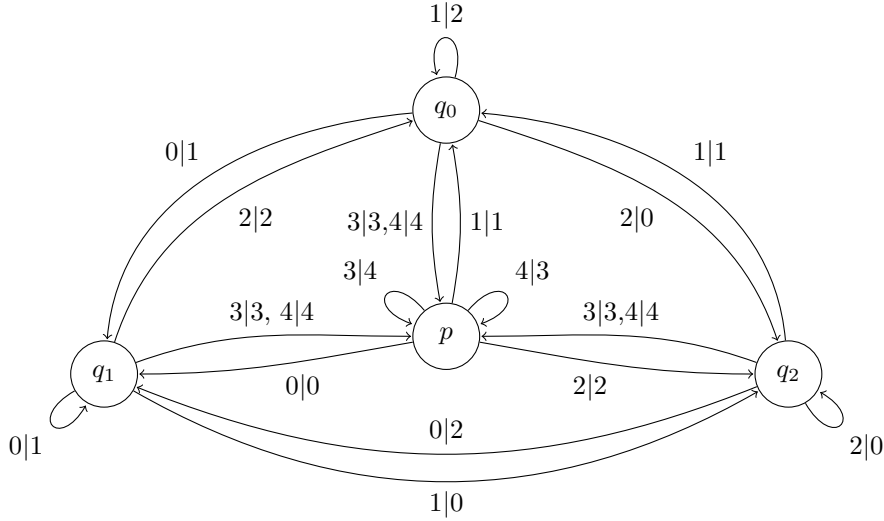
\end{example}

\subsection{Embedding direct sums of \texorpdfstring{$\spn{m}$}{Lg} in \texorpdfstring{$\spn{n}$}{Lg} for \texorpdfstring{$n$}{Lg} large enough}

The aim of this section is to show that for given $n \in \mathbb{N}$ and for an increasing sequence of non-zero natural numbers $d_1 \le d_2 \le \ldots \le  d_l$ such that $\sum_{i=1}^{l} d_{i} = n$, the semigroup $\spn{n}$ contains a subsemigroup isomorphic to $\spn{d_1} \times \spn{d_2} \times \ldots \times \spn{d_l}$. This will then yield, as a corollary, that $\hn{n}$ contains a subgroup isomorphic to $\hn{d_1} \times \hn{d_2} \times \ldots \times \hn{d_l}$. In order to do this, we first extend the results of the previous. Essentially we shall simultaneously merge the elements of  $\hn{d_i}$,  as opposed to inductively applying the construction in the previous section, this allows us better control the synchronizing level of the resulting transducers.

Let $n \in \mathbb{N}$ and let $d_i$ $1 \le i \le l$ be as in the previous paragraph. Let $X_0 := \{0,1,\ldots, d_1  -1\}$ and for  $2 \le i \le l$ let $X_i := \{\sum_{j=1}^{i-1}d_{j}, \sum_{j=1}^{i-1}d_{j} +1, \ldots, \sum_{j=1}^{i}d_{j} - 1\}$. Furthermore let $A_{i}$ $1 \le i \le l$ be synchronous synchronizing transducers on the alphabet $X_i$. We shall now describe how to form $\sqcup_{i=1}^{l} A_i \in \spn{n}$, which will simply be an extension of the 2 element case described in the previous Section.

For each $i \in \mathbb{N}$ let $\overline{A_{i}}$ denote the transformation of the words of length $1$ induced by $A_{i}$ as in Remark \ref{transformations}. The transducer $\sqcup_{i=1}^{l} A_i := \gen{X_n, \sqcup_{i=1}^{l}Q_{A_i}, \pi_{\sqcup}, \lambda_{\sqcup}}$ will consists of the disjoint union of copies of the $A_i$ which are connected in a specific way. Fix a $j$ such that $1 \le j \le l$, and consider the copy of $A_j$ in $\sqcup_{i=1}^{l} A_i$. Then the copy of $A_j$ in $\sqcup_{i=1}^{l} A_i \in \spn{n}$ transitions precisely as $A_j$ does when restricted to $X_{j}$; we now describe how $A_j$ acts for inputs not in  $X_j$. 

Let $1 \le i \le l$ be arbitrary, then for any $x_i \in X_i$, there is a unique state $q_{x_i} \in Q_{A_i}$ such that  $\pi_{A_{i}}(x_i, q_{x_i}) = q_{x_i}$ and $\lambda_{A_{i}}(x_i, q_{x_i}) = (x_i)\overline{A_{i}}$. Therefore in  $\sqcup_{i=1}^{l} A_i \in \pn{n}$ we set $\pi_{\sqcup}(x_i, A_j) = q_{x_i}$ and $\lambda_{\sqcup}(x_i, A_j) = (x_i)\overline{A_{i}}$. Hence we have now described how the copy of $A_{j}$ acts on all inputs in $\sqcup_{i\ne j, 1\le i \le l}X_i$.

Repeating the above for each $A_{j}$, $ 1 \le j \le l$, we now have that $\sqcup_{i=1}^{l} A_i $ is connected and all  states are defined on $X_n:= \{0,1 \ldots n-1\}$.

The proof that $\sqcup_{i=1}^{l} A_i \in \spn{n}$ (i.e that the resulting transducer is synchronizing) requires only the obvious amendments to the 2 element case proven in the previous Section. Therefore the following theorem is valid:

\begin{Theorem}
Let $n \in \mathbb{N}$ and let $d_i$ $1 \le i \le l$ be an increasing sequence of non-zero natural numbers such that $\sum_{i=1}^{l}d_i = n$. Let $X_1 := \{0,1,\ldots, d_1  -1\}$ and for  $2 \le i \le l$ let $X_i := \{\sum_{j=1}^{i-1}d_{j} - 1, \sum_{j=1}^{i-1}d_{j}, \ldots, \sum_{j=1}^{i}d_{j} - 1\}$. Furthermore let $A_{i}$ $1 \le i \le l$ be synchronous synchronizing transducers on the alphabet $X_i$. Then $\sqcup_{i=1}^{l}A_i$ is an element of $\spn{n}$. If $k_i$ is the synchronizing level each $A_i$, $1 \le i \le l$ then the synchronizing level of $\sqcup_{i=1}^{l}A_i$ is at most $\max_{1 \le i \le l}\{k_i\} +1 $.
\end{Theorem}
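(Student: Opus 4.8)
The plan is to verify that $\sqcup_{i=1}^{l}A_i$ is synchronous (immediate, since each $A_i$ is synchronous and all the newly-added transitions have output of length one), and then to bound its synchronizing level by a direct adaptation of the two-element argument given in the proof of Claim~\ref{newproductpreservessynch} (``Proof 1''). The key structural observation, exactly as in the two-element case, is that the partition $X_n = \bigsqcup_{i=1}^{l} X_i$ is respected by the dynamics in the following sense: as soon as one reads a letter $x \in X_i$, the active state lies in the copy of $A_i$ inside $\sqcup_{i=1}^{l}A_i$, and moreover reading $x$ from \emph{any} state of $\sqcup_{i=1}^{l}A_i$ (whether or not it already lies in the copy of $A_i$) lands in a state of the copy of $A_i$; when the state is already outside the copy of $A_i$, it lands specifically in $q_x$, the unique fixed state of $A_i$ forced by $x$.

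First I would set $k := \max_{1\le i\le l}\{k_i\}$ and fix a word $\Gamma = g_0 g_1 \ldots g_k \in X_n^{k+1}$ of length $k+1$. I would split into two cases. \emph{Case 1:} all letters of $\Gamma$ lie in a single block $X_i$. Then from the moment $g_0$ is read the active state is in the copy of $A_i$, which transitions exactly as $A_i$ does on inputs from $X_i$; since $A_i$ is synchronizing at level $k_i \le k$, the length-$k$ suffix $g_1\ldots g_k$ (or indeed any suffix of length $\ge k_i$) forces a unique state of $A_i$ independent of where we started within the copy of $A_i$, hence independent of the starting state of $\sqcup_{i=1}^{l}A_i$. \emph{Case 2:} $\Gamma$ uses letters from at least two distinct blocks. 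Let $m$ be the largest index such that $g_m$ belongs to a different block than $g_{m+1}$ — more precisely, let $m$ be maximal with the property that $g_m \in X_i$, $g_{m+1}\in X_{i'}$, $i\ne i'$; equivalently, $g_{m+1},\ldots,g_k$ all lie in one block $X_{i'}$ while $g_m \in X_i$ with $i \ne i'$. Because $g_m$ and $g_{m+1}$ lie in different blocks, after reading $g_0\ldots g_m$ the active state is in the copy of $A_i$; then reading $g_{m+1}\in X_{i'}$ from that state (which is outside the copy of $A_{i'}$) sends us to the fixed state $q_{g_{m+1}}$ of $A_{i'}$, a state depending only on $g_{m+1}$. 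From there the remaining $k - m - 1$ letters $g_{m+2}\ldots g_k$ are processed inside the copy of $A_{i'}$, so the terminal state depends only on the suffix $g_{m+1}\ldots g_k$ and not on the starting state. In both cases the state reached after $\Gamma$ depends only on $\Gamma$, so $\sqcup_{i=1}^{l}A_i$ is synchronizing at level at most $k+1$.

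To finish, I would remark that to conclude $\sqcup_{i=1}^{l}A_i \in \spn{n}$ one also needs it to be core, which holds because each copy of $A_i$ is core and strongly connected and the construction makes the whole transducer strongly connected (from any state one reaches the copy of $A_i$ by reading a letter of $X_i$, and each $A_i$ is strongly connected); alternatively one simply passes to the core, which by Claim~\ref{claim-SyncLengthsAdd}-type reasoning does not raise the synchronizing level. The main obstacle — really the only point requiring care — is the bookkeeping in Case 2: one must check that the ``last block change'' index $m$ is well-defined and that the argument genuinely only uses the suffix from the last block change onward, rather than getting confused by earlier alternations between blocks; but since each block change resets the active state to a state (indeed a fixed state) of the new block, earlier history is wiped out, and the argument is the verbatim $l$-ary analogue of the displayed two-state proof. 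I do not expect any genuinely new difficulty beyond notation, which is why the theorem is stated as immediate.
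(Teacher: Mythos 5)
Your proposal is correct and takes essentially the same approach as the paper, which does not write out a proof at all but instead states that the argument ``requires only the obvious amendments to the 2 element case proven in the previous Section,'' i.e.\ to Claim~\ref{newproductpreservessynch}. Your two-case split on whether $\Gamma$ stays in a single block $X_i$ or crosses blocks, together with the observation that the last block change resets the active state to a fixed state depending only on the letter read, is precisely the $l$-ary expansion of the paper's Proof~1; the only cosmetic slip is calling $q_x$ the state ``forced by $x$'' --- it is the unique state with a loop labelled $x$, which is the relevant notion here and is what the construction actually uses.
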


\begin{Remark}
If we begin with elements $A_i \in \pn{d_{i}}$ acting on the alphabet $X_i$, such that one of the $A_i$ does not possess a homeomorphism state, then the resulting transducer $\sqcup_{k=1}^{l}A_{k}$ does not represent a homeomorphism of $X_n^{\mathbb{Z}}$.
\end{Remark} 
\begin{proof}
To see this let $i \in \{1\ldots l\}$ be such that $A_i$ does not posses a homeomorphism state. Then since $A_i$ is a synchronous transducer there is a state $q_i$ of $A_i$ and $x_i, y_i \in X_i$ such that $\lambda_{A_i}(x_i,q_i) = \lambda_{A_i}(x'_i,q_i)$. Let $p_i:= \pi_{A_i}(x_i,q_{i})$ and $p'_i=\pi_{A_{i}}(x'_i,q_{i})$. Let $q_{z_i}$ be the state of $A_i$ such that $\pi_{A_i}(z_i, q_{z_{i}}) = q_{z_{i}}$, and let $\Gamma_i$ in $X_i^{\ast}$ be a path from $q_{z_{i}}$ to $q_i$. Furthermore let $x_j \in X_j$ for $j \in \{1,\ldots,l\}\backslash\{i\}$, and let $q_{x_{j}}$ be the state of $A_{j}$ such that $\pi_{A_{j}}(x_j,q_{x_{j}})=q_{x_{j}}$.

Now observe that by definition of $\sqcup_{k=1}^{l}A_{k}$, the words $x_jz_i\Gamma_ix_ix_j$ and $x_jz_i\Gamma_ix'_ix_j$ are such that $\pi_{\sqcup}(x_jz_i\Gamma_ix_ix_j,q_{x_{j}}) = q_{x_{j}}$ and $\pi_{\sqcup}(x_jz_i\Gamma_ix'_ix_j,q_{x_{j}}) = q_{x_{j}}$. Moreover $\lambda_{\sqcup}(x_jz_i\Gamma_ix_ix_j,q_{x_{j}})=\lambda_{\sqcup}(x_jz_i\Gamma_ix'_ix_j,q_{x_{j}})$.

Therefore $\sqcup_{k=1}^{l}A_{k}$ maps the bi-infinite strings $\ldots(x_jz_i\Gamma_ix_ix_j)\ldots$ and $\ldots(x_jz_i\Gamma_ix_ix_j)\ldots$ to the same element of $X_n^{\mathbb{Z}}$, and so it is not injective. 
\end{proof}
\begin{Remark}
If we restrict instead to $\hn{d_i}$ instead of $\pn{d_i}$ then  the resulting transducer $\sqcup_{k=1}^{l}A_{k}$ will be in $\hn{n}$ as we will see below.
\end{Remark}

It was shown in the 2 element case, that for $A$ and $B$ acting on alphabets $X_1$ and $X_2$ such that $X_1 \sqcup X_2 := \{0,1,\ldots,n-1\}$ then $A \sqcup B$ has finite order if and only if $A$ and $B$ have finite order. In this more general setting we prove the following stronger result.

\begin{Theorem}\label{embeddingdirectsumsofshift}
Let $n \in \mathbb{N}$ and let $d_i$ $1 \le i \le l$ be an increasing sequence of non-zero natural numbers such that $\sum_{i=1}^{l}d_i = n$. Let $X_1 := \{0,1,\ldots, d_1  -1\}$ and for  $2 \le i \le l$ let $X_i := \{\sum_{j=1}^{i-1}d_{j}, \sum_{j=1}^{i-1}d_{j}, \ldots, \sum_{j=1}^{i}d_{j} - 1\}$. By an abuse of notation let $\spn{d_i}$ denote the monoid of synchronous, synchronizing transducers  on the alphabet $X_{i}$. Then the map $\phi: \bigoplus_{i=1}^{l} \spn{d_i} \to \spn{n}$, $(A_1, \ldots, A_l) \mapsto \sqcup_{i=1}^{l}A_i$ is a monomorphism.
\end{Theorem}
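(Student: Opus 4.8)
The plan is to verify that the map $\phi$ is a well-defined semigroup homomorphism and that it is injective; the well-definedness (that $\sqcup_{i=1}^l A_i$ really is a synchronizing, synchronous, core transducer on $X_n$, hence an element of $\spn{n}$) has essentially been established in the preceding paragraphs, so the content here is the homomorphism property and injectivity. First I would fix notation: write $\mathbf{A} = (A_1,\ldots,A_l)$ and $\mathbf{B} = (B_1,\ldots,B_l)$ for elements of $\bigoplus_{i=1}^l \spn{d_i}$, with $A_i = \gen{X_i, Q_{A_i}, \pi_{A_i}, \lambda_{A_i}}$ and similarly for $B_i$. The product in the domain is coordinatewise, $\mathbf{A}\ast\mathbf{B} = (A_1\ast B_1,\ldots,A_l\ast B_l)$ (after reducing each coordinate to its minimal core), and the product in $\spn{n}$ is the automaton product followed by minimisation and restriction to the core. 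So the first main step is to show $\phi(\mathbf{A})\ast\phi(\mathbf{B}) = \phi(\mathbf{A}\ast\mathbf{B})$ in $\spn{n}$, i.e.\ that $\core(\min((\sqcup_i A_i)\ast(\sqcup_i B_i))) = \sqcup_i \core(\min(A_i\ast B_i))$.

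To carry this out I would use the characterisation of equality in $\spn{n}$ afforded by Corollary~\ref{corollaryusefulnessofperms}(i): two elements of $\spn{n}$ agree if and only if their associated transformations $\overline{(\cdot)}_{k+1}$ agree for a suitable $k$. However it is cleaner here to argue directly on the action on $X_n^{\Z}$. The key structural observation is that in $\sqcup_{i=1}^l A_i$, once any letter of $X_i$ has been read the active state lies in the copy of $A_i$, and thereafter letters of $X_i$ are processed exactly as in $A_i$ while a letter $x_j \in X_j$ ($j\ne i$) sends the active state to the distinguished fixed state $q_{x_j}$ of $A_j$ and is output as $(x_j)\overline{A_j}$ — in particular, output letters stay within the same block $X_j$ as the input letter. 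Consequently, for a bi-infinite word $w\in X_n^{\Z}$, the image $(\sqcup_i A_i)(w)$ is obtained block-by-block: each maximal run of letters from a fixed $X_i$ is processed by $A_i$ starting from the state forced by the run's left context (which, since runs of length $\geq k_i$ are synchronizing for $A_i$ and a fresh entry into the $A_i$-copy resets to a canonical state, is determined locally), and runs from distinct blocks do not interact. The same description applies to $\sqcup_i B_i$ and to $\sqcup_i C_i$ where $C_i = \core(\min(A_i\ast B_i))$. Feeding $w$ through $\sqcup_i A_i$ and then $\sqcup_i B_i$ therefore processes each $X_i$-run through $A_i$ then $B_i$, i.e.\ through $A_i\ast B_i$, which gives the same bi-infinite word as processing $w$ through $\sqcup_i C_i$. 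This proves $\phi(\mathbf{A})\phi(\mathbf{B}) = \phi(\mathbf{A}\mathbf{B})$.

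Injectivity is then straightforward. Suppose $\phi(\mathbf{A}) = \phi(\mathbf{B})$, i.e.\ $\sqcup_i A_i$ and $\sqcup_i B_i$ induce the same continuous map on $X_n^{\Z}$. Fix $i$ and let $u,v\in X_i^{\Z}$ be any bi-infinite word over the block $X_i$; viewing $u$ as an element of $X_n^{\Z}$, the block-by-block description above shows $(\sqcup_i A_i)(u) = A_i(u)$ and $(\sqcup_i B_i)(u) = B_i(u)$ (there is a single $X_i$-run, namely all of $u$, and reading it from any state of the $A_i$-copy; one takes a left context consisting of a long constant $X_i$-string so that the state forced is well-defined and independent of the seam, using that $A_i$ is synchronizing). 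Hence $A_i$ and $B_i$ agree on all of $X_i^{\Z}$, so $A_i = B_i$ as elements of $\spn{d_i}$ (both being minimal core transducers). Since $i$ was arbitrary, $\mathbf{A} = \mathbf{B}$.

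The main obstacle I anticipate is the bookkeeping around ``entry into a block'': I must be careful that the state at which $A_i$ begins processing a given $X_i$-run is genuinely determined in the same way for $\sqcup_i A_i$, for $\sqcup_i B_i$, and for $\sqcup_i C_i$, so that the three block-processings really do compose correctly. Because $A_i$ (and $B_i$, $C_i$) are synchronizing, a run of length at least the synchronizing level fixes the state regardless of entry point, and for shorter runs the definition of $\sqcup$ forces a canonical entry state $q_{x}$ depending only on the first letter $x$ of the run; one should note this canonical state is preserved under the product construction (the fixed state of $A_i\ast B_i$ over $x$ is the pair of fixed states), so the seams line up. Once this is pinned down the argument is essentially formal, and invoking Corollary~\ref{corollaryusefulnessofperms}(i) gives an alternative finitary route to the same conclusion if one prefers to avoid bi-infinite words entirely. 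One should also record the remark, already noted in the excerpt, that when each $A_i$ lies in $\hn{d_i}$ the image $\phi(\mathbf{A})$ lies in $\hn{n}$, so $\phi$ restricts to a group monomorphism $\bigoplus_i \hn{d_i}\hookrightarrow \hn{n}$, yielding the stated corollary.
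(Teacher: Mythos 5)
Your argument is correct and establishes the theorem, but it takes a more semantic route than the paper's own proof. Where you argue directly on the induced action on $X_n^{\Z}$ --- showing that both $\phi(\mathbf A)\phi(\mathbf B)$ and $\phi(\mathbf A\mathbf B)$ transform each maximal $X_i$-run through $A_i\ast B_i$, and that the entry seams match because the canonical entry state of $C_i=\core(A_i\ast B_i)$ over $x\in X_i$ is the pair of canonical entry states $(q_x,\, p_{(x)\overline{A_i}_1})$ --- the paper works structurally with the automaton product $(\sqcup A_i)\ast(\sqcup B_i)$. It first shows that no mixed pair $(q_i,p_j)$ with $i\neq j$ can lie in the core (a word forcing $\sqcup A_r$ into $Q_{A_i}$ has a nonempty $X_i$-suffix, which is output as an $X_i$-suffix and hence forces $\sqcup B_r$ into $Q_{B_i}$), then identifies the core states in $Q_{A_i}\times Q_{B_i}$ with the states of $C_i$ and verifies the transitions and outputs on $X_j$ for $j\neq i$ agree with those of $\sqcup C_i$, invoking Claim~\ref{cyclswellbhvd} for the local action $\overline{C_i}_1=\overline{A_i}_1\overline{B_i}_1$. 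Both routes go through; yours is conceptually lighter once the block-by-block picture is in place, while the paper's stays purely within the combinatorics of transducers. One small caution on your block decomposition: you state explicitly that entry letters are output within their own block, but you tacitly also need that \emph{every} letter of $X_i$ in a run is output inside $X_i$ (true, since the $A_i$-copy is synchronous over $X_i$ and other copies act on $X_i$ via $\overline{A_i}_1$), so that $\sqcup A_i$ preserves the $X_i$-run structure and $\sqcup B_i$ sees the same block boundaries; it is worth recording this. Your injectivity argument coincides with the paper's.
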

\begin{proof}
That this map is injective follows from that the fact that the action of each $A_i$ on $X_i^{\mathbb{Z}}$ is replicated exactly when we restrict $\sqcup_{i=1}^{l}A_i$ to $X_i^{\mathbb{Z}}$. Therefore we need only prove that $\phi$ is a homomorphism.

Let $(A_1, \ldots, A_l)$ and $(B_1 \ldots, B_l)$ be elements of $\phi: \bigoplus_{i=1}^{l} \spn{d_i}$, and let $(C_1, \ldots, C_l)$ be their product, hence $C_i = \core(A_i \ast B_i)$. We shall show that the $D:=\core(\sqcup_{i=1}^{l}A_i * \sqcup_{i=1}^{l}B_i) = \sqcup_{i=1}^{l}C_i$.

First notice that for $q_i \in Q_{A_i}$ and  $p_j \in Q_{B_j}$ the  pair $(q_i, p_j)$ is not a state of $D$. This is because for any word $\Gamma \in X_n^{\ast}$ such that the state of $\sqcup_{r=1}^{l}A_r$ forced by $\Gamma$ is $q_i$, then $\Gamma$ must have a non-empty suffix in $X_i^{\ast}$, and hence so also must its output through any state of $\sqcup_{r=1}^{l}A_r$ by construction. Therefore the output of $\Gamma$ through any state of $\sqcup_{r=1}^{l}B_r$ will synchronise to a state in $Q_{B_i}$. Therefore the states of $D$ are precisely a subset of $\sqcup_{i=1}^{l}Q_{A_{i}} \times Q_{B_{i}}$.

Now since the states of $D$ intersecting $Q_{A_{i}} \times Q_{B_{i}}$ arising from the transducer product $A_i \ast B_i$ form precisely the sub-transducer $C_i$, therefore to conclude the proof it suffices (by the injectivity of $\phi$) to show two things. Firstly, that for $j \ne i$ all states of $A_j \times B_j$ act on $X_i$ precisely as $\overline{C_{i}}_{1}= \overline{A_{i}}_{1} \times \overline{B_{i}}_{1}$ (the final equality follows from Claim \ref{cyclswellbhvd}). Secondly, that all states $(q_j, p_j)$ of $A_j \times B_j$ read an $x_i \in X_i$ into the unique state of $C_i$ with a loop labelled by $x_i$.

The first part follows from the following observation. By construction for any $j \ne i$ the copy of $A_j$ in $\sqcup_{i=1}^{l}A_{i}$ acts on $X_i$ precisely as $\overline{A_i}_{1}$ does, similarly in $\sqcup_{i=1}^{l}B_{i}$. Now by Claim \ref{cyclswellbhvd}, $\overline{C_{i}}_{1}:= \overline{A_{i}}_{1} \times \overline{B_{i}}_{1}$. Therefore the first part is proved.

For the second part consider the following. Notice that for any $j \ne i $ and for any state $q_j$, a state of the copy of $A_j$ in  $\sqcup_{i=1}^{l}A_{i}$, and for any $x_i \in X_i$ we have that $ \pi_{\sqcup A}(x_i, q_j) = q_{x_i}$, where $\pi_{\sqcup A}$ is the transition function of $\sqcup_{i=1}^{l}A_{i}$, and $q_{x_i}$ is the unique state of $A_{i}$ such that $\pi_{A_i}(x_i, q_{x_i}) = q_{x_i}$. An analogous statement holds for $\sqcup_{i=1}^{l}B_{i}$. Therefore given $(q_j, p_j) \in A_j \times B_j$, and $x_i \in X_i$, we have $\pi_{\sqcup D}(x_i, q_j, p_j) = (\pi_{\sqcup A }(x_i, q_j),\pi_{\sqcup B}((x_i)\overline{A_i}_{1},p_j))$, however this is simply the state $(q_{x_i},p_{(x_i)\overline{A_i}_{1}})$ of $A_{i} \times B_{i}$. However, since by definition of $\overline{A_i}_{1}$, $(x_i)\overline{A_i}_{1} = \lambda_{A_i}(x_i, q_{x_i})$, then $(q_{x_i},p_{(x_i)\overline{A_i}_{1}})$ is precisely the unique state of $C_i$ with a loop labelled by $x_i$.

\end{proof}

\begin{Remark}
It is straight-forward to see from the above that $\phi$ maps $\bigoplus_{i=1}^{l}\hn{d_i}$ to a subgroup of $\hn{n}$ since $\bigoplus_{i=1}^{l}\hn{d_i}$ is a subgroup of $\spn{n}$. 
\end{Remark}

\begin{corollary}
Let $n \in \mathbb{N}$ and let $d_i$ $1 \le i \le l$ be an increasing sequence of non-zero natural numbers such that $\sum_{i=1}^{l}d_i = n$. Let $X_1 := \{0,1,\ldots, d_1  -1\}$ and for  $2 \le i \le l$ let $X_i := \{\sum_{j=1}^{i-1}d_{j} - 1, \sum_{j=1}^{i-1}d_{j}, \ldots, \sum_{j=1}^{i}d_{j} - 1\}$. By an abuse of notation let $\spn{d_i}$ denote the monoid of synchronous synchronizing transducers  on the alphabet $X_{i}$. Given $(A_1, \ldots, A_l) \in \bigoplus_{i=1}^{l} \spn{d_i}$, $\sqcup_{i=1}^{l}A_i$ has finite order if and only if each of the $A_i$'s have finite order. Moreover the order of $\sqcup_{i=1}^{l}A_i$ is precisely the lowest common multiple of the orders of the $A_i$. 
\end{corollary}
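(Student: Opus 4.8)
The plan is to obtain this corollary essentially for free from Theorem~\ref{embeddingdirectsumsofshift}, which already supplies the monoid monomorphism $\phi\colon \bigoplus_{i=1}^{l}\spn{d_i}\to\spn{n}$, $(A_1,\ldots,A_l)\mapsto\sqcup_{i=1}^{l}A_i$. The point worth emphasising is that the simultaneous-merge construction of this subsection, with its connecting rule $\lambda_{\sqcup}(x_i,A_j)=(x_i)\overline{A_i}$ for $j\ne i$, was set up precisely so that $\phi$ respects products; this is exactly why the order of $\sqcup_{i=1}^{l}A_i$ comes out equal to the lowest common multiple of the $O(A_i)$, rather than merely a multiple of it as can happen for the pairwise $\sqcup$ construction earlier in this section.

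First I would record that, since $\phi$ is an injective monoid homomorphism, $(\sqcup_{i=1}^{l}A_i)^{m}=\phi\big(A_1^{m},\ldots,A_l^{m}\big)$ for every $m\in\N$, so $(\sqcup_{i=1}^{l}A_i)^{m}=(\sqcup_{i=1}^{l}A_i)^{m'}$ if and only if $A_i^{m}=A_i^{m'}$ for every $i$. Hence the monogenic semigroup $\gen{\sqcup_{i=1}^{l}A_i}$ is carried by $\phi$ isomorphically onto $\{(A_1^{m},\ldots,A_l^{m}):m\in\N\}\subseteq\prod_{i=1}^{l}\gen{A_i}$, which is finite precisely when each $\gen{A_i}$ is finite; this yields the first assertion. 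I would also note here, for the bookkeeping in the next step, that $\phi$ sends the tuple of identity transducers to the identity of $\spn{n}$ (the $\omega$-equivalent single-state pieces collapse to the single-state identity on $X_n$), consistently with $\phi$ being a monoid homomorphism.

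For the statement on orders, suppose each $A_i$ has finite order $m_i$. In the cases of principal interest (for instance when each $A_i$ lies in $\hn{d_i}$) this means $m_i$ is minimal with $A_i^{m_i}=\id$, and then $(\sqcup_{i=1}^{l}A_i)^{m}=\phi(A_1^{m},\ldots,A_l^{m})$ equals the identity of $\spn{n}$ iff $A_i^{m}=\id$ for all $i$ iff $m_i\mid m$ for all $i$ iff $\operatorname{lcm}(m_1,\ldots,m_l)\mid m$; minimality identifies $\operatorname{lcm}(m_1,\ldots,m_l)$ as the order of $\sqcup_{i=1}^{l}A_i$. For general elements of $\spn{d_i}$ I would run the same argument with the relation $A_i^{m}=\id$ replaced by the eventual-periodicity relation $A_i^{m}=A_i^{m+m_i}$, pushing $\phi$ through it verbatim, so that $\sqcup_{i=1}^{l}A_i$ becomes periodic with period $\operatorname{lcm}(m_i)$ and index the maximum of the indices of the $A_i$. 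I do not expect a substantial obstacle, since all the real content is already in Theorem~\ref{embeddingdirectsumsofshift}; the only point requiring care is the monoid-versus-group distinction, i.e. making sure that "finite order" and the identification of the identity element translate correctly across $\phi$, after which the direct-product arithmetic is routine.
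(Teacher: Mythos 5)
Your proof is correct and takes essentially the same route as the paper: the paper's own proof of this corollary is a one-line citation of Theorem~\ref{embeddingdirectsumsofshift} together with "well known results about direct sums of groups," and what you have written is precisely the expansion of that one line — using that $\phi$ is an injective monoid homomorphism to transport powers componentwise and then reading off finiteness and the $\operatorname{lcm}$ from the direct product. Your remark about eventual periodicity (index/period) in the non-group case $\spn{d_i}\setminus\hn{d_i}$ is a sensible clarification that the paper leaves implicit, but it does not change the approach.
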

\begin{proof}
This is a consequence of Theorem \ref{embeddingdirectsumsofshift} and well known results about direct sums of groups. 
\end{proof}

It is a result by Boyle, Franks and Kitchens  \cite{BoyleFranksKitchens90} that for $n \ge 3$ $\hn{n}$  contains free groups. Therefore we have the following corollary:

\begin{corollary}
Let $n \ge 3$ and let $m$ and $l$ be natural numbers such that $n = 3m +l$ where $0 \le l \le 3$. Then $\hn{n}$ contains a subgroup isomorphic to $\Pi_{i=1}^{m} F_2$ where $F_2$ is the free group on two generators.
\end{corollary}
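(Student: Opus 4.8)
The plan is to combine the earlier embedding theorem with the Boyle–Franks–Kitchens result about free groups in $\hn{n}$. Recall that Theorem~\ref{embeddingdirectsumsofshift} (together with the subsequent remark) gives, for any increasing sequence of non-zero natural numbers $d_1 \le d_2 \le \cdots \le d_l$ with $\sum_{i=1}^{l} d_i = n$, a monomorphism $\phi : \bigoplus_{i=1}^{l} \hn{d_i} \hookrightarrow \hn{n}$. And the cited result of Boyle, Franks and Kitchens \cite{BoyleFranksKitchens90} states that $\hn{d}$ contains a copy of the free group $F_2$ whenever $d \ge 3$. So the whole argument is a matter of choosing the right partition of $n$.

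**First I would** write $n = 3m + l$ with $0 \le l \le 2$ (this is just the division algorithm; I would note that the statement as written says $0 \le l \le 3$, but the range one actually needs is $l \in \{0,1,2\}$ — with $l=3$ one simply increments $m$, so there is no loss, and I would either silently use $0\le l\le 2$ or remark that $l=3$ can be absorbed). Then I would take the partition of $n$ consisting of $m$ parts equal to $3$ together with, if $l \ge 1$, a single extra part. The mild nuisance is that Theorem~\ref{embeddingdirectsumsofshift} requires the parts to be arranged in \emph{increasing} order and to be non-zero; so for $l=0$ take $(d_1,\dots,d_m) = (3,3,\dots,3)$; for $l=1$ we cannot use a part of size $1$ because $\hn{1}$ is trivial and, more to the point, a part of size $1$ would need to come first and be increasing — this is harmless since a trivial factor does not affect the group, but cleanest is to instead use the partition $(3,\dots,3,4)$ with $m-1$ threes and one four when $l=1$ (valid since $n = 3(m-1) + 4$ and $n \ge 4$), or $(3,\dots,3,5)$ when $l=2$. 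In every case we obtain a partition into $m$ parts, each of size at least $3$, summing to $n$.

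**Then** applying $\phi$ to this partition yields an embedding of $\bigoplus_{i=1}^{m} \hn{d_i}$ into $\hn{n}$ with each $d_i \ge 3$. By \cite{BoyleFranksKitchens90}, each $\hn{d_i}$ contains a subgroup isomorphic to $F_2$; taking the direct sum of these $m$ copies and restricting $\phi$ to it gives a subgroup of $\hn{n}$ isomorphic to $\bigoplus_{i=1}^{m} F_2 \cong \Pi_{i=1}^{m} F_2$ (finite direct sum = finite direct product). Since $\phi$ is injective, this image is genuinely isomorphic to $\Pi_{i=1}^m F_2$, which is exactly the claim.

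**The main obstacle** — really the only one — is the bookkeeping around the hypotheses of Theorem~\ref{embeddingdirectsumsofshift}: getting a partition into parts all of size $\ge 3$, in increasing order, summing to $n$, for every residue of $n$ modulo $3$. There is no serious mathematical content beyond that; the free-group input and the embedding are both quoted. I would spend one or two sentences spelling out the three residue cases explicitly so that the reader can check $d_1 \le \cdots \le d_m$ and $\sum d_i = n$, and then conclude.
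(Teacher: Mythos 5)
Your proof is correct and follows exactly the route the paper intends: the corollary is stated immediately after Theorem~\ref{embeddingdirectsumsofshift} and the reference to Boyle--Franks--Kitchens, with no written proof, so the composition of those two facts is precisely what is meant. Your handling of the partition into parts of size at least $3$ for each residue class modulo $3$, and your observation that the stated bound $0\le l\le 3$ is a harmless redundancy (absorb $l=3$ into $m$), are the right small points to make explicit.
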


Notice that since $F_2 \times F_2$  has undecidable subgroup membership problem, it follows that for  $n \ge 6$ $\hn{n}$  has undecidable subgroup membership problem.

\begin{Remark}\label{futherconstructions}
We can modify the construction above. Let $n$, $d_i$ and $A_i$, $1\le i \le l$, be as before. For each $A_i$ fix a permutation $\sigma _{i}$ of $X_i$ and an element $S_i \in Q_{A_i}^{d_i}$. Then we may form the transducer $\sqcup_{i=1, \sigma_i, S_i}^{l} A_i = \gen{X_n, \sqcup_{i=1}^{l}A_{i},\lambda_{\sqcup}, \pi_{\sqcup}}$. For a given $j \in \{1,\ldots l\}$ the copy of $A_j$ in $\sqcup_{i=1, \sigma_i, S_i}^{l} A_i$ is precisely $A_j$ when restricted to $X_j$. However for $i \ne j$, and any state $q_j$ of  $A_j$, then given any $x_i \in X_i$ we have $\lambda_{\sqcup}(x_i, q_j) = (x_i)\sigma_{i}$; $\pi_{\sqcup}(x_i, q_j)$ is the entry of $S_i$ corresponding to the position of $x_i$ when the elements of $X_i$ are ordered according to the natural ordering induced from $\mathbb{N}$. Then  once more the resulting element of $\spn{n}$ is synchronizing and has finite order if and only if all the $A_i$'s have finite order.
\end{Remark}

\subsection{On the difference between the synchronizing and bi-synchronizing level}

Using the techniques developed above we shall now construct a class of examples of finite order elements which are all synchronizing at level 1, but whose inverses are synchronizing at the maximum possible level for the given number of states. A side-effect of the construction is that the alphabet size increases with the gap in the size of the synchronizing and bi-synchronizing level.

Our base transducer $B$ is the transducer in Figure \ref{BandApremerge} to the left. Let $A$ be the transducer on the right.
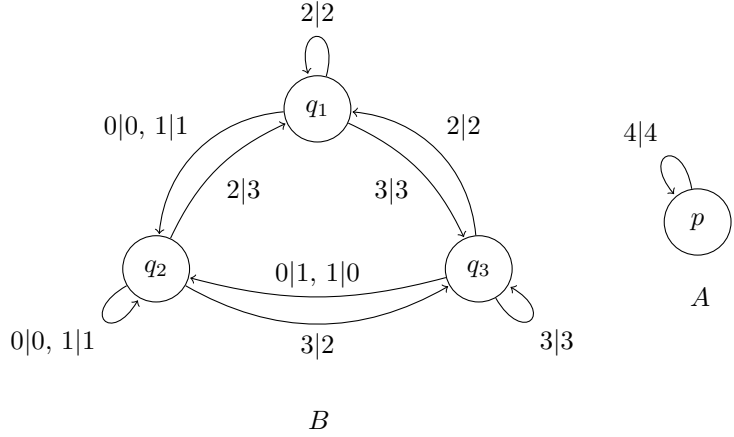
\begin{figure}[H]
 \begin{center}
\begin{tikzpicture}[shorten >=0.5pt,node distance=3cm,on grid,auto] 

   \node[state] (q_0)   {$q_1$}; 
   \node[state] (q_1) [below left=of q_0] {$q_2$}; 
   \node[state] (q_2) [below right=of q_0] {$q_3$}; 
   \node[state] (q_3) [xshift = 5cm, yshift = -1.5cm]   {$p$}; 
    \path[->] 
    (q_0) edge [in=105,out=75,loop] node [swap]{$2|2$} ()
          edge  [out=335,in=115]node [swap]{$3|3$} (q_2)
          edge  [out=185,in=85]node [swap]{$0|0$, $1|1$} (q_1)
    (q_1) edge[out=65,in=205]  node  [swap]{$2|3$} (q_0)
          edge [in=240,out=210, loop] node [swap] {$0|0$, $1|1$} ()
          edge [out=330,in=210] node [swap]{$3|2$} (q_2)
    (q_2) edge[out=95,in=355]  node [swap]{$2|2$} (q_0)
          edge [out=195,in=345]  node [swap]{$0|1$, $1|0$} (q_1) 
          edge [in=330,out=300, loop] node [swap] {$3|3$} ()
    (q_3) edge [in=130,out=100,loop] node [swap]{$4|4$} ();
     \node [xshift=2.5 cm, yshift= -2cm,text width=1cm] at (q_1)
    	          {
    	          $B$ };
    \node [xshift=0.4 cm, yshift= -1cm,text width=1cm] at (q_3)
        	          {
        	          $A$ }; 	          
    	          
\end{tikzpicture}
 \end{center}
 \caption{The base transducer $B$ and an element of $\hn{1}$}
 \label{BandApremerge}
 \end{figure}
 
 Notice that $B$ is synchronizing at level $1$ but bi-synchronizing at level $2$. It is a consequence of the collapsing procedure (see \cite{BlkYMaisANav}) that a transducer with $j$ states is synchronizing at level at most $j-1$ since we must identity two states at each step of the algorithm. Therefore $B^{-1}$ attains the maximum synchronizing level for a 3 state transducer. One can check that  $B$ has order $4$.
 
 Now we attach $A$ to $B$ using the construction described in Remark \ref{futherconstructions}. Let $\sigma_2$ be any permutation of $\{0,1,2,3\}$ that maps $0$ to $3$. There is only one permutation of $\{4\}$; form $\sqcup_{i=1, \sigma_i, S_i}^{2} C_i = \gen{X_n, \sqcup_{i=1}^{2}C_{i},\lambda_{\sqcup}, \pi_{\sqcup}}$ where $C_1 = B$, $C_2 = A$, and $\sigma_1$ is the identity map. The resulting transducer is as shown in Figure \ref{B'resultofmerge} to the left:
\begin{figure}[h!]
\begin{center}
\begin{tikzpicture}[shorten >=0.5pt,node distance=3cm,on grid,auto] 

   \node[state] (q_0)   {$q_1$}; 
   \node[state] (q_1) [xshift = -4.5cm, yshift = -3.5cm] {$q_2$}; 
   \node[state] (q_2) [xshift = 4.5cm, yshift = -3.5cm] {$q_3$}; 
   \node[state] (q_3) [xshift = 0cm, yshift = -3cm]   {$p$}; 
   \node[state] (q_4) [xshift=7cm, yshift = -3cm] {$p'$};
    \path[->] 
    (q_0) edge [in=105,out=75,loop] node [swap]{$2|2$} ()
          edge  [out=342,in=108]node [swap]{$3|3$} (q_2)
          edge  [out=185,in=85]node [swap]{$0|0$, $1|1$} (q_1)
          edge  [out=260, in=100] node [swap] {$4|4$} (q_3)
    (q_1) edge[out=72,in=198]  node  [swap]{$2|3$} (q_0)
          edge [in=240,out=210, loop] node [swap] {$0|0$, $1|1$} ()
          edge [out=330,in=210] node [swap]{$3|2$} (q_2)
          edge [out=20, in=180] node  {$4|4$} (q_3)
    (q_2) edge[out=95,in=355]  node [swap]{$2|2$} (q_0)
          edge [out=202,in=338]  node [swap]{$0|1$, $1|0$} (q_1) 
          edge [in=330,out=300, loop] node [swap] {$3|3$} ()
          edge [out=160, in=0] node[swap] {$4|4$} (q_3)
    (q_3) edge [in=160,out=130,loop] node [swap]{$4|4$} ()
          edge [out=80, in=280] node [swap] {$2|1$} (q_0)
          edge [out= 190, in= 0] node {$0|3$, $1|0$} (q_1)
          edge [in= 180, out= 350] node[swap] {$3|2$}(q_2)
    (q_4) edge [loop above] node {$5|5$}();
     \node [xshift=0.4cm, yshift= -3cm,text width=1cm] at (q_3)
             	          {
             	          $B'$ }; 
    \node [xshift=0.4 cm, yshift= -1cm,text width=1cm] at (q_4)
            	          {
            	          $A'$ };

\end{tikzpicture}
 \end{center}
 \caption{The resulting transducer $B'$ which is a merge of $B$ and $A$ and an element $A'$ of $\hn{1}$ }
\label{B'resultofmerge}
 \end{figure}
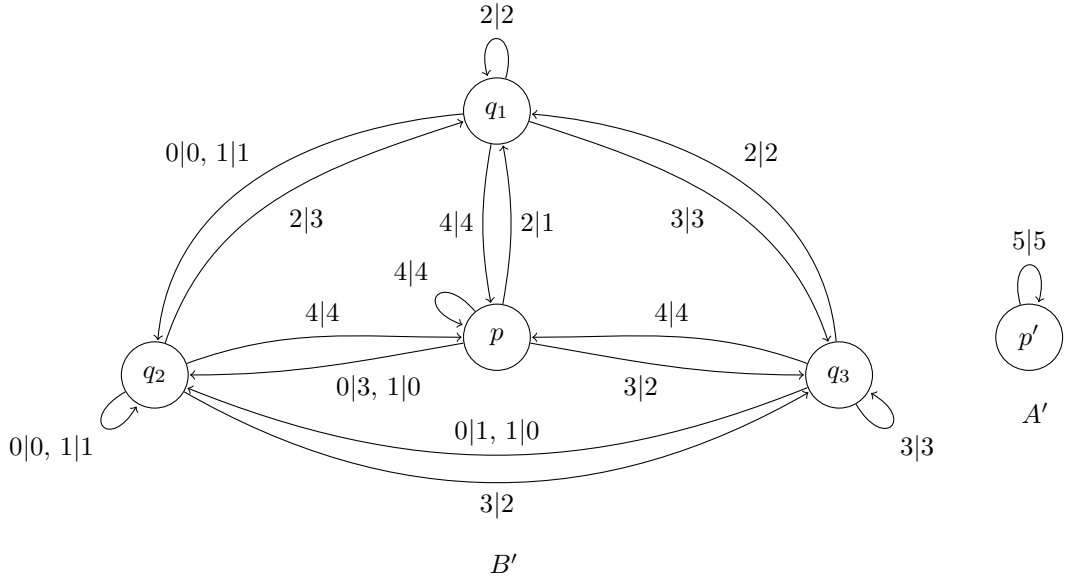
 
Since all the states $B$ map $\{0,1\} \to \{0,1\}$, $p$ is not $\omega$-equivalent to any state of $B':= \sqcup_{i=1, \sigma_i, S_i}^{2} C_i$. Moreover notice that since there is a path  $p \stackrel{0|3}{\to} q_2 \stackrel{2|3}{\to} q_1 \stackrel{3|3}{\to} q_4$, in order to identify $p^{-1}$ with $q_1^{-1}$, $q_2^{-1}$ and $q_3^{-1}$ in $B'$ we must first have identified $q_1^{-1}, q_2^{-1}$ and $q_3^{-1}$. Therefore since $B^{-1}$ is synchronizing at level $2$, it takes 3 steps to collapse $B'^{-1}$ to a single state. It follows  that $B'$ is bi-synchronizing at level $3$ and synchronizing at level $1$.

Now since all the states of $B'$ fix $4$, we can repeat the process. Let $A'$ be the transducer to the right of Figure \ref{B'resultofmerge}, and let $\sigma_2'$ be any permutation of $\{0,1,2,3,4\}$ that maps $4$ to $3$. Then by the repeating the arguments above the transducer $B'':= \sqcup_{i=1, \sigma_i', S_i}^{2} C_i = \gen{X_n, \sqcup_{i=1}^{2}C_{i},\lambda_{\sqcup}, \pi_{\sqcup}}$ where $C_1 = B'$ $C_2 = A'$, and $\sigma_1'$ is the identity map, is bi-synchronizing at level $4$ and synchronizing at level 1. We may continue on in this way.

Notice that since the initial transducers $B$ and $A$ have finite order, then by Remark \ref{futherconstructions} all the transducers $B'$, $B''$ and so on have finite order.

\section{The automaton group generated by a synchronizing transducer has exponential growth}\label{proofofmainresult}

The results of this section connect the graph theoretic properties of the graph of bad pairs to the existence of free subsemigroups in the automaton semigroup generated by an element of $\hn{n}$. We begin with the following proposition:

\begin{proposition}\label{looporfreesmgp}
Let $A \in \widetilde{\T{H}}_{n}$ be an element of infinite order. Then either there is a $j \in \mathbb{N}$ such that the minimal graph of bad pairs $\overline{G}_{j}(A)$ has a loop or the automaton semigroup generated by $A$ contains a free semigroup of rank at least 2.
\end{proposition}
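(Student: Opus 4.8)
The plan is to leverage the dichotomy already established: by Remark~\ref{ifinfiniteorderwithnocircuitsthenmusthavezero} (and the proposition preceding it), if none of the graphs $\overline{G}_{r}(A)$ has a loop, then for every $r$ greater than or equal to the synchronizing level the set $\mathfrak{S}_{r,|Q_A|^2+1}$ consists entirely of right zeroes, and moreover by Remark~\ref{circuitimpliesloop} the full graph $G_{r}(A)$ then has no circuit at all. So the first step is to assume $A$ has infinite order but no $\overline{G}_{j}(A)$ has a loop, hence no $G_{j}(A)$ has a circuit; I then want to build a free sub-semigroup of rank $2$ inside the automaton semigroup generated by $A$.

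The key idea for producing the free semigroup will be to use the splitting structure of the dual directly. Since $A$ has infinite order, $\dual{A}_{r}$ splits for every $r$ (as in the proof of Lemma~\ref{lemmagphofbdprs}), so the minimal splitting length $l=l(r)$ is finite, and by Lemma~\ref{minimalsplittinglengthstrictlyincreasing} $l(r)$ grows without bound in $r$. Because $G_{r}(A)$ has no circuit, its vertex set $\mathscr{B}_r$ is finite and acyclic, so every directed path in $G_{r}(A)$ has bounded length; combined with the $\sigma_\Gamma$-transformation picture of the proposition before Remark~\ref{ifinfiniteorderwithnocircuitsthenmusthavezero}, this forces the splits at level $r$ to ``cascade down'' to pairs of the form $\{p,p\}$ after a bounded number of steps, yet the splitting length $l(r)$ is unbounded --- so the growth in $l(r)$ must come not from long single chains in one graph but from the fact that distinct initial states of $A$ (not just powers of $A$) realise genuinely different continuous maps. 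Concretely, I will pick two states $q_1,q_2$ of $A$ lying at the top of a split of $\dual{A}_{r}$ for suitable $r$, so that the initial transducers $A_{q_1}$ and $A_{q_2}$ disagree on a large cylinder, and then show that words in $A_{q_1},A_{q_2}$ (thought of as generators of the automaton semigroup) that encode distinct binary strings produce distinct transformations on $X_n^{\mathbb{Z}}$. The mechanism is the standard one used for synchronizing automata: reading a long synchronizing word first ``commits'' the active state, after which the two branches are forced apart and the disagreement propagates; distinct words in the two generators leave distinct traces on a carefully chosen rational input.

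In more detail, the main construction I envision is: fix $r$ large, let $l=l(r)$ and take a minimal split $((q_1\dots q_l),(p_1\dots p_l),\Gamma)$ of $\dual{A}_{r}$ with top $\{a,b\}$, $a\neq b$, whose bottom depends only on the top and also equals some pair $\{a',b'\}$. Using that $G_r(A)$ has no circuit, I can follow this split down a maximal chain to reach a ``sink'' behaviour, and the length of that chain together with the data of which $W_p$ the outputs land in encodes, for each generator $A_a$ versus $A_b$, an obstruction to a relation. Then, given any two distinct words $u,v$ in the free monoid on $\{x,y\}$, I map $x\mapsto A_a$, $y\mapsto A_b$ and construct a single rational bi-infinite word $w$ (built by concatenating the words $\Gamma_i$ produced by the descent, exactly as in the proof of Proposition~\ref{loopgiveswitness}, but now reading the choice of branch at step $i$ off the $i$-th letter of $u$ or $v$) on which the image under the word $u(A_a,A_b)$ differs from the image under $v(A_a,A_b)$ at the first position where $u$ and $v$ differ. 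Freeness of the sub-semigroup $\langle A_a,A_b\rangle$ follows: if $u(A_a,A_b)=v(A_a,A_b)$ as transformations of $X_n^{\mathbb{Z}}$ for $u\neq v$, feeding this witness word yields a contradiction.

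\textbf{Main obstacle.} The delicate point is the bookkeeping that makes the ``branch at step $i$'' genuinely free rather than just showing infinite order of each $A_a,A_b$: I need the outputs of the two branches to remain \emph{separable after composition with arbitrary further generators}, i.e.\ the disagreement introduced by choosing $A_a$ rather than $A_b$ at step $i$ must survive being fed through any subsequent $A_a$'s and $A_b$'s. This is exactly where the hypothesis that no $\overline{G}_{j}(A)$ has a loop is used twice: once to guarantee (via the $\sigma_\Gamma$ right-zero fact) that there is enough room, for arbitrarily long words, to arrange the split data, and once, more subtly, to ensure that the bottom of each split depends only on its top so that the descent is well-defined independently of the ``padding'' states. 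Getting the quantifiers right --- that for \emph{every} pair of distinct words $u,v$ of any length we can find the witness, using a single pair of generators $A_a,A_b$ chosen once and for all (so one must take $r$ large enough before fixing the generators, then let the word length vary) --- is the part of the argument that will require the most care; the rest is a repackaging of the constructions in Lemma~\ref{lemmagphofbdprs} and Proposition~\ref{loopgiveswitness}.
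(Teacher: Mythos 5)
You have the right high-level plan (use absence of loops to manufacture a free sub-semigroup) and you have correctly put your finger on the real obstacle: the disagreement injected at the first position where $u$ and $v$ differ must survive post-composition with arbitrary further generators. But your sketch does not actually overcome that obstacle, and it is missing the two technical devices the paper uses to do so.

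First, the \textbf{pigeonhole step is missing}. You propose to ``fix $r$ large, then let the word length vary,'' but for the freeness argument you need a \emph{single} pair of states that sits at the top of a minimal split, with a \emph{single} fixed bottom, simultaneously for infinitely many levels $j$. The paper obtains this by noting that the sets $\mathrm{Top}_j$ and $\mathrm{Bottom}_j$ of tops and bottoms of minimal splits are subsets of the finite set of two-element subsets of $Q_A$, so by pigeonhole one can pass to an infinite $\mathcal{J}\subseteq\mathbb{N}$ on which both the top $\{p_1,p_2\}$ and the bottom $\{t_1,t_2\}$ are fixed. Picking a top at one level $r$ gives no control at other levels, and without that control your ``take $r$ large enough before fixing the generators'' step does not close.

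Second, and more seriously, \textbf{the key non-equivalence fact is asserted but not proved}. What freeness actually requires (after stripping the common prefix $v_1\cdots v_i$, which one may do because every state of an element of $\shn{n}$ is a homeomorphism state, so $A_{v_1\cdots v_i}$ is a surjection on $X_n^{\mathbb{N}}$ and can be cancelled on the left) is precisely the statement
\[
(p_1,S_1)\ \nwequal\ (t_1,S_2)\quad\text{for all tuples }S_1,S_2\in Q_A^{m},\ m\ge 0 ,
\]
where $\{p_1,p_2\}$ is the chosen top and $\{t_1,t_2\}$ the chosen bottom. This is much stronger than ``$A_{q_1}$ and $A_{q_2}$ disagree on a large cylinder.'' The paper proves it by a case split: either such equalizing suffixes $S_1,S_1',S_2,S_2'$ exist (Case 1), in which case one splices them into a minimal split to produce a split with equal top and bottom, i.e.\ a loop in $\overline{G}_j(A)$, contradicting the standing assumption; or they do not exist (Case 2), in which case one uses the right-zero structure of $\mathfrak{S}_{r,|Q_A|^2+1}$ to arrange $p_2=t_2$, whereupon ``Case 1 fails'' specializes exactly to the displayed non-equivalence. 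Your sketch gestures at the $\sigma_\Gamma$ picture, but uses it only to say the descent is ``well-defined,'' not to rule out Case 1; that step is the heart of the argument and cannot be elided.

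Third, a smaller mismatch: the conclusion of the proposition is about loops in the \emph{minimal} graph $\overline{G}_j(A)$, and the proposition before Remark~\ref{ifinfiniteorderwithnocircuitsthenmusthavezero} equates circuit-freeness of $\overline{G}_r(A)$ with $\mathfrak{S}_{r,|Q_A|^2+1}$ being all right zeroes. Remark~\ref{circuitimpliesloop}, which you invoke, concerns the full graph $G_r(A)$; it does not directly give ``no loop in any $\overline{G}_j(A)$ $\Rightarrow$ no circuit in any $G_j(A)$,'' so that inference should be dropped or re-justified. Finally, the witness-word construction you propose (importing the mechanism of Proposition~\ref{loopgiveswitness}) works for showing a \emph{power} of $A$ moves a point, but here you are composing different \emph{states} $A_a, A_b$ in varying order, and you would need, for each pair $u\ne v$, a different witness whose behaviour under the common prefix you can control. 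Once you have the displayed non-equivalence in hand, it is cleaner, and what the paper does, to argue directly at the level of $\omega$-equivalence with left cancellation rather than constructing explicit witnesses.
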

\begin{proof}
We may assume, by changing the alphabet size that $A$ is bi-synchronizing at level 1.

Since $A$ has infinite order then for each $j \in \mathbb{N}$, $(\dual{A})^{j}$ splits. Fix $j \in \N$ and let $\mathrm{Top}_{j}$ be the set of pairs of states $\{p_1,p_2\}$ such that there exits $(p_1, s_1, \ldots, s_r)$ and $(p_2,s_1',\ldots,s_r')$ which split  $(\dual{A})^j$ where $r$ is the minimal splitting length of $(\dual{A})^j$. By definition $\mathrm{Top}_{j}$ is the set of tops of minimal length splits of $\dual{A}_{j}$.  Analogously, for fixed $j \in \mathbb{N}$ let $\mathrm{Bottom}_{j}$ be the set of bottoms of minimal length splits. That is $\mathrm{Bottom} _{j}$ consists of sets $\{t_1, t_2\}$ such that there exists a split of minimal length $(\Gamma,(s_1, \ldots, s_r),(s_1',\ldots,s_r'))$ of $\dual{A}_{j}$ with bottom $\{t_1, t_2\}$.

Since $A$ has finitely many states there exists an infinite subset $\mathcal{J}'\subset \mathbb{N}$ and a fixed set of pairs $\{t_1,t_2\}$ such that $\{t_1,t_2\} \in \mathrm{Bottom}_{j}$ for all $j \in \T{J}'$. Now consider the set of tops of all splits of $\dual{A}_{j}$, $j \in \T{J}'$ with bottom $\{t_1, t_2\}$. Since $|\T{J}'|  = \infty$ and $\{t_1, t_2\} \in \mathrm{Bottom}_{j}$ for all $j \in \mathcal{J}'$, there exists an infinite subset $\T{J} \subset \T{J}'$ and a fixed set of pairs $\{p_1,p_2\}$ such that $\{p_1,p_2\} \in \mathrm{Top}_{j}$ for all $j \in \T{J}$ and there exists splits of $\dual{A}_{j}$ with top  $\{p_1, p_2\}$ and bottom  $\{t_1, t_2\}$ for all $j \in \T{J}$.

If $\{p_1,p_2\} = \{t_1,t_2\}$  we are done, since  $\overline{G}_{j}(A)$ has a loop for any $j \in \T{J}$. Therefore assume that this is not the case. Under this assumption, we have two cases to consider.

{\bf{Case 1}}: Suppose that there are $i, i' \in \mathbb{N}$, $i,i' \ge 1$ and $S_1, S_1' \in Q^{i}$ and $S_2, S_2' \in Q^{i'}$ such that $(p_1, S_1)$ is $\omega$-equivalent to $(t_1, S_1')$ and $(p_2,S_2)$ is $\omega$-equivalent to $(t_2,S_2')$. We may assume that $i = i'$ by padding out one of the pairs $(p_i,S_i)$ and $(t_i, S_i')$, $i=1,2$.

Let $j \in \T{J}$ be such that $j > i+1$. Consider $(\dual{A})^{j}$, it has minimal splitting length, $r$, greater than or equal to $j$. Now by choice of $\{p_1, p_2\}$, there exists $(p_1, s_1, \ldots, s_{r-1})$, $(p_2, s_1', \ldots, s_{r-1}')$ elements of $Q^r$ and $\Gamma$ a state of $(\dual{A})^{j}$ such that $((p_1, s_1, \ldots, s_{r-1}), (p_2, s_1', \ldots, s_{r-1}'), \Gamma)$  is a split of $(\dual{A})^{j}$ with bottom $\{t_1, t_2\}$. Hence, by minimality of $r$, it now follows that $((p_1, S_1, s_{i+1}, \ldots, s_{r-1}), (p_2, S_2, s_{i+2}',\ldots, s_{r-1}', \Gamma)$ is also a split of $(\dual{A})^{j}$ with bottom $(t_1,t_2)$. However this now implies, again by minimality of $r$ and since $(t_1,S_1')$ and $(t_2,S_2')$ are $\omega$-equivalent to $(p_1,S_1)$ and $(p_2,S_2)$ respectively, that $((t_1, S_1', s_{i+1}, \ldots, s_{r-1}),( t_2, S_2, s_{i+2}',\ldots, s_{r-1}'), \Gamma)$ is also a split of $(\dual{A})^{j}$ with bottom $(t_1,t_2)$. Therefore $(\dual{A})^{j}$ has a loop.

{\bf{Case 2}}: We assume that Case 1 does not hold, that is  for all   $i, i' \in \N$ there does not exist a choice of $S_1, S_1' \in Q^{i}$  and $S_2, S_2' \in Q^{i'}$ such that $(p_1,S1)$ is $\omega$-equivalent to $(t_1, S_1')$ and  $(p_2, S_2)$ is $\omega$-equivalent to  $(t_2, S_2')$ . We may also assume that none of the graph of bad pairs $\overline{G}_{j}(A)$ has a loop for any $j$ greater than the minimal synchronizing level of $A$, since otherwise we are done. 

The latter assumption implies that $\mathfrak{S}_{j, |Q_A|^2+1}$ consists of transformations with image size 1 by Remark \ref{ifinfiniteorderwithnocircuitsthenmusthavezero}. However since $(\dual{A})^{j}$ splits for every $j \in \mathbb{N}$, then for $j$ larger than the minimal synchronizing level, there are elements  $\Gamma \in X_n^j$, such that $\sigma_{\Gamma}$ has image size at least 2. Fix an arbitrary such $\Gamma$. Since $\sigma_{\Gamma}^{|Q_A|^2+1}$ has image size 1, then there is a state $p \in Q_{A}$ such that $(p)\sigma_{\Gamma} = p$. Therefore there is a pair of states $p_1, p_2 \in Q_{A}$ such that there is a split of $(\dual{A})^{j}$ with top $\{p_1, p_2\}$, and bottom $\{t_1, p_2\}$.   

The above argument now implies that we may chose $\{p_1, p_2\}$ and $\{t_1, t_2\}$ above so that $p_2 = t_2$, and there exists $\Gamma \in X_n^{j}$, $j \in \T{J}$, such that $(p_1)\sigma_{\Gamma}  = t_1$ and $(p_2)\sigma_{\Gamma} = p_2$. 

 Now since case one does not hold, and $p_2 = t_2$, therefore it follows that for any $m \in \mathbb{N}\backslash\{0\}$ and any $S_1, S_2 \in Q^{m}$ that $p_1S_1$ is not $\omega$-equivalent to $t_1S_2$. We now argue that the sub-semigroup $\gen{p_1, t_1}$ of  $\calscr{S}(A)$ (the automaton semigroup generated by $A$) is free.

Now as $A$ has infinite order, it follows that $Core(A^i) \nwequal Core(A^j)$ for any $i \ne j \in \mathbb{N}$. Therefore given two words $v$ and $w $ in $\gen{p_1, t_1}$ such that $v$ and $w$ are $\omega$-equivalent it follows that $|v| = |w|$.

Therefore consider the case of words $v, w \in  \gen{p_1,t_1}$ such that $|v| = |w|$. Suppose $v = v_1 \ldots v_l$ and $w = w_1 \ldots w_l$, where $|v| = |w| = l$. Let $1 \le i \le l$ be the minimal index so that $v_i \ne w_i$. We may assume that $v_i = p_1$ and $w_i = t$. Therefore $v = v_1\ldots v_i p_1 v_{i+2} \ldots v_{l}$ and $w = v_1\ldots v_i t_1 w_{i+2} \ldots w_{l}$. Hence $v$ is $\omega$-equivalent to $w$ if and only if $p_1 v_{i+2} \ldots v_{r}$ is $\omega$-equivalent to $t_1 w_{i+2} \ldots w_{r}$. However  by assumption this is not the case.  Therefore given any two distinct words in $\{p_1,t_1\}^{\ast}$, they represent distinct automorphisms of the $n$-ary rooted tree hence we conclude that $\gen{p_1, t_1}$ is a free semigroup.
\end{proof}

\begin{corollary}
Let $A \in \widetilde{\T{H}}_{n}$ be an element of infinite order. Then either there is a $j \in \mathbb{N}$ such that the  graph of bad pairs $G_{j}(A)$ has a loop otherwise the automaton semigroup generated by $A$ contains a free semigroup of rank at least 2.
\end{corollary}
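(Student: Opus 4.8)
The statement to prove is the corollary immediately following Proposition \ref{looporfreesmgp}: if $A \in \widetilde{\T{H}}_{n}$ has infinite order, then either $G_{j}(A)$ has a loop for some $j$, or the automaton semigroup generated by $A$ contains a free semigroup of rank at least $2$. This is nearly a restatement of Proposition \ref{looporfreesmgp}, the only difference being that the proposition is phrased in terms of the \emph{minimal} graph of bad pairs $\overline{G}_{j}(A)$, whereas the corollary is phrased in terms of the full graph of bad pairs $G_{j}(A)$. So the plan is essentially to observe that a loop in $\overline{G}_{j}(A)$ yields a loop in $G_{j}(A)$, and then invoke Proposition \ref{looporfreesmgp} verbatim.

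The key step is the observation that $\overline{G}_{r}(A)$ is a subgraph of $G_{r}(A)$: by construction the vertex set $B_r$ of minimal bad pairs is a subset of the vertex set $\mathscr{B}_r$ of all bad pairs, and every edge of $\overline{G}_{r}(A)$ --- coming from a split of minimal length $l$ --- is in particular an edge of $G_{r}(A)$, since a minimal-length split is a split whose bottom depends only on its top (by Remark \ref{badpairsmakesense}). Hence a loop at a vertex $\{x_1,x_2\}$ in $\overline{G}_{j}(A)$ is automatically a loop at $\{x_1,x_2\}$ in $G_{j}(A)$. Conversely, of course, not every loop of $G_{j}(A)$ need appear in $\overline{G}_{j}(A)$, but that direction is not needed here.

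So the proof reads as follows. Apply Proposition \ref{looporfreesmgp} to $A$. This gives one of two alternatives. In the first, there is a $j \in \mathbb{N}$ such that $\overline{G}_{j}(A)$ has a loop; since $\overline{G}_{j}(A)$ is a subgraph of $G_{j}(A)$ (same vertex on the minimal-bad-pairs level, same edge), the graph $G_{j}(A)$ also has a loop, which is precisely the first alternative of the corollary. In the second, the automaton semigroup generated by $A$ contains a free semigroup of rank at least $2$, which is the second alternative of the corollary. Thus in either case the conclusion of the corollary holds.

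There is no real obstacle here: the content is entirely carried by Proposition \ref{looporfreesmgp}, and the only thing to check carefully is the containment $\overline{G}_{r}(A) \subseteq G_{r}(A)$ at the level of both vertices and edges, which is immediate from the definitions of the graph of bad pairs and the minimal graph of bad pairs together with Remark \ref{badpairsmakesense}. If one wanted to be slightly more self-contained one could also note, via Remark \ref{circuitimpliesloop}, that a loop is a particular kind of circuit, so the statement could equally be phrased with ``circuit'' in place of ``loop'' without loss --- but since the corollary as stated says ``loop'', the subgraph observation suffices.
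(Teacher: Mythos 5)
Your proof is correct and matches the paper's intent exactly: the corollary is stated without its own proof precisely because it is an immediate consequence of Proposition~\ref{looporfreesmgp} once one notes (via Remark~\ref{badpairsmakesense} and the definitions) that $\overline{G}_{j}(A)$ is a subgraph of $G_{j}(A)$, so a loop in the former is a loop in the latter.
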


In the proposition below we introduce a condition on the graph of bad pairs $G_{j}(A)$ which guarantee the existence of free subsemigroups of certain rank in the automaton semigroup generated by an element of $\shn{n}$. This condition at first glance appears to be very strong, however we shall introduce a large class of examples which satisfy the hypothesis of the Proposition. In particular whenever the graph of bad pair has a loop  the hypothesis is immediately satisfied.

\begin{proposition} \label{infinite=expgrowth1}
Let $A \in \widetilde{\T{H}}_{n}$ and suppose that $A$ is synchronizing at level $k$ and is minimal. Let $G_{j}(A)$ be the graph of bad pairs for some $j \ge k \in \N$. Suppose there is a subset $\T{S}$ of the set of states of $A$, such that the following things hold:
\begin{enumerate}[label = (\roman*)]
\item $|\T{S}| \ge 2$,
\item the set $\T{S}(2)$ of two element subsets of $\T{S}$ is a subset of the vertices of $G_{j}(A)$,
\item for each element of $\T{S}(2)$ there is a vertex  accessible from it which belongs to a circuit.
\end{enumerate}
Then the automaton semigroup generated by $A$ contains a free semigroup of rank at least $|\T{S}|$. In particular the automaton semigroup generated by $A$ has exponential growth.
\end{proposition}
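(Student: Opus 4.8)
The plan is to build a free subsemigroup directly on the set $\T{S}$ by a refinement of the argument in Proposition~\ref{looporfreesmgp}. The essential point of that proof was: if $p_1 S_1$ is never $\omega$-equivalent to $t_1 S_2$ for the relevant pair of states $\{p_1,t_1\}$ on the \emph{bottom} of a loop, then $\gen{p_1,t_1}$ is free, because two distinct words $v=v_1\cdots v_\ell$, $w=w_1\cdots w_\ell$ of equal length differing first at index $i$ reduce (after stripping the common prefix $v_1\cdots v_i$, which acts as an automorphism of the tree) to comparing a word beginning with $p_1$ against one beginning with $t_1$. So the first step is to establish an analogue of ``Case 1 does not hold'' for \emph{every} pair in $\T{S}(2)$ simultaneously: namely that for all $q, q' \in \T{S}$ with $q\ne q'$, and all $m \ge 1$ and all $S_1,S_2 \in Q^m$, the states $qS_1$ and $q'S_2$ of $A^{m+1}$ are not $\omega$-equivalent. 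If this holds, then exactly the reduction argument from Proposition~\ref{looporfreesmgp} shows $\gen{\T{S}}$ is free of rank $|\T{S}|$: given distinct words of equal length, strip the longest common prefix (an automorphism of the tree), and the two resulting words begin with distinct elements of $\T{S}$, hence are $\omega$-inequivalent; words of differing length are $\omega$-inequivalent because $A$ has infinite order and so $\min\core(A^i) \nwequal \min\core(A^j)$ for $i\ne j$, via Lemma~\ref{corehasinfiniteorderimpliesdifferentpowersnotequivalent}.

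So the real work is to deduce the non-equivalence statement from hypothesis~(iii). First I would note that if $\{q,q'\} \in \T{S}(2)$ is itself a vertex lying on a circuit, then by Remark~\ref{circuitimpliesloop} some $G_{r'}(A)$ has a loop at (the pair reached along that circuit), and more to the point the construction in Proposition~\ref{loopgiveswitness}/\ref{looporfreesmgp} applies to give that $q$ and $q'$ cannot become $\omega$-equivalent after padding. For a general pair $\{q,q'\}\in \T{S}(2)$, hypothesis~(iii) says there is a directed path in $G_j(A)$ from $\{q,q'\}$ to some vertex $\{t_1,t_2\}$ on a circuit. The key claim is: \emph{if $\{x_1,x_2\} \to \{y_1,y_2\}$ is an edge of $G_j(A)$ and $y_1 S$ is never $\omega$-equivalent to $y_2 S'$ (for all $m$, all $S,S'\in Q^m$), then $x_1 T$ is never $\omega$-equivalent to $x_2 T'$ either.} Indeed an edge means there is a split $(x_1 U_1, x_2 U_2, \Gamma)$ of $\dual{A}_j$ with bottom $\{y_1,y_2\}$ whose bottom depends only on the top; so for any inputs appended after $\Gamma$ the outputs of the $x_1$-side land in $W_{y_1}$ and of the $x_2$-side in $W_{y_2}$. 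If $x_1 T \wequal x_2 T'$, then feeding a suitably chosen preimage word through these would force $y_1(\text{something}) \wequal y_2(\text{something})$, contradicting the hypothesis at $\{y_1,y_2\}$. Propagating this backwards along the path from $\{t_1,t_2\}$ (where non-equivalence holds by the loop case) to $\{q,q'\}$ gives the non-equivalence for $\{q,q'\}$.

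I expect the main obstacle to be the backwards-propagation claim, and specifically making precise the bookkeeping of ``padding'': the splits along the path have different minimal splitting lengths $l_1,l_2,\dots$, and the words $\Gamma_i \in X_n^{j}$ must be concatenated with the correct preimage-words $\Gamma_i'$ (as in the proof of Lemma~\ref{lemmagphofbdprs}) so that the composed split of $\dual{A}_{j\cdot(\text{path length})}$ genuinely has top $\{q,q'\}$ and bottom $\{t_1,t_2\}$ with the bottom-depends-only-on-top property preserved at each stage. Once that composite split is in hand, an $\omega$-equivalence $q T \wequal q' T'$ would survive being pushed through it to yield $t_1(\cdot)\wequal t_2(\cdot)$ with $t_1,t_2$ on a circuit; then Remark~\ref{circuitimpliesloop} reduces to a loop at some level $r'$, and the ``Case 2'' analysis of Proposition~\ref{looporfreesmgp} (using $\mathfrak{S}_{r',|Q_A|^2+1}$ and Remark~\ref{ifinfiniteorderwithnocircuitsthenmusthavezero}) delivers the contradiction. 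Finally, exponential growth of the automaton semigroup is immediate: a free semigroup of rank $\ge 2$ has exponential growth, and it embeds in the automaton semigroup generated by $A$.
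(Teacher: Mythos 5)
Your proposal diverges from the paper's proof in a crucial way, and the divergence creates a genuine gap.

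You aim to first establish the uniform claim that for all $q\ne q'\in\T{S}$, all $m\ge 1$, and \emph{all} $S_1,S_2\in Q^m$, the states $qS_1$ and $q'S_2$ of $A^{m+1}$ are $\omega$-inequivalent, and then deduce freeness by stripping common prefixes. This claim is stronger than what is needed (the words being compared lie in $\T{S}^*$, not $Q^*$), and more importantly your proposed route to it does not work. The base case of your backward induction is a pair $\{t_1,t_2\}$ on a circuit, and you want to invoke the ``Case~2'' analysis of Proposition~\ref{looporfreesmgp} (and Remark~\ref{ifinfiniteorderwithnocircuitsthenmusthavezero}). But Case~2 of that proof explicitly \emph{assumes} that no graph of bad pairs has a loop. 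Hypothesis~(iii) of the present proposition, together with Remark~\ref{circuitimpliesloop}, forces the existence of a loop at some level, so Case~2 is exactly the case that is \emph{excluded} here and cannot be invoked. The dichotomy of Proposition~\ref{looporfreesmgp} is ``loop or free,'' not ``loop and a witness of Case~2-type non-equivalence''; the two branches are mutually exclusive by construction. There is also a genuine mathematical concern independent of the invocation: $qS_1\wequal q'S_2$ amounts to $A_{S_2}^{-1}A_{S_1}=A_{q'}A_q^{-1}$ in $\hn{n}$, and nothing in hypotheses~(i)--(iii) precludes such a factorization for some choice of $S_1,S_2$, so the uniform claim is likely false in general.

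The paper's proof never establishes any such uniform statement. It instead works with two distinct words $U=SqT_1$, $V=SpT_2$ in $\T{S}^*$ of equal length differing first at position $i$, assumes $U\wequal V$ for contradiction, appends a common suffix so that $|qT_1|$ matches the total length $m_0+\cdots+m_l+1$ of the splits along the path from $\{p,q\}$ out to (and around) a circuit, and then reads a carefully constructed input word through $A_U$ and $A_V$. The bottom-depends-only-on-top property of each split pushes the common prefix one block longer each time, so after finitely many iterations the two resulting states differ only in their \emph{last} letter. Those two final states are of the form $Wx$ and $Wy$ with $x\ne y$, and since $A$ is minimal with all states homeomorphism states, $A_{Wx}\ne A_{Wy}$; but they should be $\omega$-equivalent because they are reached by reading the same word from the (assumed equivalent) $A_U$ and $A_V$. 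This is the contradiction. The only non-trivial base fact used is the evident inequivalence of $Wx$ and $Wy$, not any statement about $t_1S$ versus $t_2S'$ for arbitrary $S,S'$. Your ``propagation'' idea is the right instinct --- it is essentially the descent along the path --- but it must be run forward as a descent, not backward as an induction whose base case rests on a proposition that does not apply.
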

\begin{proof}
First observe that since $G_{j}(A)$ is assumed to have a circuit, by Lemma \ref{lemmagphofbdprs} $A$ has infinite order. Now let $U$ and $V$ be distinct non-empty words in $\T{S}^{\ast}$, if $|U| \ne |V|$ then since $A$ has infinite order  $A_{U}$ cannot be $\omega$-equivalent to $A_{V}$ by Lemma \ref{corehasinfiniteorderimpliesdifferentpowersnotequivalent}. Therefore we may assume that $|U| = |V|$.

Let $U= u_1 \ldots u_r$ and $V= v_1 \ldots v_r$ and let $1 \le i \le r$ be the minimal index so that $u_i \ne v_i$. If $i=r$ then we are done, since $U= Sq$ and $V= Sp$ (or vice versa) for some $S \in  \T{S}^{r-1}$ and $q\ne p \in \T{S}$. 

Therefore assume that $i \le r$ and that $U = SqT_1$ and $V = SpT_2$ for $S \in \T{S}^{i-1}$, $T_1,T_2 \in \T{S}^{r-i}$ and $q,p \in \T{S}$. If $U$ and $V$ are not $\omega$-equivalent, we are done. Therefore assume that $U$ and $V$ are $\omega$-equivalent. 

Since ${p,q} \in \T{S}(2)$, there is a path in $G_{j}(A)$ from $\{p,q\}$ to a vertex  which belongs to a circuit. Therefore we may assume that there is path in the graph $G_j(A)$ as follows :

\[
\{p,q\}:= \{p_0, q_0\} \to \{p_1, q_1\} \to \ldots \to \{q_{l}, p_{l}\} \to \{q_{l+1}, p_{l+1}\}
\]

where for each $\{p_a,q_a\}$, $0\le a \le l$ there is a split of length $m_a$ with top $\{p_a, q_a\}$ such that the bottom depends only on the top, and the bottom is $\{p_{a+1}, q_{a+1}\}$ and $\{p_{l+1}, q_{l+1}\}$ is a vertex on a circuit in $G_{j}(A)$. Notice that $m_a \ge 1$ for all $1 \le a \le l$. Therefore by travelling along this circuit in $G_{j}(A)$ as long as required, we may also assume that $m_0 + m_1 + \ldots + m_l +1 \ge r-i +1$.

By appending a common suffix to $U$ and $V$, thus preserving $\omega$-equivalence, if necessary we may further assume that $r-i+1 =|qT_1| = |pT_2|$ is  equal to $m_1 + m_1 \ldots + m_l +1$. Redefining $T_1$ and $T_2$ we assume that $U = S qT_1t_1$  and $V= SpT_2t_2$ where $|qT_1| = |pT_2| = m_0+ m_1 \ldots + m_l +1$ and $t_1 $ and $t_2$ are possibly distinct elements of $Q_A$.  Since $|qT_1| = |pT_2| = m_0 + m_1 \ldots + m_l +1$, write $qT_1 = R_1R_2 \ldots R_l$ and $pT_2 = P_1 P_2 \ldots P_l$ where $R_{a}, P_{a} \in Q_A^{m_a}$ for $1 \le a \le l$, moreover $R_1$ begins with $q$ and $P_1$ begins with $p$.

 Since $\{q,p\}$ is a vertex of $G_{j}(A)$, there is a word $\Gamma$ of length $j$ belonging to a split of length $m_0$, whose bottom depends only on the top $\{q,p\}$, and with bottom $\{q_1,p_1\}$.  Let $\Lambda$ be the word such that the output when processed through $A_{S}$ is $\Gamma$. Let $S_{\Gamma}$ be the state of $A^{m_0}$ such that $\pi_{Am_{0}}(\Gamma, P_1) = \pi_{Am_{0}}(\Gamma, Q_1)$. Such an $S_{\Gamma}$ exists  by definition of what it means for the bottom of a split to depend only on its top (see Definition \ref{bottomdependsonlyontop} ). Then we have, on reading $\Lambda$ through $A_{U}$ and $A_{V}$ respectively that we  transition to the states $S'S_{\Gamma}Q_1'Q_3' \ldots Q_l't_1'$ and $S'S_{\Gamma}P_1'P_3' \ldots P_l' t_2'$. Moreover $Q_1'$ begins with $q_1$ and $P_1'$ begins with $p_1$. Once more $Q_a' \in Q_A^{m_{a}}$ for $1 \le a \le l$.

Since, by assumption, each $\{p_a,q_a\}$ for $1 \le a \le l$ has an outgoing edge corresponding to a split of length $m_a$ whose bottom, $\{p_{a+1}, q_{a+1}\}$, depends only on its top, we can now repeat the argument of the above paragraph until the last letters of the final pair of state are a vertex of $G_r(A)$. Therefore we are in the situation that $i=r$ at which point we conclude that the final pair of states are not $\omega$-equivalent.

 Now since $U$ and $V$ are $\omega$-equivalent, then the final pair of states should also be $\omega$-equivalent, since we read the same word from $A_{U}$ and $A_{V}$ into this pair. This yields the desired contradiction. Therefore we conclude that $A_{U}$ and $A_{V}$ are not $\omega$-equivalent. 

The above now means that the semigroup $\gen{A_{p}| p \in \T{S}}$ satisfies no relations and so is a free semigroup. In fact this argument actually demonstrates that for any word $W \in Q^{\ast}$ ($Q$ being the set of states of $A$), the semigroup $\gen{A_{Wp}| p \in \T{S}}$ is a free semigroup. 
\end{proof}

\begin{corollary}
Let $A \in \widetilde{\T{H}}_{n}$ and suppose that $A$ is synchronizing at level $k$ and is minimal. As usual let $G_{j}(A)$ be the graph of bad pairs for some $j \ge k \in \N$. Suppose there is a subset $\T{S}$ of set of states of $A$, such that the following things hold:
\begin{enumerate}[label = (\roman*)]
\item $|\T{S}| \ge 2$,
\item The set, $\T{S}(2)$, of two element subsets of $\T{S}$ is a subset of the vertices of $G_{j}(A)$,
\item For each element of $\T{S}(2)$ there is a vertex  accessible from it which belongs to a circuit.
\end{enumerate}
Then the automaton semigroup generated by $A$ has exponential growth.
\end{corollary}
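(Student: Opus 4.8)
The statement to prove is the final corollary, which concludes exponential growth of the automaton semigroup generated by $A$ under the stated hypotheses on the graph of bad pairs.

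\medskip

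The plan is to derive this as an immediate consequence of the preceding Proposition \ref{infinite=expgrowth1}. That proposition, under exactly the same three hypotheses on the subset $\T{S}$ of states — namely $|\T{S}| \ge 2$, that every two-element subset of $\T{S}$ is a vertex of $G_{j}(A)$, and that from each such vertex one can reach a vertex lying on a circuit — already establishes that the automaton semigroup generated by $A$ contains a free subsemigroup of rank at least $|\T{S}| \ge 2$. So the only remaining work is the standard passage from ``contains a free subsemigroup of rank $\ge 2$'' to ``has exponential growth''. I would spell this out briefly: if $S$ is a finitely generated semigroup with generating set $Y$, its growth function counts the number of distinct elements expressible as products of at most $m$ generators; a free subsemigroup $F = \gen{a,b}$ of rank $2$ has growth function bounded below by $2^{m}$ (the $2^{m}$ words of length exactly $m$ in $a,b$ are pairwise distinct in $F$, hence pairwise distinct in $S$). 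Expressing $a$ and $b$ as words of bounded length $c$ in $Y$, we get that the ball of radius $cm$ in $S$ has at least $2^{m}$ elements, so the growth of $S$ is bounded below by an exponential function; since $S$ is generated by finitely many states and each state acts as a transducer of bounded size, the growth is also bounded above by an exponential, giving exponential growth.

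\medskip

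Concretely, the proof I would write is essentially one line: ``This is immediate from Proposition \ref{infinite=expgrowth1}, since a finitely generated semigroup containing a free subsemigroup of rank at least $2$ has exponential growth.'' If a little more detail is wanted, I would add the counting argument of the previous paragraph. There is essentially no obstacle here — the corollary is a weakening of Proposition \ref{infinite=expgrowth1}, recording only the growth consequence and discarding the sharper information about the rank of the free subsemigroup. The one point to be slightly careful about is that ``the automaton semigroup generated by $A$'' means the semigroup generated by $\{A_{q} \mid q \in Q_A\}$ acting on $X_n^{\Z}$ (or equivalently on the rooted tree, as the proof of Proposition \ref{infinite=expgrowth1} actually works at the level of the rooted tree action), so that it is indeed finitely generated, which is needed for the upper bound making ``exponential growth'' the precise conclusion rather than merely ``growth bounded below by an exponential''. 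Since $A$ has finitely many states, this finite generation is automatic.

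\medskip

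In summary: invoke Proposition \ref{infinite=expgrowth1} to obtain a free subsemigroup of rank $\ge 2$, then invoke the elementary fact that a finitely generated semigroup with a free subsemigroup of rank $\ge 2$ has exponential growth. The main ``obstacle'', if any, is purely expository — deciding how much of the standard growth-theory boilerplate to include — rather than mathematical.
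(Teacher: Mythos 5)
Your proposal is correct and matches the paper's intent: the corollary is simply a restatement of the growth conclusion already contained in Proposition \ref{infinite=expgrowth1}, which under the same hypotheses produces a free subsemigroup of rank at least $|\T{S}|\ge 2$, and the passage to exponential growth is the standard counting argument you spell out. No further comparison is needed.
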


There are a few ways of extending the argument. One can also show that for a subset $\T{S} \subset Q$ satisfying the conditions of the proposition, and for any vertex on a path from a vertex of $G_{k}$ to a vertex accessible from $\T{S}(2) $, then the pair of states making up this vertex generate a free semigroup. Notice that if the graph of bad pairs has a circuit then the conditions of the proposition are satisfied.

\begin{Remark}
In proving Propositions \ref{looporfreesmgp} and \ref{infinite=expgrowth1} we have made use of the cancellative property of automata groups generated by elements of $\hn{n}$, in particular the above arguments can be extended to elements of $\spn{n}$ where we still retain this cancellative property. 
\end{Remark}

\begin{corollary}
Let $A \in \widetilde{\T{H}}_{n}$ then the automaton semigroup generated by $A$ contains a free semigroup of rank at least $2$.
\end{corollary}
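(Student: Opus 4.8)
The plan is to split into two cases according to whether $A$ has finite or infinite order, and to exhibit a free subsemigroup of rank $2$ in each. If $A$ has infinite order, the previous Corollary (following Proposition~\ref{looporfreesmgp}) already tells us that either the graph of bad pairs $G_j(A)$ has a loop for some $j$, in which case Proposition~\ref{infinite=expgrowth1} applies with $\T{S}$ the loop vertex (a pair of states) and gives a free semigroup of rank $|\T{S}| = 2$; or, if no loop ever appears, the conclusion of Proposition~\ref{looporfreesmgp} is precisely that the automaton semigroup generated by $A$ contains a free semigroup of rank at least $2$. So the infinite-order case is essentially immediate from what has already been proved.

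The remaining case is when $A$ has finite order. Here I would observe that although $\gen{A}$, the cyclic semigroup generated by $A$ as a single element, is finite, the automaton semigroup $\calscr{S}(A) = \gen{A_q \mid q \in Q_A}$ generated by the \emph{states} of $A$ need not be. Indeed $A \in \widetilde{\T H}_n$ has at least one homeomorphism state, and if $A$ is one-way synchronizing then by Proposition~\ref{automorphismiffbi-synch} some $\overline{A}_l$ fails to be a permutation, which should already force $\calscr{S}(A)$ to be non-free-avoiding; more to the point, one would want to locate two states $p,q$ generating a free subsemigroup directly. The cleanest route is probably to reduce again to the graph of bad pairs: even for a finite-order element of $\widetilde{\T H}_n$, distinct states can be $\omega$-inequivalent and their products can witness splits, so one analyses $G_r(A)$ and the transformations $\sigma_\Gamma$ as in the proof of Proposition~\ref{looporfreesmgp}. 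If for some $r$ the graph $\overline G_r(A)$ has a circuit (equivalently, some $\mathfrak{S}_{r,|Q_A|^2+1}$ contains a non-right-zero transformation) we again invoke Proposition~\ref{infinite=expgrowth1}. Otherwise, by Remark~\ref{ifinfiniteorderwithnocircuitsthenmusthavezero}-type reasoning the splitting data stabilises, and one runs the Case~2 argument of Proposition~\ref{looporfreesmgp}: find a fixed pair $\{p_1,p_2\}$ and $\{t_1,t_2\}$ with $p_2 = t_2$ and $\sigma_\Gamma$ moving $p_1$ to $t_1$, show no $p_1 S_1$ is $\omega$-equivalent to $t_1 S_2$, and conclude $\gen{A_{p_1},A_{t_1}}$ is free.

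The main obstacle I anticipate is the finite-order case: one must be careful that the machinery of splits and bad pairs, which in the earlier sections is used to \emph{detect} infinite order of the single element $A$, still yields meaningful inequivalences among products of states when $A$ itself has finite order. In particular the step ``$\min\core(A^i) \nwequal \min\core(A^j)$ for $i \neq j$,'' used via Lemma~\ref{corehasinfiniteorderimpliesdifferentpowersnotequivalent} to separate words of different length, is unavailable when $A$ has finite order; so one instead needs to separate words in $\gen{A_{p_1},A_{t_1}}$ purely by the positional/suffix argument at the point of first disagreement, exactly as in the last paragraph of the proof of Proposition~\ref{looporfreesmgp} and of Proposition~\ref{infinite=expgrowth1}. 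I would therefore phrase the proof so that it never relies on $A$ having infinite order as an element, only on the existence of an appropriate split whose bottom depends on its top with a non-trivial transformation of states; since $A \in \widetilde{\T H}_n$ and $A$ is one-way synchronizing (the bi-synchronizing, finite-order homeomorphism situation being handled by the embedding/dual results), Proposition~\ref{automorphismiffbi-synch} guarantees the requisite non-injectivity, hence the requisite split, and the free subsemigroup of rank $2$ follows.
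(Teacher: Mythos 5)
Your first paragraph matches the paper's proof: Proposition~\ref{looporfreesmgp} gives either a loop in some $\overline{G}_j(A)$ (then Proposition~\ref{infinite=expgrowth1} with $\T{S}$ the loop vertex) or a free semigroup of rank $2$ directly. The problem is your second paragraph, where you try to handle the finite-order case. That case is vacuous, and recognising this is the step the paper leaves tacit: $\shn{n}$ consists of \emph{one-way} synchronizing transducers with a homeomorphism state (Section~\ref{preliminaries}), and Proposition~\ref{automorphismiffbi-synch} shows that every such $A$ acts non-injectively on $X_n^{\Z}$. A non-injective map cannot satisfy $A^m = \mathrm{id}$ for any $m$, so every $A \in \shn{n}$ already has infinite order. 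The whole corollary reduces to the first paragraph of your proposal plus this one-line observation.

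The finite-order argument you sketch would not close the gap even if it were needed. Non-injectivity (some $\overline{A}_l$ failing to be a permutation) does not by itself produce a split whose bottom depends on its top with a non-trivial transformation of $Q_A$, which is what Proposition~\ref{infinite=expgrowth1} or the Case~2 reasoning inside Proposition~\ref{looporfreesmgp} actually requires, and the parenthetical appeal to ``the embedding/dual results'' to dispose of ``the bi-synchronizing, finite-order homeomorphism situation'' is not an argument. Moreover, had $\shn{n}$ included the bi-synchronizing elements $\hn{n}$, the corollary itself would be false --- the single-state identity transducer lies in $\hn{n}$ and generates a trivial semigroup --- so the appearance of a finite-order branch is a signal that the hypothesis was misread, not a case to handle by parallel machinery.
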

\begin{proof}
This follows from Propositions \ref{looporfreesmgp} and  \ref{infinite=expgrowth1}.
\end{proof}

\begin{Theorem}\label{synchronizingmeansexpgrowth}
Let $A \in \widetilde{\T{H}}_{n}$ then the automaton semigroup generated by $A$ has exponential growth.
\end{Theorem}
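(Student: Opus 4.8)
The statement follows almost immediately from the results assembled in this section together with the dichotomy for finite/infinite order. The plan is to split into two cases according to whether $A$ has finite or infinite order. If $A$ has infinite order, then by the preceding corollary the automaton semigroup $\calscr{S}(A)$ generated by $A$ contains a free semigroup of rank at least $2$; a free semigroup of rank $2$ has exponential growth, and a finitely generated semigroup containing a subsemigroup of exponential growth itself has exponential growth (growth can only increase when passing to an overgroup/oversemigroup on the same generating set, up to the standard comparison of growth functions). Hence $\calscr{S}(A)$ has exponential growth. This is the substantive case and it is already done: it is exactly the content of the chain Proposition~\ref{looporfreesmgp} $\Rightarrow$ Proposition~\ref{infinite=expgrowth1} $\Rightarrow$ the corollary immediately preceding this theorem.

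It remains to treat the case where $A$ has finite order. Here $\gen{A} = \{A^i \mid i \in \Z\}$ is a finite cyclic group, but the automaton semigroup $\calscr{S}(A) = \gen{A_q \mid q \in Q_A}$ generated by the \emph{states} of $A$ need not be finite. The point is that $A$ having finite order as an element of $\hn{n}$ says nothing directly about the initial transducers $A_q$. So the first thing I would do in this case is reduce to the infinite-order machinery by a different route: one observes that if some power of some state, say $(A_q)^m = A^m_{q'}$ for a state $q'$ of $A^m$, has infinite order as an element (equivalently, fails to be $\omega$-equivalent to a finite-order initial transducer), then we can apply the infinite-order argument to that initial transducer. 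Concretely, I would argue that if \emph{every} initial transducer $A^m_{q'}$ (over all $m$ and all states $q'$ of $A^m$) has finite order, then $\calscr{S}(A)$ is a finite semigroup — because then $\min\core(A^m)$ stabilizes and all the relevant data is bounded, so there are only finitely many distinct initial transducers up to $\omega$-equivalence — and a finite semigroup trivially does not furnish a counterexample to the theorem only if we interpret the theorem as asserting exponential growth, which fails for finite semigroups.

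This last point reveals the real obstacle: the theorem as literally stated (``the automaton semigroup generated by $A$ has exponential growth'') is \emph{false} if $A$ can generate a finite automaton semigroup, e.g. if $A$ is the identity. So the intended reading must be restricted — presumably $A$ is assumed to be of infinite order, or the theorem should say ``has exponential growth or is finite'', matching the abstract's phrasing that ``these groups are finite or they have exponential growth''. Under the correct reading, the proof is simply: \emph{If $A \in \shn{n}$ has infinite order, then by the corollary above $\calscr{S}(A)$ contains a free subsemigroup of rank $2$, hence has exponential growth.} The main work — extracting the free subsemigroup via the graph of bad pairs and the splitting-length analysis — has already been carried out in Propositions~\ref{looporfreesmgp} and~\ref{infinite=expgrowth1}, so the theorem is a one-line corollary modulo the standard fact that containing a free subsemigroup forces exponential growth. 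I would therefore write the proof as: invoke Lemma~\ref{corehasinfiniteorderimpliesdifferentpowersnotequivalent} if needed to handle the finite-order-but-infinite-semigroup subcase, then cite the free-semigroup corollary, then cite the monotonicity of growth under taking finitely generated oversemigroups.
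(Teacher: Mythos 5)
Your treatment of the substantive case is exactly the paper's argument: cite the corollary immediately preceding (the automaton semigroup contains a free subsemigroup of rank at least~$2$) and then the standard fact that a finitely generated semigroup containing a free subsemigroup of rank~$\ge 2$ has exponential growth. That is the entire content of the paper's proof.

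The detour through a hypothetical finite-order case is, however, unnecessary, and your claim that the theorem is literally false (``e.g.\ if $A$ is the identity'') rests on a misreading of the hypothesis. By the definition given in Section~\ref{preliminaries}, $\shn{n}$ consists of elements of $\spn{n}$ that have a homeomorphism state \emph{and are synchronizing but not bi-synchronizing}; the identity is bi-synchronizing and so lies in $\hn{n}$, not in $\shn{n}$ as used here (the Introduction does give a more inclusive gloss, so the confusion is understandable, but it is the Preliminaries definition that is in force). Under that definition, $A \in \shn{n}$ can never have finite order: by Proposition~\ref{automorphismiffbi-synch} and the remark following it, such an $A$ is surjective but not injective on $X_n^{\Z}$. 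If $\gen{A}$ were finite it would contain an idempotent $E = A^{N}$, and $E^2 = E$ together with surjectivity of $E$ forces $E(y)=y$ for all $y$, so $A^{N} = \mathrm{id}$, making $A$ injective --- a contradiction. Thus the infinite-order hypothesis of Proposition~\ref{looporfreesmgp} holds automatically for every $A \in \shn{n}$, which is why the paper drops it in the corollary and the theorem. Your proposed appeal to Lemma~\ref{corehasinfiniteorderimpliesdifferentpowersnotequivalent} to handle a ``finite-order but infinite semigroup'' subcase would not work in any case: that lemma's hypothesis, $\min\core(A^i) \nwequal \min\core(A^j)$ for all $i \ne j$, is precisely the infinite-order condition you would be trying to do without. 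You are right, though, that the paper never spells out the ``$\shn{n} \Rightarrow$ infinite order'' step explicitly; the argument just given closes that small gap.
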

\begin{proof}
This follows from standard results in the literature on the growth rates of groups and semigroups and the fact that the automaton semigroup generated by $A$ contains a free semigroup.
\end{proof}

\subsection{Further conditions for having infinite order: avoiding loops}

In this subsection we outline a method for detecting when an element of $\shn{n}$ has infinite order which does not depend on detecting loops. This turns out to be particularly effective when $n=3$. Our approach shall be to deduce implications on the local action of states of the transducer from a power of the dual transducer being a zero.

First we need the following notion.

Let $A \in \shn{n}$. For each letter $i \in X_n$ let $[i]:= \{ \pi_{A}(i, p) \mid p \in Q_A
\}$ and let $[i]^{-1}:=  \{ \pi_{A^{-1}}(i, p^{-1}) \mid p^{-1} \in Q_A^{-1}
\}$. Note that it is not necessarily the case that if $p \in [i]$ then $p^{-1} \in [i]^{-1}$. Let $\mathfrak{P}(A)_1 := \{[i] \mid i \in X_n\}$. Now refine  $\mathfrak{P}_{1}$ as follows: whenever $i, j \in X_n$ are such that  if $[i] \cap [j] \ne \emptyset$, then let $[i,j]:=  [i] \cup [j]$, let $\mathfrak{P}(A)_{2}$ be the result of this process. An element of $\mathfrak{P}(A)_{2}$ is either of the form $[i,j]$ for $i,j \in X_n$ or just $[i]$ for some $i \in X_n$. Repeat the process: whenever two elements of $\mathfrak{P}_{2}$ have non-empty intersection, we take their union, and let  $[i_1, i_2, \ldots, i_m]$ denote the resulting set, where the $i_l$'s are distinct for $1 \le l \le m$, $[i_l] \subset [i_1, i_2, \ldots, i_m]$ and $m$ is at most $4$. Recursively form sets  $\mathfrak{P}_{j}$ for $j \in \mathbb{N}$. Since $|X_n| = n$ there is a $j \in \N$ such that $\mathfrak{P}(A)_{j} = \mathfrak{P}(A)_{j+1}$. Let $\mathfrak{P}(A) = \mathfrak{P}(A)_{j}$ for this $j$. Notice that $\mathfrak{P}(A)$ is a partition of the states of $A$, and we call $\mathfrak{P}(A)$ the \emph{letter induced partition of $A$}.

\begin{lemma}\label{Lemma: at least 2 letters have the same set of states}
Let $A \in \hn{n}$ and let $\mathfrak{P}(A)$ be the letter induced partition of $A$, then there exists $P \in  \mathfrak{P}(A)$ and distinct letters $i$ and $j$ in $X_n$ such that $[i] \cup [j] \subset P$.
\end{lemma}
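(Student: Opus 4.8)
The plan is to count states. Let $A \in \hn{n}$ with state set $Q_A$, and recall that $A$ is a core transducer, hence (being synchronizing) strongly connected, so every state of $A$ lies in some $[i]$, i.e. $\bigcup_{i \in X_n} [i] = Q_A$. First I would set up the combinatorial dichotomy: either some $[i]$ has size at least $\lceil |Q_A|/n \rceil + 1$, or all the $[i]$ are small. The key numerical input must come from an earlier result — I would invoke the lemma preceding Proposition \ref{lemma 1.3} (the one showing that if $|A| > n(n+1)$ and $B$ is synchronizing at level $1$ then $\min\core(AB)$ is synchronizing at level strictly greater than $1$), whose proof establishes precisely that for $A \in \pn{n}$ core and minimal with $|A| > n(n+1)$ there is an $i \in X_n$ with $|[i]| > n+1$. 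So the first move is to reduce to the case $|Q_A| > n(n+1)$ by a preliminary argument handling small transducers, or — cleaner — to observe that if $|Q_A| \le n(n+1)$ the statement can be checked by a direct pigeonhole on the partition sets, and if $|Q_A| > n(n+1)$ we get a genuinely large block.

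The heart of the argument: suppose for contradiction that for every $P \in \mathfrak{P}(A)$ and every pair of distinct letters $i,j$ we do \emph{not} have $[i] \cup [j] \subseteq P$. Since $\mathfrak{P}(A)$ is built by merging the $[i]$ whenever they intersect, each block $P$ equals $\bigcup_{i \in I_P}[i]$ for some nonempty $I_P \subseteq X_n$, and the $I_P$ partition $X_n$. The failure hypothesis forces $|I_P| = 1$ for every block (if $|I_P| \ge 2$, pick two letters $i,j \in I_P$; since they ended up in the same block, a chain of intersections connects $[i]$ to $[j]$, and transitively $[i] \cup [j] \subseteq P$ — this is where I need to be a little careful, because "merge whenever intersecting" gives $[i] \cup [j] \subseteq P$ only via the union of the whole chain, which is exactly $P$, so indeed $[i]\cup[j] \subseteq P$, contradiction). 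Hence every block is a single $[i]$, and moreover distinct $[i], [j]$ are disjoint — so $\{[i] : i \in X_n\}$ is itself a partition of $Q_A$ into exactly $n$ (not necessarily distinct-as-sets, but now provably distinct) blocks. Then $|Q_A| = \sum_{i \in X_n}|[i]|$. The contradiction will come from bounding each $|[i]|$: for $A \in \hn{n}$ every state is a homeomorphism state, so $\lambda_A(\cdot, q)$ is a permutation of $X_n$ for each $q$; I want to argue that the map $q \mapsto \pi_A(i,q)$ has image $[i]$ and this constrains $|[i]|$. Actually the clean contradiction is simpler: if the $[i]$ are pairwise disjoint, then reading the letter $i$ from \emph{any} state lands you in $[i]$, and reading $j$ lands you in $[j]$ with $[i] \cap [j] = \emptyset$; iterating, the state reached after reading a word $w$ depends only on $w$ in a way that makes $A$ synchronizing at level $1$ — but then its inverse structure and the one-way-synchronizing hypothesis (if $A \in \shn{n}\setminus\hn{n}$) or a minimality/size argument (if $A \in \hn{n}$) gives a contradiction unless $|Q_A| = n$, and the $n$-state case is then dispatched directly.

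The main obstacle I anticipate is pinning down exactly which earlier result delivers the numerical bound "some $[i]$ is large", and making the reduction to large $|Q_A|$ airtight: the cited lemma is stated for $\pn{n}$, and here we are in $\hn{n} \subseteq \pn{n}$, so that inclusion must be used, and one must confirm that minimality of $A$ is available (or can be assumed WLOG, since passing to the minimal core representative does not change $\hn{n}$-membership). A secondary subtlety is the transitivity step in the merging process — I would state it as a small sublemma: if $[i]$ and $[j]$ lie in the same block $P$ of $\mathfrak{P}(A)$, then $[i] \cup [j] \subseteq P$, which is immediate from $P$ being a union of $[l]$'s. With that sublemma, the failure hypothesis literally says every block of $\mathfrak{P}(A)$ is a single $[i]$ and the $[i]$ are pairwise disjoint, reducing everything to: \emph{the sets $[i]$, $i \in X_n$, cannot be pairwise disjoint when $|Q_A| > n$}. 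I would prove this last fact by the pigeonhole/counting argument above combined with strong connectedness — if they were disjoint, $\sum_i |[i]| = |Q_A|$, but also each state has $n$ outgoing edges landing in $\bigcup_i [i] = Q_A$, and tracking in-degrees forces $|[i]| = |Q_A|$ for the $i$ supplied by the large-block result, contradicting $|[i]| \le |Q_A| - (n-1) < |Q_A|$ once $n \ge 2$.
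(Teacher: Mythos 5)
Your proposal does not track the paper's argument, and the route you've sketched has a gap that I don't see how to close. The core of your plan is to show by counting/pigeonhole that the sets $[i]$, $i\in X_n$, cannot be pairwise disjoint once $|Q_A|$ is large enough; but that claim is simply false without the bi-synchronizing hypothesis, and you never use that hypothesis in a load-bearing way. A reset automaton (synchronizing at level $1$, with $[i]$ a singleton for each $i$, all distinct) has the $[i]$ pairwise disjoint, yet it can have arbitrarily many states once one allows $|Q_A|=n$ with $n$ large, and it is minimal and core. Such an automaton lies in $\shn{n}\setminus\hn{n}$ precisely because its inverse fails to be synchronizing — which shows that the hypothesis $A\in\hn{n}$ is not a technicality you can relegate to a parenthetical at the end, it is the entire content of the lemma. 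Your secondary step, "if the $[i]$ are pairwise disjoint then iterating makes $A$ synchronizing at level $1$," is also incorrect: pairwise disjointness of the $[i]$ does not make each $[i]$ a singleton, so it does not give level-$1$ synchronization. And the closing in-degree count ("tracking in-degrees forces $|[i]|=|Q_A|$") doesn't follow — each state contributes exactly one edge with label $i$ into $[i]$, so the edges with label $i$ number $|Q_A|$ and distribute over $[i]$, but nothing forces $|[i]|=|Q_A|$.

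The paper's actual argument is short and orthogonal to yours, and makes the role of $A^{-1}$ explicit. Because $A^{-1}$ is synchronizing in the strong sense used in this paper (the state after reading any word of length $k$ is determined by that word), the collapsing procedure applies: there must exist two distinct states $p^{-1},q^{-1}$ of $A^{-1}$ that transition identically on every single letter (take a word $\Gamma$ of length $k-1$, where $k$ is minimal, that fails to force a single state; any two states it reaches from different starts must agree on all one-letter extensions). Since $A$ is minimal and synchronous, $A^{-1}$ is minimal, so $p^{-1}$ and $q^{-1}$ are not $\omega$-equivalent, hence there is a letter $i'$ on which their outputs differ, say $i=\lambda_{A^{-1}}(i',p^{-1})\ne \lambda_{A^{-1}}(i',q^{-1})=j$. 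Translating back to $A$: reading $i$ from $p$ and reading $j$ from $q$ both land in the state corresponding to $\pi_{A^{-1}}(i',p^{-1})=\pi_{A^{-1}}(i',q^{-1})$, so $\pi_A(i,p)=\pi_A(j,q)$, hence $[i]\cap[j]\neq\emptyset$ and they merge into a common block $P$ with $[i]\cup[j]\subseteq P$. If you want to salvage anything from your draft, the sublemma you state — that if $[i],[j]$ lie in the same block of $\mathfrak{P}(A)$ then $[i]\cup[j]\subseteq P$ — is indeed immediate and used implicitly, but the existence of the nonempty intersection has to come from the structure of $A^{-1}$, not from a size bound on $Q_A$.
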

\begin{proof}
Let $A \in \hn{n}$. Since $A^{-1}$ is synchronizing, it follows (see the Appendix of \cite{BlkYMaisANav}) that there are distinct states $p^{-1}$ and $q^{-1}$ of $A^{-1}$ such that for all $l \in X_n$ we have $\pi_{A^{-1}}(l, p) = \pi_{A^{-1}}(l,q)$. Now since $A$ is minimal and synchronous, $A^{-1}$ is also minimal, therefore there  is an $i' \in X_n$ such that $i =\lambda_{A^{-1}}(i', p^{-1}) \ne \lambda_{A^{-1}}(i', q^{-1}) = j$. Hence in $A$ we have, $\pi(i, p) = \pi(j, q)$. It follows by definition that there is some $P \in \mathfrak{P}(A)$ such that $[i] \cup [j] \subset P$.
\end{proof}

\begin{lemma} \label{Lemma: letter induced partition bounds image size}
Let $A \in \hn{n}$ and let $\mathfrak{P}(A)$ be the letter induced partition of $A$. Let $k \in \N$ be greater than or equal to the synchronizing level of $A$. Suppose that $\dual{A}_{k}$ splits and that if $\dual{A}_{k+1}$ splits then it  has minimal splitting length strictly greater than the minimal splitting length of $(\dual{A})^{k}$. Then we have the following:

\begin{enumerate}[label = (\roman*)]
\item $\mathfrak{P}(A) \ne \{[0,1 \ldots, n-1] \}$ \label{lemma: letter partition not everything}
\item For any $\Gamma \in X_n^{k}$ the transformation  $\sigma_{\Gamma}$ has image size strictly less than $n$ \label{lemma: n gives bound on image of transformation}. In particular given $\Gamma \in X_n^{k}$ then for a given $P \in \mathfrak{P}(A)$ then there exists a $q \in Q$ such that for all  $t \in P$ we have $q=(t)\sigma_{\Gamma}$.
\end{enumerate}

\end{lemma}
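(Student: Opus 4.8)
The plan is to prove the two parts together, exploiting the hypothesis that the minimal splitting length of $\dual{A}_{k+1}$ is strictly larger than that of $\dual{A}_{k}$. First I would establish part \ref{lemma: letter partition not everything} by contradiction: suppose $\mathfrak{P}(A) = \{[0,1,\ldots,n-1]\}$, meaning all the sets $[i]$ chain together into a single block equal to $Q_A$. I would then argue that this forces $\dual{A}_{k+1}$ to have a split of the same minimal length as $\dual{A}_{k}$, contradicting the hypothesis. The mechanism: given a minimal split $((q_1,\ldots,q_l),(p_1,\ldots,p_l),\Gamma)$ of $\dual{A}_{k}$ with bottom $\{t_1,t_2\}$, one wants to prepend a letter to $\Gamma$ and produce a split of $\dual{A}_{k+1}$ of length $l$ (not $l+1$); since $\dual{A}_{k+1} = \dual{A}_{k}*\dual{A}_{1}$ (established earlier in the excerpt), a length-$l$ split of $\dual{A}_{k+1}$ amounts to finding a word $\Gamma' = i\gamma$ of length $k+1$ and tuples in $Q_A^{l}$ whose outputs through $\Gamma'$ agree on the first $l-1$ coordinates (up to the $W$-classes) but land in distinct $W$-classes on the last. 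The condition $\mathfrak{P}(A) = \{Q_A\}$ says that the states reachable by reading single letters are ``maximally mixed'', and I would use the reachability of $t_1$ and $t_2$ from a common block (via Lemma \ref{Lemma: at least 2 letters have the same set of states}, which already gives distinct letters $i,j$ with $[i]\cup[j]$ in one block, and hence states $\pi(i,p)=\pi(j,q)$ coinciding) to build a length-$l$ split at level $k+1$. This is the step I expect to be the main obstacle: carefully bookkeeping the dual product $\dual{A}_{k}*\dual{A}_{1}$ so that the extra letter absorbs into the existing split without increasing its length, using that any state of $A$ reached after reading one more letter is already ``seen'' inside $Q_A$ because the letter-induced partition is trivial.

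Next, for part \ref{lemma: n gives bound on image of transformation}, I would first observe that the image size of $\sigma_\Gamma$ for $\Gamma \in X_n^{k}$ is at most $|X_n| = n$ trivially, since $\sigma_\Gamma(q)$ is determined by which $W_p$ the output of $\Gamma$ through $qS$ lands in, and there are only $n$ possible such classes (indexed by states, but the output is a word of fixed length whose last-letter-determined synchronizing state is one of $n^{\text{something}}$ — here I must be careful; actually $\sigma_\Gamma$ maps into $Q_A$, so the crude bound is $|Q_A|$). The content is the strict inequality $< n$. For this I would use part \ref{lemma: letter partition not everything}: since $\mathfrak{P}(A)$ is a nontrivial partition of $Q_A$, pick a block $P \in \mathfrak{P}(A)$ with $P \subsetneq Q_A$. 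The defining property of $\sigma_\Gamma$ (that the $W$-class of the output of $\Gamma$ through $qS$ is independent of the tail $S$, because $l$ is the minimal splitting length) should be strengthened: if $q, q'$ lie in the same block $P$, I claim $(q)\sigma_\Gamma = (q')\sigma_\Gamma$. The reason is that $q$ and $q'$ being in a common block means there is a chain of letters $i_1,\ldots,i_m$ and states realizing $q$ and $q'$ as $\pi(i_a,\cdot)$ for adjacent $i_a$'s with overlapping $[i_a]$; reading $\Gamma$ from states that agree after one preceding letter must give outputs in the same $W$-class, or else $\dual{A}_{k+1}$ would again have a split of length $\le l$, contradicting the hypothesis on minimal splitting length.

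So the logical skeleton is: (a) the strict-increase-of-splitting-length hypothesis forbids any ``short'' split at level $k+1$; (b) a nontrivial letter-induced partition is exactly what must hold, else one manufactures such a short split — giving part \ref{lemma: letter partition not everything}; (c) the same forbidding of short splits forces $\sigma_\Gamma$ to be constant on each block of $\mathfrak{P}(A)$, and since $\mathfrak{P}(A)$ has a block smaller than all of $Q_A$ — in fact, by Lemma \ref{Lemma: at least 2 letters have the same set of states} some block contains $[i]\cup[j]$ for two distinct letters — the number of blocks is at most $n-1$, so the image of $\sigma_\Gamma$ has size at most $n-1 < n$, and the final sentence of \ref{lemma: n gives bound on image of transformation} (constancy on each $P$) is precisely the block-constancy just proved. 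The delicate point throughout, and the one I would spend the most care on, is translating ``$\dual{A}_{k+1}$ has no split of length $\le l$'' into ``outputs of $\Gamma$ through states agreeing modulo one preceding letter land in the same $W$-class'', i.e. correctly reading the product decomposition $\dual{A}_{k+1} = \dual{A}_{k}*\dual{A}_{1}$ in terms of splits, tops, and bottoms.
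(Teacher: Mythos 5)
Your proposal identifies the same core mechanism as the paper: every state of $\dual{A}_{k+1}$ has the form $i\Gamma$, and the $W$-class of the output word obtained by processing $i\Gamma$ through any $l$-tuple of states beginning with $p$ is $\sigma_\Gamma(\pi(i,p))$; the hypothesis that $\dual{A}_{k+1}$ has no split of length $\le l$ therefore forces $\sigma_\Gamma(\pi(i,p))$ to be independent of $p$, i.e.\ $\sigma_\Gamma$ is constant on each $[i]$, and chaining over non-empty intersections gives constancy on each block of $\mathfrak{P}(A)$. Combined with Lemma~\ref{Lemma: at least 2 letters have the same set of states}, which forces $|\mathfrak{P}(A)|<n$, this yields both parts; your summary points (a)--(c) match the paper's argument almost verbatim.

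The one misstep is the detailed mechanism you sketch for part~\ref{lemma: letter partition not everything}. You propose assuming $\mathfrak{P}(A)=\{Q_A\}$ and directly manufacturing a length-$l$ split of $\dual{A}_{k+1}$ by ``prepending a letter'' and appealing to the bottom $\{t_1,t_2\}$ of a level-$k$ split together with Lemma~\ref{Lemma: at least 2 letters have the same set of states}. But the bottom $\{t_1,t_2\}$ is the pair of distinct \emph{values} of $\sigma_\Gamma$, not the pair of states on which $\sigma_\Gamma$ disagrees (those form the top); and the cited lemma, which produces distinct $i,j$ with $[i]\cap[j]\ne\emptyset$, gives no information once the partition is assumed to consist of a single block. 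The paper avoids all of this by arguing (i) directly: once block-constancy is established, a one-block partition would force $\sigma_\Gamma$ to be constant on $Q_A$, contradicting that $\dual{A}_k$ splits and hence some $\sigma_\Gamma$ has image of size at least two. Your contradiction version can be repaired, but only by first running the chaining argument you reserve for (ii) to locate an $[i]$ on which $\sigma_\Gamma$ is non-constant, and then exhibiting the short split from the state $i\Gamma$; as written that intermediate step is missing.
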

\begin{proof}
Let $A \in \hn{n}$ be as in the statement of the lemma and let $l$ be the minimal splitting length of $(\dual{A})^{k}$. Let $m \in \N$ be minimal such that $\mathfrak{P}(A)_{m} = \mathfrak{P}(A)$.

For part \ref{lemma: letter partition not everything} let $\Gamma$ be a word in $X_n^{k}$ such that $\sigma_{\Gamma}$ has image size at least 2. Let  $q_1q_2 \ldots q_l \in Q_A^{l}$ be any $l$ tuple of states, then since $\dual{A}_{k}$ has minimal splitting length $l$ then the $l$'th letter of the output when $q_1q_2 \ldots q_l$ is processed from the state $\Gamma$ of $(\dual{A})^{k}$ depends only on $q_1$. In particular for $i \in X_n$, and for any word $p_1\ldots p_l$ in the state of $A$ the $l$'th letter of the output when $p_1 \ldots p_l$ is processed from $i\Gamma$ depends only on the state $\pi(i, p_1)$. In particular this letter is equal to $(\pi(i, p_1))\sigma_{\Gamma}$. However since the splitting length of  $(\dual{A})^{k+1}$, if it splits, is strictly greater than $l$ then it must be the case that  $(\pi(i, p_1))\sigma_{\Gamma} =(\pi(i, p))\sigma_{\Gamma}$ for any state $p$ of $Q_A$. 

Now if $j \in X_n$ is such that $[i] \cap [j] \ne \emptyset$ i.e 
$[i,j]$ is an element of  $\mathfrak{P}(A)_{2}$ then there are 
states $q_1$ and $q_2$ of $A$ such that  $\pi(i, q_1) = \pi(j, 
q_2)$. It therefore follows that $(\pi(i, q_1))\sigma_{\Gamma} = 
(\pi(j, q_2))\sigma_{\Gamma}$. By the previous paragraph we 
therefore have that for any $q$ in $[i]$ and $p \in [j]$, 
$(q)\sigma_{\Gamma} = (p)\sigma_{\Gamma}$. 

Now assume that for all $1 \le r < m$ for any set $P_1 \in \mathfrak{P}(A)_{r}$ and any pair of states $q_1$ and $q_2$ in $P_1$ we have that  $(\pi(i, q_1))\sigma_{\Gamma} = (\pi(j, q_2))\sigma_{\Gamma}$. Now let $P_1$ and $P_2$ in $\mathfrak{P}(A)_{r}$ such that $P_1 \cap P_2 \ne \emptyset$. This means that there is a pair $i, j \in X_n$ such that $[i] \subset P_1$ an $[j] \subset P_2$ such that $[i] \cap [j] \ne \emptyset$. Therefore by repeating the argument in the previous paragraph we have that $(\pi(i, q_1))\sigma_{\Gamma} = (\pi(j, q_2))\sigma_{\Gamma}$ for any pair of state $q_1 \in [i]$ and $q_2 \in [j]$. By the inductive assumption we therefore have that $(\pi(i, q_1))\sigma_{\Gamma} = (\pi(j, q_2))\sigma_{\Gamma}$ for any pair of states $q_1 \in P_1$ and $q_2 \in P_2$.

Now since $\sigma_{\Gamma}$ has image size at least 2, there are 
states $t_1$ and $t_2$ of $A$ such that $(t_1)\sigma_{\Gamma} \ne 
(t_2)\sigma_{\Gamma}$. Now as $A$ is core and synchronizing, 
there are elements $P_1$ and $P_2$  of $\mathfrak{P}(A)$ such 
that $t_1 \in P_1$ and $t_2 \in P_2$. Now by observations in the 
previous paragraph it follows that $P_1 \ne P_2$. This 
demonstrates \eqref{lemma: letter partition not everything}. 

The second part of the lemma now follows since as demonstrated above for $P \in \mathfrak{P}(A)$, there is a fixed $q \in Q_A$ such that $q =(t)\sigma_{\Gamma}$ for all $t \in P$. Therefore, by Lemma~\ref{Lemma: at least 2 letters have the same set of states}, we have  $ 2 \le |\im(\sigma_{\Gamma})| = |\mathfrak{P}(A)| < n$.  
\end{proof}

\begin{Remark}
Notice that if $A \in \hn{n}$ has infinite order, then there are 
infinitely many numbers $k \in \N$, where $k$ is greater than or 
equal to the minimal synchronizing level of $A$, such  
$(\dual{A})^{k}$ has splitting length strictly less than 
$(\dual{A})^{k+1}$. For each such $k$, any $\Gamma \in X_n^{k}$ 
and a given $P \in \mathfrak{P}(A)$ all elements of $P$ have the 
same image under $\sigma_{\Gamma}$. In particular since 
$(\dual{A})^{k}$ splits there elements $P_1$ and $P_2$ such that for $t_1 \in P_1$ and $t_2 \in P_2$ there is some $\Delta 
\in X_n^{k}$ such that $(t_1)\sigma_{\Delta} \ne 
(t_2)\sigma_{\Delta}$. Furthermore we may insist that there is 
an infinite subset $\T{J} \subset \N$ such that for all $j \in 
\T{J}$ there is a $\Delta \in X_n^{j}$ such that 
$(t_1)\sigma_{\Delta} \ne (t_2)\sigma_{\Delta}$ for $t_1 \in 
P_1$ and $t_2 \in P_2$. This follows  since there are infinitely 
many numbers $k \in \N$, where $k$ is greater than or equal to 
the minimal synchronizing level of $A$, such  $(\dual{A})^{k}$ has 
splitting length strictly less than $(\dual{A})^{k+1}$. Now by 
repeating, with slight modifications, the proof of Proposition~\ref{looporfreesmgp}, if no power of $\dual{A}$ has a loop then for any pair $t_1 \in P_1$ and $t_2 \in P_2 $ the semigroup generated by $t_1$ and $t_2$ is free.
\end{Remark}

\begin{Remark}
Notice that if $A \in \hn{3}$ has infinite order, then $\mathfrak{P}(A)$ has only two elements, $P_1$ and $P_2$. Moreover for all numbers $k \in \N$, where $k$ is greater than or equal to the minimal synchronizing level of $A$, and the splitting length of $\dual{A}_{k+1}$ is strictly greater than the splitting length of $\dual{A}_{k}$ we have that there is some $\Gamma \in X_n^{k}$ such that $(t_1) \sigma_{\Gamma} \ne (t_2)\sigma_{\Gamma}$ for any pair $t_1 \in P1$ and $t_2 \in P_2$.
\end{Remark}

Now consider the case that $n=3$. By Lemmas ~\ref{Lemma: at least 2 letters have the same set of states} and ~\ref{Lemma: letter induced partition bounds image size}, if an element $A \in \hn{n}$ has infinite order then $\mathfrak{P}(A)$ contains only the elements $[i_1, i_2]$ and $[i_3]$ where $\{i_1, i_2, i_3\} := X_3$. 

\begin{lemma}\label{Lemmma: finite order imposes conditions on local map of powers}
Let $A \in \hn{3}$. Let $i_1$, $i_2,$ and $i_3$ be distinct elements of $X_3$. Suppose that $[i_1] \cap [i_2] \ne \emptyset$. If for some $l \in \N$ there is an $i_a \in X_3$ and a pair of states $S_1$, $S_2$ of  $A^l= \gen{X_n, Q^{l}, \pi_{Al}, \lambda_{Al}}$  such that:
\begin{enumerate}[label = (\roman*)]
\item $\pi_{Al}(i_a, S_1) = \pi_{Al}(i_a, S_2)$,
\item $\lambda_{Al}(i_a, S_1) \in \{i_1, i_2\}$ and, \item $\lambda_{Al}(i_a, S_2) = i_3$ 
\end{enumerate}
then either $(\dual{A})^{k} = (\dual{A})^{k+1}$ where $k$ is the minimal synchronizing level of $A$, or $A$ has infinite order.
\end{lemma}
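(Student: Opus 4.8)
The plan is to prove the contrapositive: assume $A$ has finite order and $(\dual{A})^{k}\neq(\dual{A})^{k+1}$ (that is, $\dual{A}_{k}\nwequal\dual{A}_{k+1}$), and show that no $i_a$, $l$, $S_1$, $S_2$ as in (i)--(iii) can exist. The first observation is that, since $\dual{A}_{k+1}=\dual{A}_{k}\ast\dual{A}_{1}$, the condition $\dual{A}_{k}\wequal\dual{A}_{k+1}$ is equivalent to $\dual{A}_{k}$ being a zero of the semigroup $\gen{\dual{A}}$ (one direction is that an absorbing idempotent is a zero, the other is immediate); so from now on $\dual{A}_{k}$ is not a zero of $\gen{\dual{A}}$. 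I then check that the hypotheses of Lemma~\ref{Lemma: letter induced partition bounds image size} hold. If $\dual{A}_{k}$ did not split, then by Remark~\ref{rsplittinglengthzeromeansr+1splittinglenghtzzero} neither would $\dual{A}_{k+1}$, and the argument in the proof of Theorem~\ref{lemma 2} (which uses only the infinitude of the splitting length at these two consecutive levels to produce the restricted-output component structure and conclude $\omega$-equivalence) would give $\dual{A}_{k}\wequal\dual{A}_{k+1}$, contrary to assumption; so $\dual{A}_{k}$ splits. That the minimal splitting length of $\dual{A}_{k+1}$, if finite, strictly exceeds that of $\dual{A}_{k}$ follows by adapting the computation in the proof of Lemma~\ref{minimalsplittinglengthstrictlyincreasing}, writing a length-$(k+1)$ word as a length-$k$ word followed by one letter and using that the length-$k$ prefix already forces the relevant synchronising data.

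With Lemma~\ref{Lemma: letter induced partition bounds image size} available, $\mathfrak{P}(A)\neq\{[0,1,\ldots,n-1]\}$ and each $\sigma_{\Gamma}$ with $\Gamma\in X_3^{k}$ is constant on every block of $\mathfrak{P}(A)$. Since $\mathfrak{P}(A)$ is a partition of $Q_A$ into blocks each a union of some $[i]$, and $[i_1]\cap[i_2]\neq\emptyset$ forces $[i_1]$ and $[i_2]$ into a common block, the only way to avoid $\mathfrak{P}(A)=\{Q_A\}$ (recall $Q_A=[0]\cup[1]\cup[2]$) is $\mathfrak{P}(A)=\{\,[i_1]\cup[i_2],\ [i_3]\,\}$; thus $\mathfrak{P}(A)$ has exactly these two disjoint blocks. (This is also where Lemma~\ref{Lemma: at least 2 letters have the same set of states} can be invoked.)

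For the contradiction, put $b_1:=\lambda_{Al}(i_a,S_1)\in\{i_1,i_2\}$ and $T:=\pi_{Al}(i_a,S_1)=\pi_{Al}(i_a,S_2)$. For any $\Theta\in X_3^{k-1}$, reading $i_a\Theta$ from $S_1$ and from $S_2$ in $A^{l}$ reaches the common state $\pi_{Al}(\Theta,T)$ with output words $b_1W_{\Theta}$ and $i_3W_{\Theta}$, where $W_{\Theta}:=\lambda_{Al}(\Theta,T)$; these are length-$k$ words with the same last letter, so their forced states $\mathfrak{s}(b_1W_{\Theta})$ and $\mathfrak{s}(i_3W_{\Theta})$ both lie in the set $[\cdot]$ of that letter, hence in a single block of $\mathfrak{P}(A)$. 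If for some $\Theta$ these are distinct, then $(S_1,S_2,i_a\Theta)$ witnesses a split of $\dual{A}_{k}$ whose bottom lies inside one block; on the other hand Lemma~\ref{Lemma: letter induced partition bounds image size}(ii) forces the bottom of a minimal split of $\dual{A}_{k}$ with top word $i_a\Theta$ to have one element in each block, and tracing $(S_1,S_2,i_a\Theta)$ down to the minimal splitting level (where the intermediate states on the two branches share a $W$-class, so the minimal bottom is still separated by the two blocks) gives the contradiction. When $k=1$ this is immediate: $\Theta$ is empty and $\mathfrak{s}(b_1)\in[i_1]\cup[i_2]$, $\mathfrak{s}(i_3)\in[i_3]$ are distinct since the blocks are disjoint. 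The remaining case is $\mathfrak{s}(b_1W_{\Theta})=\mathfrak{s}(i_3W_{\Theta})$ for every $\Theta$; unwinding gives $\mathfrak{s}(b_1W)=\pi_A(W,r_1)$ and $\mathfrak{s}(i_3W)=\pi_A(W,r_2)$ with $r_1\in[i_1]\cup[i_2]$, $r_2\in[i_3]$ distinct, so every reachable length-$(k-1)$ word $W_{\Theta}$ would synchronise $\{r_1,r_2\}$, which I would rule out using that $A$ has minimal synchronising level exactly $k$ and that its core is strongly connected.

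I expect the main work to be in this last step: matching the single-letter configuration living inside the power $A^{l}$ with the transformations $\sigma_{\Gamma}$ (defined at the minimal splitting length of $\dual{A}_{k}$), reducing a non-minimal split to a minimal one without losing track of the block structure of $\mathfrak{P}(A)$, and disposing of the degenerate case $\mathfrak{s}(b_1W_{\Theta})=\mathfrak{s}(i_3W_{\Theta})$ for all $\Theta$. The verification in the first paragraph that the minimal splitting lengths of $\dual{A}_{k}$ and $\dual{A}_{k+1}$ are strictly ordered under the finite-order hypothesis is the other step that requires a short but careful argument.
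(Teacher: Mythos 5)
Your plan diverges from the paper at the key structural point and this divergence creates a genuine gap. You want to invoke Lemma~\ref{Lemma: letter induced partition bounds image size} at level~$k$, and to do so you must verify its hypothesis: that if $\dual{A}_{k+1}$ splits, its minimal splitting length strictly exceeds that of $\dual{A}_{k}$. You propose to get this by ``adapting the computation in Lemma~\ref{minimalsplittinglengthstrictlyincreasing}, writing a length-$(k{+}1)$ word as a length-$k$ word followed by one letter.'' But that lemma's computation depends essentially on the appended suffix being a \emph{length-$k$} word: when one reads such a suffix $\gamma\in X_n^k$, the output $\lambda_l(\gamma,\cdot)$ has length~$k$ and therefore \emph{forces} a state of $A$, which is what lets the final coordinate of $\pi_{l+1}(\Gamma\gamma,Pp)$ become independent of $p$. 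If you append only a single letter, its output has length~1 and does not force a state (unless $k=1$), so the argument does not transfer. Nothing in the statement or your other hypotheses rules out $\dual{A}_{k}$ and $\dual{A}_{k+1}$ both splitting with the same minimal splitting length, so the hypothesis of Lemma~\ref{Lemma: letter induced partition bounds image size} at level~$k$ is not secured. The paper sidesteps this entirely: since $A$ has finite order, $\dual{A}$ eventually stabilizes to a zero, and the proof chooses the level $m>k$ where $\dual{A}_{m}$ splits but $\dual{A}_{m+1}$ does not (because $\dual{A}_{m+2}=\dual{A}_{m+1}$). At that level the ``strictly greater splitting length'' clause is vacuous, the partition lemma applies cleanly, and one then produces the word $\Gamma\in X_n^m$, finds $\Delta$ with $\lambda_{Al}(\Delta,S_3)=\Gamma$, and shows $i_a\Delta$ splits $\dual{A}_{m+1}$, a contradiction.

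Two further, smaller concerns. First, your reduction of the (not necessarily minimal) witness split $(S_1,S_2,i_a\Theta)$ to a minimal split ``without losing the block structure'' is asserted rather than argued; this step is nontrivial because a minimal split can have a very different shape from the length-$l$ witness you built, and the block-membership of the bottom must be carefully propagated. Second, in the degenerate case $\mathfrak{s}(b_1W_\Theta)=\mathfrak{s}(i_3W_\Theta)$ for all $\Theta$, the words $W_\Theta$ range only over outputs of $\Theta$ through the particular state $T$ of $A^{l}$, which need not exhaust $X_3^{k-1}$; so ``every reachable length-$(k-1)$ word synchronizes $\{r_1,r_2\}$'' does not immediately contradict minimal synchronizing level~$k$, and the strong-connectedness appeal needs to be made precise. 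Adopting the paper's choice of level~$m$ (one step before the dual stabilizes) both removes the splitting-length obstruction and lets you avoid the second issue, since at that level the partition lemma directly supplies a single $\Gamma\in X_n^m$ separating the two blocks.
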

\begin{proof}
It suffices to show by Proposition~\ref{lemma 1.3} that if $A \in \hn{3}$ with minimal synchronizing level $k$, has finite order and satisfies the conditions of the lemma then $\dual{A}_{k+1} = \dual{A}_{k}$.

Therefore let $A \in \hn{3}$ be an element of finite order which satisfies the conditions of the lemma. Let $k \in \N$ be the minimal synchronizing level of $A$. Let $\mathfrak{P}_{A}$ be the letter induced partition of $A$. If $\dual{A}_{k+1} = \dual{A}_{k}$ we are done. Thus suppose that there is some $m >k$, $m\in \N$ such that $\dual{A}_{m+2} = \dual{A}_{m+1}$ but $\dual{A}_{m}$ splits. 

Since $\dual{A}_{m}$ splits it follows from  Lemma~\ref{Lemma: letter induced partition bounds image size} and the condition that $[i_1] \cap [i_2] \ne \emptyset$ that the letter induced partition of $A$ consists of the elements $[i_1, i_2]$ and $[i_3]$. Moreover there are states $q_1$ and $q_2$, and some $\Gamma \in X_n^{m}$ such that $q_1 = (t_1)\sigma_{\Gamma}$ and $q_2 = (t_2)\sigma_{\Gamma}$  for any pair $t_1 \in [i_1, i_2]$ and $t_2 \in [i_3]$.

Now let $S3 = \pi_{Al}(i_a, S_1) = \pi_{Al}(i_a, S_2)$, and let $\Delta \in X_n^{m}$ be such that $\lambda_{Al}(\Delta, S_3) = \Gamma$. Consider the word $i_a \Delta$ a state of $\dual{A}_{m+1}$. Now after processing the words $S_1$ and $S_2$ from the state $i_a \Delta$ of $\dual{A}_{m+1}$  the active states are $i_b \Gamma$ and $i_3 \Gamma$ $i_b \in \{i_1, i_2\}$. Now since $[i_b] \subset [i_1, i_2]$, it follows from the previous paragraph that for any input of length $l$ processed from the state $i_b \Gamma$ the $l$'th letter of the output must be $q_1$. Likewise for any input of length $l$ processed from the state $i_3 \Gamma$ the $l$'th letter of the output must be $q_2$. Therefore we see that $\dual{A}_{m+1}$ splits also which is a contradiction.
\end{proof}

As a corollary we have:

\begin{corollary}
Let $A \in \hn{3}$ and suppose that $\mathfrak{P}(A) \ne \{[0,1,2]\}$.  Let $i_1$, $i_2$ be distinct elements of $X_3$ such that $[i_1, i_2]$ is in $\mathfrak{P}(A)$, then if $[i_1]^{-1} \cap [i_2] ^{-1} = \emptyset$ either $\dual{A}_{k+1} = \dual{A}_{k}$ where $k$ is the minimal synchronizing level of $A$ or $A$ has infinite order.
\end{corollary}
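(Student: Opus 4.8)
The plan is to derive the statement as a direct consequence of Lemma~\ref{Lemmma: finite order imposes conditions on local map of powers}, by producing, from the hypothesis $[i_1]^{-1}\cap[i_2]^{-1}=\emptyset$, a tuple of states of some power $A^l$ witnessing the configuration (i), (ii), (iii) of that lemma. First I would record the standing setup: $A\in\hn{3}$ is invertible, synchronous, and $A^{-1}$ is synchronizing (being an element of $\hn{n}$, both $A$ and $A^{-1}$ are synchronizing at some common level $k$). By Lemma~\ref{Lemma: at least 2 letters have the same set of states} applied to $A^{-1}$ (or rather by the appeal in the proof of that lemma to the Appendix of \cite{BlkYMaisANav}), there are distinct states $p^{-1},q^{-1}$ of $A^{-1}$ with $\pi_{A^{-1}}(l,p^{-1})=\pi_{A^{-1}}(l,q^{-1})$ for all $l\in X_3$; minimality of $A^{-1}$ gives a letter $i_a$ with $i:=\lambda_{A^{-1}}(i_a,p^{-1})\ne\lambda_{A^{-1}}(i_a,q^{-1})=:j$, and then in $A$ we have $\pi(i,p)=\pi(j,q)$. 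This is exactly the source of the pair $[i]\cup[j]\subseteq P$ for some $P\in\mathfrak P(A)$; since $\mathfrak P(A)\ne\{[0,1,2]\}$ and $\hn{3}$ has only partitions of the shape $\{[i_1,i_2],[i_3]\}$, we may assume $\{i,j\}=\{i_1,i_2\}$, matching the hypothesis $[i_1,i_2]\in\mathfrak P(A)$.

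Next I would exploit $[i_1]^{-1}\cap[i_2]^{-1}=\emptyset$ to climb to a power of $A$ where configuration (i)--(iii) of Lemma~\ref{Lemmma: finite order imposes conditions on local map of powers} becomes visible. The key point: in $A^{-1}$ the two letters $i_1,i_2$ are read (from \emph{every} state) into disjoint state-sets, whereas in $A$ the letters $i_1$ and $i_2$ can be \emph{merged} by a single step — because $\pi(i_1,p)=\pi(i_2,q)$. Dualizing: consider the state $i_a$ of $\dual{A}_1$ and the transitions $p^{-1}\mapsto p^{-1}$-image and $q^{-1}\mapsto q^{-1}$-image. I want to find, for suitable $l$, a pair $S_1,S_2\in Q_A^{l}$ with $\pi_{Al}(i_a,S_1)=\pi_{Al}(i_a,S_2)$ but outputs lying in $\{i_1,i_2\}$ respectively $\{i_3\}$. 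Reading the $A^{-1}$ picture backwards: starting from the pair $(p^{-1},q^{-1})$ which synchronizes all letters to a common pair, push forward along words; since $[i_1]^{-1}$ and $[i_2]^{-1}$ are disjoint, the $A^{-1}$-trajectories from $p^{-1}$ and $q^{-1}$ on the letter $i_a$ never coalesce in the forward ($A^{-1}$) direction after outputting $i_1$ versus $i_2$, and this forces, upon inverting, that in $A$ the state $i_3$ and one of $i_1,i_2$ appear simultaneously as outputs from a single source-state of $A^l$ on the input letter $i_a$. Making this precise — choosing $l$ so that the relevant $A^{-1}$-prefixes force into a common state, then transposing to $A^l$ via the inversion algorithm of \cite{GriNekSus} — yields the required $S_1,S_2,i_a$. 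With that in hand, Lemma~\ref{Lemmma: finite order imposes conditions on local map of powers} immediately gives: either $\dual{A}_{k+1}=\dual{A}_{k}$ (with $k$ the minimal synchronizing level) or $A$ has infinite order, which is the claim.

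An alternative, possibly cleaner, route would avoid explicit dual-juggling: by Proposition~\ref{automorphismiffbi-synch}, since $A\in\shn{n}$ is one-way synchronizing or bi-synchronizing, and here $A\in\hn{3}$ is bi-synchronizing, the inverse $A^{-1}$ is genuinely synchronizing; the non-synchronization \emph{obstruction} for $A^{-1}$ that one would use to force a split corresponds, under the $\mathfrak P$-analysis of Lemma~\ref{Lemma: letter induced partition bounds image size}, to the disjointness $[i_1]^{-1}\cap[i_2]^{-1}=\emptyset$ meaning the $A^{-1}$-side cannot "repair" the merge happening on the $A$-side. Concretely I would argue the contrapositive: if $\dual{A}_{k+1}\ne\dual{A}_{k}$ and $A$ has finite order, Proposition~\ref{lemma 1.3} forces $\dual{A}_{m}$ to stabilize to a disjoint-union-of-cycles; track where the merge $\pi(i_1,p)=\pi(i_2,q)$ sits inside that cycle structure and derive a contradiction with $[i_1]^{-1}\cap[i_2]^{-1}=\emptyset$ by running the cycle backwards (using that $A^{-1}$ also has finite order, hence its dual also stabilizes). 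This is in spirit the same as the direct route but organized around Proposition~\ref{lemma 1.3} rather than Lemma~\ref{Lemmma: finite order imposes conditions on local map of powers}.

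The main obstacle I anticipate is the bookkeeping in converting the forward/backward ($A$ vs. $A^{-1}$) information into the precise tuple $(S_1,S_2,i_a)$ over the correct power $A^l$: one must be careful that the state where the two $A^{-1}$-branches coalesce really does correspond, after inverting, to the single state $S_3=\pi_{Al}(i_a,S_1)=\pi_{Al}(i_a,S_2)$, and that the length $l$ chosen is at least the synchronizing level so that Claim~\ref{cycls<k} / the synchronization machinery applies cleanly. A secondary subtlety is checking that the output of $S_1$ through $i_a$ genuinely lands in $\{i_1,i_2\}$ and not accidentally in $\{i_3\}$ as well — this is exactly where $[i_1]^{-1}\cap[i_2]^{-1}=\emptyset$ does the work, ruling out the degenerate case, and I would want to isolate that implication as its own short sublemma before invoking Lemma~\ref{Lemmma: finite order imposes conditions on local map of powers}.
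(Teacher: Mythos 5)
Your proposal correctly identifies the target (Lemma~\ref{Lemmma: finite order imposes conditions on local map of powers}) and correctly brings in the $\mathfrak{P}$-structure of both $A$ and $A^{-1}$, but it does not actually produce the required tuple $(S_1, S_2, i_a)$, and the plan you sketch for producing one goes in the wrong direction. You apply the collapsing-state argument to $A^{-1}$: you obtain $p^{-1}, q^{-1}$ with $\pi_{A^{-1}}(l, p^{-1}) = \pi_{A^{-1}}(l, q^{-1})$ for all $l$ and a letter $i_a$ with $i := \lambda_{A^{-1}}(i_a, p^{-1}) \neq \lambda_{A^{-1}}(i_a, q^{-1}) =: j$. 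Translating back to $A$, this yields $\pi_A(i,p) = \pi_A(j,q)$ and $\lambda_A(i,p) = \lambda_A(j,q) = i_a$ --- that is, \emph{different} input letters with the \emph{same} output landing in the same state. But Lemma~\ref{Lemmma: finite order imposes conditions on local map of powers} needs the transpose of this: a \emph{single} input letter $i_a$ read from two states $S_1, S_2$ of some $A^l$ into the \emph{same} state, with \emph{different} outputs (one in $\{i_1, i_2\}$ and the other $i_3$). Your configuration is the dual of the one needed, and the paragraph where you propose to ``climb to a power of $A$'' and ``push forward along words'' to convert one into the other is exactly the missing step --- you acknowledge this yourself, and it is not a detail but the substance of the proof.

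The paper avoids this entirely by staying at $l = 1$: apply the collapsing-state fact to $A$ itself (not $A^{-1}$), getting distinct states $q_1, q_2$ of $A$ with $\pi_A(i, q_1) = \pi_A(i, q_2)$ for all $i$, and, by minimality of $A$, a letter $j$ with $\lambda(j, q_1) \neq \lambda(j, q_2)$. Then the common image state $r := \pi(j,q_1) = \pi(j,q_2)$ gives $r^{-1} \in [\lambda(j,q_1)]^{-1} \cap [\lambda(j,q_2)]^{-1}$, so the pair $\{\lambda(j,q_1), \lambda(j,q_2)\}$ has non-disjoint $[\cdot]^{-1}$ sets. Since $\dual{A}_k$ splits, Lemma~\ref{Lemma: letter induced partition bounds image size} applied to $A^{-1}$ forces $\mathfrak{P}(A^{-1})$ to be a 2-block partition, and the hypothesis $[i_1]^{-1} \cap [i_2]^{-1} = \emptyset$ then forces that partition to merge $i_3$ with exactly one of $i_1, i_2$. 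Hence the only unordered pair of distinct letters with non-disjoint $[\cdot]^{-1}$ sets is $\{i_1, i_3\}$ or $\{i_2, i_3\}$, so $\{\lambda(j,q_1), \lambda(j,q_2)\}$ is that pair --- one output lies in $\{i_1,i_2\}$, the other is $i_3$. Taking $S_1 = q_1$, $S_2 = q_2$, $i_a = j$ at power $l = 1$ gives the configuration of Lemma~\ref{Lemmma: finite order imposes conditions on local map of powers} directly, and you are done. In short: your proof has the right ingredients but the wrong transducer ($A^{-1}$ instead of $A$) at the decisive step, leaving a genuine gap that the ``climb to a power'' sketch does not fill.
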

\begin{proof}
Let $A \in \hn{3}$ satisfy the conditions of the lemma and let $k$ be the minimal synchronizing level of $A$. Furthermore assume that $\dual{A}_{k}$ splits and so $\dual{A}_{k+1} \ne \dual{A}_{k}$. 

Since $A$ is synchronizing it follows that there are states $q_1$ and $q_2$ of $A$ such that for all $i \in X_n$ $\pi_{A}(i, q_1) = \pi_{A}(i, q_2)$ (see the appendix of \cite{BlkYMaisANav}). Since $A$ is minimal there is a $j \in X_n$ such that $\lambda(j, q_1) \ne \lambda(j, q_2)$. 

Now the condition that $[i_1]^{-1} \cap [i_2] ^{-1} = \emptyset$ implies by Lemma~\ref{Lemma: letter induced partition bounds image size} that, since $A$ $\dual{A}_{k}$ splits, either  $[i_1]^{-1} \cap [i_3]^{-1} \ne \emptyset$ or $[i_2]^{-1} \cap [i_3]^{-1} \ne \emptyset$. We assume by relabelling if necessary that $[i_1]^{-1} \cap [i_3]^{-1} \ne \emptyset$. This means that, since $\dual{A}$ splits,  $\mathfrak{P}(A^{-1})$ consists of the elements $[i_1, i_3]^{-1}:= [i_1]^{-1} \cup [i_3]^{-1}$ and $[i_2]^{-1}$. Hence we have that $\lambda(j, q_1) = i_1$ and $\lambda(j, q_2) = i_3$ or $\lambda(j, q_1) = i_3 $ and $\lambda(j, q_2) = i_1$. In either case we have that $A$ satisfies the conditions of Lemma~\ref{Lemmma: finite order imposes conditions on local map of powers} and we are done.

\end{proof}

\begin{Remark}
The above corollary implies that if $A \in \hn{3}$ is such that  $\mathfrak{P}(A) = \{[i_1, i_2], [i_3]\}$ for $\{i_1, i_2, i_3\} = X_n$, then either $\mathfrak{P}(A^{-1}) = \{[i_1, i_2]^{-1}, [i_3]^{-1}\}$ where $[i_1, i_2]^{-1} := [i_1]^{-1} \cup [i_2]^{-1}$ or $\dual{A}_{k}$ does not split.
\end{Remark}

We conclude the section with the following lemma:

\begin{lemma}
Let $A \in \hn{3}$. Assume that $\mathfrak{P}(A) = \{[i_1, i_2], [i_3]\}$ and $\mathfrak{P}(A^{-1}) = \{[i_1, i_2]^{-1}, [i_3]^{-1}\}$. Let $q_1$ and $q_2$ be distinct states of $A$  such that for all $i \in X_n$, $\pi_A(i, q_1) = \pi_{A}(i, q_2)$ and $\{q_1, q_2\}$ is a subset of some $P \in \mathfrak{P}(A)$.  If there are (not necessarily distinct) states $p_1$, $p_2$ of $A$ and (not necessarily distinct letters $j_1$ and $j_2$ in $X_n$ such that $\pi(j_1, p_1) = q_1$, $\pi(j_2, p_2) = q_2$, $\lambda(j_1, p_1) \in \{i_1, i_2\}$, and $\lambda(j_2, p_2) =  i_3$, then there is a conjugate $B$ of $A$ such that $|Q_B| < |Q_A|$.
\end{lemma}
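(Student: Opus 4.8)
The hypotheses say that $A$ has a pair of states $q_1 \neq q_2$ with identical transition behaviour (i.e. $\pi_A(i,q_1) = \pi_A(i,q_2)$ for all $i \in X_n$), that $\{q_1,q_2\}$ lies inside a single block $P$ of the letter induced partition $\mathfrak{P}(A)$, and that there are ``incoming'' transitions $p_1 \xrightarrow{j_1 \mid a} q_1$ and $p_2 \xrightarrow{j_2 \mid i_3} q_2$ with $a \in \{i_1,i_2\}$. The idea is to build an explicit conjugator $h \in \hn{3}$ so that $B := \min(\core(h^{-1} A h))$ identifies $q_1$ with $q_2$, thereby strictly reducing the state count. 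I would take $h$ to be a one-state transducer, namely the element of $\hn{3}$ acting as a fixed permutation $\sigma$ of $X_3$ on every level; such an $h$ is invertible, bi-synchronizing at level $0$, and its only state is a homeomorphism state, so $h \in \hn{3}$. Then $h^{-1}Ah$ has the same underlying state set as $A$, with transition function $\pi_{h^{-1}Ah}(i,q) = \pi_A(\sigma(i),q)$ unchanged up to relabelling inputs, and output function $\lambda_{h^{-1}Ah}(i,q) = \sigma^{-1}\big(\lambda_A(\sigma(i),q)\big)$.

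First I would choose $\sigma$ to be the transposition of $X_3$ that swaps $i_3$ with the unique third letter, fixing $\{i_1,i_2\}$ (or more carefully: the permutation that collapses the discrepancy between $\lambda(j_1,p_1) = a \in \{i_1,i_2\}$ and $\lambda(j_2,p_2) = i_3$ appropriately). Since $q_1$ and $q_2$ already agree on transitions, the obstruction to $q_1 \wequal q_2$ in $A$ is purely a difference of \emph{outputs}: there is some $j$ with $\lambda_A(j,q_1) \neq \lambda_A(j,q_2)$, and by Lemma~\ref{Lemma: at least 2 letters have the same set of states} and the structure of $\mathfrak{P}(A)$ this difference is confined to a single coordinate position. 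The role of the hypothesis $\{q_1,q_2\} \subset P$ and $\mathfrak{P}(A) = \{[i_1,i_2],[i_3]\}$ is precisely to guarantee that, after applying the right $\sigma$, the images $\lambda_A(\cdot,q_1)$ and $\lambda_A(\cdot,q_2)$ become equal as functions $X_3 \to X_3$: the partition tracks which states produce which output letters, so the incoming transitions $p_1 \xrightarrow{j_1} q_1$, $p_2 \xrightarrow{j_2} q_2$ through the distinct blocks $[i_1,i_2]$ and $[i_3]$ force the local maps of $q_1$ and $q_2$ to differ exactly by the transposition $\sigma$. I would verify this by a direct case analysis on $\lambda_A(j,q_1)$ versus $\lambda_A(j,q_2)$ for each $j \in X_3$, using that $q_1,q_2$ are both homeomorphism states (so their local maps are permutations of $X_3$) and that they transition identically.

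Next I would show $B$ is still in $\hn{3}$ and has strictly fewer states. Being in $\hn{3}$ is automatic: conjugacy in $\hn{n}$ preserves $\hn{n}$ (this is used throughout Section~\ref{propertiesofpn}, and concretely $h^{-1}Ah$ is synchronous, bi-synchronizing by Claim~\ref{claim-SyncLengthsAdd} applied to $h$, $A$, $h^{-1}$, and has homeomorphism states). For the count: in $h^{-1}Ah$ the states corresponding to $q_1$ and $q_2$ now satisfy $\pi_{h^{-1}Ah}(i,q_1) = \pi_{h^{-1}Ah}(i,q_2)$ and $\lambda_{h^{-1}Ah}(i,q_1) = \lambda_{h^{-1}Ah}(i,q_2)$ for all $i$, hence induce the same continuous map on $X_3^{\mathbb{N}}$, so they are $\omega$-equivalent; passing to $\min(\core(h^{-1}Ah)) = B$ identifies them, giving $|Q_B| \leq |Q_{h^{-1}Ah}| - 1 < |Q_A|$ (note $\core$ cannot increase the count and $A$ is already core, and minimality only removes states). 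The conjugacy $B = h^{-1}Ah$ in $\spn{3}$ (hence in $\hn{3}$) is exactly what Corollary~\ref{corollaryusefulnessofperms}(iii) packages.

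\textbf{Main obstacle.} The delicate point is the case analysis establishing that the chosen single-state permutation conjugator $\sigma$ actually makes $\lambda_A(\cdot,q_1)$ and $\lambda_A(\cdot,q_2)$ coincide as permutations of $X_3$ — a priori $q_1$ and $q_2$ could differ on two or three input letters, not just one, and then no transposition would fix them. The hypotheses (both $\{q_1,q_2\}$ and the relevant incoming data lie in the two-block structure $\mathfrak{P}(A) = \{[i_1,i_2],[i_3]\}$, $\mathfrak{P}(A^{-1}) = \{[i_1,i_2]^{-1},[i_3]^{-1}\}$) are exactly what rules this out, so the work is in extracting from them that the local maps of $q_1$ and $q_2$ differ by a transposition swapping an element of $\{i_1,i_2\}$ with $i_3$; I expect this to require carefully tracking, via the inverse transducer and the definition of $[\cdot]$ and $[\cdot]^{-1}$, how the output letters at $q_1$ and $q_2$ are constrained. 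Once that is done, everything else is bookkeeping with the conjugation formula and the minimisation procedure.
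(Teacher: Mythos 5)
Your choice of conjugator cannot work, and this is a structural obstruction rather than a detail. If $h$ is a single-state transducer acting as a fixed permutation $\sigma$ of $X_3$, then $h^{-1}Ah$ has state set in canonical bijection with $Q_A$, and conjugation is simply a relabelling of the alphabet: $\pi_{h^{-1}Ah}(i,q) = \pi_A(\sigma^{-1}(i),q)$ and $\lambda_{h^{-1}Ah}(i,q) = \sigma\bigl(\lambda_A(\sigma^{-1}(i),q)\bigr)$. In particular, for any two states $q_1, q_2$ of $A$ one has $\lambda_{h^{-1}Ah}(\cdot,q_1) = \lambda_{h^{-1}Ah}(\cdot,q_2)$ if and only if $\lambda_A(\cdot,q_1) = \lambda_A(\cdot,q_2)$, because post- and pre-composing with a fixed bijection $\sigma$ is injective on functions $X_3 \to X_3$. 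Since $A$ is minimal, the distinct states $q_1, q_2$ that transition identically must differ in their local maps, and that difference persists after conjugation by $\sigma$. More generally, conjugation by a one-state element is an isomorphism of labelled automata, so $\min\core(h^{-1}Ah)$ always has exactly $|Q_A|$ states — you can never lose a state this way.

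The paper's argument avoids this by conjugating by a genuine two-state transducer $C$. The crucial feature is that in the product $CAC$ a state is a triple $(c, q, c')$ and the left-hand and right-hand coordinates $c, c'$ depend on the history of what was read into and out of $q$ — precisely the data recorded by the partitions $\mathfrak{P}(A)$ and $\mathfrak{P}(A^{-1})$. This lets the conjugator apply the correcting permutation $\overline{q_2^{-1}q_1}$ only on the branch containing $q_2$ (or $q_1$), while applying the identity on the other branch, so that $(c_1, q_1, c_{[i_1,i_2]})$ and $(c_1, q_2, c_{[i_3]})$ end up $\omega$-equivalent in $\core(CAC)$. Your ``main obstacle'' paragraph implicitly assumes the local maps of $q_1$ and $q_2$ differ by a transposition in a way a single global $\sigma$ can absorb, but even if they did differ by exactly one transposition, conjugating both by the same $\sigma$ keeps them distinct permutations. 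The hypothesis structure is there to justify the existence and well-definedness of the branching conjugator $C$, not of a global one.
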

\begin{proof}
Let $A \in \hn{3}$ satisfy the conditions of the lemma. Observe that the condition $\mathfrak{P}(A) = \{[i_1, i_2], [i_3]\}$ and $\mathfrak{P}(A^{-1}) = \{[i_1, i_2]^{-1}, [i_3]^{-1}\}$ implies that whenever a state $q$ of $A$ is such that there is some state $p$ of $A$ and an $i \in X_n$ with, $\pi_{A}(i, p) = q$ and $\lambda_{A}(i, p) = i_3$, then for any other state $p'$ and any letter $i'$ such that $\pi_{A}(i', p') = q$ we must have that $\lambda_{A}(i', p') = i_3$. Thus if $i \in X_n$ is such that $\lambda_{A}(i, q_1) = i_3$ then $\lambda_{A}(i, q_2) = i_3$. Therefore if $\overline{q_{2}^{-1}q_1}$ is the permutation of  $X_n$ induced by the state $q_{2}^{-1}q_1$ of $A^{-1}A$,  $\overline{q_{2}^{-1}q_1}$ fixes $i_3$. This is because if $i = \lambda_{A^{-1}}(i_3, q_2^{-1})$ then $\lambda(i, q_1) = i_3$. Likewise let $\overline{q_{1}^{-1}q_2}$ be the permutation of  $X_n$ induced by the state $q_{1}^{-1}q_2$ of $A^{-1}A$,  $\overline{q_{2}^{-1}q_1}$ and this also fixes $i_3$ by a similar argument. Moreover for any state $t$ of $Q_A$ such that there is some $j \in X_n$ and $\pi_{A}(j, t) = q_1$, we must also have that $\lambda_{A}(j, t) \in \{i_1, i_2\}$. Now since $q_1$ and $q_2$ are states of $A$ then $j_1$ and $j_2$ are either equal, or $\{j_1, j_2\} = \{i_1, i_2\}$, by an abuse of notation write $[j_1, j_2]$ for element of $P$ containing $q_1$ and $q_2$.

Let  $C = \gen{X_3, Q_C, \pi_C, \lambda_C}$ where $Q_C:= \{c_1, c_2\}$ be defined as follows. $\pi_{C}(i, \centerdot): Q_C \to \{c_1\}$ if $i \in \{j_1, j_2\}$ otherwise  $\pi_C(i, \centerdot): Q_C \to \{c_2\}$. The map $\lambda_{C}(\centerdot, c_1):  X_3 \to X_3$ is the identity permutation. Set the map $\lambda_{C}(\centerdot, c_2):  X_3 \to X_3$ to be the permutation $\overline{q_{2}^{-1}q_1}$ if $q_1, q_2 \in [i_1, i_2]$ otherwise  set $\lambda_{C}(\centerdot, c_2):  X_3 \to X_3$ to be the permutation  $\overline{q_{1}^{-1}q_2}$. Notice that  both $c_1$ and $c_2$  map $i_3$ to $i_3$. Moreover since $q_1$ and $q_2$ are distinct states of $A$ and $A$ is a minimal transducer we also have that  the state $c_2$ induces the transposition swapping $i_2$ and $i_2$. Therefore $C$ is a minimal transducer. Furthermore since whenever we read $1_1$ and $i_2$ the active state is $c_1$ and the output is an element of the set $\{i_1, i_2\}$ we also have that $C$ is bi-synchronizing at level 1 and has order 2.

Now consider $\core(C A C)$. Since $A$ is synchronizing it follows that $\core{(C A C)}$ is synchronizing. Let $k$ be greater than maximum of the  minimal synchronizing length of $\core(CAC)$ and the minimal synchronizing length of $A$. Using the conditions that $\pi(j_1, p_1) = q_1$ and $\lambda(j_1, p_1) \in \{i_1, i_2\}$, there is a string $\Gamma \in X_n^{k}$ with  last letter equal to $j_1$ such that the state of $A$ forced by $\Gamma$ is $q_1$. This means by an observation in the first paragraph that the output of $\Gamma$ when processed from any state has last letter in the set $\{i_1, i_2\}$. Likewise there is a word $\Delta \in X_n^{k}$ with last letter $j_2$ such that the state of $A$ forced by $\Delta$ is $q_2$ and the output of $\Delta$ when processed from any state has last letter equal to $i_3$. Now since $q_1$ and $q_2$ belong to the same element $[j_1, j_2]$ of  $\mathfrak{P}(A)$, it follows that any word of length $k$ in $W_{q_1}$ or $W_{q_2}$ must have last letter in the set $\{j_1, j_2\}$.

Now all states of $C$ map $\{j_1, j_2\}$ to the set $\{j_1, j_2\}$ (since they all fix $i_3$), therefore for any word $\Lambda \in X_n^{k}$ and any state $c$ of $C$ such that $\lambda_{C}(\Lambda, c) \in W_{q_1} $ we must that the state of $C$ forced by $\Lambda$ is $c_1$ and the last letter of $\Lambda$ is in the set $\{j_1, j_2\}$. Therefore reading such a word $\Lambda$ from any state of $CAC$ beginning with $c$ the active state will be  $(c_1, q_1, c_{[i_1, i_2]})$, where $c_{[i_1, i_2]} = c_1$ if $\{j_1, j_2\} = \{i_1, i_2\}$  otherwise $c_{[i_1, i_2]} = c_2$.  This is because by an observation in the first paragraph all single letter inputs to the state $q_1$ have output in the set $\{i_1, i_2\}$. Therefore $(c_1, q_1, c_{[i_1, i_2]})$ is a state of $\core(CA C)$.  Likewise for any word $\Lambda' \in X_n^{k}$ and any state $c'$ of $C$ such that $\lambda_{C}(\Lambda', c') \in W_{q_2} $ we must have that the state of $C$ forced by $\Lambda'$ is $c_1$ and the last letter of $\Lambda'$ is in the set $\{j_1, j_2\}$. Therefore reading such a word $\Lambda'$ from any state of $CAC$ beginning with $c'$ the active state will be  $(c_1, q_2, c_{[i_3]})$, where $c_{[i_3]} = c_1$ if $\{j_1, j_2\} = \{i_3\}$  otherwise $c_{[i_3]} = c_2$.  This is because by an observation in the first paragraph all single letter inputs to the state $q_2$ have output equal to $i_3$. Therefore $(c_1, q_2, c_{[i_3]})$ is also a state in $\core(CAC)$.

Now the above arguments are actually independent of $q_1$ and $q_2$ and demonstrate that if  $d q d'$ is a state of $\core(CAC)$ then $d$ depends only the set $S$ of $\mathfrak{P}(A)$ such that $q \in S$ and $d'$ depends only on the set $S'$ of $\mathfrak{P}(A^{-1})$ such that $q^{-1} \in S'$.  Therefore $\core(CA C)$ has as many states as $A$. 

We now demonstrate that $(c_1, q_2, c_{[i_3]})$ and $(c_1, q_1, c_{[i_1, i_2]})$ are $\omega$-equivalent. Since $C$ is synchronizing at level  1, since both states of $CAC$ begin with $c_1$, since $q_1$ and $q_2$ satisfy  $\pi_A(i, q_1) = \pi_{A}(i, q_2)$ for all $i \in X_n$, and since all states of $C$ read $i_1$ and $i_2$ to the same location, it follows that for any word $i \in X_n$ we have $\pi_{CAC}(i,(c_1, q_2, c_{[i_3]})) = \pi_{CAC}(i,(c_1, q_2, c_{[i_1, i_2]}))$. This is because for any $i \in X_n$, $\{\lambda_{A}(i, q_1), \lambda_{A}(i, q_2) \} = \{i_1, i_2\}$ or $\{\lambda_{A}(i, q_1), \lambda_{A}(i, q_2) \} = \{i_3\}$. Thus, it suffices to show that $(c_1, q_2, c_{[i_3]})$ and $(c_1, q_1, c_{[i_1, i_2]})$ induce the same permutation on $X_n$. However this follows by construction, since if $\{j_1, j_2\} = \{i_1, i_2\}$ we have that $c_{[i_3]} = c_2$, the permutation of $X_n$ induced by $c_2$ is $\overline{q_2^{-1}q_1}$ and $c_\{[i_1, i_2]\} = c_1$ (recall $c_1$ induces the identity permutation on $X_n$). Therefore the permutation of $X_n$ induced by the states $(c_1, q_2, c_{[i_3]})$ and $(c_1, q_1, c_{[i_1, i_2]})$ coincide and is equal to $\overline{q_1}$. On the other hand if  $\{j_1, j_2\} = \{i_3\}$ we have that $c_{[i_3]} = c_1$, $c_{[i_1, i_2]} = c_2$ and the permutation of $X_n$ induced by $c_2$ is equal to $\overline{q_1^{-1}q_2}$. Therefore the permutation of $X_n$ induced by the states $(c_1, q_2, c_{[i_3]})$ and $(c_1, q_1, c_{[i_1, i_2]})$ coincide and is equal to $q_2$. Therefore setting $B$ to be the minimal transducer representing $\core(CAC)$ we see that $B \in \hn{3}$ is a conjugate of $A$ with  $|Q_A| - |Q_B| \ge 1$.
\end{proof}

\subsection{The growth rate of Cayley machines}
In this section we show that for a finite group $G$, the automaton semigroup  generated by the Cayley machine, $\T{C}(G)$ has growth rate, $|G|^n$. To this end, we begin by describing the construction of the Cayley machine.

Let $M$ be a finite monoid (e.g. a finite group), then one can form the automaton $\T{C}(M) := \gen{M, M,\pi, \lambda}$ called its \emph{Cayley machine}, with input and output alphabet, $M$ and state set $M$. The transition and rewrite function satisfy the following rules for $l,m \in M$:

\begin{enumerate}[label = (\arabic*.)]
\item $\pi(l,m) := ml$
\item $\lambda(l,m) := ml$
\end{enumerate}

In each case $ml$ is the evaluation of the product of $m$ and $l$ in the monoid $M$. If $M$ is a finite group $G$  then by Cayley's Theorem no two states of $\T{C}(G)$ are $\omega$-equivalent, and the functions $\pi(\centerdot,m): M \to M$ and $\lambda(\centerdot,m): M \to M$ are bijections. Hence $\T{C}(G)$ is reduced and invertible. It is not hard to see that $(\T{C}(G))^{-1}$ is synchronizing at level 1 (or is a reset automaton).

\begin{Remark}\label{smpgenbycayleymacineisfree}
With a little work it can be shown that $(\T{C}(G))^{-1}$ satisfies the conditions of Proposition \ref{infinite=expgrowth1} where, in this case, $\T{S} = G$. This shows that the automaton semigroup generated by $\T{C}(G)$ is free. Silva and Steinberg give a proof of this in \cite{SilvaSteinberg}.
\end{Remark}

We have the following lemma for synchronizing transducers:

\begin{lemma}\label{core=power}
Let $A = \gen{X_n,Q,\pi,\lambda} \in \widetilde{\T{P}}_{n}$ be a transducer, which is synchronizing at level $k$. Furthermore assume that for every $\Gamma \in X_n^k$ and for all states $q \in Q$, there is a state $p \in Q$ such that $\lambda(\Gamma,p) \in W_{q}$. Then under this condition, $A$ has the property that for all $m \in \mathbb{N}$, $Core(A^m) = A^m$.
\end{lemma}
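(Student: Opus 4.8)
The plan is to show that every state of $A^{m}$ lies in its core, i.e. that every state of $A^{m}$ is forced by some word of length at most $km$ (indeed, I will aim to show it is forced by some word of length exactly $km$, which suffices since longer words also synchronize). Recall that the states of $A^{m}$ are the $m$-tuples $(q_{1},\ldots,q_{m}) \in Q^{m}$ that actually occur as active states, and that a tuple $(q_{1},\ldots,q_{m})$ is in $\core(A^{m})$ precisely when there is a word $\Gamma$ (of length at least the synchronizing level) such that reading $\Gamma$ from \emph{any} $m$-tuple of states lands in $(q_{1},\ldots,q_{m})$. So the task reduces to: given an arbitrary reachable state $(q_{1},\ldots,q_{m})$ of $A^{m}$, produce a synchronizing word for it.

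First I would set up the right inductive framework. The key hypothesis — that for every $\Gamma \in X_{n}^{k}$ and every state $q \in Q$ there is a state $p$ with $\lambda(\Gamma,p) \in W_{q}$, where $W_{q}$ denotes the set of words of length $k$ forcing $q$ — is exactly a surjectivity statement: reading a fixed length-$k$ block $\Gamma$ from the various states of $A$, the resulting length-$k$ outputs hit every synchronizing class $W_{q}$. Combined with the fact that elements of $\widetilde{\T{P}}_{n}$ are core and synchronizing (so $A$ itself is strongly connected and $A = \core(A)$), this lets me build preimages under the action of $A$. Concretely, I would first handle $m=1$ (trivial, since $A = \core(A)$ by hypothesis), and then argue inductively: suppose $(q_{1},\ldots,q_{m})$ is a reachable state of $A^{m}$, so $(q_{1},\ldots,q_{m-1})$ is a reachable state of $A^{m-1}$, which by induction lies in $\core(A^{m-1})$, hence is forced by some word $\Delta$ of length $(m-1)k$. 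Now I want to find a length-$k$ block that, when appended, forces the full tuple while not disturbing the first $m-1$ coordinates: the suffix of length $(m-1)k$ still forces $(q_{1},\ldots,q_{m-1})$, and the last coordinate after reading the $m$-th copy of $A$ needs to be $q_{m}$.

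The heart of the argument is the last coordinate, and this is where the hypothesis does its work. Reading $\Delta$ of length $(m-1)k$ from the tuple $(q_{1},\ldots,q_{m-1})$ in $A^{m-1}$ produces an output $\Theta$ which is fed into the $m$-th copy of $A$; I need the state of that $m$-th copy, after processing a further length-$k$ block $\Gamma$ appended to $\Delta$, to be $q_{m}$ regardless of its starting state, which happens as soon as the length-$k$ output of $A^{m-1}$ corresponding to that block lies in $W_{q_{m}}$. So I must choose $\Gamma$ (of length $k$) so that $A^{m-1}$, started from the tuple $(q_{1},\ldots,q_{m-1})$ already fixed by $\Delta$'s suffix, outputs a length-$k$ word in $W_{q_{m}}$. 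Here I would invoke Claim~\ref{cyclswellbhvd}/Remark~\ref{transformations} and the hypothesis applied iteratively through the $m-1$ copies: the map "input length-$k$ block $\mapsto$ output length-$k$ block" from the fixed tuple is a composition of $m-1$ maps each of which, by the hypothesis, is surjective onto the union of synchronizing classes of $A$ — more precisely, for each copy the output of a fixed length-$k$ block ranges over a set meeting every $W_{q}$ — so the composite is still surjective onto every $W_{q_{m}}$. The main obstacle, and what needs care, is making this "composition of surjections" argument precise: the hypothesis is a statement about a single copy of $A$ with arbitrary starting state, but in $A^{m-1}$ the starting state of the $i$-th copy is constrained (it is the output-forced state), so I need to check that the hypothesis still applies because the $i$-th copy \emph{is} synchronized to a definite state by the length-$(m-1)k$ history, and from a definite state a fixed length-$k$ block has output whose synchronizing class I can still steer (using that $A$ is strongly connected and the hypothesis to move between classes). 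Once that is established, concatenating $\Delta$ with the chosen $\Gamma$ — or rather taking a word of length $mk$ whose last $(m-1)k$ letters are $\Delta$ and appropriately chosen — gives a synchronizing word of length $mk$ for $(q_{1},\ldots,q_{m})$, completing the induction and hence the proof that $\core(A^{m}) = A^{m}$.
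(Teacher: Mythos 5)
Your overall shape (induction on $m$, produce a forcing word for each state of $A^m$) matches the paper's, but the crucial step --- controlling the last coordinate --- is where your argument breaks down. You fix a forcing word $\Delta=\delta_1\cdots\delta_{m-1}$ of length $(m-1)k$ for $(q_1,\ldots,q_{m-1})$ in $A^{m-1}$, prepend a block $\Gamma$ of length $k$, and then try to argue that by varying $\Gamma$ you can land the last coordinate in $W_{q_m}$ by a ``composition of surjections''. This misreads the hypothesis. The hypothesis is a surjectivity over \emph{states} for a \emph{fixed input block}: for fixed $\Gamma$, the map $p \mapsto \lambda(\Gamma,p)$ hits every class $W_q$. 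It says nothing about the maps $\Gamma' \mapsto \lambda(\Gamma',s)$ on $X_n^k$ being surjective for fixed $s$; since $\widetilde{\T{P}}_n$ contains non-homeomorphism states, they in general are not. And the fallback you suggest (``from a definite state a fixed length-$k$ block has output whose synchronizing class I can still steer'') cannot be right as stated: a fixed state and a fixed input determine a fixed output, so there is nothing to steer. What actually varies with $\Gamma$ is only the \emph{last} coordinate $s_{m-1}$ of the state $S(\Gamma)$ of $A^{m-1}$ forced by $\Gamma\delta_1\cdots\delta_{m-2}$; the first $m-2$ coordinates of $S(\Gamma)$ are forced by the suffix $\delta_1\cdots\delta_{m-2}$ alone and do not move. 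Controlling $s_{m-1}$ therefore requires a backward recursion through the layers: use the hypothesis once to find $p_{m-1}$ with $\lambda(\Theta,p_{m-1})\in W_{q_m}$, where $\Theta$ is the (fixed) output of the first $m-2$ copies on $\delta_{m-1}$; then find $p_{m-2}$ whose output of the next fixed block lands in $W_{p_{m-1}}$; and so on, until finally one chooses $\Gamma$ forcing $p_1$, which exists because $A$ is core. That recursion would close your argument, but it is not in the write-up, and the ``composition of surjections'' you appeal to does not substitute for it.

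The paper takes a cleaner route that applies the hypothesis \emph{once}, in exactly the direction it is stated. After reducing to synchronizing level $1$, it fixes a target $(b_1,\ldots,b_{m-1})$ in $A^{m-1}$, picks any incoming edge $(a_1,\ldots,a_{m-1}) \stackrel{x\mid y}{\longrightarrow} (b_1,\ldots,b_{m-1})$, and observes that the output $y'$ of the first $m-2$ coordinates on $x$ already forces $b_{m-1}$ \emph{regardless of $a_{m-1}$} (because $A$ is synchronizing at level $1$). The coordinate $a_{m-1}$ is therefore free, and the hypothesis applied to $y'$ lets one choose $a_{m-1}:=p$ so that $y=\lambda(y',p)\in W_q$ for any prescribed $q$. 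A loop $\Delta$ of length $m-1$ at $(a_1,\ldots,a_{m-1})$, which exists because this state is by induction in $\core(A^{m-1})$, then makes $\Delta x$ a forcing word for $(b_1,\ldots,b_{m-1},q)$ in $A^m$. The structural difference is that the paper varies a \emph{state} (the last coordinate of a predecessor), which is precisely what the hypothesis permits, rather than trying to vary an \emph{input prefix} and chase the consequences through all $m-1$ layers.
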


\begin{proof}
We may assume, by increasing the alphabet size, that $A$ is synchronizing at level $1$.

We proceed by induction on $m$. For $m=1$ it holds that $A = Core(A)$ by assumption that $A \in \widetilde{\T{P}}_{n}$.

Assume $Core(A^j) = A^j$ for all $j \le m-1$.

Consider $A^{m-1} = Core(A^{m-1})$. Fix an arbitrary state $b_1\ldots b_{m-1} \in A^{m-1}$. There is a state $a_1\ldots a_{m-2}a_{m-1}$ and letters $x$ and $y$ in $X_n$ such that 
\[
a_1\ldots a_{m-2}a_{m-1} \stackrel{x|y}{\longrightarrow} b_1\ldots b_{m-1} 
\] 

Let $y'$ be the output when $x$ is read from $a_1\ldots a_{m-2}$. Notice that since $A$ is synchronizing at level 1 the state of $A$ forced by $y'$ must be $b_{m-1}$. By assumption, for every state $q \in Q$ there is a state $p$ such that $\lambda(y',p) \in W_{q}$. Therefore given an arbitrary $q \in Q$, by setting $a_{m-1} := p$ we may assume that $y \in W_{q}$  moreover, the inductive hypothesis guarantees that $a_1\ldots a_{m-1}$ is a state of $Core(A^{m-1})$. 

Observe that $A^{m-1}$ is synchronizing at level $m-1$ and so there is a word $\Delta$ of length $m-1$ labelling a loop based at $a_1\ldots a_{m-2}a_{m-1}$. Let $\Lambda$ be the output of this loop. Then reading $\Delta x$ in $(A)^{m-1}$ from the state $a_1\ldots a_{m-2}a_{m-1}$ the output is $\Lambda y$. Now, the state of $A$ forced by $\Lambda y$ is $q$, therefore reading $\Delta x$ through any state $a_1\ldots a_{m-2}a_{m-1}s$ for any $s \in Q$, the active state becomes $b_1\ldots b_{m-1}q$.

The above paragraph now implies that $b_1,\ldots b_{m-1}q$ is a state of $Core(A^m)$, since $A^m$ is synchronizing at level $m$, hence the state of $A^m$ forced by $\Delta x$ is $b_1,\ldots b_{m-1}q$. Therefore for any $q \in Q$, $b_1\ldots b_{m-1}q$ is a state of $Core(A^m)$. Moreover $b_1\ldots b_{m-1}$ was arbitrary, so we conclude that $Core(A^m) = A^m$ as required.
\end{proof}

In our next result we apply Lemma \ref{core=power} to the transducer $(\T{C}(G))^{-1}$ for a finite group $G$ by showing that $(\T{C}(G))^{-1}$ satisfies the condition of the lemma.

\begin{Theorem}
Let $G$ be a finite group, then $|(\T{C}(G))^{n}| = |G|^n$, hence the automaton $\T{C}{G}$ has growth rate $|G|^{n}$. Moreover every state of $\T{C}(G)^{n}$ is accessible from every other state.
\end{Theorem}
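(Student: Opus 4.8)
The plan is to work with the inverse transducer $(\T{C}(G))^{-1}$, which is synchronizing at level $1$, and verify that it satisfies the hypothesis of Lemma~\ref{core=power}; this immediately gives $\core((\T{C}(G))^{-1})^{m}) = ((\T{C}(G))^{-1})^{m}$ for all $m$, and then translate back to $\T{C}(G)$ itself. First I would recall that the states of $(\T{C}(G))^{-1}$ are the elements $g^{-1}$ for $g \in G$, and that for a letter $h \in G$ (the alphabet) and a state $g^{-1}$, reading $h$ from $g^{-1}$ outputs the unique $k$ with $gk = h$, i.e. $k = g^{-1}h$, and transitions to the state $h^{-1}$ (since $(\T{C}(G))^{-1}$ is a reset automaton, the state after reading $h$ is forced by $h$ alone). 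So to check the hypothesis of Lemma~\ref{core=power}: given a letter $\Gamma = h \in G = X$ and a state $q^{-1}$ of $(\T{C}(G))^{-1}$, I need a state $p^{-1}$ with $\lambda(h, p^{-1}) \in W_{q^{-1}}$. But $W_{q^{-1}}$, the set of length-$1$ words forcing the state $q^{-1}$, is exactly $\{q\}$ (again because the automaton is a reset at level $1$). So I need $p^{-1}$ with $\lambda(h, p^{-1}) = q$, i.e. $p^{-1}h = q$, i.e. $p = hq^{-1}$. Such a $p$ always exists and is unique. This verifies the hypothesis, so Lemma~\ref{core=power} applies and every power of $(\T{C}(G))^{-1}$ equals its own core.

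Next I would count states. By Claim~\ref{claim-SyncLengthsAdd}, $(\T{C}(G))^{-1}$ synchronizing at level $1$ forces $((\T{C}(G))^{-1})^{m}$ to be synchronizing at level at most $m$; since it is core, its state set consists precisely of the length-$m$ outputs of the dual $\dual{((\T{C}(G))^{-1})}_{m}$ --- equivalently, the set of tuples $(p_1^{-1}, \ldots, p_m^{-1})$ that actually arise as active states when reading some input word of length $m$. I would argue that for a reset automaton at level $1$, reading a word $w = w_1 \cdots w_m$ from any starting state produces the state sequence determined entirely by the letters read, and after $m$ steps the tuple of states visited (recording the last $m$ active states) can be any element of $(Q^{-1})^m$: this is because at each step the new state is $w_i^{-1}$ (in the first coordinate's reset), and the subsequent coordinates in the tower are governed by the shifted outputs, which by the computation above range freely over $G$. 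More carefully, I would set up an explicit bijection between $X^m = G^m$ (length-$m$ input words) and the state set of $((\T{C}(G))^{-1})^m$ via the synchronizing map, using that $((\T{C}(G))^{-1})^m$ is synchronizing at level exactly $m$ (not less), hence the synchronizing map $\mathfrak{s}: G^m \to Q_{((\T{C}(G))^{-1})^m}$ is surjective onto the core, which is everything, and injective because distinct words of length $m$ must force distinct states (if two words of length $m$ forced the same state, the level would drop below $m$; one shows the level is exactly $m$ by exhibiting two words of length $m$ with different images, or by the free-semigroup remark in Remark~\ref{smpgenbycayleymacineisfree}). This gives $|((\T{C}(G))^{-1})^m| = |G|^m$, and since $(\T{C}(G))^m = (((\T{C}(G))^{-1})^m)^{-1}$ has the same number of states, $|(\T{C}(G))^m| = |G|^m$ as well. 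The growth rate claim follows since the automaton semigroup's ball sizes are controlled by the state counts of powers.

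For the final assertion --- that every state of $(\T{C}(G))^n$ is accessible from every other --- I would use strong connectedness of the dual picture: since $((\T{C}(G))^{-1})^n$ is synchronizing at level $n$ with synchronizing map a bijection $G^n \to Q$, given any two states $P = \mathfrak{s}(\Gamma)$ and $P' = \mathfrak{s}(\Gamma')$, reading the word $\Gamma'$ from $P$ lands in $\mathfrak{s}(\Gamma') = P'$ (the last $n$ letters read being $\Gamma'$). Hence $((\T{C}(G))^{-1})^n$ is strongly connected, and the same holds for its inverse $(\T{C}(G))^n$ since the underlying transition graph of an invertible transducer and of its inverse have the same strongly-connected structure (a path in one corresponds to a path in the other read with the output word).

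The main obstacle I anticipate is the bookkeeping in the state-count step: showing the synchronizing level of $((\T{C}(G))^{-1})^m$ is \emph{exactly} $m$ and that the synchronizing map is injective, rather than merely surjective onto a core of size at most $|G|^m$. The inequality $|(\T{C}(G))^m| \le |G|^m$ is the easy direction (it follows from Claim~\ref{claim-SyncLengthsAdd} plus coreness); the reverse inequality --- that all $|G|^m$ putative states are genuinely distinct and reachable --- is where one must do real work, and I expect the cleanest route is to exhibit, for each word $\Gamma \in G^m$, an input that demonstrates $\mathfrak{s}(\Gamma)$ acts differently from $\mathfrak{s}(\Gamma')$ for $\Gamma \ne \Gamma'$, leveraging the freeness from Remark~\ref{smpgenbycayleymacineisfree} or a direct computation with the reset structure.
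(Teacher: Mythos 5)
Your proposal takes essentially the same route as the paper: reduce to $(\T{C}(G))^{-1}$, verify the hypothesis of Lemma~\ref{core=power} by solving $\lambda(h,p^{-1})=q$ for $p$ (your $p=hq^{-1}$ is exactly the paper's $m'=(gh^{-1})'$ after renaming the letter and target), conclude $\core((\T{C}(G)^{-1})^{m}) = ((\T{C}(G)^{-1})^{m})$, and invoke the freeness of the automaton semigroup (Remark~\ref{smpgenbycayleymacineisfree} / Silva--Steinberg) to rule out $\omega$-collapse. One minor logical tidying: the ``obstacle'' you flag at the end --- establishing that the synchronizing level is exactly $m$ and that $\mathfrak{s}: G^m \to Q^m$ is injective --- is not actually where the work lies. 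Once Lemma~\ref{core=power} gives $\core = Q^m$, the map $\mathfrak{s}$ is a surjection between finite sets of the same cardinality $|G|^m$, so it is automatically a bijection and the synchronizing level is automatically exactly $m$; no separate argument is needed, and the parenthetical ``distinct words of length $m$ must force distinct states'' is just restating injectivity. What freeness actually buys you is the distinct issue of non-$\omega$-equivalence: that the $|G|^m$ states of $Q^m$, all of which you have shown lie in the core, represent $|G|^m$ \emph{pairwise inequivalent} maps, so the minimal transducer also has $|G|^m$ states. Your strong-connectedness argument at the end is fine and is slightly more explicit than the paper, which leaves it implicit in the fact that the core of a synchronizing transducer is strongly connected and that a transducer and its inverse have the same underlying transition graph up to relabelling.
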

\begin{proof}
Since, either by Remark \ref{smpgenbycayleymacineisfree} or a result in \cite{SilvaSteinberg}, the automaton semigroup generated by $\T{C}(G)$ is free it suffices to show that $(\T{C}(G)^{-1})$ satisfies the conditions of Lemma \ref{core=power}. 

Since the states of $(\T{C}(G))^{-1}$ are in bijective correspondence with the states of $\T{C}(G)$ we shall let $g'$ be the state of $(\T{C}(G))^{-1}$ corresponding to the state $g$ of $\T{C}(G)$.

Let $g,h \in G$. We shall show that there is a state $m'$ of $(\T{C}(G))^{-1}$ such that $\lambda_{G}'(g,m') = h$ (here $\lambda_{G}'$ represents the rewrite function of $(\T{C}(G))^{-1}$).

By definition of $\T{C}(G)$ it suffices to take $m' = (gh^{-1})'$.
\end{proof}

\end{document}